\documentclass[12pt,twoside,a4paper]{article} 

\frenchspacing \sloppypar \textheight 21cm \textwidth 14.4cm \oddsidemargin 0.95cm
\evensidemargin 0.5cm \topmargin 0.5cm \headsep 1.0cm \topskip 0pt
\usepackage{amsmath}
\usepackage{amssymb}
\usepackage{amsthm}
\usepackage{mathrsfs}
\usepackage{dsfont}
\usepackage[latin1]{inputenc}

\usepackage{makeidx}

\usepackage[dvips]{graphicx} 
\usepackage[dvips]{color}    

\usepackage{scrpage}
\pagestyle{headings}

\setcounter{secnumdepth}{2}

\setcounter{section}{-1}
\renewcommand{\theequation}{\arabic{section}.\arabic{equation}}
\newcounter{saveeqn}
\newcounter{Qnqn}
\newcounter{altcounter}
\newcommand{\alpheqn}{\stepcounter{equation}
  \setcounter{saveeqn}{\value{equation}}
  \setcounter{equation}{0}
  \renewcommand{\theequation}{\arabic{section}.\arabic{saveeqn}\alph{equation}}}
\newcommand{\Aeqn}{
  \setcounter{saveeqn}{\value{equation}}
  \setcounter{equation}{0}
  \renewcommand{\theequation}{\Alph{equation}}}

\newcommand{\reseteqn}{\setcounter{equation}{\value{saveeqn}}
  \renewcommand{\theequation}{\arabic{section}.\arabic{equation}}}

\newcommand{\Section}{\setcounter{equation}{0} \section}

\newtheorem{satz}{Proposition}[section]
\newtheorem{prop}[satz]{Proposition} 
\newtheorem{lemma}[satz]{Lemma} 
\newtheorem{korollar}[satz]{Corollary} 
\newtheorem*{theorem}{Theorem}
\newtheorem*{vermutung}{Conjecture}
\newtheorem{vermutung2}[satz]{Conjecture}
\newtheorem{Theorem}{Theorem}

\theoremstyle{definition}
\newtheorem*{definition}{Definition}

\newtheorem*{remark}{\sc Remark}

\newenvironment{bemerkung}
   {%
    \begin{remark}%
            }  %
    {

    \hfill $\diamond$
    \end{remark}}

\newcommand{\fa}{{\: \cdot \:}}       

\newcommand{\ray}{\mathscr{R}}
\newcommand{\cH}{\mathscr{H}}
\newcommand{\cD}{\mathscr{D}}
\newcommand{\const}{{\mathscr{C}}}

\newcommand{\C}{\mathds{C}}                 
\newcommand{\R}{\mathds{R}}                 
\renewcommand{\H}{\mathds{H}}                 
\newcommand{\N}{\mathds{N}}                 
\newcommand{\Z}{\mathds{Z}}                 

\newcommand{\lieg}{\mathfrak{g}}               
\newcommand{\lieh}{\mathfrak{h}}               

\renewcommand{\S}{{\mathscr{S}}}            
\newcommand{\Cnu}{C_{0}^\infty}             
\newcommand{\Cu}{C^\infty}                  
\newcommand{\ed}[1]{\frac{1}{#1}}          
\newcommand{\dz}[1]{\tfrac{#1}{2}}                  
\newcommand{\conv}[2]{#1 \ast #2}           
\newcommand{\gaussk}[1]{\lfloor#1\rfloor} 
\newcommand{\nach}{\circ}                   


\renewcommand{\d}[1]{\partial_{\mspace{-1mu}#1}\mspace{-1mu}}
\newcommand{\dt}{\d{t}}
\newcommand{\dx}{\d{x}}
\newcommand{\du}{\d{u}}
\newcommand{\vt}{\vert_{t=0}}

\newcommand{\grad}{\nabla}                  

\newcommand{\s}[1]{#1^\prime}               
\renewcommand{\j}[1]{\langle #1 \rangle}      
\renewcommand{\t}[1]{\widetilde{#1}}         
\newcommand{\h}[1]{\widehat{#1}}            
\newcommand{\ab}[1]{\overline{#1}}

\newcommand{\X}{{\cal{X}}}                  
\renewcommand{\P}{{\cal{P}}}
\newcommand{\U}{{\cal{U}}}              

\renewcommand{\l}{\ell}                          
\newcommand{\kphi}{\varphi}                  

\DeclareMathOperator{\supp}{supp}           
\DeclareMathOperator{\re}{Re}

\DeclareMathOperator{\sgn}{sgn}
\DeclareMathOperator{\singsupp}{singsupp} %

\newcommand{\skalar}[2]{\left(#1 \vert #2\right)}
\newcommand{\menge}[2]{\{#1;\; #2\}}   
\newcommand{\bigmenge}[2]{\big\{#1;\; #2\big\}}   
\newcommand{\Bigmenge}[2]{\Big\{#1;\; #2\Big\}}   

\newcommand{\teil}{\subseteq}                

\newcommand{\intao}[1]{\left[#1\right[}     
\newcommand{\intaa}[1]{\left[#1\right]}     

\newcommand{\pint}{\int_0^{\infty}}

\newcommand{\norm}[1]{|#1|}        
\newcommand{\n}[1]{|#1|}        
\newcommand{\bignorm}[1]{\bigl|#1\bigr|}        
\newcommand{\Bignorm}[1]{\Bigl|#1\Bigr|}        

\newcommand{\dnorm}[1]{\|#1\| }      
\newcommand{\Bigdnorm}[1]{\Bigl\|#1\Bigr\| }      


\newcommand{\Ri}[1]{\tfrac{#1}{\norm{#1}}}   

\newcommand{\ohne}{\backslash}
\newcommand{\ueber}[2]{\genfrac{}{}{0pt}{}{#1}{#2}}

\newcommand{\absatz}{\vspace{0.2cm}}
\newcommand{\chap}{{\sc Chapter }}

\makeindex

\begin{document}

\begin{titlepage}
\begin{center}
\vspace{2.5cm}
{\Large $L^p$-estimates for the wave equation\\
\vspace{0.4cm} associated to the Gru\v{s}in operator}\\
\vspace{2.5cm} Dissertation\\
\vspace{0.2cm} zur Erlangung des Doktorgrades\\
\vspace{0.2cm} der Mathematisch-Naturwissenschaftlichen Fakultät\\
\vspace{0.2cm} der Christian-Albrechts-Universität\\
\vspace{0.2cm} zu Kiel\\

\vspace{2.5cm}
\includegraphics{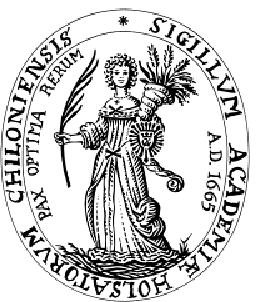}
\vspace{2.5cm}

{\footnotesize vorgelegt von}\\
\vspace{0.2cm}Ralf Meyer\\

\vspace{2.0cm}
Kiel\\
\vspace{0.2cm}Juni 2006\\

\end{center}
\end{titlepage}

\newpage
\thispagestyle{empty}

\vspace*{\fill}

\noindent\begin{tabular}{lr} \hspace{8cm} & \hspace{8cm}\\
Referent/in: & Prof. Dr. D. Müller\\
Korreferent/in: & Prof. Dr. H. König\\
Tag der mündlischen Prüfung: & 14.06.2006\\
Zum Druck genehmigt: & Kiel, den 3.07.2006\\
& \\
& \\
& gezeichnet\\
& Prof. Dr. J. Grotemeyer, Dekan
\end{tabular}

\newpage
\thispagestyle{empty}
\subsection*{Zusammenfassung}

Mit $G:=-(\d{x}²+x²\d{u}²)$ bezeichnen wir den \emph{Gru\v{s}in} Operator auf $\R^2$. Das
Cauchy-Problem der assoziierten Wellengleichung auf $\R \times \R^2$ ist gegeben durch
\begin{equation*}
    \Big(\frac{\partial^2}{\partial t^2}+G\Big) v =0,\quad
    v \rvert_{t=0}=f,\quad \frac{\partial v}{\partial t}\rvert_{t=0}=g,
\end{equation*}
wobei sich $t$ auf die Zeit bezieht und $f,\ g$ geeignete Funktionen sind. Die Lösung
dieses Problems ist formal gegeben durch
\begin{equation*}
  v(t,x,u):=[\cos(t \sqrt{G})f](x,u)+\Big[\frac{\sin(t\sqrt{G})}{\sqrt{G}}g\Big](x,u).
\end{equation*}
Das Thema dieser Dissertation sind Glattheitseigenschaften der Lösung $v$ in Abhängigkeit
von den Anfangsdaten. Wir betrachten dabei einen festen Zeitpunkt $t$. Die Glattheit
einer Lösung messen wir bezüglich Sobolev-Normen
$\dnorm{f}_{L_p^\alpha}:=\dnorm{(1+G)^{\alpha/2}f}_{L_p}$, definiert in Termen des
Differentialoperators $G$. $S_C$ bezeichne den Streifen $S_C:=\menge{(x,u)\in
\R^2}{\n{x}\leq C}$ im $\R^2$. Wir beweisen, dass für $1\leq p \leq \infty$ die Lösung
$v$ in $L_p^{-\alpha}$ liegt, falls unsere Anfangsdaten $f$ und $g$ in einem Streifen
$S_C$, $C>0$, getragene $L_p$-Funktionen sind und zudem $\alpha>\n{1/p-1/2}$ gilt. Hierzu
zeigen wir, dass sich für alle $C>0$ der \mbox{Operator}
$\exp(it\sqrt{G})(1+G)^{-\alpha/2}$, definiert auf dem Schwartzraum $\S$, zu einem
beschränkten Operator von $L_p(S_C)$ nach $L_p(\R^2)$ fortsetzen läßt, sofern
$\alpha>\n{1/p-1/2}$ gilt.

\newpage
\thispagestyle{empty}
\subsection*{Abstract}

Let $G:=-(\d{x}²+x²\d{u}²)$ denote the \emph{Gru\v{s}in} operator on $\R^2$. Consider the
Cauchy problem for the associated wave equation on $\R \times \R^2$, given by
\begin{equation*}
    \Big(\frac{\partial^2}{\partial t^2}+G\Big) v =0,\quad
    v \rvert_{t=0}=f,\quad \frac{\partial v}{\partial t}\rvert_{t=0}=g,
\end{equation*}
where $t$ denotes time and $f,\ g$ are suitable functions. The solution to this problem
is formally given by
\begin{equation*}
  v(t,x,u):=[\cos(t \sqrt{G})f](x,u)+\Big[\frac{\sin(t\sqrt{G})}{\sqrt{G}}g\Big](x,u).
\end{equation*}
The focus of this thesis lies on smoothness properties of the solution $v$ for fixed time
$t$ with respect to the initial data. Smoothness can be measured in terms of Sobolev
norms $\dnorm{f}_{L_p^\alpha}:=\dnorm{(1+G)^{\alpha/2}f}_{L_p}$, defined in terms of the
differential operator $G$. Let $S_C$ denote the strip $S_C:=\menge{(x,u) \in \R^2}{\n{x}
\leq C}$ in $\R^2$. We prove that for $1\leq p \leq \infty$ the solution $v$ is in
$L_p^{-\alpha}$ if our initial data $f$ and $g$ are $L_p$-functions supported in a fixed
strip $S_C$, $C>0$, and if $\alpha>\n{1/p-1/2}$ holds. In fact, we show that for every
$C>0$ the operator $\exp(it\sqrt{G})(1+G)^{-\alpha/2}$, defined for Schwartz functions,
extends to a bounded operator from $L_p(S_C)$ to $L_p(\R^2)$ for all
$\alpha>\n{1/p-1/2}$.



\newpage
\tableofcontents \thispagestyle{empty}
\newpage
\thispagestyle{empty} \vspace*{\fill}

\newpage
\setcounter{page}{1}
\Section{Introduction}

\subsection{Context and background}

\noindent Let
\begin{equation}
    \label{context1}
    L(x,\d{x})=-\sum_{\n{\alpha}\leq 2} a_{\alpha}(x)\d{x}^{\alpha}
\end{equation}
be a linear partial differential operator of order $2$ with smooth real coefficients in
an open set $\Omega \teil \R^{d}$ with principal symbol
$L_{pr}(x,\xi):=\sum_{\n{\alpha}=2} a_{\alpha}(x)\xi^{\alpha}$.
We say $L$ is \emph{elliptic in $x$}, if
\[L_{pr}(x,\xi)\not=0 \mbox{ for all } \xi \in T_x\Omega\ohne 0:=\menge{\xi\in T_x\Omega}{
\xi \not=0}.\] We call $L$ \emph{elliptic}, if $L$ is elliptic for all $x\in \Omega$. $L$
is called \emph{non-elliptic}, if $L$ is not elliptic.

In addition, we assume that $L$ is positive and essentially selfadjoint. We consider now
the following Cauchy problem for the wave equation associated to $L$ on $\Omega$:
\begin{equation*}
    \frac{\partial^2v}{\partial t^2}+L v =0,\quad
    v \rvert_{t=0}=f,\quad \frac{\partial v}{\partial t}\rvert_{t=0}=g,
\end{equation*}
where $t$ denotes time and $f,\ g$ are suitable functions. The solution to this problem
is formally given by
\begin{equation*}
    \label{sol}
    v(t,x):=
    \cos(t \sqrt{L})f(x)+\frac{\sin(t\sqrt{L})}{\sqrt{L}}g(x),\quad (x,t)
    \in \Omega \times \R.
\end{equation*}
The functions of $L$ are defined by the spectral theorem and the above expression for $v$
makes sense at least for $f,g \in L^2(\R^d)$.

Smoothness properties of the solution $v$, for fixed time $t$, can be measured in terms
of Sobolev norms $\dnorm{f}_{L_p^\alpha}:=\dnorm{(1+L)^{\alpha/2}f}_{L_p}$ adapted to
$L$. We are especially interested in estimates of the following
kind.\absatz\index{lpalpha}

\noindent \textit{For every $t>0$, $1<p<\infty$ and $\alpha>\alpha(d,p)$ there exists a
constant $C_{p,t}^\alpha$ such that
\begin{equation}
  \label{rm1}
  \dnorm{\cos(t \sqrt{L})f}_{L_p^{-\alpha}} \leq C_{p,t}^\alpha \dnorm{f}_p
\end{equation}
and
\begin{equation}
  \label{rm2}
  \Bigdnorm{\frac{\sin(t\sqrt{L})}{\sqrt{L}}g}_{L_p^{-\alpha+1}} \leq C_{p,t}^\alpha
  \dnorm{g}_p
\end{equation}
hold.\\}

\noindent We call these estimates \emph{wave estimates}.
\subsubsection*{Wave estimates for the Laplacian on Euclidean space}

For $L=-\Delta$ and $\Omega=\R^d$, we have the usual Cauchy problem on the Euclidean
space. For this case, estimates have been established by Sigrid Sjöstrand
\cite{sjoestrand}, Akihiko Miyachi \cite{miyachi} and Juan Peral \cite{peral}. In 1980,
Peral and Miyachi independently showed the estimates \eqref{rm1}, \eqref{rm2} for
$\alpha(d,p):=(d-1)\norm{1/p-1/2}$. In fact, they showed that these estimates also hold
true for the endpoint $\alpha=\alpha(d,p)=(d-1)\norm{1/p-1/2}$ if $1<p<\infty$. Moreover,
both operators are bounded from the Hardy space $H_1(\R^d)$ to $L_1(\R^d)$ for
$\alpha=(d-1)/2$. These estimates are optimal. In the following we call
$\alpha(d,p):=(d-1)\norm{1/p-1/2}$ the \emph{critical index}.\index{alphadp}

The solution $v$ can also be written as
\begin{equation*}
  \begin{split}
  v(x,t)&=\sum_{\epsilon=\pm 1}(2\pi)^{-d} \int\int e^{i((x-y)\fa\xi +\epsilon \norm{\xi} t)}
  \ed{2}\Big(f(y)+\epsilon \frac{g(y)}{i\norm{\xi}}\Big) \; dy \; d\xi\\
  &=:\sum_{\epsilon=\pm 1} A_{\epsilon,0}^{t}f(x)+A_{\epsilon,1}^{t}g(x).
  \end{split}
\end{equation*}
The operators $A_{\epsilon,k}^{t}$ are Fourier integral operators. Andreas Seeger,
Christopher D. Sogge and Elias M. Stein \cite{seeger} showed that wave estimates
(\ref{rm1}), (\ref{rm2}) hold true for a wide class of Fourier integral operators for the
critical index $\alpha(d,p)$ and $d$ the topological dimension of the underlying space.

In general let
\[P:=\d{t}^2+L(x,\d{x}),\] with $L$ defined as in \eqref{context1}, and put $p(x,\tau,\xi):=-\tau^2+L_{pr}(x,\xi)$.
If $L$ is an elliptic operator, then $P$ is \emph{strictly hyperbolic}, which means that
\begin{equation*}
  \begin{split}
  &p(x,\tau,\xi) \mbox{ has two real distinct roots } \tau_1(x,\xi),\
  \tau_2(x,\xi)\\
  &\mbox{for each } (x,\xi) \mbox{ with } \xi\not=0.
  \end{split}
\end{equation*}
It is well known (see J. J. Duistermaat \cite{duistermaat}) that in this case one can
find elliptic Fourier integral operators $T_{j,k}^t$ such that for small $t$ the solution
$v(t,x)$ of the Cauchy problem with initial conditions $\d{t}^k v(0,\fa)=f_k$ for $k=0,1$
is given by
\begin{equation*}
  v(t,x)=\sum_{j=1,2,\ k=0,1} T_{j,k}^t f_k(x),
\end{equation*}
modulo an infinitely smoothing operator (this technic is often called the
\emph{geometrical optics ansatz}). Therefore, the estimates \eqref{rm1} and (\ref{rm2})
hold true for a wide class of elliptic operators $L$, provided $t$ is small and
$\alpha(d,p)=(d-1)\n{1/p-1/2}$. In fact, Seeger, Sogge and Stein \cite{seeger} showed
these estimates for elliptic differential operators of order $m$ on compact, smooth
manifolds of dimension $d$. Locally such an operator of order $2$ is of the form
(\ref{context1}).\absatz

\subsubsection*{Hörmander type operators}

Now we remove the requirement of ellipticity. To ensure that the question of smoothness
of a solution $v$ of the wave equation is reasonable, we should demand that $L(x,\d{x})$
is \emph{hypoelliptic}, which means that
for all $v \in D'(\Omega)$
\[
  \singsupp v \teil \singsupp L(x,\d{x})v\]
holds. An elliptic operator $L$ is hypoelliptic.

In 1967, Lars Hörmander showed that operators of the form
\[L(x,\d{x}):=-(\sum_{\l=1}^m X_\l^2+X_0),\] where all $X_\l$ are real vector fields on an
open set $\Omega$, are hypoelliptic under the following, rather weak condition:
\begin{equation*}
  \begin{split}
  &\mbox{For all } x \in \Omega, \mbox{ the tangential space } T_x\Omega \mbox{ in } x
  \mbox{ is spanned by } \{X_1,\dots,X_m\}\\
  &\mbox{and finitely many iterated commutators of } X_1,\dots,X_m.
  \end{split}
\end{equation*}
This condition is often called \emph{Hörmander's condition} and an operator which
fulfills it is called an \emph{operator of Hörmander type}. If we write $L$ as
$\sum_{\n{\alpha}\leq 2} a_{\alpha}(x)\d{x}^{\alpha}$, we can show that
$-\sum_{\n{\alpha}=2} a_{\alpha}(x) \xi^{\alpha}\geq 0$. Operators fulfilling this
inequality are often called \emph{degenerate elliptic} operators.

The associated wave equation to a non-elliptic operator is not strictly hyperbolic, and
that is why we cannot use a geometrical optic ansatz to write the solutions by using
Fourier integral operators. Since we have no "straight forward" way of computing the
solutions of the wave equation, we can presently only hope to get results for special
operators. Furthermore, the underlying geometry is sub-Riemannian and, in general,
substantially more complex than the geometry for wave equations associated to elliptic
operators.

Nevertheless, we expect that for many Hörmander type operators (\ref{rm1}) and
(\ref{rm2}) hold true for the critical index $(d-1)\n{1/p-1/2}$, where $d$ is the
topological dimension of the underlying space and that such a result is optimal, except
for the endpoint. As far as we know, there is only one result of this type for a
non-elliptic operator yet known. We define this operator in the following.



\subsubsection*{Wave estimates for the sub-Laplacian on the Heisenberg group.}

Let $\H_m$ denote the $2m+1$-dimensional Heisenberg group. As a manifold $\H_m$ is the
$\R^{2m+1}$. The vector fields $X_j:=\d{x_j}-\ed{2}y_j \d{u}, \; Y_j:=\d{y_j}+\ed{2} x_j
\d{u}, \; U:=\d{u}$ form a natural basis for the Lie algebra of left-invariant vector
fields.
The \emph{sub-Laplacian}
\[L:=-\sum_{j=1}^m (X_j^2+Y_j^2)\]
is non-elliptic. Nevertheless, $L$ is a hypoelliptic operator, since $[X_j,Y_j]=U$ and
hence the Hörmander condition is fulfilled.

In 1999, Detlef Müller and Elias M. Stein \cite{mueller} showed that the estimates
\eqref{rm1}, \eqref{rm2} hold true for the critical index \mbox{$(d-1)\norm{1/p-1/2}$},
where $d:=2m+1$ is the topological dimension of $\H_m$. Except for the endpoint
$\alpha(d,p)$, this result is optimal. One can reduce the proof to showing that the
operator \mbox{$\exp(i\sqrt{L})(1+L)^{-\alpha/2}$} is bounded on $L_p$ for
$\alpha>\alpha(d,p)$. Furthermore, it can be restricted to the case $p=1$. For this case,
Müller and Stein showed that the corresponding convolution kernel of this operator lies
in $L_1(\H_m)$. \absatz

Before we present our result, we want to mention a recent result by Michael Cowling and
Adam Sikora. They studied a sub-Laplacian on the group $SU(2)$. This sub-Laplacian is
also of Hörmander type. Since the $SU(2)$ is connected to the Heisenberg group (see
Fulvio Ricci \cite{ricci}), one can hope to get wave estimates for the wave equation
associated to this sub-Laplacian for the critical index
$\alpha(d,p)=(d-1)\n{1/p-1/2}=2\n{1/p-1/2}$. We know, by oral communication, that Cowling
and Müller are working on this topic and have developed some new technics, which yield,
in principle, the wave estimates on the group $SU(2)$ for the critical index. But they
have not worked out all details yet.

Instead of wave equations, Cowling and Sikora studied multipliers and proved a spectral
multiplier theorem for this sub-Laplacian. By general functional calculus one can deduce
multiplier theorems from wave estimates (see Müller \cite{muellerberlin}).

\subsubsection*{Spectral multiplier theorems}

We say that for an operator $L(x,\d{x})$ a \emph{Mikhlin-Hörmander multiplier theorem}
holds if for all bounded Borel functions $m:\R^+ \to \C$ with
\begin{equation}
    \label{critical}
    \sup_{t\in \intao{1,\infty}} \dnorm{\eta(\fa)m(t\fa)}_{H_s}< \infty
\end{equation}
the operator $m(L)$ is a bounded operator on $L_p$ for $1<p<\infty$ and $m(L)$ is of weak
type $(1,1)$, provided $s$ is bigger than an index $s_0$. $\eta$ is here a non trivial
cut-off function on $\R^+$. Müller and Stein \cite{ms} and independently Waldemar Hebisch
\cite{hebisch} proved a Mikhlin-Hörmander multiplier theorem for the sub-Laplacian $L$ on
$\H_m$ for the index $s_0=d/2=(2m+1)/2$, which is half of the topological dimension of
$\H_m$. This result is optimal, except for the endpoint. It follows also from the
mentioned estimates for the wave equation by Müller and Stein by the method of
subordination. In fact, Hebisch \cite{hebisch}, and also Ricci, Müller and Stein
\cite{muellerricci1} showed multiplier theorems for generalized Heisenberg groups and not
only for $\H_m$.

In 2001, Cowling and Sikora showed that a Mikhlin-Hörmander multiplier theorem for a
sub-Laplacian on the group $SU(2)$ (see \cite{cowling}) holds true for $s_0=3/2$, which
is half of the topological dimension of $SU(2)$ and therefore, this result is the
analogue for $SU(2)$ of the result obtained by Müller and Stein and as well of the result
by Hebisch. This result is optimal, except for the endpoint.

By using the methods of Müller in \cite{muellerberlin}, it should be possible to prove a
Mikhlin-Hörmander multiplier theorem for the Gru\v{s}in operator. We conjecture the
following theorem.
\begin{vermutung}
    Let $m:\R^+ \to \C$ a bounded Borel function that fulfills \eqref{critical}. Than
    $m(G)$ is a bounded operator on $L_p$ for $1<p<\infty$ and $m(G)$ is of weak type
    $(1,1)$, provided $s>1$.
\end{vermutung}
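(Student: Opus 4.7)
The plan is to follow the subordination strategy of M\"uller \cite{muellerberlin}, which derives Mikhlin--H\"ormander multiplier theorems from $L_p$ bounds on the wave propagator. The conjectured threshold $s>1$ matches $d/2$ with $d=2$ the topological dimension of $\R^2$, exactly as in the Heisenberg group results of M\"uller--Stein. By the spectral theorem, $m(G)$ is bounded on $L_2$ with operator norm at most $\dnorm{m}_\infty$, so by Marcinkiewicz interpolation and duality the full statement reduces to a weak-type $(1,1)$ estimate for $m(G)$. To obtain this via the Calder\'on--Zygmund machinery on the sub-Riemannian space associated to $G$, I would exhibit the integral kernel $K$ of $m(G)$ and verify the H\"ormander-type cancellation condition
\begin{equation*}
  \int_{d(z,z')>2d(z',z'_0)} \n{K(z,z')-K(z,z'_0)}\, dz \leq C,
\end{equation*}
where $d$ denotes the Carnot--Carath\'eodory distance associated to $G$.

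The verification proceeds by dyadic decomposition in the spectral parameter. Fix a smooth cut-off $\eta$ on $\R^+$ with $\sum_{j\in\Z}\eta(2^{-j}\fa)=1$ and set $m_j(\lambda):=\eta(2^{-j}\lambda)\,m(\lambda)$. By Fourier inversion,
\begin{equation*}
  m_j(\sqrt{G}) = \frac{1}{2\pi}\int_\R \widehat{m_j}(t)\, e^{it\sqrt{G}}\, dt,
\end{equation*}
so the integral kernel of $m_j(\sqrt{G})$ is an average of the wave kernels weighted by $\widehat{m_j}$. The wave estimate established in this thesis provides, for $\alpha>1/2$,
\begin{equation*}
  \dnorm{e^{it\sqrt{G}}(1+G)^{-\alpha/2}f}_{L_1(\R^2)} \leq C_t^\alpha\, \dnorm{f}_{L_1(S_C)},
\end{equation*}
and integration against $\widehat{m_j}$ should then yield an $L_1\to L_1$ bound on each dyadic piece $m_j(\sqrt{G})$ controlled by the Sobolev norm $\dnorm{\eta(\fa)\, m(2^j\fa)}_{H_s}$ whenever $s>1$; the gain of a full derivative beyond $\alpha>1/2$ comes from the subordination, precisely as in \cite{muellerberlin}.

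The main obstacle is the strip localization built into the wave estimate: the thesis only controls $\exp(it\sqrt{G})(1+G)^{-\alpha/2}$ for initial data supported in a fixed strip $S_C$, while the multiplier result requires arbitrary $L_p$ data. To overcome this I would exploit two symmetries of $G$, namely translation invariance in the $u$-direction, which ensures that $K(x,u;y,v)$ depends only on $u-v$, and the anisotropic dilations $\delta_r(x,u):=(rx,r^2u)$, under which $G$ is homogeneous of degree two. After rescaling each dyadic piece to unit frequency by $\delta_{2^j}$, its kernel becomes concentrated on the unit scale in the Grushin geometry, so the required H\"ormander integral is estimated by summing strip-localized wave contributions across dyadic annuli. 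The delicate points will be (i) showing that the off-diagonal decay of the wave kernels away from the characteristic line $\{x=0\}$ is strong enough that the recombination across all $x$-translates is summable, and (ii) capturing the endpoint $s>1$ rather than some $s>1+\eps$, which will require using the critical index $\alpha=\n{1/p-1/2}$ of the wave estimates together with finite propagation speed for $\cos(t\sqrt{G})$ to localize in space-time before summing.
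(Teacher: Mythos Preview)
The statement you are attempting to prove is labeled \emph{Conjecture} in the paper and is not proved there; the author merely remarks that ``by using the methods of M\"uller in \cite{muellerberlin}, it should be possible'' to establish such a multiplier theorem. There is therefore no proof in the paper to compare against. Your outline follows precisely the route the author suggests, and the reduction via the spectral theorem, duality, and Calder\'on--Zygmund theory adapted to the optimal control metric is the standard architecture.

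The genuine difficulty, which you correctly locate, is the strip restriction in the wave estimate of Theorem~2: the bound is only from $L_p(S_C)$ to $L_p(\R^2)$, not on all of $L_p(\R^2)$. Your proposed remedy, however, does not close this gap. Anisotropic dilation $\delta_r$ rescales both the strip width and the spectral parameter simultaneously, so bringing a dyadic piece $m_j(\sqrt{G})$ to unit frequency does not move data supported at large $|x'|$ into a fixed strip; it only shows that the multiplier theorem on $S_C$ for $m$ is equivalent to the one on $S_{rC}$ for $m(r^{-2}\cdot)$, which is the same problem at a different scale. The phrase ``summing strip-localized wave contributions across dyadic annuli'' has no obvious meaning here because $G$ has \emph{no} translation invariance in $x$: there is no way to slide a strip-localized estimate across the $x$-axis and sum. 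The paper itself addresses large $|x'|$ only through a separate argument (Section~4.3) that relies on the ellipticity of $G$ away from $x=0$ and the Fourier integral operator estimates of Seeger--Sogge--Stein, and even that is left as Conjecture~4.2 rather than a theorem. To complete your program you would need either a global wave estimate (i.e., the unrestricted version of Theorem~2, which the thesis does not prove) or a direct argument for the H\"ormander kernel condition when $|x'|$ is large, presumably via the elliptic/FIO machinery sketched in Section~4.3. Without one of these, points (i) and (ii) at the end of your proposal are not merely ``delicate'' but unresolved.
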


\subsection{The main result}

Let $S_C$ denote the strip $S_C:=\menge{(x',u') \in \R^2}{\n{x'} \leq C}$ in $\R^2$. In
this thesis, we show that for the most basic and historically first studied non-elliptic
Hörmander type operator the estimates (\ref{rm1}) and (\ref{rm2}) hold true with critical
index $\alpha(d,p)$ and $d$ the topological dimension of the underlying space, provided
the initial data $f$ and $g$ are supported in a fixed strip $S_C$. This operator is the
Gru\v{s}in operator.\index{strip}

The \emph{Gru\v{s}in operator} $G$ is defined by
\begin{equation*}
    G:=-(\d{x}^2+x^2\d{u}^2)
\end{equation*}
on $\R^{2}$. Though this operator is non-elliptic, it is still a hypoelliptic operator,
since it fulfills the Hörmander condition. Since $G$ is one of the easiest Hörmander type
operators, it is predestinated as a starting point for a systematic study of wave
equations for non-elliptic operators of this type.

$G$ posses less invariance properties than the sub-Laplacian $L$ on $\H_m$. That is why
the study of waves associated to this operator is more difficult than the study of waves
associated to $L$. In contrast to $L$, the Gru\v{s}in operator is not translation
invariant. Waves associated to $G$ that start near the axis $x'=0$ exhibit a behavior
similar to the behavior of waves on $\H_1$. Waves associated to $G$ that start far away
form the axis $x'=0$ behave like waves associated to an elliptic operator. Especially the
transition area, $0<x'\lesssim 1$, is very interesting and gives new insights in the
general theory of wave estimates for operators of Hörmander type.

$G$ is connected to the sub-Laplacian $L$ on $\H_1$, since it can be written as an image
of $L$ under a certain representation of $\H_1$.

Our result reads as follows.
\newpage
\begin{Theorem}
    For every $C>0$, $t>0$, $1\leq p \leq \infty$ and $\alpha>\norm{1/p-1/2}$ there exists a
    constant $C_{p,t,C}^\alpha$ such that for all $f$ and $g$ in $\S$ and supported in $S_C$
    the estimates
    \begin{equation*}
        \Bigdnorm{\frac{\cos(t \sqrt{G})}{(1+G)^{\alpha/2}}f}_{L_p(\R^2)} \leq C^\alpha_{p,t}
        \dnorm{f}_{L_p(\R^2)},
    \end{equation*}
    and
    \begin{equation*}
        \Bigdnorm{\frac{\sin(t\sqrt{G})}{\sqrt{G}(1+G)^{(\alpha-1)/2}}g}_{L_p(\R^2)} \leq
        C^\alpha_{p,t,C}
        \dnorm{g}_{L_p(\R^2)},
    \end{equation*}
    hold.
\end{Theorem}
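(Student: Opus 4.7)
The strategy parallels the approach of Müller and Stein \cite{mueller} for the Heisenberg sub-Laplacian, adapted to the more delicate geometry of $G$. The sine estimate follows from the cosine estimate via $\sin(t\sqrt G)/\sqrt G=\int_{0}^{t}\cos(s\sqrt G)\,ds$; writing $2\cos(t\sqrt G)=e^{it\sqrt G}+e^{-it\sqrt G}$ reduces matters further to showing that, for all $C,t>0$ and $\alpha>\n{1/p-1/2}$, the operator $T_\alpha:=e^{it\sqrt G}(1+G)^{-\alpha/2}$ extends from $\S$ to a bounded operator $L_p(S_C)\to L_p(\R^{2})$. The $L_2$-case is immediate from the spectral theorem. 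My plan is to prove the $L_1$ endpoint
\begin{equation*}
    \dnorm{T_\alpha f}_{L_1(\R^{2})}\le C_{\alpha,t,C}\,\dnorm{f}_{L_1(\R^{2})} \qquad \text{for } f\in\S,\ \supp f\teil S_C,\ \alpha>1/2,
\end{equation*}
obtain the intermediate range by Stein's complex interpolation applied to the analytic family $z\mapsto e^{it\sqrt G}(1+G)^{-z/2}$, and cover $2<p\le\infty$ by the same kernel analysis applied to the transpose (using that $\cos(t\sqrt G)(1+G)^{-\alpha/2}$ has a symmetric Schwartz kernel since $G$ is self-adjoint).

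\textbf{Kernel formula.}
The $L_1$ endpoint reduces to $\sup_{(x',u')\in S_C}\int_{\R^{2}}\n{K_\alpha(x,u,x',u')}\,dx\,du<\infty$, where $K_\alpha$ denotes the Schwartz kernel of $T_\alpha$. Taking a partial Fourier transform in $u$ converts $G$ into the family of rescaled harmonic oscillators $H_\eta:=-\d{x}^{2}+x^{2}\eta^{2}$, whose eigenfunctions are the dilated Hermite functions $\varphi_k^{\eta}(x):=\n{\eta}^{1/4}h_k(\n{\eta}^{1/2}x)$ with eigenvalues $(2k+1)\n{\eta}$. This yields the spectral representation
\begin{equation*}
    K_\alpha(x,u,x',u')=\ed{2\pi}\int_{\R}\sum_{k=0}^{\infty}\varphi_k^{\eta}(x)\,\varphi_k^{\eta}(x')\,\frac{e^{it\sqrt{(2k+1)\n{\eta}}}}{(1+(2k+1)\n{\eta})^{\alpha/2}}\,e^{i\eta(u-u')}\,d\eta,
\end{equation*}
which is the starting point for all subsequent estimates and makes explicit the link between $G$ and the sub-Laplacian on $\H_{1}$ suggested in the introduction.

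\textbf{Kernel estimates.}
Finite speed of propagation in the Carnot--Carathéodory metric $d_G$ attached to $G$ confines the support of $K_\alpha$ to the set $d_G((x,u),(x',u'))\lesssim t$, so for $(x',u')\in S_C$ the relevant region in $(x,u)$ has uniformly bounded Lebesgue measure. I then decompose the spectral parameter $\lambda:=\sqrt{(2k+1)\n{\eta}}$ dyadically, $T_\alpha=\sum_{j\ge 0}T_\alpha^{(j)}$ with $\lambda\sim 2^{j}$, and estimate each piece separately. In the ``Heisenberg regime'' $\n x,\n{x'}\lesssim 1/\lambda$ the convolution-kernel bounds of Müller and Stein for the sub-Laplacian on $\H_{1}$ can be transported to $G$ via the representation-theoretic identification. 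In the ``elliptic regime'' $\n x,\n{x'}\gtrsim 1$, Plancherel--Rotach asymptotics re-cast the Hermite sum as an oscillatory integral which, combined with stationary phase in $\eta$, fits into the Fourier integral operator framework of Seeger, Sogge and Stein \cite{seeger}. Summation in $j$ converges once $\alpha>1/2$ thanks to the regulariser $(1+\lambda^{2})^{-\alpha/2}$.

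\textbf{Main obstacle.}
The decisive new difficulty lies in the transition region $\n x\sim 1$, where $G$ interpolates between its elliptic and sub-elliptic regimes and neither the Heisenberg techniques nor the FIO techniques apply cleanly. The strip hypothesis enters here in an essential way: it bounds the source variable $x'$ but leaves the target variable $x$ free to range over all of $\R$, so one must follow a wave launched from $\n{x'}\lesssim 1$ as it crosses the transition zone and propagates into the elliptic region. Carrying this through requires a joint dyadic decomposition in $x$, $x'$ and $\eta$ together with a uniform stationary-phase analysis in which the critical points migrate as $\n x$ varies, and I expect this gluing between the two regimes to be the hardest technical step of the argument and the source of the strip-support hypothesis in the final statement.
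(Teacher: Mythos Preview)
Your overall reduction to the $L_1$ bound for $e^{it\sqrt G}(1+G)^{-\alpha/2}$ and the kernel formula via the Hermite expansion match the paper. But two of your key steps would not yield the sharp exponent.

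First, the sine reduction $\sin(t\sqrt G)/\sqrt G=\int_0^t\cos(s\sqrt G)\,ds$ loses a full derivative: to bound $\sin(t\sqrt G)/(\sqrt G(1+G)^{(\alpha-1)/2})$ this way you would need the cosine estimate at exponent $\alpha-1$, forcing $\alpha>1+|1/p-1/2|$. The paper instead reduces both operators to $e^{i\sqrt G}(1+G)^{-\alpha/2}$ after removing low frequencies by a multiplier theorem transferred from $\H_1$.

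Second, and more seriously, your plan to transport the M\"uller--Stein $\H_1$ kernel bounds to $G$ via the representation-theoretic identification gives only $\alpha>1$, not $\alpha>1/2$: the critical index on $\H_1$ is $(d-1)/2=1$, and transference preserves exactly this. The paper states this explicitly in its section on transference and says one ``miss[es] half a derivative'' by this route. Recovering that half derivative is the whole content of the paper, and it is \emph{not} done by a spatial decomposition in $|x|,|x'|$ versus $1/\lambda$. Instead the paper performs a dyadic decomposition of the \emph{joint spectrum} of $G$ and $iU=i\partial_u$ along the rays $\ray_{n,\epsilon}$, derives closed-form expressions for the ray-projection kernels $P_n$ from the Mehler generating function (rather than Plancherel--Rotach asymptotics), and then carries out the $L_1$ estimate via a nontrivial change of variables $(x,u,s)\mapsto(X,Y,s)$ whose Jacobian admits an explicit algebraic factorisation.

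Your diagnosis of the main obstacle is also off. Finite propagation already confines $x$ to a bounded interval once $|x'|\le C$ and $t$ is fixed, so there is no propagation ``into the elliptic region'' for the target variable. The genuine difficulty, emphasised repeatedly in the paper, is the source range $0<|x'|\lesssim 1$: there the wave front has a complicated asymmetric edge structure, and neither the $\H_1$ argument (clean only at $x'=0$, where the sphere is symmetric) nor the elliptic FIO argument applies. The strip hypothesis in the theorem is present only because the paper leaves the large-$|x'|$ elliptic analysis as a conjecture, not because of any transition in the target variable $x$.
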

Since the topological dimension is $d=2$, and hence $d-1=1$, this theorem is a localized
analogue for $G$ of the result by Müller and Stein for the sub-Laplacian on the
Heisenberg group.

We can restrict to $t=1$, since $G$ is homogenous with respect to the dilation
$\delta_r:(x,u)\mapsto (rx,r^2u),\ r>0$. Instead of
\mbox{$\cos(\sqrt{G})(1+G)^{-\alpha/2}$} and
\mbox{$\sin(\sqrt{G})G^{-1/2}(1+G)^{-\alpha/2}$} we study the operator
$\exp(i\sqrt{G})(1+G)^{-\alpha/2}$. The assertion for $\cos(\sqrt{G})(1+G)^{-\alpha/2}$
follows immediately.  For the operator $\sin(\sqrt{G})G^{-1/2}(1+G)^{-\alpha/2}$, we use
that it suffices to prove the assertion for
$\eta(G)\sin(t\sqrt{G})G^{-1/2}G^{-\alpha/2}$, where $\eta$ is a smooth function
supported away from the origin. Thus our theorem can be reduced to the following.

\begin{Theorem}
    \label{theorem}
    For every $C>0$, $1\leq p\leq \infty$, the operator
    \mbox{$\exp(i\sqrt{G})(1+G)^{-\alpha/2}$} extends to a bounded operator from $L_p(S_C)$ to $L_p(\R^2)$,
    provided \mbox{$\alpha>\n{1/p-1/2}$.}
\end{Theorem}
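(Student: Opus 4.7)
The plan is to embed $T_\alpha := \exp(i\sqrt{G})(1+G)^{-\alpha/2}$ into the analytic family $T_z := \exp(i\sqrt{G})(1+G)^{-z/2}$ and apply Stein's complex interpolation theorem. On the line $\mathrm{Re}(z)=0$, the operators $T_z$ are uniformly bounded on $L_2(\R^2)$ by the spectral theorem. It therefore suffices to establish, uniformly on a line $\mathrm{Re}(z)=\tfrac12+\eps$, the two endpoint bounds $T_z:L_1(S_C)\to L_1(\R^2)$ and $T_z:L_\infty(S_C)\to L_\infty(\R^2)$; interpolation then delivers the statement in the full range $\alpha>|1/p-1/2|$ for $1\le p\le\infty$. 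By the Hermitian symmetry of the Schwartz kernel $K_\alpha$ of $T_\alpha$, the two endpoint bounds reduce, via a Schur-type test, to controlling $\sup_{(y,v)\in S_C}\int_{\R^2}|K_\alpha|\,dx\,du$ and $\sup_{(x,u)\in\R^2}\int_{S_C}|K_\alpha|\,dy\,dv$ respectively.

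To obtain a workable representation of $K_\alpha$, I would diagonalize $G$ using Hermite functions: partial Fourier transform in $u$ turns $G$ into the Hermite operator $H_\lambda:=-\partial_x^2+\lambda^2x^2$ on each $\lambda$-fibre, with orthonormal eigenbasis $h_k^\lambda(x):=|\lambda|^{1/4}h_k(|\lambda|^{1/2}x)$ and eigenvalues $(2k+1)|\lambda|$. This yields
\begin{equation*}
K_\alpha(x,u;y,v) \;=\; \frac{1}{2\pi}\int_\R e^{i(u-v)\lambda}\sum_{k\ge 0}\frac{e^{i\sqrt{(2k+1)|\lambda|}}}{(1+(2k+1)|\lambda|)^{\alpha/2}}h_k^\lambda(x)h_k^\lambda(y)\,d\lambda,
\end{equation*}
which is exactly the image of the Müller--Stein wave kernel on $\H_1$ under the Schrödinger representation. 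Because $G$ is homogeneous of degree $2$ under $\delta_r:(x,u)\mapsto(rx,r^2u)$, a Littlewood--Paley decomposition adapted to $\sqrt{G}$ reduces matters to a single frequency scale: fixing $\chi\in C_0^\infty((1/2,2))$, it is enough to establish a uniform Schur-type $L^1$-control of the kernel $K_\alpha^{(1)}$ of $\chi(\sqrt{G})T_\alpha$, and to check that the resulting dyadic sum converges for $\alpha>1/2$.

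At unit frequency the analysis proceeds in two regions. I would express $\exp(i\sqrt{H_\lambda})$ via a subordination identity as a superposition of heat semigroups $\exp(-sH_\lambda)$, whose kernels are known explicitly through Mehler's formula; the resulting oscillatory integral in $(s,\lambda)$ is then attacked by stationary phase and repeated integration by parts. In the \emph{elliptic regime} $|x|\gtrsim 1$, $G$ behaves like an elliptic second-order operator and the classical FIO / Hadamard-parametrix techniques of Seeger--Sogge--Stein yield the critical index. In the \emph{sub-Riemannian regime} $|x|\lesssim 1$, waves associated to $G$ propagate as on $\H_1$ and one imports the Müller--Stein analysis directly, exploiting finite speed of propagation in the Carnot--Carathéodory metric. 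The hypothesis $(y,v)\in S_C$ is indispensable: it confines the slowly decaying Hermite-tail variable to a bounded range, which is what ultimately secures the finiteness of the Schur integrals.

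The principal obstacle I anticipate is the uniform gluing of the two regimes across the transition zone $|x|\sim 1$, where neither the elliptic nor the Heisenberg-style analysis applies cleanly, and the absence of translation invariance in $x$ rules out standard convolution techniques. Careful book-keeping of the dependence of all constants on $C$ and on the frequency scale---so that the dyadic sum converges precisely at $\alpha>|1/p-1/2|$ with no loss beyond the stated strict inequality---will be the technical heart of the argument.
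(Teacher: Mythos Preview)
Your reduction to $p=1$ via Stein interpolation and your Hermite/Grušin-fan representation of the kernel match the paper exactly, and the paper also invokes finite propagation speed to localize. The critical gap is your plan to ``import the Müller--Stein analysis directly'' in the sub-Riemannian regime $|x|\lesssim 1$. The paper shows explicitly (Section~\ref{transferencesection}) that transference from $\H_1$ yields only $\alpha>1$, not $\alpha>1/2$: integrating the Müller--Stein convolution kernel along the fibre of the representation $\pi$ costs half a derivative, and the resulting two-dimensional oscillatory integral is, in the authors' words, ``very complicated'' and not usable. Recovering that half derivative is the entire content of Chapters~3, 5 and~6. Your proposal contains no mechanism for doing so; the subordination/Mehler route you sketch is essentially what the paper does to compute the projection kernels $P_n$, but this by itself gives no improvement over transference.

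Two further issues. First, your reduction to a single frequency scale via the dilation $\delta_r$ is not free: $\delta_r$ maps $S_C$ to $S_{rC}$, so the strip constraint is not scale-invariant and the dyadic pieces cannot all be normalized to unit frequency while keeping $|y|\le C$. The paper instead performs a double dyadic decomposition in the joint spectrum of $(G,iU)$ and estimates each piece $K_{k,j}$ separately. Second, you locate the main difficulty at the transition zone $|x|\sim 1$, but the paper identifies the hardest case as $x'$ small but nonzero (Figures~3--4 versus Figure~2): at $x'=0$ the Müller--Stein arguments adapt with minor changes, while for $0<|x'|\lesssim 1$ the sphere $S_G(x',0)$ loses its symmetry and the explicit formulas for $Q_n$ acquire the factor $|1-e^{-2i\sigma}|^{-3/2}$. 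The paper's decisive technical input is a nontrivial change of variables $(x,u,s)\mapsto(X,Y,s)$ whose Jacobian happens to have the closed form $|Y|/(2\langle X\rangle^3\sqrt{|XY-x_1^2X^2|})$; without this (or an equivalent device) the $L_1$ integrals of the dyadic pieces cannot be summed at the sharp threshold.
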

By standard interpolation arguments, it suffices to show the case $p=1$. Since $G$ is not
translation invariant, $\exp(i\sqrt{G})(1+G)^{-\alpha/2}$ has no convolution kernel. To
prove the case $p=1$ we show that this operator has an integral kernel $K$ such that
$\exp(i\sqrt{G})(1+G)^{-\alpha/2}f(x,u)=\int K(x',u',x,u) \; f(x',u') \; d(x',u')$ for
every $f \in \S$ and that
\begin{equation}
    \label{l1abschatzung}
    \dnorm{K(x',u',\fa,\fa)}_{L_1(\R^2)}
\end{equation}
is uniformly bounded for $\n{x'}\leq C$ and $u'\in \R$.

As we have mentioned before, due to the lack of translation invariance, the behavior of a
wave highly depends on its starting point. For waves starting near the axis $x'=0$, we
use ideas of Müller and Stein and adapt them to our situation. For waves starting far
away form the axis $x'=0$, it should be possible to reduce by scaling arguments to the
results by Seeger, Sogge and Stein for elliptic operators. For this case we only present
the general idea and do not go into the details.

It turns out that the most crucial part, but also the most interesting part, of the proof
is the case when waves start near, but not exactly on the axis $x'=0$. For waves starting
at $x'=0$ the methods of Müller and Stein work very well, but as soon as the starting
point is a little bit away from $x'=0$ matters become a lot more difficult. A sketch of
the proof of Theorem \ref{theorem} will be given in Chapter 2.\absatz

Before we start to prove Theorem \ref{theorem}, we have to calculate the integral kernel
of $\exp(i\sqrt{G})(1+G)^{-\alpha/2}$ in a very explicit way.

Peter C. Greiner, David Holcman and Yakar Kannai derived in \cite{greiner} formulas for
the distribution kernel of $\sin(t\sqrt{G})G^{-1/2}$. We do not use their formulas for
two reasons. First, we are interested in the "smoothed" wave propagators
$\cos(t\sqrt{G})(1+G)^{-\alpha/2}$ and $\sin(t\sqrt{G})G^{-1/2}(1+G)^{-\alpha/2}$ resp.
$\exp(i\sqrt{G})(1+G)^{-\alpha/2}$ instead of $\sin(t\sqrt{G})G^{-1/2}$.

The second reason is that Greiner, Holcman and Kannai identify $\R^2$ with $\C$ and their
formulas involve contour integrals. This representation seems not to be explicitly enough
to show wave estimates, or, it is not clearly evident how to use them. Moreover, we do
not know how to get similar formulas for higher dimensional Gru\v{s}in operators by this
approach. Though we prove wave estimates only for the two dimensional case, our method of
calculating the integral kernel of $\exp(i\sqrt{G})(1+G)^{-\alpha/2}$ can also be used,
in principle, for the higher dimensional case.

We use a trick that was used beforehand by Müller and Stein for the sub-Laplacian $L$ on
$\H_m$. $G$ can be written as $(iU)(-iGU^{-1})$ and hence one can derive formulas for the
integral kernel of $\exp(i\sqrt{G})(1+G)^{-\alpha/2}$ by using the functional calculus of
$iU$ and the functional calculus of $-iGU^{-1}$. The calculus for $-iLU^{-1}$, $L$
instead of $G$, has been studied by Robert S. Strichartz in \cite{strichartz}. We adapt
his methods to our situation and calculate $m(-iGU^{-1})$, for a bounded Borel function
$m$. This gives us a representation of the integral kernel of
$\exp(i\sqrt{G})(1+G)^{-\alpha/2}$ that can be handled by oscillatory integral methods.

\subsection{Organization of this thesis}

In \chap 1, Section 1.1, we define the Gru\v{s}in operator and the sub-Laplacian $L$ on
the Heisenberg group $\H_m$.

The Gru\v{s}in operator is the image of $L$ under a certain representation of the
polarized Heisenberg group of dimension 3. Therefore, transference methods are
applicable. We show that for $G$ a weak multiplier theorem holds. This will be done in
Section 2.

The next section, Section 1.3, is devoted to the study of the underlying geometry of our
problem. We give explicit formulas for geodesics belonging to optimal control metric
associated to $G$. This, together with a result by Richard Melrose \cite{melrose}, allows
us to estimate the speed of propagation of our waves. At the end of this section, we
present figures of geodesics and balls belonging to the optimal control metric associated
to G, and a figure of the sphere in the optimal control metric associated to
$L$.\absatz\absatz

In \chap 2, we state our main theorem and a conjecture for higher dimensional Gru\v{s}in
operators. Moreover, we give a short sketch of the proof of Theorem
\ref{theorem}.\absatz\absatz

In \chap 3, we study the joint functional calculus for $iU=i\d{u}$ and $G$. Since $G$ can
be written as $(iU)(-iGU^{-1})$, we are especially interested in $m(-iGU^{-1})$, where
$m$ is a bounded Borel function.

The functional calculus for $-iLU^{-1}$, $L$ instead of $G$, has been studied by
Strichartz \cite{strichartz}. Strichartz derived explicit formulas for joint
eigenfunctions $\phi_{\lambda,n}$ associated to the eigenvalues $\lambda$ of $L$ and the
eigenvalues $\epsilon\lambda/(1+2n)$ of $iU$. He showed that the operators
$m(-iLU^{-1})$, $m$ a bounded Borel function, can be written as a sum over certain
generalized projection operators $\P^\H_{n,\epsilon}$ associated to rays of the
Heisenberg fan. Formally $\P^\H_{n,\epsilon}$ is the convolution operator with kernel
$\int \phi_{\lambda,n} \; d\lambda$. We adapt these ideas to our situation. Since $G$ is
not translation invariant, the corresponding projection operators $\P_{n,\epsilon}$ are
no longer convolution operators. But they are still $L_2$-bounded singular integral
operators. For a bounded Borel function $m$, we get the spectral decomposition
$m(-iGU^{-1})f=\sum_{\epsilon=\pm 1} \sum_{n=0}^\infty
m(\epsilon(2n+1))\;\P_{n,\epsilon}(f)$.\absatz\absatz

In \chap 4, we show that it suffices to prove the theorem for \mbox{$p=1$.}
Let $K$ denote the integral kernel of $\exp(i\sqrt{G})(1+G)^{-\alpha/2}$. To prove the
theorem, we show that
\begin{equation}
    \label{overview1}
  \dnorm{K(x',u',\fa,\fa)}_{L^1(\R^2)}
\end{equation}
is uniformly bounded for $\n{x'}\leq C$ and $u'\in \R$. Since $G$ is translation
invariant with respect to $u$, we only have to consider the case $u'=0$. We formally show
how one can use scaling arguments and the result by Seeger, Sogge and Stein for elliptic
operators, to proof that \eqref{overview1} is also true for large $x'$.

Furthermore, instead of estimating $\exp(i\sqrt{G})(1+G)^{-\alpha/2}$, we are allowed to
estimate $h^\alpha(G):=\eta(G)\ G^{-\alpha/2} \ \exp(i\sqrt{G})$, where $\eta$ is a
smooth function supported away from the origin. In addition we show, due to the finite
speed of propagation of our waves, that it suffices to show that the integral kernel
$K_{h^\alpha(G)}(x',0,\fa,\fa)$ of $h^\alpha(G)$ is in $L_1(B_{G}((x',0),C)$, where
$B_{G}((x',0),C)$ is a ball with respect to the optimal control metric associated to $G$
centered in $(x',0)$ with radius $C$ and $C$ a constant.\absatz\absatz

In \chap 5, we use a dyadic decomposition of the joint spectrum of $G$ and $iU$ to
decompose the integral kernel $K_{h^{\alpha}(G)}$ in dyadic parts $K_{k,j}$ . The proof
of the theorem is then reduced to showing that $\sup_{\n{x'}\lesssim 1}
\dnorm{K_{k,j}(x',0,\fa,\fa)}_{L_1(B_{G}((x',0),C))}$ is summable in $j$ and $k$. We
derive explicit formulas for these dyadic parts by using the projection operators
$\P_{n,\epsilon}$ we have defined in Chapter 3. Furthermore, we introduce new
coordinates.\absatz\absatz

In \chap 6, we show the desired $L_1$-estimates for the integral kernels $K_{k,j}$.

\subsection{Acknowledgements}

Foremost, I would like to express my gratitude to my advisor Professor Dr. Detlef Müller
for constant support and numberless helpful suggestions.

I am grateful for the financial support of the Graduiertenkolleg 357 "Effiziente
Algorithmen und Mehrskalenmethoden" of the Deutsche Forschungsgemeinschaft (DFG).

Moreover, I owe thanks to my officemates Martin Hemke, Jörn Peter and Heike Siebert for
encouragement. Last but not least I would like to thank my parents, my brother and
friends for having been there, whenever I needed them.

\newpage
\Section{Preliminaries}

\subsection{Remarks about integral kernels}

We know that if $A:L_p(\R^2)\to L_q(\R^2)$, $1\leq p,q \leq \infty$ is a translation
invariant operator, then there exists a distribution $u\in \S'(\R^2)$ with
$Af=\conv{f}{u}$ for all $f \in \S(\R^2)$. Furthermore, if $u$ is in $L_1(\R^2)$ then the
operator $\S\to C^\infty,\ f\mapsto \conv{f}{u}$ extends to a bounded operator on $L_1$.
(In fact, we know that this operator extends to a bounded operator on $L_1$ if and only
if $u$ is a finite Borel measure.)

Since the operator of interest $G=-(\d{x}^2+x^2\d{u}^2)$ is not translation invariant,
most of the operators we study in this work do not have a convolution kernel. Though by
the Schwartz kernel theorem, we know that for every continuous linear map
$A:C_0^\infty(\R^2)\to D'(\R^2)$ there exists a distribution $K$ such that
\begin{equation}
    <A\phi,\psi>=K(\psi \otimes \phi),
\end{equation}
where $\psi \otimes \phi$ denotes the tensor product of $\psi$ and $\phi$. We designate
$K$ as the \emph{distribution kernel} of $A$.

If $K$ is a
measurable function such that for every $f \in \S$
\[Af(x)=\int K(x',x) \; f(x') \; dx',\]
we say that $K$ is the \emph{integral kernel} of $A$. If we consider more than one
operator, we usually denote the integral kernel of an operator $A$ by $K_A$. We also
write
\[K_A(x',\fa)=A\delta_{x'},\] where $\delta_{x'}$ denotes the Dirac measure at the point
$x'$.

Given such an $A$ and $K$ we assume now that
\begin{equation*}
    \sup_{x'} \int \n{K(x',x)} \; dx \quad \mbox{ and } \quad \sup_{x} \int \n{K(x',x)} \;dx'
\end{equation*}
are bounded. By Schur's test, $A$ is bounded on $L_p$ for $1\leq p \leq \infty$. For the
$L_1$-boundedness, we only need that $\sup_{x'} \int \n{K(x',x)} \; dx$ is bounded.

\begin{definition}
    Let $A$ be an operator with measurable integral kernel $K(x',x)$ such that
    \begin{equation}
        A(f)(x)=\int K(x',x) \; f(x') \; dx',
    \end{equation}
    for all $f\in \S$.
    We define the \emph{Schur norm} of $K$, denoted by $\dnorm{K}_{Schur}$, by
    \begin{equation*}
        \dnorm{K}_{Schur}:=\sup_{x'} \int \n{K(x',x)}\; dx.
    \end{equation*}\index{schurnorm}
\end{definition}
In the following, we only use the phrases "$K$ has bounded Schur norm" or "the kernels
$K_t$ have bounded Schur norms, uniformly for $t\in I$" if $(K_t)_t$ is a family of
kernels and there exists a constant $C$ such that $\dnorm{K_t}_{Schur}\leq C$ for all $t$
in an interval $I$.

Thus an operator $A$ with integral kernel $K$ such that $K$ has bounded Schur norm is
bounded on $L_1$.

\subsection{The Gru\v{s}in operator and the sub-Laplacian on $\H_m$}
\label{grushinsublaplacesection}

\subsubsection*{The Gru\v{s}in operator}
Let $n \in \N$. We define the \emph{Gru\v{s}in operator} $G_n$ on $\R^{n+1}$ by

\begin{equation*}
    G_n:=-(\Delta_x+\norm{x}^2\d{u}^2)
\end{equation*}
$G_n$ is positive and essentially selfadjoint on $\Cnu(\R^{n+1})$. Though this operator
is not elliptic for $x=0$, it is still a hypoelliptic operator, since it fulfills the
Hörmander condition. $G$ is one of the easiest Hörmander type operators.

V.~V.~Gru\v{s}in studied in 1970 (see \cite{grushin}) a class of operators that is not
contained in the class of Hörmander type operators. He gave sufficient and necessary
condition for operators in this class to be hypoelliptic. The Gru\v{s}in operator is a
prototype of these operators.


This work will be restricted to the case $n=1$. Therefore, we define
\[G:=G_1.\]\index{G} For every $r>0$, we define the dilation $\delta_r$ on
$\R^2$ by \begin{equation}
    \delta_r(x,u):=(rx,r^2u). \index{automorphicdilation}
    \end{equation}
Then for every suitable $f$ and $r>0$,
\begin{equation*}
    G(f\nach\delta_r)(x,u)=r^2(Gf)\nach \delta_r(x,u)
\end{equation*}
holds. Hence $G$ is homogenous of degree $2$ with respect to $\delta_r$.

\subsubsection*{The Heisenberg group and the sub-Laplacian}

Let $\H_m$ \index{Hm} denote the Heisenberg group, which is $\R^{2m} \times \R$ endowed
with the group law
\begin{equation*}
    (x,y,u) \fa (x',y',u'):=\big(x+x',y+y',u+u'+\ed{2}\omega((x,y),(x',y'))\big)
\end{equation*}
for $x,y,x',y' \in \R^n,\ u,u' \in \R$, where $\omega$ is the canonical symplectic form
\begin{equation*}
    \omega((x,y),(x',y')):=x\fa y'-x'\fa y,\quad x,y,x',y' \in \R^n
\end{equation*}
on $\R^{2n}$. $\H_m$ is a connected, simply connected and nilpotent Lie group.

The Lebesgue measure on the Euclidean space $\R^{2n+1}$ is a bi-invariant Haar measure on
$\H_m$. The convolution of two functions $f,g \in L_1(\H_m)$ is defined by
\begin{equation*}
    (\conv{f}{g})(x,y,u)=\int f(x',y',u') \; g((x',y',u')^{-1}(x,y,u)) \; d(x',y',u').
\end{equation*}
The  dilation $\delta_r^\H:(x,y,u)\mapsto (rx,ry,r^2u)$ is an automorphism of $\H_m$ for
every $r>0$. The vector fields
\begin{equation*}
    X_j:=\d{x_j}-\ed{2}y_j \d{u}, \quad Y_j:=\d{y_j}+\ed{2} x_j \d{u}, \quad U:=\d{u}
\end{equation*}
form a natural basis for the Lie algebra $\lieh_m$ of left-invariant vector fields with
commutator relations
\begin{equation*}
    \begin{split}
    [X_j,Y_k]&=\delta_{k,j}U_j,\\
    [X_j,X_k]&=[Y_j,Y_k]=[X_j,U]=[Y_j,U]=0.
    \end{split}
\end{equation*}
We define now the \emph{sub-Laplacian}\index{L} on $\H_m$ by
\begin{equation}
    L:=-\sum_{j=1}^m (X_j^2+Y_j^2).
\end{equation}
Explicitly $L$ is given by
\begin{equation*}
    L=-\Delta_{x,y}+x\fa \d{y}-y\fa\d{x}+\frac{1}{4}\n{(x,y)}^2\d{u}^2.
\end{equation*}
$L$ is positive, essentially selfadjoint and homogenous with respect to $\delta^\H_r$.
Moreover, $L$ is non-elliptic, but hypoelliptic, since the Hörmander condition is
fulfilled.\absatz

Another common way of writing $\H_m$ is as a set of matrices
\[\menge{A(p,q,v)}{p,\ q \in \R^n,\ v \in \R}\]
with
\begin{equation*}
    A(p,q,v):=\left(\begin{array}{llcll}
    1 & p_1 & \dots & p_n & v\\
      & 1   &       & 0   & q_1\\
      &     & \ddots &    & \vdots\\
      &  0  &        & 1  & q_n\\
      &     &        &    & 1\\
    \end{array}\right)
\end{equation*}
and endowed with the usual matrix product. By identifying $A(p,q,v)$ with $(p,q,v)$, we
get a group on $\R^{2n}\times \R$ with group law
\begin{equation*}
    (p,q,v)\fa (\s{p},\s{q},\s{v})=(p+p',q+q',v+v'+p\fa q').
\end{equation*}
This group is called the polarized Heisenberg group and we denote it by
$\t{\H}_m$\index{polarizedheisenberg}. $\H_m$ is isomorphic to $\t{\H}_m$ by
\begin{equation*}
  \Psi:\H_1 \to \t{\H_1},\; (x,y,u)\mapsto(p,q,v)=(x,y,u+x\fa y/2)
\end{equation*}
with inverse transformation
\begin{equation*}
  \Psi^{-1}:\t{\H_1} \to \H_1,\; (p,q,v)\mapsto(x,y,u)=(p,q,v-p\fa q/2).
\end{equation*}
A basis of the Lie algebra is given by
\begin{equation*}
    P_j:=\d{p_j},\quad
    Q_j:=\d{q_j}+p_j\d{v}, \quad
    V:=\d{v}
\end{equation*}
and the sub-Laplacian on this group is
\[\t{L}:=-\sum_{j=1}^m(\d{p_j}^2+(\d{q_j}+p_j\d{v})²).\]

For more information about the Heisenberg group we refer to Stein \cite{steinharmonic}
and Taylor \cite{taylor}.

\subsubsection*{$L^p$-estimates for the wave equation on the Heisenberg group}

In analogy to the Cauchy problem for the wave equation on Euclidean space, we consider
the following Cauchy problem on $\H_m \times \R$.
\begin{equation*}
    \frac{\partial^2v}{\partial t^2}+L v =0,\quad
    v \rvert_{t=0}=f,\quad \frac{\partial v}{\partial t}\rvert_{t=0}=g.
\end{equation*}
Since the work of Müller and Stein about $L_p$-estimates for solutions of this Cauchy
problem is the starting point of this work, we want to state its main result once again.

D.~Müller and E.~M.~Stein established in \cite{mueller} the following estimates.
\begin{theorem}{\bf(D.~Müller, E.~M.~Stein (1999))}
    Let $d:=2m+1$ denote the topological dimension of $\H_m$. For every $t>0$, $1\leq p \leq \infty$ and
    \mbox{$\alpha>(d-1)\norm{1/p-1/2}$} there exists a
    constant $C_{p,t}^\alpha$ such that for all $f$ and $g$ in $\S$ the estimates
    \begin{equation*}
        \Bigdnorm{\frac{\cos(t \sqrt{L})}{(1+L)^{\alpha/2}}f}_{L_p(\H_m)} \leq C^\alpha_{p,t}
        \dnorm{f}_{L_p(\H_m)},
    \end{equation*}
    and
    \begin{equation*}
        \Bigdnorm{\frac{\sin(t\sqrt{L})}{\sqrt{L}(1+L)^{(\alpha-1)/2}}g}_{L_p(\H_m)} \leq
        C^\alpha_{p,t}
        \dnorm{g}_{L_p(\H_m)},
    \end{equation*}
    hold.
\end{theorem}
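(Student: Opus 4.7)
The plan is to mimic the Euclidean strategy, but adapted to the left-invariant, non-elliptic setting of $\H_m$, exploiting the fact that $L$ is a convolution operator and that it commutes with $U=\partial_u$. First I would reduce the two stated inequalities to a single statement about the operator
\[
T^\alpha := \exp(i\sqrt{L})\,(1+L)^{-\alpha/2}.
\]
The scaling $\delta^\H_r$ together with the homogeneity of $L$ of degree $2$ lets us fix $t=1$; a spectrally supported cutoff away from $\lambda=0$ handles the division by $\sqrt L$ in the $\sin$-operator (the low-frequency part is harmless). By the trivial $L^2$ bound (spectral theorem) and complex interpolation, it is then enough to prove that $T^\alpha$ is bounded on $L^1(\H_m)$ for every $\alpha>(d-1)/2$. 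Since $L$ is left-invariant, $T^\alpha f = f * K^\alpha$ with $K^\alpha$ a tempered distribution, and the required bound is equivalent to $K^\alpha\in L^1(\H_m)$.

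Next I would decompose dyadically in the spectrum of $L$. Pick $\eta_0\in C_c^\infty(\R)$ supported near $0$ and $\eta\in C_c^\infty((1/2,2))$ with $\eta_0(\lambda)+\sum_{j\ge 1}\eta(2^{-j}\lambda)=1$. Writing $K^\alpha = K^\alpha_0 + \sum_{j\ge 1} K^\alpha_j$ with
\[
K^\alpha_j = \eta(2^{-j}L)\,\exp(i\sqrt{L})(1+L)^{-\alpha/2}\delta_0,
\]
it suffices to show $\|K^\alpha_j\|_{L^1(\H_m)}\lesssim 2^{-j\varepsilon}$ for some $\varepsilon>0$ depending on $\alpha-(d-1)/2$. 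On the spectral piece $\lambda\sim 2^j$, a rescaling by $\delta^\H_{2^{-j/2}}$ reduces matters to estimating kernels of operators of the form $m_j(L)e^{i\sqrt L}$ with $m_j$ a smooth bump near $1$, but now on a ball of Heisenberg-radius $\sim 1$; by finite speed of propagation (Melrose-type estimates for sub-Laplacians) the kernel of $e^{i\sqrt L}$ is supported in the unit Carnot--Carath\'eodory ball, so after scaling back we may integrate $|K^\alpha_j|$ over a ball of measure $\sim 2^{-jd/2}$ in $\H_m$.

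The crucial ingredient is to make the kernel on this ball explicit enough to get a Cauchy--Schwarz/Plancherel-type bound with the correct power of $2^j$. Here I would use the joint functional calculus for the commuting pair $(L,iU)$: on the representation side, the spectrum of $L$ lives on the Heisenberg fan $\lambda=(2n+m)|\tau|$, $n\in\N_0$, with $\tau$ the dual variable of $u$, and the spectral projectors are given in terms of Laguerre functions via Mehler's formula. This yields an oscillatory-integral representation
\[
K^\alpha_j(x,y,u) = \sum_{n}\int e^{iu\tau}\, \chi_j\bigl((2n+m)|\tau|\bigr)\, e^{i\sqrt{(2n+m)|\tau|}}\,\frac{\varphi^\tau_n(x,y)}{(1+(2n+m)|\tau|)^{\alpha/2}}\,d\tau,
\]
with $\varphi^\tau_n$ the rescaled Laguerre eigenfunctions. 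A combination of Plancherel in $\tau$, Laguerre $L^2$-orthogonality in $n$, and volume estimates then gives
\[
\|K^\alpha_j\|_{L^1(\textit{ball of volume }2^{-jd/2})}\lesssim 2^{-jd/2+j\alpha/2}\cdot 2^{j(d-1)/4}\cdot 2^{j(d-1)/4} = 2^{-j(\alpha-(d-1)/2)},
\]
which is summable precisely when $\alpha>(d-1)/2$.

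The main obstacle is not the reduction steps but the last one: controlling the oscillatory sum/integral for $K^\alpha_j$ on the cone $|u|\le 1$, where the Laguerre functions concentrate and the phase $\sqrt{(2n+m)|\tau|}$ has a square-root singularity. One has to split the sum into the range $n\le N(\tau)$ where Laguerre asymptotics give an Airy-like behaviour and the range $n\ge N(\tau)$ of exponential decay, perform a stationary-phase analysis in $\tau$ (taking careful account of the square-root phase), and finally reinsert the Heisenberg-specific anisotropic volume to balance $L^2$ information against the $L^1$ target. This step—showing that the $L^2$-gain from Plancherel exactly compensates the anisotropic volume loss up to the critical exponent $(d-1)/2$—is the heart of the Müller--Stein argument and the place where the Heisenberg structure (as opposed to a purely Euclidean scaling) genuinely enters.
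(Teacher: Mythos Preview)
The paper does not itself prove this theorem; it is quoted from M\"uller--Stein \cite{mueller}, and the paper only records the standard reductions (interpolation and $\delta^\H_r$-homogeneity reduce to $p=1$, $t=1$; then one must show that the convolution kernel of $\exp(i\sqrt L)(1+L)^{-\alpha/2}$ lies in $L^1(\H_m)$). Your reduction steps match this and are correct.

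The analytical part, however, has a genuine gap. The displayed arithmetic is wrong (the exponents do not combine to $-j(\alpha-(d-1)/2)$, and the sign on $\alpha$ is backwards since the amplitude is $\sim 2^{-j\alpha/2}$), but the deeper problem is the volume claim. Finite propagation speed only confines $K^\alpha_j$ to the unit Carnot--Carath\'eodory ball, of measure $\sim 1$, and on $\H_m$ the wave front set is \emph{not} a smooth hypersurface with a thin $2^{-j/2}$-annulus around it as in the Euclidean case; it is the set in Figure~7 of Section~\ref{carnot}, with infinitely many edges accumulating at the pole. A bare Cauchy--Schwarz/Plancherel bound against that support therefore gives only $\alpha>d/2$, half a derivative short of the claim. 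The actual M\"uller--Stein argument---whose structure this paper mirrors for the Gru\v{s}in operator in Chapters~5--6---does not use a volume estimate at all: it takes a \emph{double} dyadic decomposition in the joint spectrum of $(L,iU)$ (indices $(k,j)$ with $\lambda\sim 2^{2k-j}$, $2n+m\sim 2^j$), writes each piece explicitly via the Strichartz projectors $P^\H_n$ coming from Mehler's formula, applies stationary phase in $\lambda$ to produce the phase $e^{i(m+n)/s}$, and then gains the missing half-derivative by direct partial integration in $s$ against the resulting one-dimensional oscillatory integrand. Your single dyadic cut in $L$ and the $L^2$-against-volume endpoint cannot see this oscillation, which is precisely where the critical exponent $(d-1)/2$ comes from.
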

By a standard interpolation argument and since $L$ is homogeneous with respect to
$\delta_r^\H$, it suffices to prove the theorem for $p=1$ and $t=1$. In fact, Müller and
Stein showed that the operator $\exp(i\sqrt{L})(1+L)^{-\alpha/2}$ extends to a bounded
operator on $L_1(\H_m)$, when $\alpha>m$. For this purpose, they showed that the
corresponding convolution kernel belongs to $L_1(\H_m)$.

The proof of this theorem is strongly involved in the proof of our \mbox{Theorem
\ref{theorem}}. One reason for this is that $G$ is the image of $L$ under a certain
representation of the polarized Heisenberg group $\t{\H}_1$.

\subsection{Transference}
\label{transferencesection}

In this section, we denote the polarized Heisenberg group $\t{\H}_1$ by $\cal{G}$ with
elements $g\in \cal{G}$, $g=(p,q,v)$. The Lie algebra of $\cal{G}$ we denote by $\lieg$.
Define
\begin{equation}
    P:=\d{p},\quad Q:=\d{q}+p\d{v},\quad V:=\d{v}.
\end{equation}
The sub-Laplacian is now given by $L=-(P^2+Q^2)$.

Let $\lieh$ denote the smallest subalgebra of $\lieg$ with $Q \in \lieh$. With
$\exp_G:\lieg \to \cal{G}$ denoting the exponential function, we define now
\[\cal{H}:=\exp(\lieh) \teil \cal{G}.\]
For every $f:\cal{G} \to \C$ we put
\begin{equation*}
    \dnorm{f}_\cH:=\left(\int \norm{f(p,0,v)}^2 \; dp \; dv\right)^{1/2}
\end{equation*}
and we define
\begin{equation*}
    \cH:=\menge{f}{\dnorm{f}_\cH < \infty \mbox{ and } f(hg)=f(g) \; \forall \ (g,h) \in
    \cal{G} \times \cal{H}}.
\end{equation*}
$\cH$ is a Hilbert space with norm $\dnorm{\fa}_{\cH}$. We denote the set of unitary
operators on $\cH$ by $\U(\cH)$.
\begin{equation*}
    \begin{split}
    \pi:\cal{G} &\to \U(\cH),\\
    [\pi(g)f](h)&:=f(hg)
    \end{split}
\end{equation*}
defines an unitary representation $\pi$ of $\cal{G}$ with representation space $\cH$. We
denote the associative algebra of left-invariant differential operators with
$C^\infty$-coefficients by $\cD_\l(\cal{G})$. $d\pi$ denotes the representation of
$\cD_\l(\cal{G})$ derived from $\pi$. Then
\begin{equation*}
    d\pi(P)=\d{p},\quad d\pi(Q)=p\; \d{v},
\end{equation*}
and hence
\begin{equation}
    d\pi(L)=-(\d{p}^2+p^2\d{v}^2).
\end{equation}
Thus the Gru\v{s}in operator $G$ is the image of $L$ under $d\pi$.

For $m \in C_{\infty}(\R^+)$, the operator $m(L)$, defined by the functional calculus for
$L$, is contained in the $C^*$-algebra $C^*(\cal{G})$. We denote the $*$-representation
of $C^*(\cal{G})$ which corresponds to $\pi$ again by $\pi$.
If $m\in \S$, then we know by a result of Hulanicki \cite{hulanicki} that
\[m(L)f=\conv{f}{M},\]
with $M\in \S(G)$. Now, let $m \in C_{\infty}(\R^+)$. If we assume that $m(L)$ is given
by $m(L)f=\conv{f}{M}$ with $M \in L_1(G)$, we have
\[
    \pi(m(L))=\int_{\cal{G}} M(x) \; \pi(x) \; dx.\]
Furthermore, we have that the functional calculus commutes with the representation $\pi$,
i.e.
\begin{equation*}
    \pi(m(L))=m(d\pi(L)).
\end{equation*}
For a proof see e.g. Proposition 1.1 in \cite{muellerrestriktion}.

These facts allow us to compute the integral kernel $M_G$ of $m(G)$ by using the
convolution kernel $M_L$ of $m(L)$, provided $M_L$ is in $L_1(\cal{G})$.
\begin{satz}
    \label{transferproposition}
  Let $m \in C_{\infty}(\R^+)$ such that the operator $m(L)$ has a convolution kernel $M_L\in L_1(\cal{G})$.
  Then for every $f \in \S(\R^2)$
    \begin{equation*}
        [m(G)f](p,v)=\int M_L(p'-p,q',v'-v-pq') \; dq' \;
    f(p',v') \; dp' \; dv'
    \end{equation*}
    holds.
\end{satz}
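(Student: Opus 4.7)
The plan is to exploit the two facts recalled just before the statement: first, that $\pi(m(L)) = m(d\pi(L)) = m(G)$ (functional calculus commutes with $\pi$), and second, that whenever $m(L)$ acts by right convolution with an $L_1$-kernel $M_L$, one has the integrated form $\pi(m(L)) = \int_{\cal{G}} M_L(g)\,\pi(g)\,dg$. Once these are combined, the only real content is an explicit computation of the action of $\pi(g)$ after a concrete identification of $\cH$ with $L_2(\R^2)$.

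First I would identify the subgroup $\cal{H}$. Since $\lieh$ is the smallest subalgebra containing $Q$ and $Q$ generates an abelian line, $\lieh = \R Q$. In the polarized coordinates $(p,q,v)$ we have $\exp(tQ) = (0,t,0)$, so $\cal{H} = \{(0,q,0) : q \in \R\}$. The left-action $(0,q_0,0)\cdot(p,q,v) = (p,q_0+q,v)$ shows that $\cal{H}$-invariance from the left is exactly independence of the $q$-variable. Hence the map
\begin{equation*}
   \Phi: L_2(\R^2) \to \cH, \qquad (\Phi F)(p,q,v) := F(p,v),
\end{equation*}
is a unitary isomorphism that matches the norm $\|\cdot\|_\cH$ as defined.

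Next I would transport $\pi$ through $\Phi$. For $g=(p',q',v')$ and a representative $(p,0,v)$ of a coset, one computes $(p,0,v)\cdot g = (p+p',\,q',\,v+v'+pq')$, so using left $\cH$-invariance to suppress the middle entry,
\begin{equation*}
   [\Phi^{-1}\pi(g)\Phi F](p,v) = F(p+p',\,v+v'+pq').
\end{equation*}
Substituting this formula into $\pi(m(L)) = \int M_L(g)\,\pi(g)\,dg$ and applying both sides to $F$ gives
\begin{equation*}
   [m(G)F](p,v) = \int M_L(p',q',v')\,F(p+p',\,v+v'+pq')\,dp'\,dq'\,dv'.
\end{equation*}
The change of variables $p' \mapsto p'-p$, $v' \mapsto v'-v-pq'$ (whose Jacobian is $1$) yields precisely the claimed formula, after renaming dummy variables. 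The identification $\pi(m(L)) = m(G)$ on the $L_2$ side is legitimate because $d\pi(L) = -(\d{p}^2+p^2\d{v}^2) = G$ as already recorded.

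The one step that warrants care, rather than being a serious obstacle, is the justification of the integrated formula $\pi(m(L)) = \int M_L(g)\pi(g)\,dg$ together with Fubini for the subsequent change of variables: since $M_L \in L_1(\cal{G})$ and $\pi$ is unitary, the vector-valued integral converges in operator norm, and for $F \in \S(\R^2)$ the scalar integrand is absolutely integrable, so Fubini applies and the manipulations above are rigorous. Everything else is a bookkeeping calculation in the polarized Heisenberg coordinates.
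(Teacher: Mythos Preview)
Your proposal is correct and follows essentially the same approach as the paper: both use $\pi(m(L))=\int M_L(g)\pi(g)\,dg$ together with the identification $m(G)=\pi(m(L))$, compute the right-translation action on $\cH$-invariant functions via the polarized group law, and then perform the same affine change of variables. The only cosmetic difference is that the paper substitutes $gx=(p',q',v')$ directly inside the integral $\int M_L(x)\tilde f(gx)\,dx$, whereas you first write out the transported representation on $L_2(\R^2)$ and then change variables---the algebraic content is identical.
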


\begin{proof}
   Let $\t{f} \in \cH\cap \S,\ g:=(p,q,v)\in \cal{G}$. Then
   \begin{equation*}
       \begin{split}
       [\pi(m(L))&\t{f}](g)=\int M_L(x) \; \t{f}(gx) \; dx\\
       &=\int M_L ((p,q,v)^{-1}(p',q',v')) \; dq' \; \t{f}(p',0,v') \; dp' \; dv'\\
       &=\int M_L(p'-p,q'-q,v'-v+p(q-q')) \; dq' \; \t{f}(p',0,v') \; dp' \; dv'\\
       &=\int M_L(p'-p,q',v'-v-pq') \; dq' \; \t{f}(p',0,v') \; dp' \; dv'.
       \end{split}
   \end{equation*}
   Since $m(G)=m(\pi(L))=\pi(m(L))$, we finally get for $f\in \S(\R^2)$
   \begin{equation*}
    [m(G)f](p,v)=\int M_L(p'-p,q',v'-v-pq') \; dq' \; f(p',v') \; dp' \; dv'.
   \end{equation*}
\end{proof}
By this observation and the Fubini theorem, we deduce the following corollary.
\begin{korollar}
    \label{transfer}
    Let $m \in C_{\infty}(\R_{\geq0})$ such that the operator $m(L)$ has a convolution kernel $M_L\in L_1(\cal{G})$.
    Then $m(G)$ is bounded on $L_1(\R^2)$ and has an integral kernel $M_G$ with bounded Schur
    norm. Furthermore, $\dnorm{M_G}_{Schur}=\dnorm{M_L}_{L_1}$.
\end{korollar}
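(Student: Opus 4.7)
The plan is to read off the integral kernel of $m(G)$ directly from Proposition \ref{transferproposition} and then estimate its Schur norm by Fubini's theorem and an affine change of variables. Setting
$$
  M_G(p',v',p,v) := \int M_L(p'-p,q',v'-v-pq') \, dq',
$$
Proposition \ref{transferproposition} identifies $M_G$ as the integral kernel of $m(G)$ in the sense of the definition from Section 1.1, so what remains is to control $\dnorm{M_G}_{Schur}$.

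To do this I first apply the triangle inequality under the $dq'$-integral, obtaining $|M_G(p',v',p,v)| \leq \int |M_L(p'-p,q',v'-v-pq')| \, dq'$. Since $M_L \in L_1(\cal{G})$, Fubini's theorem allows me to integrate this bound over $(p,v)$ and interchange the order of integration. For fixed $(p',q')$ the substitution $\tilde p := p'-p$ converts the $dp$-integral (Jacobian $\pm 1$); then for fixed $(p,q',p',v')$ the substitution $\tilde v := v'-v-pq'$ converts the $dv$-integral (Jacobian $\pm 1$). After these substitutions the right-hand side becomes $\int |M_L(\tilde p,q',\tilde v)| \, d\tilde p \, dq' \, d\tilde v = \dnorm{M_L}_{L_1}$, independently of $(p',v')$. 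Taking the supremum over $(p',v')$ yields $\dnorm{M_G}_{Schur} \leq \dnorm{M_L}_{L_1}$, and Schur's test, as recalled in Section 1.1, then gives $L_1$-boundedness of $m(G)$ with the claimed bound.

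For equality in the Schur norm, the cleanest route is to observe that when $M_L \geq 0$ both applications of the triangle inequality above collapse to equalities, so the same computation yields $\dnorm{M_G}_{Schur} = \dnorm{M_L}_{L_1}$ directly; in the general complex case one can argue by an approximation procedure or by comparing $L_1$-operator norms through the transference identity $\pi(m(L)) = m(G)$ established earlier in this section. I expect no serious obstacle here: the argument is essentially a direct calculation. The only real pitfall is bookkeeping the inverse in the polarized Heisenberg group, $(p,q,v)^{-1}(p',q',v') = (p'-p,q'-q,v'-v+p(q-q'))$, so that the argument of $M_L$ inside the definition of $M_G$ matches what appears in Proposition \ref{transferproposition}; once this is in place, the rest is a routine application of Fubini and translation-invariance of Lebesgue measure.
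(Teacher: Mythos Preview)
Your proposal is correct and matches the paper's approach exactly: the paper's entire proof is the sentence ``By this observation and the Fubini theorem, we deduce the following corollary,'' and you have simply written out what that means---read off $M_G$ from Proposition~\ref{transferproposition}, bound its Schur norm by the triangle inequality, Fubini, and an affine change of variables. Your caution about the equality $\dnorm{M_G}_{Schur}=\dnorm{M_L}_{L_1}$ is well placed: the Fubini argument only yields $\leq$ in general (with equality when $M_L\geq 0$), and in fact only the inequality is ever used later in the paper (see the proof of Proposition~\ref{reducmultipliertheorem}).
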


\begin{bemerkung}
    In the coordinates $(x,y,u)$ of the Heisenberg group $\H_1$, and with \mbox{$L=-(X^2+Y^2)$} and
    $G=-(\d{x}^2+x^2\d{u}^2)$,
    we get
    \begin{equation*}
        \label{transferformel}
        [m(G)f](x,u)=\int M_L(x'-x,y',u'-u-(x+x')y'/2) \; dy' \; f(x',u') \; dx' \; du'.
    \end{equation*}
\end{bemerkung}
From the wave estimates for the Heisenberg group by Müller and Stein (see the last part
of Section \ref{grushinsublaplacesection}), we know that the convolution kernel
$M^{\alpha}_L$ of the operator $m^{\alpha}(L):=\exp(i\sqrt{L})(1+L)^{-\alpha/2}$ lies in
$L_1(\H_1)$, for $\alpha>(d-1)/2$ and with $d=3$ the topological dimension of $\H_1$.
Hence we obtain that the operator $\exp(i\sqrt{G})(1+G)^{-\alpha/2}$ is bounded in
$L_1(\R^2)$ for $\alpha>1$.

Comparing this result with our result in Theorem \ref{theorem}, for $p=1$, we see that by
this approach we miss half a derivative. One can show that the $q'$-integral
\[\int M^{\alpha}_L(p'-p,q',v'-v-pq') \; dq'\]
is an oscillatory integral and hence one can hope to get this missing half a derivative
by using the method of stationary phase. Müller and Stein derived an explicit formula for
$M^{\alpha}_L$ as a one dimensional oscillatory integral. So, by using Proposition
\ref{transferproposition} we end up with a two dimensional oscillatory integral.
Unfortunately, this integral turned out to be very complicated. Hence we do not use this
representation of the integral kernel of $\exp(i\sqrt{G})(1+G)^{-\alpha/2}$ for the proof
of Theorem \ref{theorem}.

Nevertheless, Corollary \ref{transfer} is very useful to show that a weak multiplier
theorem for $G$ holds. We use this multiplier theorem in the proof of Theorem
\ref{theorem}.

\subsubsection*{A weak multiplier theorem for $G$}

\begin{satz}{\bf(Multipliers for $G$)}
    \label{reducmultipliertheorem}
    Suppose $\psi \in C^{k}(\R^+)$, with k assumed to be sufficiently large. If $\psi$ satisfies
    the inequalities
    \[
        \left\{ \begin{array}{l@{\quad , \quad}l}
        \n{\xi^{\l} \; \d{\xi}^{\l} \psi(\xi)} \lesssim \xi^{1/2} &\mbox{ for all } 0<\xi \leq 1,\\
        \n{\xi^{\l} \; \d{\xi}^{\l} \psi(\xi)} \lesssim \xi^{-1/2} &\mbox{ for all } 1\leq \xi <\infty,
        \end{array} \right.\\
    \]
    for $0\leq \l \leq k$, then the integral kernel of $\psi(G)$ has bounded Schur norm.
\end{satz}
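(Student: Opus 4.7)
The plan is to reduce to the corresponding convolution kernel estimate on the Heisenberg group $\H_1$ via Corollary \ref{transfer}, and then to prove the latter by a Littlewood-Paley decomposition adapted to the Heisenberg dilations $\delta_r^\H$.

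First, by Corollary \ref{transfer} it suffices to show that the convolution kernel $M_L$ of $\psi(L)$ lies in $L_1(\H_1)$; the Schur norm of the integral kernel of $\psi(G)$ is then bounded by $\dnorm{M_L}_{L_1(\H_1)}$. To estimate $M_L$, I would fix $\phi_0 \in C_0^\infty([1/2,2])$ with $\sum_{j \in \Z} \phi_0(2^{-j}\xi) = 1$ for every $\xi > 0$, and set
\begin{equation*}
\psi_j(\xi) := \psi(\xi)\, \phi_0(2^{-j}\xi), \qquad \tilde\psi_j(\eta) := \psi(2^j \eta)\, \phi_0(\eta),
\end{equation*}
so that $\psi = \sum_j \psi_j$ pointwise and $\psi_j(L) = \tilde\psi_j(2^{-j}L)$. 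Since $L$ is homogeneous of degree $2$ under $\delta_r^\H$, the convolution kernel of $m(r^2 L)$ is a $\delta_{1/r}^\H$-rescaling of the kernel of $m(L)$ and hence has the same $L_1(\H_1)$-norm. Writing $M_{L,j}$ and $K_j$ for the convolution kernels of $\psi_j(L)$ and $\tilde\psi_j(L)$, this yields $\dnorm{M_{L,j}}_{L_1(\H_1)} = \dnorm{K_j}_{L_1(\H_1)}$.

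The hypothesis on $\psi$ then translates into the scale-invariant estimate $\dnorm{\tilde\psi_j}_{C^k} \lesssim 2^{-\n{j}/2}$, uniformly in $j \in \Z$. Indeed, by Leibniz
\begin{equation*}
\d{\eta}^\l \tilde\psi_j(\eta) = \sum_{i=0}^\l \binom{\l}{i}\, 2^{ji}\, (\d{\xi}^i \psi)(2^j \eta)\, \d{\eta}^{\l-i}\phi_0(\eta),
\end{equation*}
and on $\supp \phi_0 \subset [1/2,2]$ each summand is dominated by $2^{-j/2}\eta^{-1/2-i}$ in the large-$\xi$ regime ($j \geq 0$) and by $2^{j/2}\eta^{1/2-i}$ in the small-$\xi$ regime ($j < 0$), directly from the two-sided weight $\xi^{\pm 1/2}$ in the hypothesis.

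The main ingredient is then the standard $C^k$-to-$L_1$ kernel bound for the sub-Laplacian on $\H_1$: for $k$ large enough and for any $m \in C^k$ supported in $[1/2,2]$, the convolution kernel of $m(L)$ is a Schwartz function on $\H_1$ with $L_1$-norm controlled by $\dnorm{m}_{C^k}$. This is a direct consequence of Hulanicki's theorem together with the explicit Plancherel decomposition of $L$; it is exactly the compactly supported multiplier estimate that underlies the Müller-Stein and Hebisch proofs of the Mikhlin-Hörmander multiplier theorem on $\H_1$, and it is the main black-box step of the argument. Applied to $\tilde\psi_j$, it gives $\dnorm{K_j}_{L_1(\H_1)} \lesssim 2^{-\n{j}/2}$. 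Summing the geometric series yields
\begin{equation*}
\dnorm{M_L}_{L_1(\H_1)} \leq \sum_{j \in \Z} \dnorm{K_j}_{L_1(\H_1)} \lesssim \sum_{j \in \Z} 2^{-\n{j}/2} < \infty,
\end{equation*}
and Corollary \ref{transfer} concludes the proof. The delicate point of the argument is the matching of the two-sided weight $\xi^{\pm 1/2}$ in the hypothesis to the Heisenberg scaling, so that the dyadic pieces of $\psi$ contribute $C^k$-norms decaying like $2^{-\n{j}/2}$ in both directions; once this bookkeeping is set up, everything reduces to the well-known compactly supported multiplier estimate on $\H_1$.
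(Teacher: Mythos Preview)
Your proposal is correct and follows the same route as the paper: reduce via Corollary \ref{transfer} to the statement that the convolution kernel of $\psi(L)$ lies in $L_1(\H_1)$, then invoke the corresponding Heisenberg multiplier estimate. The paper simply cites M\"uller--Stein \cite{mueller}, Section 1.1, for this last step, whereas you sketch its proof via dyadic decomposition and Heisenberg scaling; that sketch is exactly the argument behind the cited result.
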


\begin{proof}
    Müller and Stein showed in \cite{mueller}, Section 1.1 that under these conditions
    for $\psi$ the convolution kernel of $\psi(L)$ is in $L_1(\H_1)$.
    By Corollary \ref{transfer}, the integrability of the kernel of $\psi(L)$ implies the boundedness of the
    Schur norm of the integral kernel of $\psi(G)$.
\end{proof}

\noindent The fact that $G$ is an image of $L$ under a representation of $\H_1$ implies
much more than we have stated here. In general, let $A$ be an operator on a well-behaved
group and $\pi$ a representation of this group. The process of getting information for
the operator $\pi(A)$ from properties of $A$ is often called "transference method".
Several people have worked on this subject. We refer here to a paper by Ronald R. Coifman
and Guido Weiss \cite{coifman}.
%
%
%
%
%
%
%
\subsection{The optimal control metric associated to $G$
} \label{carnot} In this section we study the optimal control metric of $G$. The
definition for optimal control metrics resp. Carnot-Carathéodory metrics we have taken
from the paper \cite{strichartzg} by Strichartz.

Let $M$ be a connected $\Cu$ manifold, $X_1,\dots,X_m$ smooth real vector fields on $M$.
Let $x\in M$ and $v\in T_x M$. If $v$ is in the linear span of the vector fields
$X_1(x),\dots,X_m(x)$ we define
\begin{equation*}
    \dnorm{v}^2_x:=\inf \menge{\xi_1^2+ \dots + \xi_m^2}{\xi_1 X_1(x)+\dots + \xi_m
    X_m(x)=v}.
\end{equation*}
If $v$ is not in the linear span of $X_1(x),\dots, X_m(x)$ we define
\begin{equation*}
    \dnorm{v}^2_x:=\infty.
\end{equation*}
Let $I$ be an interval and $\gamma:I \to M$ be a piecewise $C^1$-curve. We call $\gamma$
\emph{admissible}, if $\dnorm{\dot{\gamma}(t)}_{\gamma(t)}<\infty$ for all $t\in I$.

We assume now that $\gamma$ is admissible and $I=\intaa{0,1}$. Set
\begin{equation*}
    L(\gamma):=\int_0^1 \dnorm{\dot{\gamma}(t)}_{\gamma(t)} \; dt.
\end{equation*}
The \emph{Carnot-Carathéodory metric} associated to the vector fields $X_1, \dots,X_m$ on
$M$ we define by
\begin{equation*}
    d_{cc}(x,y):=\inf \menge{L(\gamma)}{\gamma(0)=x,\ \gamma(1)=y,\ \gamma \; \mbox{regular}}.
\end{equation*}
Now let $A:=-(\sum_{\l=1}^m X_\l^2)$. If $A$ is a Hörmander type operator, then we define
the \emph{optimal control metric} $d_A$ associated to $A$ by $d_A:=d_{cc}$, where
$d_{cc}$ is the Carnot-Carathéodory metric associated to the vector fields $X_1,
\dots,X_m$. In \cite{melrose} this metric is called
\emph{$A$-distance}.\absatz\index{dcc} \index{d_a}

Now let $M:=\R^2$ and define vector fields $X_1$ and $X_2$ by
\begin{equation*}
  X_1:=\d{x_1}=(1,0) \quad X_2:=x_1\d{x_2}=(0,x_1).
\end{equation*}
Then $[X_1,X_2]=\d{x_2}=(0,1)$. The Carnot-Carathéodory distance $d_{cc}$ to these vector
fields is the optimal control metric of our operator $G=-(X_1^2+X_2^2)$.

Let $g^{11}(x)=1$, $g^{22}(x)=x_1^2$ and $g^{12}(x)=g^{21}(x)=0$. The Riemannian metric
$g_{jk}$, if there were one, ought to be the inverse of the metric $g^{jk}$, which does
not exists. $g^{jk}$ is often called sub-Riemannian metric.

Here we are interested in balls $B_{G}((x_1,x_2),R)$, belonging to this metric, of radius
$R$ and centered at $(x_1,x_2)$. Especially we like to show that \[B_{G}((x_1,x_2),R)
\teil B(x_1,c R)\times B(x_2,c R (R+\n{x_1})),\] with some constant $c$ and $B(x,R)$ the
ball with respect to the Euclidean metric on $\R$ of radius $R$ and centered at $x$. To
show this we study geodesics.\index{B_A}

Let $x,y \in M$. We want to find an admissible curve $\gamma=(\gamma^1,\gamma^2)$ with
\mbox{$\gamma(0)=x$}, $\gamma(1)=y$ and $d_{G}(x,y)=L(\gamma)$. We can assume that
$\dnorm{\dot{\gamma}(t)}_{\gamma(t)}$ is constant. To find this curve we have to minimize
$\int_0^1 \dnorm{\dot{\gamma}(t)}_{\gamma(t)} \; dt$ with $\gamma(0)=x$ and
$\gamma(1)=y$. Instead of minimizing this integral, we are allowed to minimize
    \begin{equation*}
        \int_0^1 \dnorm{\dot{\gamma}(t)}_{\gamma(t)}^2 \; dt
        =\int_0^1 (\dot{\gamma^1})^2+\frac{(\dot{\gamma^2})^2}{(\gamma^1)^2} \; dt,
    \end{equation*}
    with $\gamma(0)=x,\ \gamma(1)=y$. The minimizer is a solution of the Euler-Lagrange
    equations
    \begin{equation}
        \label{eulerlagrange}
        \begin{split}
        \ddot{\gamma^1}+\frac{(\dot{\gamma^2})^2}{(\gamma^1)^3}&=0\\
        \frac{\dot{\gamma^2}}{(\gamma^1)^2}&=constant.
        \end{split}
    \end{equation}
    For $\gamma(0)=(0,0)$ these equations have the solutions
    \begin{equation*}
        \begin{split}
        \gamma_{b,c}(t)&:=\left(\frac{c}{b} \sin(bt),\
        \frac{c^2}{b}\left(\frac{t}{2}-\frac{\sin(2bt)}{4b}\right)\right),\quad b,c \in \R\\
        \beta_c(t)&:=(ct,0), \quad c \in \R.
    \end{split}
    \end{equation*}
  For $d_{G}$-Balls centered in the origin we obtain essentially
  \begin{equation*}
    B_{G}(0,R)\sim \menge{(x_1,x_2)}{\norm{x_1}<R,\ \norm{x_2}<R^2}.
  \end{equation*}
  These calculations have also been done by Greiner, Holcman and Kannai in \cite{greiner}.

  With some more calculus, we get also formulas for $\gamma(0)=(x_1,0)$. Define
    \begin{equation*}
    \begin{split}
    \gamma^1&:=c_1\sin(bt)/b+c_2\cos(bt)/b,\\
    \gamma^2&:=\frac{1}{b}\Big(c_1^2\Big(\frac{t}{2}-\frac{\sin(2bt)}{4b}\Big)+\frac{c_1c_2}{b}\sin^2(bt)+c_2^2
    \Big(\frac{t}{2}+\frac{\sin(2bt)}{4b}\Big)\Big)+d,
    \end{split}
    \end{equation*}
with $c_1,c_2,b,d \in \R$. All solutions of \eqref{eulerlagrange} are of one of these
forms, or given by $\beta_c$.

Since $\gamma(0)$ should be $(x_1,0)$ we choose $c_2=x_1b$ and $d=0$. Thus by defining
\begin{equation*}
    \begin{split}
    \gamma^1_{b,c_1}&:=c_1\sin(bt)/b+x_1\cos(bt),\\
    \gamma^2_{b,c_1}&:=\frac{c_1^2}{b}\Big(\frac{t}{2}-\frac{\sin(2bt)}{4b}\Big)+\frac{x_1c_1}{b}\sin^2(bt)+x_1^2b
    \Big(\frac{t}{2}+\frac{\sin(2bt)}{4b}\Big),
    \end{split}
\end{equation*}
the function $\gamma_{b,c_1}(t):\intaa{0,1}\to \R^2,\; t \mapsto
(\gamma^1_{b,c_1}(t),\gamma^2_{b,c_1}(t))$ is part of a geodesic starting in $(x_1,0)$ of
length $\sqrt{c_1^2+x_1^2b^2}$. For every $c_1 \in \R$, there exists a $c\geq \n{x_1b}$
and an $\epsilon \in \{+1,1\}$ such that $c_1=\epsilon\sqrt{c^2-x_1^2b^2}$. Define
\begin{equation*}
    \begin{split}
    \gamma^1_{b,c,\epsilon}&:=\epsilon\sqrt{c^2-x_1^2b^2}\sin(bt)/b+x_1\cos(bt),\\
    \gamma^2_{b,c,\epsilon}&:=\frac{c^2}{b}\Big(\frac{t}{2}-\frac{\sin(2bt)}{4b}\Big)+\frac{\epsilon x_1}{b}\sqrt{c^2-x_1^2b^2}
    \sin^2(bt)+x_1^2\frac{\sin(2bt)}{2}.
    \end{split}
\end{equation*}
Now the function $\gamma_{b,c,\epsilon}(t):\intaa{0,1}\to \R^2,\; t \mapsto
(\gamma^x_{b,c,\epsilon}(t),\gamma^u_{b,c_1,\epsilon}(t))$ is part of a geodesic starting
in $(x_1,0)$ of length $c$.

An easy calculation shows, that we can write the components of $\gamma_{b,c,\epsilon}$ in
the following form
\begin{equation*}
    \begin{split}
    \gamma^1_{b,c,\epsilon}&=\epsilon \n{x_1 b}\sqrt{c^2/(x_1^2b^2)-1}\sin(bt)/b+x_1\cos(bt),\\
    \gamma^2_{b,c,\epsilon}&=\frac{c^2t}{2b}-c^2\frac{\sin(2bt)}{4b^2}+\epsilon x_1\n{x_1}\sqrt{c^2/(x_1^2b^2)-1}
    \sin^2(bt)+x_1^2\frac{\sin(2bt)}{2}.
    \end{split}
\end{equation*}

    With this information it is not difficult to show that there exists a constant
    $c$ with
    \[B_{G}((x_1,0),1) \teil B(x_1,c)\times B(0,c(1+\n{x_1})).\]
    To prove this we have to study the functions $\gamma_{b,1,\epsilon}$. We can restrict
    to \mbox{$\n{x_1}^2\geq C$}, with $C>1$ a sufficiently large constant and hence $\n{b}\leq 1/\n{x_1}\leq 1/C$ is small.
    Therefore, $\sin(2bt)/(2b^2)-t/b\lesssim 1$ and $x_1^2\sin(2bt)\lesssim \n{x_1}$.
    Now, we get for $1/(b^2x_1^2)\geq 2$ by the Taylor expansion of the sine function
     \[\n{\sin(bt)\n{x_1}\sqrt{b^{-2} x_1^{-2}-1}}\lesssim 1, \quad
    \n{\sin^2(bt)x_1^2\sqrt{b^{-2} x_1^{-2}-1}}\lesssim 1.\]
    And for $1/(b^2x_1^2)\leq 2$ we get the same estimates. Hence
    \[\n{\gamma^1_{b,1,\epsilon}(t)-x_1}\lesssim 1 \mbox{ and } \n{\gamma^2_{b,1,\epsilon}(t)}\lesssim
    1+\n{x_1},\]
    for all $t\leq 1$.
    Since $G=-X_1^2-X_2^2$ is homogeneous with respect to the automorphic dilation $\delta_r$ and
    translation invariant with respect to the variable $x_2$ we get
    \[B_{G}((x_1,x_2),R) \teil B(x_1,c R)\times B(x_2,c R (R+\n{x_1})).\]
    We now want to use our previous notation. We denote the first variable by $x$ and the second by $u$.
    Then $G=-(\d{x}^2+x^2\d{u}^2)$ and
    we have proven the following proposition.
    \begin{satz}
        \label{carnotmetric}
        Let $B_{G}((x,u),R)$ denote the ball with respect to the optimal control metric associated to $G$,
        centered in $(x,u)$ and with
        radius $R\in \R^+$. There exists a constant $c$ such that for
        all $x,u \in \R$ and $R \in \R^+$
        \[B_{G}((x,u),R) \teil B(x,c R)\times B(u,c R (R+\n{x}))\]
        holds.
    \end{satz}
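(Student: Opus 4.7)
The plan is to exploit the two symmetries of $G$ already noted in the discussion to reduce the statement to a single canonical estimate, and then to read the conclusion off the explicit geodesic formulas $\gamma_{b,c,\epsilon}$ derived above. First I would use that $G$ is translation invariant in $u$, which yields $B_G((x,u),R)=(0,u)+B_G((x,0),R)$ and reduces the inclusion to the case $u=0$. Next, the homogeneity $G(f\nach\delta_r)=r^2(Gf)\nach\delta_r$ together with the corresponding scaling of lengths in the optimal control metric gives $\delta_{1/R}(B_G((x,0),R))=B_G((x/R,0),1)$; consequently it suffices to prove
\[B_G((x_1,0),1)\teil B(x_1,c)\times B(0,c(1+\n{x_1}))\]
for a constant $c$ independent of $x_1\in\R$.

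To establish this remaining claim, given any $(\t{x},\t{u})\in B_G((x_1,0),1)$, I would connect it to $(x_1,0)$ by an admissible curve of length at most $1+\eps$. After passing to a minimizer (or approximate minimizer) and rescaling, the Euler-Lagrange analysis above shows this curve has the explicit form $\gamma_{b,c,\epsilon}$ for some $c\leq 1$, or is the trivial horizontal line $\beta_c$. The case $\beta_c$ is immediate, so the essential task is to bound the two components $\gamma^1_{b,1,\epsilon}(t)-x_1$ and $\gamma^2_{b,1,\epsilon}(t)$ uniformly for $t\in\intaa{0,1}$, $\epsilon\in\{\pm 1\}$, and $b$ subject to the admissibility constraint $\n{b}\leq 1/\n{x_1}$ which keeps the square roots real.

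The verification then splits into two regimes. When $\n{x_1}\leq C_0$ for a fixed constant $C_0$, every factor in $\gamma^1_{b,1,\epsilon}$ and $\gamma^2_{b,1,\epsilon}$ is trivially bounded by an absolute constant, which yields the estimates with room to spare. When $\n{x_1}\geq C_0$, the bound $\n{b}\leq 1/\n{x_1}$ forces $\n{bt}\leq 1/C_0$ to be small, and I would use Taylor expansions of $\sin$ and $\cos$ to control each summand: the terms $\sin(bt)\n{x_1}\sqrt{1/(bx_1)^2-1}$ and $\sin^2(bt)\,x_1^2\sqrt{1/(bx_1)^2-1}$ are bounded by absolute constants, $x_1^2\sin(2bt)\lesssim \n{x_1}$, and for the combination $t/(2b)-\sin(2bt)/(4b^2)$ the first-order term of $\sin(2bt)=2bt-(2bt)^3/6+\cdots$ cancels the leading $t/(2b)$. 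Putting these together yields $\n{\gamma^1_{b,1,\epsilon}(t)-x_1}\lesssim 1$ and $\n{\gamma^2_{b,1,\epsilon}(t)}\lesssim 1+\n{x_1}$, as required.

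The main obstacle I anticipate is precisely arranging the cancellation in the term $t/(2b)-\sin(2bt)/(4b^2)$ uniformly as $b\to 0$: the naive bound $\sin(2bt)\lesssim \n{bt}$ is not enough here, and one must exploit the third-order Taylor term to see that the combination is of size $O((bt)^3/b^2)=O(b\,t^3)$, which is bounded because $b$ is small whenever $\n{x_1}$ is large. Once this subtlety is handled, the remaining estimates are routine bookkeeping, and the proposition follows by combining the two regimes.
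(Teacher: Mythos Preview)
Your proposal is correct and matches the paper's argument almost exactly: both reduce via $u$-translation invariance and the dilation $\delta_r$ to the unit ball at $(x_1,0)$, then bound the explicit geodesic components $\gamma_{b,1,\epsilon}$ case by case, with the Taylor cancellation in $t/(2b)-\sin(2bt)/(4b^2)$ as the one non-trivial step. One small caution: this cancellation is also required in your ``trivial'' regime $\n{x_1}\leq C_0$, since there $b$ is unconstrained from below and the individual summands $t/(2b)$ and $\sin(2bt)/(4b^2)$ blow up as $b\to 0$; the combination is still bounded, but not for the reason you state.
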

This proposition allows us to make a statement about the speed of propagation of our
wave.
\begin{satz}{\bf(Finite wave propagation speed)}
    Let $K_t$ denote the distribution kernel of $\cos(t\sqrt{G})$. There exists a constant $\const_0$ such that
    \begin{equation}
        \label{finitewavespeed}
        \supp K_t \teil \menge{(x',u',x,u)}{(x,u) \in B(x',\const_0 t)\times B(u',\const_0 t(t+\n{x'}))}.
    \end{equation}
\end{satz}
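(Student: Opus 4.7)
The plan is to derive the finite propagation speed estimate (\ref{finitewavespeed}) by combining a general finite speed of propagation result for wave equations associated to H\"ormander type operators with the explicit containment of optimal-control balls established above in Proposition \ref{carnotmetric}. Thus the proof has essentially two steps: first, express the support of $K_t$ in terms of the optimal control metric $d_G$; second, translate that statement into a product of Euclidean intervals using Proposition \ref{carnotmetric}.

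For the first step, I would invoke the theorem of Melrose \cite{melrose}, which applies to any positive, essentially self-adjoint H\"ormander type operator $A$ on a smooth manifold and asserts that the distribution kernel of $\cos(t\sqrt{A})$ is supported in the set of pairs of points whose $A$-distance is at most a universal constant times $t$. Since $G$ is of H\"ormander type and its $A$-distance coincides by definition with the optimal control metric $d_G$ introduced in Section \ref{carnot}, this yields a constant $c_1 > 0$ such that
\[
  \supp K_t \teil \menge{(x',u',x,u)}{(x,u) \in B_{G}((x',u'), c_1 t)}.
\]

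For the second step, I would apply Proposition \ref{carnotmetric} with radius $R := c_1 t$ to conclude
\[
  B_{G}((x',u'), c_1 t) \teil B(x', c c_1 t) \times B(u', c c_1 t (c_1 t + \n{x'})),
\]
where $c$ is the constant furnished by that proposition. Choosing $\const_0 := c c_1 (1 + c_1)$ makes $c c_1 t \le \const_0 t$ and $c c_1 t(c_1 t + \n{x'}) \le \const_0 t (t + \n{x'})$ simultaneously, which is exactly the form of the containment (\ref{finitewavespeed}).

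The main (and really only) obstacle is locating Melrose's finite propagation statement in precisely the form needed here; the rest is a bookkeeping combination of two results already available. An alternative, self-contained route would be a classical energy estimate on a distorted ``cone'' defined by sublevel sets of $d_G$: given $f \in \S$ supported outside $B_G((x',u'), c_1 t)$, one controls $\int \n{\partial_t u}^2 + \n{X_1 u}^2 + \n{X_2 u}^2$ over such a cone and shows it vanishes at $(x',u')$, using that the sub-Riemannian gradient has speed $1$ in the $d_G$-metric. Either route reduces the problem to the $d_G$-finite speed statement, after which Proposition \ref{carnotmetric} immediately closes the argument.
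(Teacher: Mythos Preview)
Your proposal follows exactly the paper's two-step argument: invoke Melrose's finite propagation result to obtain $\supp K_t \subseteq \{(x',u',x,u): (x,u)\in B_G((x',u'),t)\}$, then apply Proposition \ref{carnotmetric} to convert the $d_G$-ball into a Euclidean product. The one point you gloss over that the paper addresses explicitly is that Melrose's theorem in \cite{melrose} is stated for compact manifolds; the paper handles this by noting that the homogeneity of $G$ under $\delta_r$ reduces (\ref{finitewavespeed}) to the case of small $t$, where compactness is not needed.
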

\index{const0}
\begin{proof}
    This assertion follows by Proposition \ref{carnotmetric} and since
    \begin{equation*}
        \supp K_t \teil \menge{(x',u',x,u)}{(x,u) \in B_{G}((x',u'),t)},
    \end{equation*}
    which was shown by Melrose in \cite{melrose}.
    In fact, Melrose showed that this is true for an arbitrary positive selfadjoint differential operator of second order
    on a compact manifold. The compactness is not essential here, since the operator
    $G$ is homogeneous with respect to $\delta_r$ and hence we have to show
    (\ref{finitewavespeed}) only for small $t$.
\end{proof}

\begin{bemerkung}
    A formal proof of \eqref{finitewavespeed} can also be obtained in the following way.

    \noindent The support of the distribution $\cos(t\sqrt{L})\delta_0$ is contained in
    $B^{\H}_{t}$, where $B^{\H}_t$ denotes the ball with respect to the
    optimal control metric associated to $L$ on $\H_1$, centered in $0$ and with radius $t$. There exists a constant $C\geq 1$ such that
    \[B^{H_1}_{\n{t}}\teil \menge{(z,t)\in \H_1}{\n{z}\leq Ct ,\; \n{u}\leq C^2t^2}.\]
    Let $K_{\cos(t\sqrt{L})}$ denote the distribution kernel of $\cos(t\sqrt{L})$. Formally,
    \begin{equation*}
        K_t(x',u',x,u)=\int K_{\cos(t\sqrt{L})}(x'-x,y',u'-u-(x+x')y'/2) \; dy'.
    \end{equation*}
    Observe that $\n{x'-x},\n{y'} \leq
    Ct$ and $\n{u'-u}\geq 2C^2t(t+\n{x'})$ implies
    \[\n{u'-u-(x+x')y'/2}\geq \n{u'-u}-\n{(x+x')y'/2}\geq 3C^2t^2/2>C^2t^2.\]
    Hence, if $K_{\cos(t\sqrt{L})}$ would be integrable, the assertion
    \eqref{finitewavespeed} would follow
    immediately.
    Unfortunately, $K_{\cos(t\sqrt{L})}$ is not integrable and so this second proof is only formally true.
\end{bemerkung}
To get an impression how the geometry looks like, we now want to show some figures.
Define
\[S_{G}(x',u'):=\menge{(x,u)}{d_G((x',u'),(x,u))=1}.\]
\absatz\vspace{1.5cm}

\begin{center}
\unitlength1cm
\includegraphics{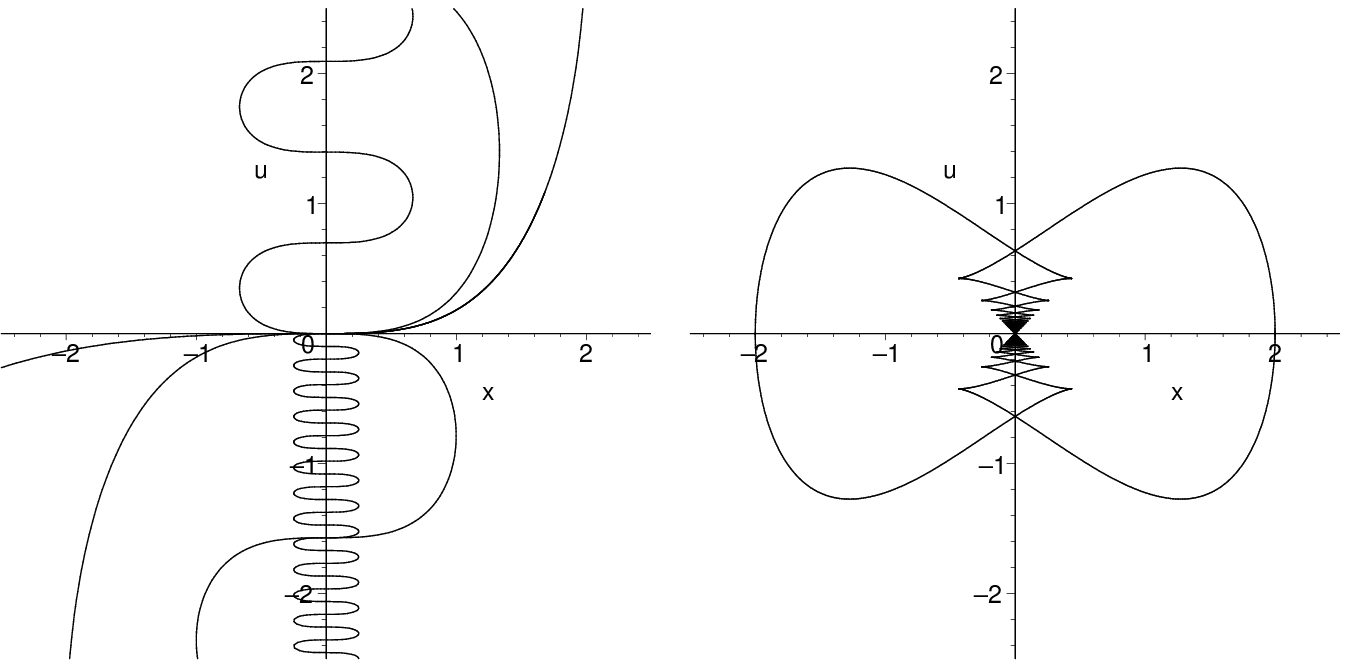}\\
\end{center}
\vspace{0.0cm} \hspace{2.9cm} Figure 1. \hspace{5.05cm} Figure 2.\\
\vspace{1cm}

\noindent Figure 1 shows geodesics starting in the origin. Figure 2 shows the sphere
$S_{G}(0,0)$. It has a highly complicated structure. Remarkable is that it has an inner
structure consisting of infinitely many edges tending to the origin. This a tribute to
the non-ellipticity of $G$ in $0$. The sphere is symmetric with respect to the axis
$x=0$.

\newpage
\begin{center}
\includegraphics{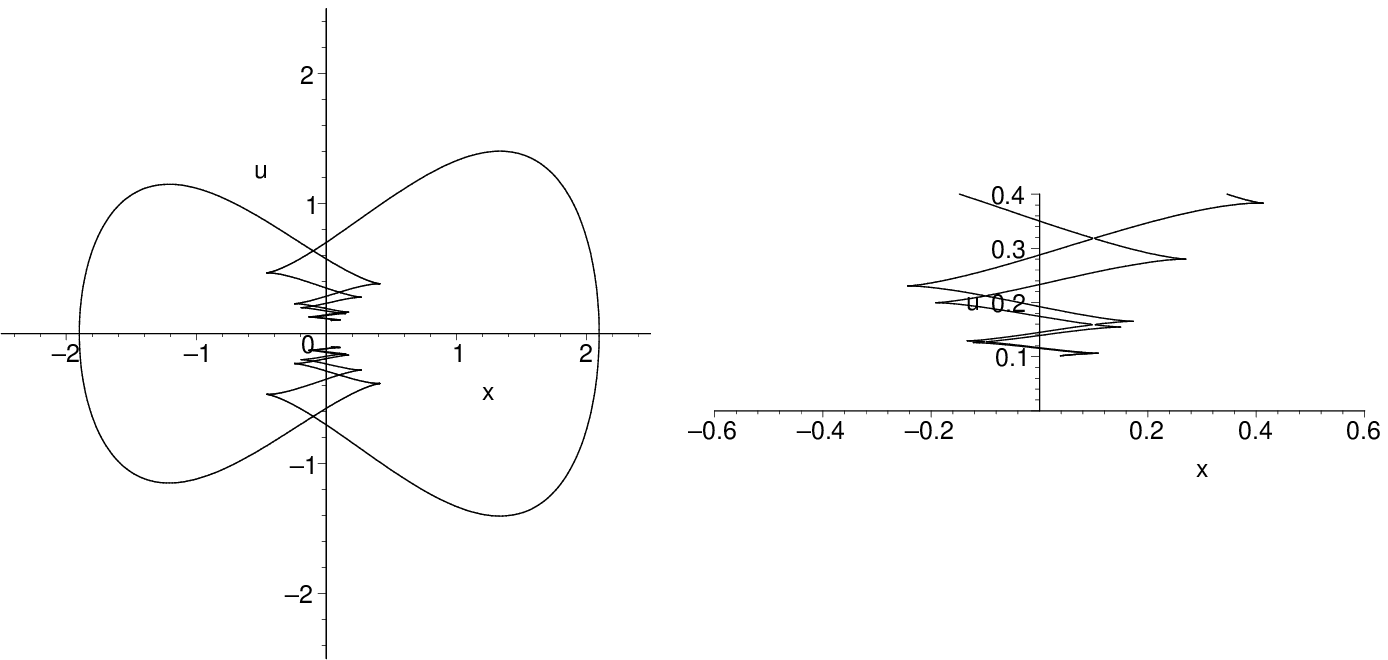}\\
\end{center}
\vspace{0.0cm} \hspace{2.9cm} Figure 3. \hspace{5.05cm} Figure 4.\\
\vspace{1cm}

\noindent Figure 3 shows the sphere $S_G(x',0)$, where $x'=0.1$. Figure 4 is an
enlargement of Figure 3 near the point $(0.1,0)$. The inner structure is given by only
finitely many edges. $S_{G}(0.1,0)$ is, in contrast to $S_{G}(0,0)$, not symmetric with
respect to any axis $x=c$.\vspace{1cm}

\begin{center}
\includegraphics{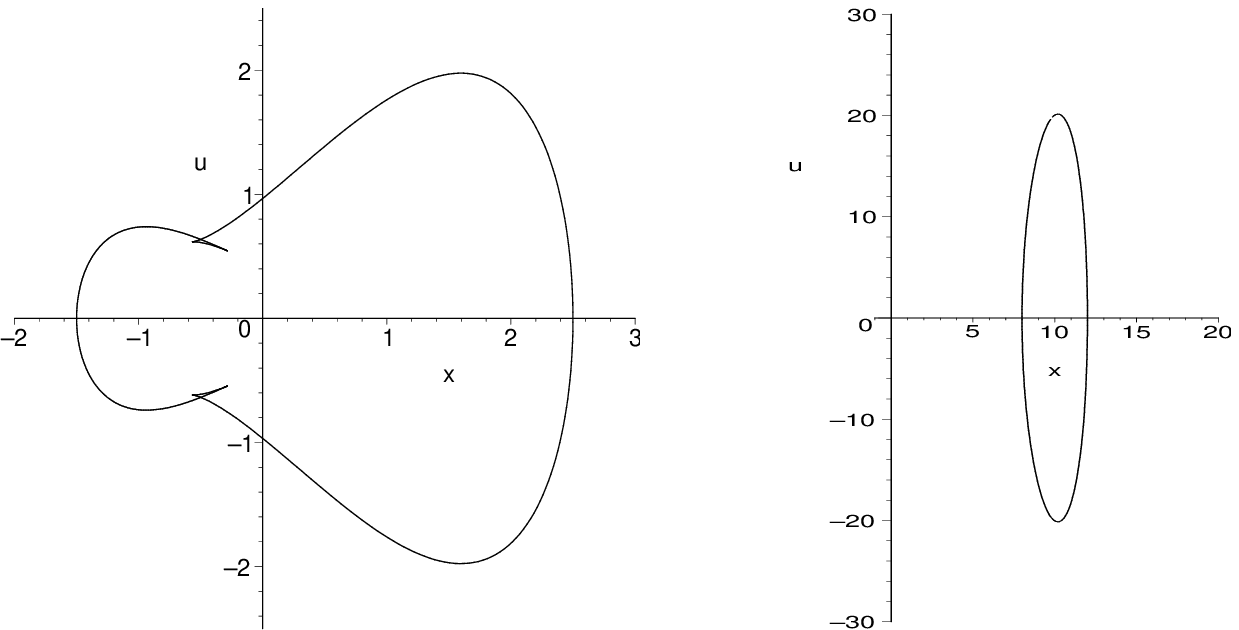}\\ 
\end{center}
\vspace{0.0cm} \hspace{2.9cm} Figure 5. \hspace{5.05cm} Figure 6.\\ \vspace{1cm}

\newpage
\noindent Figure 5 shows the sphere $S_{G}(0.5,0)$. The sphere $S_{G}(x,0)$ in figure 6
is given for $x=10$ and is nearly an ellipsoid with elongation comparable to $\n{x}$ in
the $u$-direction and elongation comparable to $1$ in the $x$-direction. Near this sphere
the operator $G$ is elliptic.\vspace{1cm}
\begin{center}
\includegraphics{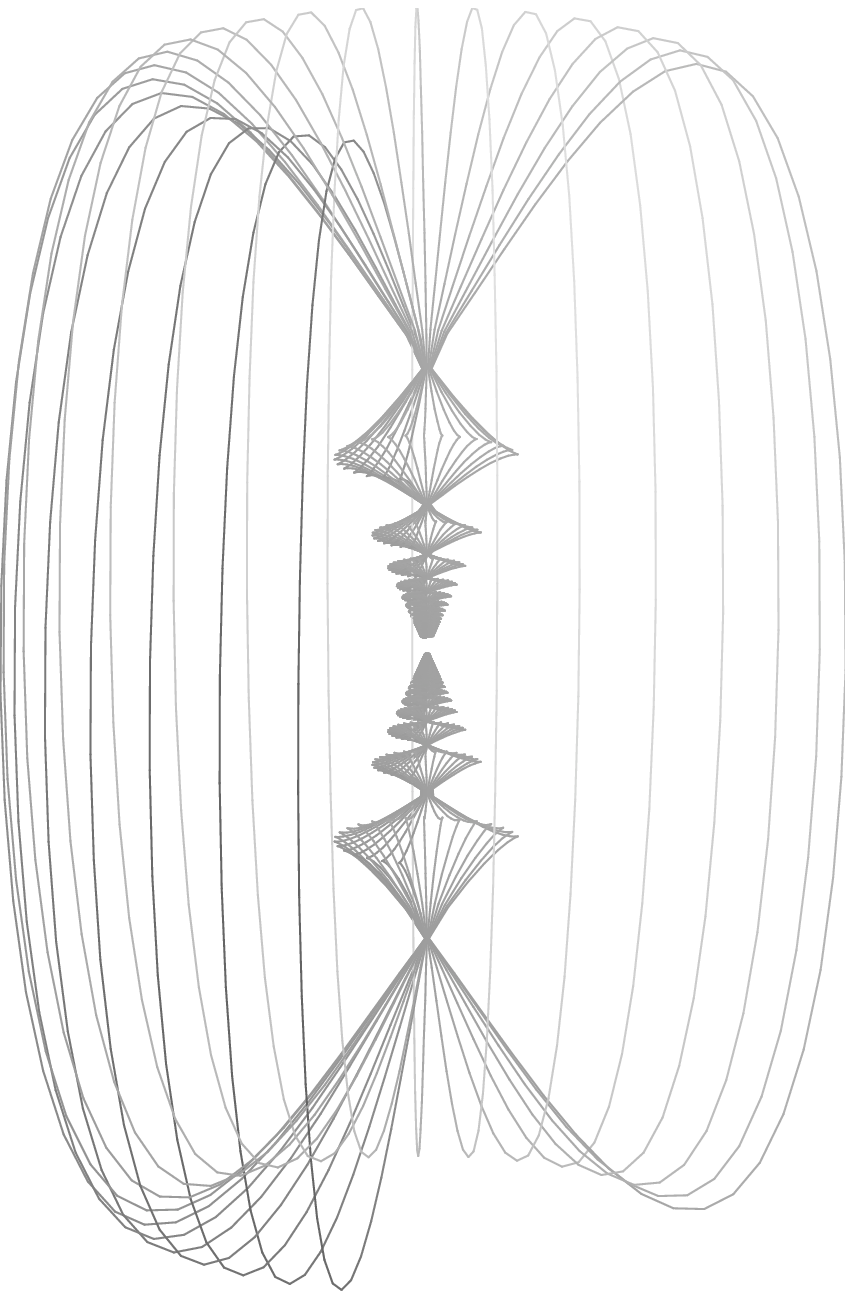}\\
\end{center}
\hspace{6.0cm} Figure 7.\\

\noindent Figure 7 shows the sphere belonging to the optimal control metric of the
sub-Laplacian $L$ on the Heisenberg group $\H_1$. We get this picture by rotating the
sphere in figure 2 around the axis $x=0$. This reflects the fact that $G$ is the image of
$L$ under the representation $\pi$ given in the previous section.\absatz\vspace{1cm}

If we consider the Cauchy problem
\begin{equation*}
    \frac{\partial^2v}{\partial t^2}-\Delta v =0,\quad
    v \rvert_{t=0}=f,\quad \frac{\partial v}{\partial t}\rvert_{t=0}=g,
\end{equation*}
for the usual wave equation, then the solution $v$ is given by
\[v(t,x)=\conv{f}{Q_t(x)}+\conv{g}{P_t(x)},\]
where the wave propagators $Q_t$, $P_t$ are distributions with
\[
    \singsupp Q_t=\singsupp P_t=S^t(0)=\menge{x \in \R^d}{\n{x}=t}.\]
For our wave equation we get a similar result. If we denote the distribution kernel of
$\cos(t\sqrt{G})$ and $\sin(t\sqrt{G})/\sqrt{G}$ by $Q_t$ and $P_t$ respectively, we have
\begin{equation*}
    \begin{split}
    \singsupp Q_t&=\singsupp P_t
    \teil \menge{(x',u',x,u) \in \R^2}{d_{G}((x',u'),(x,u))=t}.
    \end{split}
\end{equation*}
This has been proven by Melrose in \cite{melrose}.

Therefore, for given $x'$, the set $S_{x'}:=S_G(x',0)$ in $\R^2$, shown in the figures 2
to 6 has the property that $(\singsupp Q_1) \cap ((x',0)\times \R^2) \teil (x',0)\times
S_{x'}$.

In the introduction, we mentioned that the most crucial part of the proof of Theorem
\ref{theorem} is the case when waves start near, but not exactly on the axis $x=0$. Now,
since we know the underlying geometry better, we want to pick up this subject once more.

First we thought that estimating waves starting in the origin should be most difficult,
since $G$ is not elliptic in $(0,0)$. Though, by comparing figure 2 and figure 7 one can
see that this situation is very similar to the situation on the Heisenberg group. By a
slightly modification of the methods of Müller and Stein we can prove that the integral
kernel of $\exp(i\sqrt{G})(1+G)^{-\alpha/2}\delta_{(x',0)}$ lies in $L_1(\R^2)$.

Waves starting in $(x',0)$ with $x'$ very far away from $0$ behave like waves for an
elliptic operator, as one can see in figure 6. Therefore, this case should also be not
very difficult. We do not consider this case in detail.

The most difficult case is when $0<\n{x'}<c$, where $c$ is a small constant. A reason for
this is that in this case the set $S_{x'}$ has a highly complex structure, as one can see
in figures 3 and 4. Especially the lack of symmetry, which we have for $S_0$, causes that
explicit formulas for $\exp(i\sqrt{G})(1+G)^{-\alpha/2}\delta_{(x',0)}$ are very
complicated.


\newpage
\Section{The main theorem and a conjecture}

Since we now have the fundamentals, we want to present the main theorem one more time.
Furthermore, we state a conjecture for higher dimensional Gru\v{s}in operators, and give
a short sketch of the proof of our theorem.\absatz

The main theorem of this thesis reads as follows. Put $S_{\const_1}:=\menge{(x,u)\in
\R^2}{\n{x}\leq \const_1}$.\index{strip2} \setcounter{Theorem}{0}
\begin{Theorem}
    \label{theorem1}
    For every $\const_1>0$, $t>0$, $1\leq p \leq \infty$ and $\alpha>\norm{1/p-1/2}$ there exists a
    constant $C_{p,t,\const_1}^\alpha$ such that for all $f$ in $\S$ with $\supp f\teil S_{\const_1}$ the estimates
    \begin{equation*}
        \Bigdnorm{\frac{\cos(t \sqrt{G})}{(1+G)^{\alpha/2}}f}_{L_p(\R^2)} \leq C^\alpha_{p,t,\const_1}
        \dnorm{f}_{L_p(\R^2)},
    \end{equation*}
    and
    \begin{equation*}
        \Bigdnorm{\frac{\sin(t\sqrt{G})}{\sqrt{G}(1+G)^{(\alpha-1)/2}}f}_{L_p(\R^2)} \leq
        C^\alpha_{p,t,\const_1}
        \dnorm{f}_{L_p(\R^2)},
    \end{equation*}
    hold.
\end{Theorem}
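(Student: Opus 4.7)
The plan follows the reduction scheme already outlined in the introduction. By the homogeneity $G\circ\delta_r=r^2(Gf)\circ\delta_r$ for $\delta_r(x,u)=(rx,r^2u)$, the substitution $f\mapsto f\circ\delta_t$ converts the general $t$ statement into the $t=1$ statement at the price of enlarging the strip by the factor $t$, so I may fix $t=1$. The cosine estimate is the real part of the corresponding statement for $\exp(i\sqrt{G})(1+G)^{-\alpha/2}$. For the sine operator I would split $\sin(\sqrt{G})/\sqrt{G}$ via a smooth cut-off $\eta$ supported away from the origin: the low-frequency piece $(1-\eta)(G)\sin(\sqrt{G})/\sqrt{G}\cdot(1+G)^{-(\alpha-1)/2}$ has a symbol that satisfies the hypotheses of Proposition \ref{reducmultipliertheorem} and is therefore harmless, while the high-frequency piece reduces to $\exp(i\sqrt{G})(1+G)^{-\alpha/2}$ composed with a multiplier of $G$ whose symbol is again admissible for Proposition \ref{reducmultipliertheorem}. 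Thus everything reduces to Theorem \ref{theorem}.

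By standard complex interpolation between the trivial $L_2$-bound (available for every $\alpha>0$ via the spectral theorem) and the $p=1$ endpoint, it suffices to treat $p=1$ with $\alpha>1/2$; the case $p=\infty$ follows by duality. Writing $\exp(i\sqrt{G})(1+G)^{-\alpha/2}$ as an integral operator with kernel $K(x',u',x,u)$, the bound I need is
\[\sup_{\n{x'}\leq \const_1,\; u'\in\R}\int_{\R^2}\n{K(x',u',x,u)}\,dx\,du<\infty.\]
Since $G$ commutes with translation in $u$, I may fix $u'=0$. By the finite propagation speed proposition combined with Proposition \ref{carnotmetric}, the relevant kernel is effectively supported in $B_G((x',0),\const_0)$, which Euclideanly is a rectangle of width $\sim 1$ and height $\sim 1+\n{x'}$, so it is enough to control $\dnorm{K(x',0,\fa,\fa)}_{L_1(B_G((x',0),\const_0))}$ uniformly in $\n{x'}\leq \const_1$.

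To analyse this kernel I would exploit the factorisation $G=(iU)(-iGU^{-1})$ and the joint spectral decomposition of $G$ and $iU$ built from the projections $\P_{n,\epsilon}$ of Chapter 3. A dyadic decomposition in the $G$-frequency $\sim 2^{2k}$ and the $iU$-frequency $\sim 2^{j}$ writes $K=\sum_{k,j}K_{k,j}$, and the task becomes
\[\sum_{j,k}\sup_{\n{x'}\leq \const_1}\dnorm{K_{k,j}(x',0,\fa,\fa)}_{L_1(B_G((x',0),\const_0))}<\infty.\]
Using Strichartz-type explicit formulas for the joint eigenfunctions, each $K_{k,j}$ becomes a controllable oscillatory integral, to which stationary-phase, integration by parts, and the volume estimate of Proposition \ref{carnotmetric} can be applied. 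Two regimes are easy: for $\n{x'}\gtrsim 1$ the operator is elliptic near the wave front, so a scaling argument reduces matters to the Seeger--Sogge--Stein estimates for elliptic Fourier integral operators; for $x'=0$ the sphere $S_G(0,0)$ has the full rotational symmetry of the Heisenberg sphere (compare Figures 2 and 7) and the Müller--Stein argument transfers through the representation $\pi$ of Section \ref{transferencesection}.

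The main obstacle, as flagged in the introduction, is the transition regime $0<\n{x'}\lesssim 1$. Here the sphere $S_G(x',0)$ (Figures 3--5) loses the symmetry it has at $x'=0$, and the two-parameter family of geodesics $\gamma_{b,c,\epsilon}$ computed in Section \ref{carnot} develops a subtle critical set whose behaviour in $b$ degenerates as $x'\to 0$; the associated oscillatory-integral phase has caustics that must be tracked uniformly in $x'$ down to zero. The whole dyadic decomposition, the specific choice of Heisenberg-adapted coordinates, and the careful kernel bookkeeping of Chapters 5 and 6 are dictated by the need to make the stationary-phase analysis uniform across this transition window, and this is the step I expect to be the genuinely hard part of the argument.
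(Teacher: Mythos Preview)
Your reduction scheme is exactly the paper's: homogeneity to set $t=1$, reduction of both $\cos$ and $\sin$ operators to $\exp(i\sqrt{G})(1+G)^{-\alpha/2}$ via Proposition~\ref{reducmultipliertheorem}, analytic interpolation down to $p=1$, translation invariance in $u$ to set $u'=0$, finite propagation speed to localise, and a dyadic decomposition of the joint spectrum of $G$ and $iU$ built on the projections $\P_{n,\epsilon}$. (One labelling slip: in the paper $j$ indexes the $-iGU^{-1}$ spectrum, i.e.\ $2n+1\sim 2^j$, while the $iU$-frequency is $\lambda\sim 2^{2k-j}$; and the large-$|x'|$ regime you call ``easy'' is in fact left as a conjecture in Section~\ref{reducx1chapter} --- it is not needed for Theorem~\ref{theorem1}, which only concerns the fixed strip.)

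Where your proposal stops is precisely where the real content begins. Saying the $K_{k,j}$ are ``controllable oscillatory integrals'' to which ``stationary phase, integration by parts'' apply does not distinguish this problem from any other wave problem; the issue is that the phase
\[
\kphi(s)=\arctan\Big(\frac{Rw-2xx_1(u-s)}{2Rxx_1+(u-s)w}\Big)+\frac{2^{k-j}}{s}
\]
is a genuinely four-variable object in $(x,u,s,x_1)$ with no symmetry once $x_1\neq 0$, and no direct stationary-phase argument succeeds uniformly. The paper's resolution is a specific nonlinear change of variables $(x,u,s)\mapsto(X,Y,s)$ (Section~5.5) chosen so that $\kphi'=X/Y$ and, crucially, so that the Jacobian $|\det\psi'|^{-1}=\tfrac{|Y|}{2\j{X}^3}|XY-x_1^2X^2|^{-1/2}$ has a closed form even though $\psi^{-1}$ itself cannot be written down. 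This identity is what makes the $L_1$-integration tractable, and nothing in your outline anticipates it. Coupled with the sharp summation estimates for $q_{n,m}^\pm$ (Propositions~\ref{propqn1}--\ref{propqn2}, proved via partial summation and a Poisson-summation ``half-derivative'' lemma), these coordinates drive the entire analysis of Chapter~6. Your proposal correctly locates the difficulty but gives no mechanism for overcoming it; the $(X,Y)$ coordinates and the $q_n$ estimates are that mechanism.
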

For higher dimensional Gru\v{s}in operators $G_n$ we conjecture that the following holds.
\begin{vermutung}
    Let $d:=n+1$ denote the topological dimension. For every $\const_1>0$, $t>0$, $1\leq p \leq \infty$ and \mbox{$\alpha>(d-1)\norm{1/p-1/2}$} there exists a
    constant $C_{p,t,\const_1}^\alpha$ such that for all $f$ in $\S$ with $\supp f\teil S_{\const_1}$ the estimates
    \begin{equation*}
        \Bigdnorm{\frac{\cos(t \sqrt{G_n})}{(1+G_n)^{\alpha/2}}f}_{L_p(\R^d)} \leq C^\alpha_{p,t,\const_1}
        \dnorm{f}_{L_p(\R^d)},
    \end{equation*}
    and
    \begin{equation*}
        \Bigdnorm{\frac{\sin(t\sqrt{G_n})}{\sqrt{G_n}(1+G_n)^{(\alpha-1)/2}}f}_{L_p(\R^d)} \leq
        C^\alpha_{p,t,\const_1}
        \dnorm{f}_{L_p(\R^d)},
    \end{equation*}
    hold.
\end{vermutung}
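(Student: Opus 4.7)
The plan is to follow, step by step, the structure of the proof of Theorem~\ref{theorem1} while carefully tracking the extra difficulty introduced by the higher dimension. By interpolating the trivial $L_2$ bound coming from the spectral theorem against the endpoint $p=1$, and by exploiting the homogeneity of $G_n$ under $\delta_r:(x,u)\mapsto(rx,r^2u)$ to set $t=1$, the conjecture reduces to proving that for every $\const_1>0$ the operator $\exp(i\sqrt{G_n})(1+G_n)^{-\alpha/2}$ extends to a bounded operator from $L_1(S_{\const_1})$ into $L_1(\R^{n+1})$ whenever $\alpha>n/2$. Writing $K^\alpha$ for its integral kernel, this amounts to the Schur-type bound
\begin{equation*}
  \sup_{|x'|\le \const_1,\; u'\in\R}\ \dnorm{K^\alpha(x',u',\fa,\fa)}_{L_1(\R^{n+1})}<\infty,
\end{equation*}
and translation invariance in $u$ lets us set $u'=0$ throughout.

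The next step is to extend the transference identity of Proposition~\ref{transferproposition} to the pair $(G_n,L_n)$, where $L_n$ denotes the sub-Laplacian on $\H_n$. The operator $G_n$ is the image of $L_n$ under an irreducible representation of the polarized group $\t{\H}_n$, and the argument of Proposition~\ref{transferproposition} carries over to produce
\begin{equation*}
  K^\alpha(x',0,x,u)=\int_{\R^n} M_{L_n}^{\alpha}\bigl(x'-x,\,y',\,-u-\tfrac12(x+x')\cdot y'\bigr)\;dy',
\end{equation*}
where $M_{L_n}^\alpha$ is the Heisenberg convolution kernel of $\exp(i\sqrt{L_n})(1+L_n)^{-\alpha/2}$. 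The M\"uller--Stein theorem gives $M_{L_n}^\alpha\in L_1(\H_n)$ only when $\alpha>n$, so transference alone loses $n/2$ derivatives; the missing $n/2$ must be recovered from the oscillation in the $n$-dimensional $y'$-integral. To organise that oscillation I would generalise Strichartz's joint functional calculus for $-iL_nU^{-1}$, construct the projection operators $\P_{k,\epsilon}$ onto the rays of the Heisenberg fan, and perform a dyadic decomposition of the joint spectrum of $(G_n,iU)$ in complete analogy with Chapters~3 and~5. The higher-dimensional analogue of Proposition~\ref{carnotmetric}, namely $B_{G_n}((x',0),t)\teil B(x',ct)\times B(0,ct(t+|x'|))$, together with Melrose's finite propagation theorem, confines the support of each dyadic piece $K^\alpha_{k,j}(x',0,\fa,\fa)$ to the Gru\v{s}in ball of radius $\const$, and the proof reduces to summing the Schur norms of these pieces over the ball, uniformly in $|x'|\le \const_1$.

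The region $|x'|\gtrsim 1$ should be handled by rescaling, in the spirit of Chapter~4: after a suitable dilation $G_n$ becomes an elliptic second-order operator with smooth bounded coefficients on $\R^{n+1}$, and the Fourier-integral-operator estimates of Seeger--Sogge--Stein supply the critical index $(d-1)|1/p-1/2|$ directly. The case $x'=0$, where the Gru\v{s}in sphere $S_{G_n}(0,0)$ is $O(n)$-invariant, should follow from an essentially one-dimensional adaptation of the M\"uller--Stein argument on $\H_n$: rotational symmetry collapses the $y'$-integral to a scalar oscillatory integral and standard stationary phase recovers exactly the missing $n/2$ derivatives.

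The main obstacle, and the genuinely new feature in dimension $n>1$, is the transition zone $0<|x'|<c$ with $c$ small. Here the spheres $S_{G_n}(x',0)$ lose the $O(n)$-symmetry they enjoy at $x'=0$, so the $y'$-integral must be treated as a bona fide $n$-dimensional oscillatory integral whose phase has a caustic structure depending sensitively on $|x'|$. After decomposing $y'$ into its component along $x'/|x'|$ and its $(n-1)$-dimensional transverse part, I expect a non-degenerate stationary-phase analysis to still extract $n/2$ powers of decay; but controlling the rank of the Hessian uniformly as $|x'|\to 0$, tracking the caustic set, and summing the resulting error terms in the dyadic parameters $k,j$ uniformly in $|x'|$ will be the crux of the argument.
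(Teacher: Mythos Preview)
The statement you are attempting to prove is presented in the paper as a \emph{conjecture}, not a theorem. The paper contains no proof of it; immediately after stating it, the author writes that ``many of our techniques we use for the proof of Theorem~\ref{theorem1} are also applicable in the higher dimensional case. Our computations for this case so far give rise to the hope that the conjecture is really true, but we have not gone into the details yet.'' So there is no ``paper's own proof'' to compare your proposal against.

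That said, your outline is a reasonable sketch of how one would attack the problem and is broadly aligned with the paper's own remarks about what should generalise. A few substantive comments. First, you invoke the transference integral over $y'\in\R^n$ as the source of the missing $n/2$ derivatives, but the paper explicitly rejects this route even for $n=1$: ``Unfortunately, this integral turned out to be very complicated. Hence we do not use this representation of the integral kernel\ldots''. The paper instead works directly with the Strichartz-type projections $\P_{n,\epsilon}$ and derives closed formulas for their kernels via the Mehler formula; you mention this as well, but you should be aware that the two approaches are not interchangeable and the paper's actual argument never returns to the $y'$-integral.

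Second, the most serious technical gap in your proposal is the transition zone $0<|x'|<c$. For $n=1$ the paper's decisive tool is the change of variables $(x,u)\mapsto (X,Y)$ of Section~5.5, whose Jacobian miraculously admits the closed form $|Y|/(2\langle X\rangle^3\,|XY-x_1^2X^2|^{1/2})$; all the $L_1$ estimates in Chapter~6 rest on this identity. You give no indication of what the higher-dimensional analogue would be, and there is no obvious candidate: the construction is tied to solving a cubic in $x^2$ and exploits two-dimensional algebra that has no evident $n$-variable generalisation. Likewise, the explicit formula \eqref{qndefinition} for $Q_n$ in terms of $e^{i\sigma}$ and a single angular variable is specific to $n=1$; for $G_n$ the projection kernels involve $n$-fold products of Hermite functions, and an analogous closed form is not known. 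Your stationary-phase heuristic in the last paragraph is plausible but does not address these structural obstacles, which are precisely why the statement remains a conjecture.
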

Many our technics we use for the proof of Theorem \ref{theorem1} are also applicable in
the higher dimensional case. Our computations for this case so far give rise to the hope
that the conjecture is really true, but we have not gone into the details yet.\absatz

Instead of Theorem \ref{theorem1} we show the following.
\begin{Theorem}
    \label{theorem2}
    Let $\const_1>0$, $1\leq p \leq \infty$. The operator $\exp(i\sqrt{G})(1+G)^{-\alpha/2}$ extends to
    a bounded operator from $L_p(S_{\const_1})$ to $L_p(\R^2)$ for $\alpha>\n{1/p-1/2}$.
\end{Theorem}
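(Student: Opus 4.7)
My plan is to reduce Theorem \ref{theorem2} to the case $p=1$ and then to a Schur-norm bound for the integral kernel $K$ of $\exp(i\sqrt{G})(1+G)^{-\alpha/2}$, uniform on the strip $S_{\const_1}$. The operator is bounded on $L_2(\R^2)$ by the spectral theorem; interpolation between $L_1(S_{\const_1})\to L_1(\R^2)$ and $L_2(S_{\const_1})\to L_2(\R^2)$, together with the dual statement $L_\infty(\R^2)\to L_\infty(S_{\const_1})$, reduces the assertion to showing, for every $\alpha>1/2$,
\[
\sup_{|x'|\leq \const_1,\ u'\in\R}\ \dnorm{K(x',u',\fa,\fa)}_{L_1(\R^2)} <\infty.
\]
Translation invariance of $G$ in $u$ then lets me fix $u'=0$ throughout.

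Next I would split the range of $x'$: fix a small constant $\const_2>0$. For $|x'|\geq \const_2$ the principal symbol $\xi^2+(x')^2\eta^2$ is elliptic near $(x',0)$, and the dilation $\delta_{1/|x'|}$, a symmetry of $G$, rescales the problem to one about an elliptic operator on bounded scales; I would invoke here the Seeger--Sogge--Stein Fourier-integral-operator estimates, as already sketched in the introduction. For $|x'|\leq \const_2$, the Mikhlin-type Proposition \ref{reducmultipliertheorem} together with a smooth cutoff of the spectrum at the origin lets me replace $\exp(i\sqrt{G})(1+G)^{-\alpha/2}$ by $h^\alpha(G):=\chi(G)\,G^{-\alpha/2}\,\exp(i\sqrt{G})$ with $\chi$ supported away from zero. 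By the finite-propagation-speed statement, $K_{h^\alpha(G)}(x',0,\fa,\fa)$ is supported in the Carath\'eodory ball $B_G((x',0),\const_0)$, which by Proposition \ref{carnotmetric} has Lebesgue measure $\lesssim 1+|x'|\lesssim 1$.

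The heart of the argument is a dyadic decomposition of $h^\alpha(G)$ adapted to the joint functional calculus of $G$ and $iU=i\d{u}$. Writing $G=(iU)(-iGU^{-1})$ and using that $-iGU^{-1}$ has eigenvalues $\epsilon(2n+1)$ with projections $\P_{n,\epsilon}$ constructed in Chapter 3, one obtains the formal decomposition
\[
h^\alpha(G) \;=\; \sum_{\epsilon=\pm 1}\sum_{n\geq 0}\ h^\alpha\!\bigl(\epsilon(2n+1)\,iU\bigr)\,\P_{n,\epsilon}.
\]
A Littlewood--Paley splitting in both $G$ and $iU$ then decomposes $K_{h^\alpha(G)}=\sum_{k,j} K_{k,j}$ into explicit oscillatory integrals built from the joint eigenfunctions $\phi_{\lambda,n}$. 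The target becomes
\[
\sum_{k,j}\ \sup_{|x'|\leq \const_2}\ \dnorm{K_{k,j}(x',0,\fa,\fa)}_{L_1(B_G((x',0),\const_0))} < \infty
\]
for $\alpha>1/2$, which I would establish by combining a pointwise size bound on $K_{k,j}$ with the measure of its support and, wherever the trivial estimate fails, stationary-phase and integration-by-parts arguments inside the oscillatory integrals.

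The main difficulty lies in the transition regime $0<|x'|<\const_2$. At $x'=0$, the sphere $S_G(0,0)$ is symmetric about $x=0$ and the M\"uller--Stein analysis for the Heisenberg sub-Laplacian transfers essentially intact; for $|x'|\gtrsim 1$ the operator is essentially elliptic, as Figure 6 makes plausible. Between these two regimes, however, $S_G(x',0)$ abruptly loses its symmetry while still retaining only finitely many inner edges (Figures 3 and 4), so the phase functions controlling the pieces $K_{k,j}$ acquire degenerate critical points whose normal forms depend on $(x',k,j)$ in a coupled way. Producing $L_1$-estimates uniform in $x'$ across this transition zone, while keeping the sum over $k$ and $j$ convergent at the critical exponent $\alpha>1/2$, is where the technical bulk of the proof lies.
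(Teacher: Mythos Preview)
Your overall route---reduce to $p=1$, pass to $h^\alpha(G)$ via the multiplier theorem, localize by finite propagation, decompose dyadically in the joint spectrum of $G$ and $iU$, and estimate each $K_{k,j}$---is the paper's strategy. But two steps in your outline are off. First, the $x'$-splitting: you propose to treat $|x'|\geq\const_2$ with $\const_2$ \emph{small} by Seeger--Sogge--Stein FIO estimates, on the grounds that $G$ is elliptic near $(x',0)$. This fails because a wave starting at $(x',0)$ with $|x'|$ small reaches the non-elliptic line $x=0$ well within time $1$; ellipticity at the starting point is not enough. The FIO approach requires $|x'|$ \emph{large}, so that after rescaling the effective time is short and the wave stays in the elliptic region; the paper discusses exactly this in Section~\ref{reducx1chapter}, but only as a sketch, formulating the needed uniform $L_1$-bound as Conjecture~\ref{FIOvermutung} and not relying on it for Theorem~\ref{theorem2}. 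The actual proof makes no split in $x'$: it establishes the $L_1$-estimate uniformly for all $|x'|\leq 2\const_1$, the splitting being instead in the dyadic indices $j,k$ (the two regimes $2^{j-k}\leq 1/\const_2$ and $2^{k-j}\leq\const_2$). Second, $K_{h^\alpha(G)}(x',0,\fa,\fa)$ is not literally supported in a Carath\'eodory ball, since $e^{i\sqrt{G}}$ (unlike $\cos(t\sqrt{G})$) has no finite propagation; Proposition~\ref{reduclocalpart} shows instead that the part of the kernel outside the box $B_2(x',u')$ already has bounded Schur norm, via the transferred multiplier theorem.

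Your outline is also missing the paper's decisive technical device for the transition regime $0<|x'|\lesssim 1$: the nonlinear change of variables $(x,u,s)\mapsto(X,Y,s)$ of Section~5.5, chosen so that the phase becomes $\arctan X+2^{k-j}/s$ and the Jacobian has the closed form $|Y|\,/\,(2\j{X}^3\sqrt{|XY-x_1^2X^2|})$. Without this transformation the oscillatory integrals cannot be controlled by generic stationary-phase and integration-by-parts arguments; the paper combines these coordinates with the symbol bounds of Propositions~\ref{propqn1}--\ref{propqn2} and a partial integration governed by the control quantity $\Sigma(s)$ of \eqref{Sigmadef} to prove Proposition~\ref{dyadicprop2}, which is the heart of the matter.
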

The restriction to time $t=1$ is not essential, since $G$ is homogenous with respect to
the dilation $\delta_r:(x,u)\mapsto (rx,r^2u),\ r>0$, and hence one can deduce the case
$t\not=1$ by the case $t=1$. As we mentioned before, the assertion for
$\cos(\sqrt{G})(1+G)^{-\alpha/2}$ follows immediately. For the operator
$\sin(t\sqrt{G})G^{-1/2}(1+G)^{-(\alpha-1)/2}$, we use that it suffices to show the
assertion for $\eta(G)\sin(t\sqrt{G})G^{-1/2}G^{-(\alpha-1)/2}$, where $\eta$ is a smooth
function supported away from the origin (see Proposition \ref{reduclocalpart} in Section
\ref{reduclocalpartsection}).\absatz

We simplify our notation by defining
\begin{equation*}
    m^{\alpha}(G):=\exp(i\sqrt{G})(1+G)^{-\alpha/2}. \index{malpha}
\end{equation*}
Furthermore, we fix a constant $\const_1>0$. \index{const_1}

Throughout our calculations it turned out that it is very useful to compare our results
and formulas with the results and formulas that have been derived by Müller and Stein.
The topological dimension of the Heisenberg group $\H_m$ is $2m+1$ and the critical index
$\alpha(d,p)=2m\n{1/p-1/2}$ is $m$, for $p=1$. In our work the dimension of $\R^2$ can be
written as $2 \times 1/2+1$ and the critical index $\alpha(d,p)=\n{1/p-1/2}$ is $1/2$,
for $p=1$. Therefore, we set $m$ to be equal to $1/2$,
\[m:=1/2.\index{m}\]
In {\sc Remarks} we consider formulas for $L$ on $\H_m$ and only there we take $m$ to be
in $\N$. If $A$ is some term with respect to $G$, we denote the corresponding term for
$L$ on $\H_m$ by $A^\H$.

\subsubsection*{A short sketch of the proof of Theorem \ref{theorem2}}

By standard interpolation arguments, it suffices to prove the case $p=1$, hence we show
that for all $\alpha>1/2$ the integral kernel of $m^\alpha(G)$ has bounded Schur norm. We
can restrict to high frequencies in the spectrum of $G$, and instead of $m^\alpha(G)$ we
are allowed to study the operator $h^\alpha(G)$, with
\[h^\alpha(\xi):=\eta_N(\xi) \ \xi^{-\alpha/2} \ e^{i\sqrt{\xi}}\]
and $\eta_N$ smooth and supported in $\menge{\xi \in \R^+}{\xi\geq N}$.

Let $\X_B(x',u',x,u):=1$ for all $(x,u)\in B(x',2\const_0)\times
B(u',2\const_0(2+\n{x'})$ and $0$ otherwise. By using the finite speed of propagation
(see Proposition \ref{finitewavespeed}), it suffices to show that the integral kernel
$\X_B K_{h^\alpha(G)}$ has bounded Schur norm. This is Proposition
\ref{reduclocalpart}.\absatz

In the next chapter we present an idea how one can proof our result also in the case that
$f$ is not supported in a strip $S_C$. This idea reads as follows. By scaling in $u$, we
can transform $G$ into the operator $\t{G}:=-\d{x}^2-x^2/x'^2\d{u}^2$. Now let $x'>>1$.
On the set $\n{x-x'}\leq 1,\ u\in \R$ the operator $\t{G}$ is elliptic and just a smooth
perturbation of the Laplacian. Hence it should be possible to use Fourier integral
operator methods to obtain estimates for solutions to the wave equation $\d{t}^2+\t{G}$
(see Seeger, Sogge, Stein \cite{seeger}). From these estimates one would get that
\[\sup_{\n{x'}\geq C,u'\in \R} \dnorm{K_{h^\alpha(G)}(x',u',\fa,\fa)}_{L_1}\]
is bounded, where $C$ is a constant.

We exchange $\X_B$ by a smooth variant $\t{\X}_B$ of $\X_B$. Let $\Omega:=S_{\const_1}$
. The proof of the theorem is then reduced to showing that
the operator
\[f\mapsto \int \t{\X}_{B} K_{h^\alpha(G)}(x',u',\fa,\fa) \; f(x',u') \; dx' \; du'\] is
bounded from $L_p(\Omega)$ to $L_p(\R^2)$, for every $1<p<\infty$. This is Proposition
\ref{reduclocalpart2}.

$G$ and $iU$ are strongly commuting operators. Their joint spectrum is the closure of the
union of rays
\begin{equation*}
  \ray_{n, \epsilon}:=
  \Bigmenge{(\epsilon\lambda,\tau)}{\tau=(2n+1)\lambda,\; \lambda>0},\quad \epsilon:=\pm 1,\ n \in
  \N_0.
\end{equation*}
By a dyadic decomposition of the joint spectrum, we write $h^\alpha(G)$ as a sum of
operators $H_{k,j}^\epsilon,\ k\in \Z,\ j\in \N_0,\ \epsilon=\pm 1$, where
$H_{k,j}^\epsilon$ is given by
\begin{equation*}
  \begin{split}
  H_{k,j}^\epsilon f=
  2^{-\alpha k} \sum_n \X_j(2n+1) \; \gamma_n^{\epsilon}(iU) \; (\P_n^\epsilon f),
  \end{split}
\end{equation*}
with $\gamma_n^\epsilon(\lambda):=\t{\X}_{2k-j}(\epsilon\lambda) \;
e^{i\sqrt{(2n+1)\norm{\lambda}}}$, $\X_j:=\X(2^{-j} \fa)$ and
$\t{\X}_{2k-j}:=\t{\X}(2^{-2k+j} \fa)$ cut off functions and $\P_n^\epsilon$ the spectral
projection operator that corresponds to the ray $\ray_{n,\epsilon}$ in the joint
spectrum. These projection operators and especially their integral kernels we study in
Chapter \ref{strichartzchapter}. If we denote the integral kernel of the operator
$H_{k,j}^{\epsilon}$ by $K_{k,j}^{\epsilon}$, then away from the diagonal
$K_{k,j}^{\epsilon}$ is given by
\begin{equation}
    \label{sketch1}
  K_{k,j}^\epsilon(x',0,x,u)=
  \frac{2^{-\alpha k}}{2\pi} \sum_{n=0}^{\infty} \X_j(2n+1) \int_{-\infty}^{\infty}
  P_n^\epsilon(x',0,x,u-s) \; \Phi_{k,j,n}^\epsilon(s)
  \; ds,
\end{equation}
where $P_n^\epsilon$ is the integral kernel of the projection operator $\P_n^\epsilon$,
and $\Phi_{\l,n}^\epsilon$ is given by an oscillatory integral. We can reduce to the case
$\epsilon=1$. By the method of stationary phase, $\Phi_{k,j,n}^1$ is roughly given by
$2^{3k/2-j} f(\sqrt{(m+n)/(2^{2k-j})}s^{-1}) \; e^{i(m+n)/s}$, with $f \in \Cnu(\R)$,
supported away from the origin. Now we choose $\alpha$ to be the critical index $m=1/2$.
To prove the theorem, it suffices to show that for every $\epsilon>0$ there exists a
constant $C_\epsilon$ with
\begin{equation}
    \label{sketch2}
  \sup\limits_{\n{x'}\leq 2\const_1} \; \sum_{j\in \N_0} \; \dnorm{\t{\X}_{B}K_{k,j}^1(x',0,\fa,\fa)}_{L_1}
  \leq C_{\epsilon} \; 2^{\epsilon k}.
\end{equation}
Because, having this assertion allows us to sum over all $k$, which gives us the desired
result for $h^{\alpha}(G)$. That the assertion (\ref{sketch2}) is true is stated in
Proposition \ref{dyadicprop2}.\absatz

For the proof of Proposition \ref{dyadicprop2}, we use many technical lemmata. Though,
there are two main observations that we want to mention. The first thing we need is an
explicit formula for $P_n^\epsilon$. In Chapter \ref{strichartzchapter} we derive
\begin{equation*}
  P_{n}(x',0,x,u/2)=C \; [Q_n-Q_{n-2}],
\end{equation*}
with
\begin{equation*}
    \begin{split}
    Q_n=\sum_{\l=0}^n \frac{\Gamma(\l+m+1)}{\Gamma(\l+1)}
    \frac{\Gamma(n-\l+m+1)}{\Gamma(n-\l+1)} \; e^{-i2\l\sigma}
    \; e^{in\sigma} \; \frac{(x²+x'²-iu)^{n/2}}{(x²+x'²+iu)^{n/2+m+1}}
    \end{split}
\end{equation*}
and $\sigma$ depending on $x,\ x'$ and $u$. Careful examinations rends that $Q_n$ behaves
like the sum of two terms of the form
\begin{equation*}
    \begin{split}
    Q_n^\pm=\frac{n^m}{\n{1-e^{-i2\sigma}}^{3/2}} \;
    e^{\pm in\sigma} \; \frac{(x²+x'²-iu)^{n/2}}{(x²+x'²+iu)^{n/2+m+1}}.
    \end{split}
\end{equation*}
\begin{bemerkung}
    On the Heisenberg group $\H_m$, Müller, Stein and Strichartz derived similar formulas.
    The corresponding convolution kernel $P^{\H}_{n}$ is given by
    \mbox{$P^{\H}_{n}=C_m[Q^{\H_m}_n-Q^{\H_m}_{n-1}]$}
    with
    \begin{equation*}
        Q^{\H}_n=i^{m+1}(-1)^n \frac{(n+m)!}{n!} \frac{(x²+y²-4iu)^n}{(x²+y²+4iu)^{n+m+1}}.
    \end{equation*}
    Observe that $\frac{(n+m)!}{n!}\sim n^{m}$.
\end{bemerkung}
For $x'=0$, the factor $e^{i\sigma}$ is equal to $i$ and hence $\n{1-e^{-i2\sigma}}$ is
just $2$. Roughly we get
\begin{equation*}
    \begin{split}
    Q_n^\pm(0,0,\fa,\fa)=C_m \; n^m \;
    i^{\pm n} \; \frac{(x²-iu)^{n/2}}{(x²+iu)^{n/2+m+1}}.
    \end{split}
\end{equation*}
This coincidence allows us to use many methods that were used by Müller and Stein in
their proof. Unfortunately, matters become very difficult when we choose $x'$ away from
$0$.\absatz

The second important observation we use is the following. By using our formulas we
derived for $Q_n$, and after some scaling, we see that the integral in (\ref{sketch1}) is
an oscillatory integral with phase function $s\mapsto \kphi(s) +2^{k-j}/s$, where $\kphi$
is roughly given by
\begin{equation*}
  \kphi(s)=\arctan\left(\frac{Rw-2xx_1(u-s)}{2Rxx_1+(u-s)w}\right),
\end{equation*}
with $R:=x^2+x_1^2$ and $w:=((x^2-x_1^2)^2+(u-s)^2)^{1/2}$. By
\begin{equation*}
  \begin{split}
  &X:=\frac{Rw-2xx_1(u-s)}{2Rxx_1+(u-s)w},\quad Y:=\frac{(R²+(u-s)²)w}{2Rxx_1+(u-s)w},\\
  &s:=s,\quad \psi(x,u,s):=(X,Y,s).
  \end{split}
\end{equation*}
we define new coordinates such that $\kphi(s)=\arctan(X)$ and $\d{s}\kphi(s)=X/Y$. For
$x_1=0$, these coordinates coincides with coordinates which were used by Müller and
Stein. On first sight, it is not clear that using them is really possible. The problem is
that the functional determinant of $\psi^{-1}$ can not be computed directly, or, to be
more exact, in a straight forward way.

Nevertheless, it is possible to find an expression for $\det(\psi^{-1})$. We can estimate
the integral in (\ref{sketch1}) by using these new coordinates and a refined partial
integration. We prove Proposition \ref{dyadicprop2}, which completes the proof of Theorem
\ref{theorem2}.
%
%
%
%
%
%
%
%
%
\newpage
\Section{The joint functional calculus for $G$ and $iU$} \label{strichartzchapter}

In \cite{strichartz} Strichartz regarded harmonic analysis on the Heisenberg group $\H_m$
as the \emph{joint spectral theory of the operator $L$ and the operator $iU$}, where
$U=\d{u}$ is the partial derivative with respect to the central variable $u$ of the
group. \index{U}

The joint spectrum of these operators is the \emph{Heisenberg fan}, which is the closure
of the union of rays $\ray_{n, \epsilon}:=
  \bigmenge{(\lambda,\tau)}{\tau=\epsilon\lambda/(m+2n),\; \lambda>0}.$
The spectral decomposition of a function $f$ in $L_2$ can be given as
\[f=\sum_{k,\epsilon} \pint \conv{f}{\phi_{\lambda,k,\epsilon}} \; d\lambda,\]
where the functions $\conv{f}{\phi_{\lambda,k,\epsilon}}$ are joint eigenfunctions of
$iU$ and $L$. The functions $\phi_{\lambda,k,\epsilon}$ can be explicitly calculated in
terms of Laguerre polynomials.

In this chapter we adapt the methods of Strichartz to our situation. Instead of the joint
spectrum of the sub-Laplacian $iU$ and $L$ we study the joint spectrum of $iU$ and $G$,
which also consists of rays in $\R \times \R^+$. Our main concern here is to derive
simple formulas for certain spectral projection operators belonging to these rays and, in
the end, to derive a simple formula for the integral kernel $m^\alpha(G)$.

\subsection{Spectral projection operators to rays}
\label{harmonicgrushin1}

Let $U:=\d{u}$. Since $G$ and $iU$ are essentially self-adjoint and strongly commuting
operators, they have a well defined joint spectrum. This spectrum consists of the union
of rays
\begin{equation*}
  \ray_{n, \epsilon}:=
  \Bigmenge{(\epsilon\lambda,\tau)}{\tau=(2n+1)\lambda,\; \lambda>0}
\end{equation*}
for $\epsilon:=\pm 1$, $n \in \N_0$ together with the limit ray $\ray_\infty
=\menge{(0,\tau)}{\tau\geq 0}$. Here $\epsilon\lambda$ refers to the spectrum $iU$ and
$\tau$ to the spectrum of $G$.\index{ray} In analogy to the Heisenberg group we will call
the closure of the union of rays $\ray_{n, \epsilon}$ the \emph{Gru\v{s}in fan}.

We write
\begin{equation*}
  G=(iU)(-iGU^{-1}).
\end{equation*}
The operator $iU$ is easy. The operator $-iGU^{-1}$ can be written as a sum over spectral
projection operators to rays.
\newpage
\begin{satz}
    \label{spektralzerlegung1}
  Let $m$ be a bounded measurable function on the Gru\v{s}in fan. Then for every $f\in
  \S$
  \begin{equation*}
    \begin{split}
    [m&(G,iU)f](x,u)=\sum_{\epsilon=\pm 1} \sum_{n=0}^\infty
    \pint m(\epsilon\lambda,(2n+1)\lambda) \; [\P_{\lambda,n,\epsilon}f](x,u) \;
    d\lambda\\
    \end{split}
  \end{equation*}
  with
  \begin{equation*}
    \begin{split}
    [\P_{\lambda,n,\epsilon}f](x,u)&:=\int \phi_{\lambda,n,\epsilon}(x',u',x,u) \; f(x',u') \; dx' \;
    du',\\
    \phi_{\lambda,n,\epsilon}(x',u',x,u)&:=\frac{\lambda^{1/2}}{2\pi} \;
    e^{-i\epsilon \lambda(u-u')}
    \; h_n(\lambda^{1/2} x') \; h_n(\lambda^{1/2} x).
    \end{split}
  \end{equation*}\index{Pn}
  and
  \begin{equation*}
    h_n(x):=(\sqrt{\pi}\;n!\;2^n)^{-1/2} \; (\d{x}-x)^n e^{-x²/2}
  \end{equation*}
  the $n$-th Hermite function.
\end{satz}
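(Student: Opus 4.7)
The plan is to reduce the joint spectral problem to a family of one-dimensional Hermite operators by taking a partial Fourier transform in the central variable $u$, then expand in the Hermite basis and reassemble.

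First I would introduce the partial Fourier transform $\hat f(x,\xi):=\int e^{-iu\xi}f(x,u)\,du$. Because $G$ is translation invariant in $u$ and involves only $\partial_u^2$, conjugation by this Fourier transform turns $G$ into the family of one-dimensional operators
\begin{equation*}
    H_\xi := -\partial_x^2+x^2\xi^2,
\end{equation*}
acting on functions of $x$ alone, while $iU=i\partial_u$ becomes multiplication by $-\xi$. Strong commutativity and essential self-adjointness of $G$ and $iU$ thus reduce the joint functional calculus to the calculus of $H_\xi$ for almost every $\xi$, with $iU$ contributing only a multiplication factor in the $\xi$-variable.

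Next I would invoke the classical spectral theory of the (scaled) Hermite operator: for $\lambda=|\xi|>0$ the functions $x\mapsto \lambda^{1/4}h_n(\lambda^{1/2}x)$ form an orthonormal basis of $L^2(\R)$ consisting of eigenfunctions of $H_\xi$ with eigenvalues $(2n+1)\lambda$. Writing $\xi=-\epsilon\lambda$ with $\epsilon=\pm 1$ and $\lambda>0$ splits the $\xi$-integral into two half-lines corresponding to the two families of rays $\ray_{n,\epsilon}$. Expanding $\hat f(\cdot,\xi)$ in the Hermite basis and applying the functional calculus slice-by-slice, I obtain
\begin{equation*}
    [m(G,iU)f](x,u)=\sum_{\epsilon=\pm1}\sum_{n=0}^\infty\int_0^\infty m(\epsilon\lambda,(2n+1)\lambda)\,[\P_{\lambda,n,\epsilon}f](x,u)\,d\lambda,
\end{equation*}
where the operators $\P_{\lambda,n,\epsilon}$ act by pairing $\hat f(\cdot,-\epsilon\lambda)$ against the Hermite basis function.

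Finally I would unwind the Fourier transform to identify the integral kernel. Writing $\hat f(x',-\epsilon\lambda)=\int e^{i\epsilon\lambda u'}f(x',u')\,du'$ and combining the phase factor $e^{-i\epsilon\lambda u}$ coming from the inverse Fourier transform with the two Hermite factors and the Jacobian $\lambda^{1/2}/(2\pi)$ yields exactly $\phi_{\lambda,n,\epsilon}(x',u',x,u)$ as stated. The only delicate point, and what I expect to be the main obstacle for a fully rigorous write-up, is justifying the interchange of sums and integrals: the expansion in Hermite functions converges in $L^2$ for each $\xi$, but for $f\in\S$ one needs uniform decay in both $\xi$ and $n$ to legitimate the Fubini/Tonelli manipulations and to ensure the limit ray $\ray_\infty$ carries no spectral mass. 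This is handled by the rapid decay of $\hat f$ and of Hermite coefficients of Schwartz functions, together with dominated convergence using the bound $\dnorm{m}_\infty$.
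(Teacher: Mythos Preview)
Your proposal is correct and follows essentially the same route as the paper: partial Fourier transform in $u$ reduces $G$ to the scaled Hermite operator, expansion in the Hermite eigenbasis together with Fourier inversion yields the decomposition, and identification of the joint eigenfunctions gives the stated formula via the spectral theorem. The paper's proof is actually terser than yours on the convergence and Fubini issues, so your discussion of the rapid decay of Hermite coefficients for Schwartz functions is a welcome addition rather than a deviation.
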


\begin{proof}


Observe that under the Fourier transform with respect to the variable $u$ the operator
$G$ is the Hermite operator $H_\lambda:=-(\d{x}^2-\lambda x^2)$. By using the spectral
decomposition of the Hermite operator and the Fourier inversion formula with respect to
the variable $u$, we get for any Schwartz function $f$ on $\R^2$
\begin{equation*}
  \begin{split}
  &f(x,u)
  =\sum_{\epsilon=\pm 1} \sum_{n=0}^\infty
  \pint \int \phi_{\lambda,n,\epsilon}(x',u',x,u) \; f(x',u') \; dx' \; du' \; d\lambda,
  \end{split}
\end{equation*}
with
\begin{equation*}
  \phi_{\lambda,n,\epsilon}(x',u',x,u):=\frac{\lambda^{1/2}}{2\pi} \;
  e^{-i\epsilon \lambda(u-u')}
  \; h_n(\lambda^{1/2} x') \; h_n(\lambda^{1/2} x).
\end{equation*}
The function $\Phi_{\lambda,n,\epsilon}(x,u):=e^{-i\epsilon \lambda u}
  \; h_n(\lambda^{1/2} x)$
is a joint eigenfunction of $G$ and $iU$ with
\begin{equation*}
  \begin{split}
  G\Phi_{\lambda,n,\epsilon}&=(2n+1)\lambda \; \Phi_{\lambda,n,\epsilon}\\
  iU\Phi_{\lambda,n,\epsilon}&=\epsilon\lambda \; \Phi_{\lambda,n,\epsilon}.
  \end{split}
\end{equation*}
Hence
\begin{equation*}
  \frac{G}{iU}\Phi_{\lambda,n,\epsilon}=\epsilon(2n+1) \; \Phi_{\lambda,n,\epsilon}
\end{equation*}
the proposition follows by the spectral theorem.
\end{proof}
This proposition implies the next corollary.

\begin{korollar}
    \label{spektralzerlegung2}
  Let $m$ be a bounded measurable function then for every $f\in\S$
  \begin{equation*}
    \begin{split}
    [m(-iGU^{-1})f](x,u)
    =\sum_{\epsilon=\pm 1} \sum_{n=0}^\infty
    m(\epsilon(2n+1)) \; [\P_{n,\epsilon}f](x,u)
    \end{split}
  \end{equation*}
  with
  \begin{equation*}
    \begin{split}
    &[\P_{n,\epsilon}f](x,u)=\\
    &\pint
    \int
    \frac{\lambda^{1/2}}{2\pi} \;
    e^{-i\epsilon \lambda(u-u')}
    \; h_n(\lambda^{1/2} x') \; h_n(\lambda^{1/2} x) \; f(x',u') \; dx' \; du'\;
    d\lambda.
    \end{split}
  \end{equation*}
\end{korollar}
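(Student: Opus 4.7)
The plan is to deduce the corollary from Proposition 3.1 by functorial transport: feed Proposition 3.1 a function on the Gru\v{s}in fan whose values realize those of $m(-iGU^{-1})$ on the joint eigenspaces.

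First, I would define the bounded Borel function $\tilde m$ on the Gru\v{s}in fan by
\[
  \tilde m(\epsilon\lambda,(2n+1)\lambda) := m(\epsilon(2n+1)),
\]
which is well defined (the right-hand side depends only on the ray $\ray_{n,\epsilon}$ the point lies on, not on the parameter $\lambda>0$) and satisfies $\|\tilde m\|_\infty \leq \|m\|_\infty$. Next I would verify that $\tilde m(G,iU)=m(-iGU^{-1})$. On the joint eigenfunctions $\Phi_{\lambda,n,\epsilon}(x,u)=e^{-i\epsilon\lambda u}h_n(\lambda^{1/2}x)$ produced in the proof of Proposition 3.1 we have $iU\Phi_{\lambda,n,\epsilon}=\epsilon\lambda\,\Phi_{\lambda,n,\epsilon}$ and $G\Phi_{\lambda,n,\epsilon}=(2n+1)\lambda\,\Phi_{\lambda,n,\epsilon}$, so
\[
  -iGU^{-1}\Phi_{\lambda,n,\epsilon}=\frac{G}{iU}\Phi_{\lambda,n,\epsilon}=\frac{(2n+1)\lambda}{\epsilon\lambda}\Phi_{\lambda,n,\epsilon}=\epsilon(2n+1)\Phi_{\lambda,n,\epsilon}.
\]
Hence both $m(-iGU^{-1})$ and $\tilde m(G,iU)$ act as multiplication by $m(\epsilon(2n+1))$ on the spectral subspace associated to $\ray_{n,\epsilon}$, and the joint spectral theorem identifies the two operators (the limit ray $\ray_\infty$, where $U$ has no inverse, carries zero joint spectral mass for $f\in\S$ after a standard density argument, or can be absorbed into the formally identical fan description used in Proposition 3.1).

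Then I apply Proposition 3.1 with $\tilde m$ in place of $m$. Since the coefficient $m(\epsilon(2n+1))$ is independent of $\lambda$, it may be pulled out of the $\lambda$-integral:
\[
  [m(-iGU^{-1})f](x,u)=\sum_{\epsilon=\pm 1}\sum_{n=0}^{\infty} m(\epsilon(2n+1))\int_0^{\infty}[\P_{\lambda,n,\epsilon}f](x,u)\,d\lambda.
\]
Defining $\P_{n,\epsilon}f := \int_0^\infty \P_{\lambda,n,\epsilon}f\,d\lambda$ and applying Fubini (justified because $f\in\S$ makes the inner $(x',u')$-integral a Schwartz function of $\lambda$, and the Hermite factors decay rapidly) gives precisely the formula claimed.

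The only real subtlety I foresee is the bookkeeping around $U^{-1}$ and the limit ray $\ray_\infty$: one must ensure the interchange of the summation in $n$, the $\lambda$-integration, and the $(x',u')$-integration is legitimate for Schwartz $f$, and that the part of the spectrum where $\lambda=0$ makes no contribution. Both are routine once one notes that $\P_{\lambda,n,\epsilon}f$ is, after Fourier transforming in $u$, a Hermite-projection of the partial Fourier transform $\hat f(x',\epsilon\lambda)$ times $\lambda^{1/2}$, which is rapidly decaying in $(n,\lambda)$ uniformly in $(x,u)$.
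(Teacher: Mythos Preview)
Your proposal is correct and is exactly the approach the paper has in mind: the paper simply states ``This proposition implies the next corollary'' without further detail, and the eigenvalue identity $\frac{G}{iU}\Phi_{\lambda,n,\epsilon}=\epsilon(2n+1)\Phi_{\lambda,n,\epsilon}$ you use is already computed in the proof of Proposition~\ref{spektralzerlegung1}. Your write-up is in fact more careful than the paper's, spelling out the choice of $\tilde m$ and the Fubini justification that the paper leaves implicit.
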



The operator $\P_{n,\epsilon}$ will be called the \emph{spectral projection operator to
the ray $\ray_{n, \epsilon}$}.

Observe that with $S(x,u):=(x,-u)$
\[[\P_{n,-1}f](x,u)=-[\P_{n,1}(f\nach S)](x,-u).\]
Hence it suffices to study $\P_{n,1}$. We assume now $\epsilon=1$ and define
$\P_n:=\P_{n,1}$ to make the notation simpler.

For $x,\ u \in \R$ and $f \in \S$ supported away form $u$, we see by using partial
integration that $[\P_{n}f](x,u)$ can be given by an absolutely convergent integral. By
Fubini's theorem we get
\begin{equation*}
    \begin{split}
    &[\P_{n}f](x,u)=\\
    &\int
    \pint
    \frac{\lambda^{1/2}}{2\pi} \;
    e^{-i \lambda(u-u')}
    \; h_n(\lambda^{1/2} x') \; h_n(\lambda^{1/2} x) \; d\lambda \; f(x',u') \; dx' \; du'.\\
    \end{split}
  \end{equation*}
We define now
\begin{equation}
    P_{n}(x',u',x,u):=\pint
    \frac{\lambda^{1/2}}{2\pi} \;
    e^{-i\lambda(u-u')}
    \; h_n(\lambda^{1/2} x') \; h_n(\lambda^{1/2} x) \;  d\lambda
\end{equation}
and
\begin{equation}
    \phi_{\lambda,n}(x',u',x,u):=\frac{\lambda^{1/2}}{2\pi} \;
    e^{-i\lambda(u-u')}
    \; h_n(\lambda^{1/2} x') \; h_n(\lambda^{1/2} x).
\end{equation}
Then
\[ [\P_{n}f](x,u)=\int
    P_{n}(x',u',x,u) \; f(x',u') \; dx' \; du',\]
for $f\in\S$, supported away form $u$.
\begin{bemerkung}
    Strichartz showed that the projection operators $\P^{\H}_{n}$ for $iU$ and $L$
    are Calderon-Zygmund operators and that
    \[\P^{\H}_{n}=c_n \; \delta_0 + p.v.P^{\H}_{n}.\]
    $P^{\H}_{n}$ is a kernel of similar type than $P_{n}$ and $p.v.P^{\H}_{n}$
    is the operator with kernel $P^{\H}_{n}$ in the principal value sense.

    The same should also be true here, but we do not need this additional information.
\end{bemerkung}
We compute $P_{n}$ explicitly. Recall the Mehler formula, according to which for all $x,y
\in \R$ and $z \in \C,\; \norm{z}<1$
\begin{equation*}
  \begin{split}
  \sum_{n=0}^\infty &h_n(x) \; h_n(y) \; z^n=
  \frac{1}{\sqrt{\pi}} \sum_{n=0}^\infty \frac{H_n(x)H_n(y) z^n}{2^n n!} \; e^{-\dz{x²+y²}}\\
  &=\frac{1}{\sqrt{\pi}\sqrt{1-z²}}e^{\tfrac{2xyz-z²(x²+y²)}{1-z²}} \; e^{-\dz{x²+y²}}
  \end{split}
\end{equation*}
holds. Thus
\begin{equation*}
  \begin{split}
  &\sum_{n=0}^\infty r^n\pint \phi_{\lambda,n}(x',u',x,u) \; d\lambda\\
  &=\sum_{n=0}^\infty r^n\pint \frac{\lambda^{1/2}}{2\pi} \; e^{-i \lambda(u-u')}
  \; h_n(\lambda^{1/2} x) \; h_n(\lambda^{1/2} x') \; d\lambda\\
  &=\frac{1}{2\pi^{3/2}} \pint \frac{\sqrt{\lambda}}{\sqrt{1-r²}}
  e^{\tfrac{2r\lambda xx'-r²(\lambda x²+\lambda x'²)}{1-r²}} \;
  e^{-\lambda\tfrac{x²+x'²}{2}}
  e^{-i\lambda(u-u')}\; d\lambda\\
  &=\frac{1}{2\pi^{3/2}} \pint \frac{\sqrt{\lambda}}{\sqrt{1-r²}}
  e^{-\lambda\big[(x²+x'²)\tfrac{1+r²}{2(1-r²)}-\tfrac{2rxx'}{1-r²}+i(u-u')\big]}
  \; d\lambda.
    \end{split}
\end{equation*}
For small $r$ this integral converges since, $x²+x'²\geq 2xx'$.
\begin{bemerkung}
  For $x'=0=u'$ we obtain
  \[\frac{1}{2\pi^{3/2}} \pint \frac{\sqrt{\lambda}}{\sqrt{1-r²}}
  e^{-\lambda[x²\tfrac{1+r²}{2(1-r²)}+iu]}\; d\lambda.\]
  On the Heisenberg group $\H_1$, Strichartz established the following formula
  \[\sum_{n=0}^\infty r^n\pint \phi_{\H,\lambda,n}(z,u) \; d\lambda
  =\frac{1}{4\pi²} \pint \frac{\lambda}{1-r} \;
  e^{-\lambda[\norm{z}²\tfrac{1+r}{4(1-r)}+i u]}\; d\lambda,\]
  with $\norm{z}²=x²+y²$.
\end{bemerkung}
The $\lambda$-integration can be computed easily, since the Gamma function has the
integral representation
\begin{equation*}
  \Gamma(z)=\pint t^{z-1} \;e^{-t}\; dt,
\end{equation*}
for $\re(z)>0$. We get
\begin{equation*}
    \begin{split}
  &\sum_{n=0}^\infty r^n\pint \phi_{\lambda,n}(x',u',x,u) \; d\lambda
  =\frac{\Gamma(3/2)2^{1/2}}{\pi^{3/2}} (1-r²) \\
  &\big( (x²+x'²)+2i(u-u')+r²((x²+x'²)-2i(u-u'))-4rxx'\big)^{-3/2}.
  \end{split}
\end{equation*}
Put
\begin{equation}
  \label{phiepsilondef}
  \begin{split}
  \phi&(x',u',x,u):=\frac{\Gamma(3/2)2^{1/2}}{\pi^{3/2}} \; (1-r²)\\
  &\times \big( (x²+x'²)+2i(u-u')+r²((x²+x'²)-2i(u-u'))-4rxx'\big)^{-3/2}
  \end{split}
\end{equation}
Since our operator is translation invariant with respect to the $u$-direction, we only
have to consider the case $u'=0$. In the following, we are interested in
$P_{n}(x',0,x,u/2)$, thus we replace $2u$ by $u$.

We use the abbreviations
\begin{equation}
    \begin{split}
    R&:=x²+x'²,\quad z:=R+iu,\quad p:=4xx', \index{R}
    \end{split}
\end{equation}
and
\begin{equation}
    \begin{split}
    w&:=\sqrt{x^4+x'^4-2(xx')^2+u^2}=\sqrt{R²-4(xx')²+u²},\\ \index{w}
    a&:=\sqrt{(x^2+x'^2)^2+u^2}=\sqrt{R^2+u^2}. \index{a}
    \end{split}
\end{equation}
With these definitions we get $\phi=C_m \; (1-r²)( z+r²\ab{z}-pr)^{-m-1},$ with $C_m$ a
constant. Recall that $m$ is always $1/2$, except when we speak about the Heisenberg
group $\H_m$.
\begin{lemma}
    \label{bnlemma}
  Let $\alpha \in \N/2$ then
  \begin{equation*}
    \frac{\d{r}^n}{n!}|_{r=0} (z+r²\ab{z}-rp)^{-\alpha}=C_{\alpha} \;b_{n,\alpha} \;
    \frac{\ab{z}^{n/2}}{z^{\alpha+n/2}} \; e^{in\sigma}
  \end{equation*}
  holds with
  \begin{equation*}
    \begin{split}
    b_{n,\alpha}&=\sum_{\l=0}^{n} \frac{\Gamma(\l+\alpha)}{\Gamma(\l+1)}
    \frac{\Gamma(n-\l+\alpha)}{\Gamma(n-\l+1)} \; e^{-i2\l\sigma},\\
    e^{i\sigma}&=\left(\frac{p}{2\norm{z}}+i\sqrt{1-\frac{p²}{4\norm{z}²}}\right)\\
    &=\frac{2xx'+i\sqrt{x^4+x'^4-2(xx')²+u^2}}{\sqrt{(x²+x'²)²+u²}}
    =\frac{2xx'+iw}{\sqrt{R²+u²}}
    \end{split}
    \index{sigma}
  \end{equation*}
  and $C_\alpha$ a constant only depending on $\alpha$.
\end{lemma}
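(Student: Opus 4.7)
The plan is to factor the quadratic $z+r^2\ab{z}-rp$ in $r$ and expand each resulting linear factor by a binomial series. A direct computation gives the key identity
\begin{equation*}
  4\norm{z}^2-p^2
  = 4(R^2+u^2)-16(xx')^2
  = 4\bigl((x^2-x'^2)^2+u^2\bigr)
  = 4w^2,
\end{equation*}
so that $w\geq 0$ is real and the discriminant of $\ab{z}r^2-pr+z$ (as a polynomial in $r$) equals $-4w^2\leq 0$. Hence $z+r^2\ab{z}-rp=\ab{z}(r-r_+)(r-r_-)$ with roots $r_\pm:=(p\pm 2iw)/(2\ab{z})$. Since $p^2+4w^2=4\norm{z}^2$, the number $e^{i\sigma}:=(p+2iw)/(2\norm{z})$ has modulus one and coincides with the formula for $e^{i\sigma}$ claimed in the lemma. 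Using $\norm{z}^2=z\ab{z}$ one then obtains the convenient form
\begin{equation*}
  r_\pm=\frac{\norm{z}}{\ab{z}}\,e^{\pm i\sigma}.
\end{equation*}

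Next, write $r-r_\pm=-r_\pm(1-r/r_\pm)$ and apply the binomial series $(1-s)^{-\alpha}=\sum_{k\geq 0}\Gamma(\alpha+k)\,s^k/(\Gamma(\alpha)\,k!)$, valid for $\norm{s}<1$. For $r$ in a small disk around $0$ we have $\norm{r/r_\pm}=\norm{r\ab{z}}/\norm{z}<1$, so both expansions converge and can be multiplied termwise. Collecting the coefficient of $r^n$ and using the Vieta relation $r_+r_-=z/\ab{z}$ (equivalently $1/r_\pm=(\ab{z}/z)\,r_\mp$), one obtains, up to an overall constant $C_\alpha$ absorbing the branch ambiguity in $\ab{z}^{-\alpha}(-r_+)^{-\alpha}(-r_-)^{-\alpha}=z^{-\alpha}$,
\begin{equation*}
  \frac{\d{r}^n}{n!}\Big|_{r=0}(z+r^2\ab{z}-rp)^{-\alpha}
  = \frac{C_\alpha}{\Gamma(\alpha)^2}\,\frac{\ab{z}^{n/2}}{z^{\alpha+n/2}}
  \sum_{\ell=0}^n \frac{\Gamma(\alpha+\ell)\,\Gamma(\alpha+n-\ell)}{\Gamma(\ell+1)\,\Gamma(n-\ell+1)}\,e^{i(2\ell-n)\sigma},
\end{equation*}
where the simplification $r_+^{-(n-\ell)}r_-^{-\ell}=(\ab{z}/z)^n(\norm{z}/\ab{z})^n\,e^{i(2\ell-n)\sigma}=(\ab{z}/z)^{n/2}\,e^{i(2\ell-n)\sigma}$ has been used.

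To reach the precise form stated in the lemma, I substitute $\ell\mapsto n-\ell$ in the sum: the product of $\Gamma$-factors is symmetric and invariant, while $e^{i(2\ell-n)\sigma}$ becomes $e^{i(n-2\ell)\sigma}=e^{in\sigma}e^{-i2\ell\sigma}$. Factoring out the global phase $e^{in\sigma}$ leaves exactly $b_{n,\alpha}$ inside the summation. The main obstacle throughout is the bookkeeping of complex branches: each of $\ab{z}^{-\alpha},\ (-r_\pm)^{-\alpha},\ (1-r/r_\pm)^{-\alpha}$ is multivalued, and their product must be reduced to the single-valued expression $z^{-\alpha}(\ab{z}/z)^{n/2}e^{in\sigma}$. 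Since $\alpha\in\N/2$ the worst branches occurring are half-integer; fixing principal branches and comparing both sides at $r=0$, where the left-hand side is simply $z^{-\alpha}$, pins down the overall constant $C_\alpha$ and justifies the termwise manipulations above.
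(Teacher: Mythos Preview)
Your proof is correct and follows essentially the same route as the paper's own argument: both factor the quadratic $z+r^2\bar z-rp$ over $\C$, identify the roots with the unimodular numbers $e^{\pm i\sigma}$ (after a suitable normalization), and then read off the Taylor coefficient in $r$ as a Cauchy product of two binomial expansions. The paper first rescales $\tilde r:=r e^{-i\vartheta}$ with $e^{i\vartheta}=z/|z|$ so that the roots become exactly $e^{\pm i\sigma}$, and then applies the Leibniz rule to $(r-e^{i(\vartheta+\sigma)})^{-\alpha}(r-e^{i(\vartheta-\sigma)})^{-\alpha}$; you instead keep the original variable $r$ with roots $r_\pm=|z|e^{\pm i\sigma}/\bar z$ and expand each factor by a binomial series---this is the same computation in slightly different packaging.
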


\begin{proof}
  Put $e^{i\vartheta}:=\Ri{z}$ and $\t{r}:=r e^{-i\vartheta}$. Then
  \begin{equation*}
    \begin{split}
    (z+r²\ab{z}-rp)^{-\alpha}
    =&z^{-\alpha}\left(1+\t{r}²-\t{r}\tfrac{p}{\norm{z}}\right)^{-\alpha}\\
    =&z^{-\alpha}[(\t{r}-\beta_1)(\t{r}-\beta_2)]^{-\alpha}.
    \end{split}
  \end{equation*}
  The roots $\beta_1$ and $\beta_2$ are given by
  $\beta_1=\tfrac{p}{2\norm{z}}+i\sqrt{1-\tfrac{p²}{4\norm{z}²}}=e^{i\sigma}$
  and $\beta_2=\ab{\beta_1}$. We have $\norm{\beta_1}=1=\norm{\beta_2}$ and
  \begin{equation*}
    (z+r²\ab{z}-rp)^{-\alpha}=z^{-\alpha}\; e^{i2\vartheta\alpha} \;
    [(r-e^{i(\vartheta+\sigma)})(r-e^{i(\vartheta-\sigma)})]^{-\alpha}.
  \end{equation*}
  Thus
  \begin{equation*}
    \begin{split}
    \frac{\d{r}^n}{n!} |_{r=0} & (z+r²\ab{z}-rp)^{-\alpha}\\
    =&\frac{z^{-\alpha} e^{i2\vartheta \alpha}}{n!} \Big[
    \sum_{\l=0}^{n} \frac{n!}{\l!(n-\l)!}
    (-1)^{\l} \; \alpha \fa \dots \fa (\alpha+\l-1) \;
    (-e^{i(\vartheta+\sigma)})^{-\alpha-\l}\\
    &\times (-1)^{n-\l}\;\alpha\fa\dots\fa(\alpha+(n-\l)-1)\;
    (-e^{i(\vartheta-\sigma)})^{-\alpha-(n-\l)}\Big]\\
    =&(-1)^{2\alpha} \; \Gamma(\alpha)^{-2} \; z^{-\alpha} \; e^{-i(\vartheta-\sigma)n} \;
    \sum_{\l=0}^{n} \frac{\Gamma(\l+\alpha)}{\Gamma(\l+1)}
    \frac{\Gamma(n-\l+\alpha)}{\Gamma(n-\l+1)} \; e^{-i2\l\sigma}\\
    =&C_\alpha \sum_{\l=0}^{n} \frac{\Gamma(\l+\alpha)}{\Gamma(\l+1)}
    \frac{\Gamma(n-\l+\alpha)}{\Gamma(n-\l+1)} \; e^{-i2\l\sigma}\; z^{-\alpha} \; e^{-i(\vartheta-\sigma)n} \;\\
    =&C_\alpha \; b_{n,\alpha} \; \frac{\ab{z}^{n/2}}{z^{\alpha+n/2}} \; e^{i\sigma n}.
    \end{split}
  \end{equation*}
\end{proof}
\noindent We are only interested in $\alpha=3/2$ and $\alpha=1/2$. Thus we put
\begin{equation}
    q_{n,m}:=b_{n,3/2} \mbox{ and } q_{n,m-1}:=b_{n,1/2}.
\end{equation}
Furthermore, we observe that
\begin{equation*}
  1-e^{i2\sigma}=\frac{R²+u²-(4(xx')²+i4xx'w-w²)}{R²+u²}=2\frac{w²-i2xx'w}{R²+u²},
\end{equation*}
and thus
\begin{equation*}
  \begin{split}
  \norm{1-e^{i2\sigma}}=2\frac{\sqrt{w^4+4(xx')²w²}}{R²+u²}
  =2w\frac{\sqrt{w²+4(xx')²}}{R²+u²}=2\frac{w}{a}.
  \end{split}
\end{equation*}
For $z=R+iu$, we get
\begin{equation}
    \label{arctan1}
    \begin{split}
    \Big(&\frac{R-iu}{R+iu}\Big)^{1/2} \; e^{i\sigma}=\Big(\frac{\ab{z}}{z}\Big)^{1/2} \; e^{i\sigma}=
    \Big(\frac{\ab{z}^2}{\n{z}^2}\Big)^{1/2} \; e^{i\sigma}
    =\frac{\ab{z}}{\n{z}} \; e^{i\sigma}
    =\frac{(R-iu)(2xx'+iw)}{\n{z}^2}\\
    &=\frac{R2xx'+uw+i(Rw-2xx'u)}{\n{z}^2}
    =\frac{\n{z}^2}{\n{z}^2} \; \exp\Big(i\arctan\Big(\frac{Rw-2xx'u}{R2xx'+uw}\Big)\Big)\\
    &=\exp\Big(i\arctan\Big(\frac{Rw-2xx'u}{R2xx'+uw}\Big)\Big).
    \end{split}
\end{equation}
$\arctan$ denotes the branch of $\tan^{-1}$ taking values in $\intaa{0,\pi}$, since
\mbox{$\n{Rw} \geq \n{2xx'u}$} and thus the imaginary part is always positive. We also
get
\begin{equation}
    \label{arctan2}
    \Big(\frac{R-iu}{R+iu}\Big)^{1/2} \; e^{-i\sigma}=
    \exp\Big(-i\arctan\Big(\frac{Rw+2xx'u}{R2xx'-uw}\Big)\Big),
\end{equation}
where $\arctan$ denotes the same branch of $\tan^{-1}$ taking values in $\intaa{0,\pi}$.
For the integral kernel $P_n$ of $\P_{n}$ we get
\begin{equation}
  P_{n}(x',0,x,u/2)=\pint \phi_{\lambda,n}(x',0,x,u/2) \; d\lambda=C \; [Q_n-Q_{n-2}].
\end{equation}
For $Q_n$ and with $m=1/2$ we have the following formula
\setcounter{Qnqn}{\value{equation}}
\begin{equation}
    \label{qndefinition}
    \begin{split}
    Q_n=Q_n(x',0,x,u)=&\sum_{\l=0}^n \frac{\Gamma(\l+m+1)}{\Gamma(\l+1)}
    \frac{\Gamma(n-\l+m+1)}{\Gamma(n-\l+1)} \; e^{-i2\l\sigma}\\
    &\times\; e^{in\sigma} \; \frac{(x²+x'²-iu)^{n/2}}{(x²+x'²+iu)^{n/2+m+1}},
  \end{split}
\end{equation}
with
\begin{equation}
  \begin{split}
  e^{i\sigma}
  &=\frac{2xx'+i\sqrt{x^4+x'^4-2(xx')²+u^2}}{\sqrt{(x²+x'²)²+u²}}
  =\frac{2xx_1+iw}{a}.
  \end{split}
\end{equation}

%
%
For completeness we also want to state another formula for the projection operators. It
is based on the following observation
\begin{lemma}
  Let $f,g \in \Cu$ with $g^{(n)}(r)|_{r=0} =0$ for all $n>2$. Then
  \begin{equation*}
    \frac{1}{n!} (f\nach g)^{(n)}|_{r=0}=
    \sum_{\l=0}^{\gaussk{n/2}} \frac{1}{2^\l \l! (n-2\l)!} \; (g')^{n-2\l} \; (g'')^\l
    \; f^{(n-\l)}\nach g|_{r=0}.
  \end{equation*}
\end{lemma}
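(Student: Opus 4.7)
The plan is to recognize the identity as the specialization of Fa\`a di Bruno's formula to a function $g$ whose derivatives beyond order two vanish at $r = 0$. First I would invoke Fa\`a di Bruno in its set-partition form,
\begin{equation*}
    (f \nach g)^{(n)}(r) = \sum_{\pi} f^{(|\pi|)}(g(r)) \prod_{B \in \pi} g^{(|B|)}(r),
\end{equation*}
where $\pi$ ranges over all set partitions of $\{1, \ldots, n\}$, $|\pi|$ is the number of blocks, and $|B|$ the size of a block $B$. This identity is standard and admits a short proof by induction on $n$ using the product and chain rules, which I would either quote from a reference or include as a single inductive check.

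Next I would evaluate at $r = 0$. Under the hypothesis $g^{(k)}(0) = 0$ for all $k > 2$, every partition that contains a block of size $\geq 3$ drops out of the sum. The only surviving partitions are those whose blocks all have size $1$ or $2$. If such a partition has exactly $\l$ blocks of size $2$, then it has $n - 2\l$ blocks of size $1$ and $n - \l$ blocks total, and its contribution at $r=0$ is
\begin{equation*}
    f^{(n-\l)}(g(0)) \; (g'(0))^{n-2\l} \; (g''(0))^{\l}.
\end{equation*}
The index $\l$ ranges over $0, 1, \ldots, \gaussk{n/2}$.

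Finally I would perform the combinatorial count: the number of set partitions of $\{1, \ldots, n\}$ with exactly $\l$ blocks of size $2$ (and the remaining $n-2\l$ blocks singletons) equals
\begin{equation*}
    \binom{n}{2\l} \cdot \frac{(2\l)!}{2^{\l} \l!} = \frac{n!}{(n-2\l)! \; \l! \; 2^{\l}},
\end{equation*}
obtained by first choosing which $2\l$ elements will be paired and then counting the $(2\l-1)!!$ possible pairings of them. Collecting these contributions and dividing by $n!$ produces exactly the factor $\frac{1}{2^{\l}\, \l!\, (n-2\l)!}$ appearing in the claimed identity.

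The main obstacle is essentially bookkeeping: one must keep the combinatorial count of pairings straight and make sure the indices on $f^{(n-\l)}$, $(g')^{n-2\l}$ and $(g'')^{\l}$ line up with the number of blocks. An equally clean alternative would be direct induction on $n$: differentiating the stated right-hand side and using $g^{(k)}(0)=0$ for $k>2$ annihilates the unwanted terms and reproduces the formula at level $n+1$; but the set-partition derivation above is more transparent and gives the combinatorial factor without manipulation.
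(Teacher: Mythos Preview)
Your proof is correct. The paper actually states this lemma without proof, treating it as an elementary observation and immediately applying it; your derivation via the set-partition form of Fa\`a di Bruno's formula, together with the pairing count $\binom{n}{2\l}(2\l-1)!! = n!/\bigl(2^{\l}\,\l!\,(n-2\l)!\bigr)$, is the natural route and fills in exactly what the paper omits.
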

By this lemma we get easily
\begin{equation*}
  \begin{split}
  \frac{\d{r}^n}{n!}|_{r=0} &(z+r²\ab{z}-rp)^{-\alpha}\\
  &=\sum_{\l=0}^{\gaussk{n/2}} \frac{1}{2^\l \l! (n-2\l)!} \; (-p)^{n-2\l} \; (2\ab{z})^\l
  \;  (-1)^{(n-\l)} \frac{\Gamma(n-\l+\alpha)}{\Gamma(\alpha)} \; z^{-\alpha-(n-\l)}\\
  &=\frac{1}{\Gamma(\alpha)}
  \sum_{\l=0}^{\gaussk{n/2}} (-1)^\l \; \frac{\Gamma(n-\l+\alpha)}{\l! \; (n-2\l)!} \;
  p^{n-2\l} \; \frac{\ab{z}^\l}{z^{\alpha+(n-\l)}}\\
  &=\frac{1}{\Gamma(\alpha)}
  \sum_{\l=0}^{\gaussk{n/2}} (-1)^\l \; \frac{\Gamma(n-\l+\alpha)}{\l! \; (n-2\l)!} \;
  (4xx_1)^{n-2\l} \; \frac{(x²+x'²-iu)^\l}{(x²+x'²+iu)^{\alpha+(n-\l)}}.
  \end{split}
\end{equation*}
\noindent Hence
\begin{equation*}
  \pint \phi_{\lambda,n}(x',t',x,0) \; d\lambda=C_{m} \; [Q_n-Q_{n-2}].
\end{equation*}
with $Q_n$ given by
\setcounter{altcounter}{\value{equation}} \setcounter{equation}{\value{Qnqn}} \alpheqn
\begin{equation}
    \begin{split}
    Q_n=&\sum_{\l=0}^{\gaussk{n/2}} (-1)^\l \; \frac{\Gamma(n-\l+m+1)}{\l! \; (n-2\l)!} \;
    (4xx')^{n-2\l}\\ &\times \frac{(x²+x'²-i u)^\l}{(x²+x'²+i u)^{m+1+(n-\l)}}.
    \end{split}
\end{equation}
\reseteqn \setcounter{equation}{\value{altcounter}} It turns out, that this formula is
not very useful. In the following we only use formula \eqref{qndefinition}.
\subsubsection*{The region where $w$ is small}

For the region where $w$ is small we establish a second formula for the $P_{n}$. Let
\begin{equation*}
  \begin{split}
  \Phi&:=-\frac{C_m}{im}(z+r²\ab{z}-rp)^{-m}\\
  &=-\frac{C_m}{im}(x²+x_1²+iu+r²(x²+x_1²-iu)-4rxx_1)^{-m}.
  \end{split}
\end{equation*}
Then $\phi=\d{u}\Phi$ and by Lemma \ref{bnlemma} we obtain
\begin{equation*}
    \begin{split}
    \frac{\d{r}^n}{n!}|_{r=0} \; \Phi_{\epsilon}&=\frac{\d{r}^n}{n!}|_{r=0}\;
    \frac{C_m}{i(\alpha+1)} (z+r²\ab{z}-rp)^{-\alpha+1}\\
    &=C_m \; q_{n,m-1} \frac{\ab{z}^{n/2}}{z^{m+n/2}} \; e^{in\sigma}
    =:C_m \; q_{n,m-1} \frac{(R-iu)^{n/2}}{(R+iu)^{m+n/2}} \; e^{in\sigma}.
    \end{split}
\end{equation*}
Define
\begin{equation}
    R_n:=q_{n,m-1} \frac{(R-iu)^{n/2}}{(R+iu)^{m+n/2}} \; e^{in\sigma}.
\end{equation}
For $P_n$ we get
\begin{equation}
  \begin{split}
  \label{Pn=dtRn}
  P_{n}&=C_m \; [Q_n-Q_{n-2}]=\tfrac{1}{n!}\d{r}^n|_{r=0} \; \phi
  =\tfrac{1}{n!}\d{r}^n|_{r=0} \; (\d{t}\Phi)
  =\tfrac{1}{n!} \d{t} \d{r}^n|_{r=0} \; \Phi\\
  &=C_m \; \d{t} R_n.
  \end{split}
\end{equation}
\begin{bemerkung}
In the Heisenberg situation, the corresponding function
  $\phi^{\H}$ is given by
  \begin{equation*}
    \begin{split}
    \phi^{\H}&(x,y,u)=2^{m+1}\pi^{-m-1}m!\;(1-r)\\
    &\times \big( (x²+y²)+4i u+r((x²+y²)-4i u))^{-m-1}.
    \end{split}
  \end{equation*}
For the convolution kernel $P^{\H}_{n}$ we have a very similar expressions as for
$P_{n}$.
\[P^{\H}_{n}=C_m[Q^{\H_m}_n-Q^{\H_m}_{n-1}]\] and
\[P^{\H}_{n}=C_m \; \d{t} R^{\H_m}_n,\]
with
    \begin{equation*}
        Q^{\H}_n=i^{m+1}(-1)^n \frac{(n+m)!}{n!} \frac{(x²+y²-4iu)^n}{(x²+y²+4iu)^{n+m+1}}
    \end{equation*}
    and
    \begin{equation*}
        R^{\H}_n=
        =i^m(-1)^{n}\frac{(m+n-1)!}{n!}\frac{((x^2+y^2)-4iu)^n}{((x^2+y^2)+4iu)^{m+n}}.
    \end{equation*}
\end{bemerkung}


Careful examination rends that $Q_{n}$, given by (\ref{qndefinition}), can be roughly
written as a sum of two functions $Q_n^+$ and $Q_n^-$, where each behaves like
\begin{equation*}
  \frac{n^{m}}{\n{\sigma}^{m+1}} \; e^{\pm in\sigma} \; \frac{(x²+x'²-iu)^{n/2}}{(x²+x'²+iu)^{n/2+m+1}}.
\end{equation*}
For $R_n$ we get a similar expression.
These calculations will be done in the next section.
%
%
%
%
%
%
\subsection{Properties of the function $Q_n$}
\label{harmonicgrushin2}

In the last chapter we obtained the following formulas
\begin{equation*}
  P_{n}(x',0,x,u/2)=\pint \phi_{\lambda,n}(x',0,x,u/2) \; d\lambda=C \; [Q_n-Q_{n-2}]
\end{equation*}
with
\begin{equation*}
  \begin{split}
    Q_n=Q_n(x',0,x,u)&=\sum_{\l=0}^n \frac{\Gamma(\l+m+1)}{\Gamma(\l+1)}
    \frac{\Gamma(n-\l+m+1)}{\Gamma(n-\l+1)} \; e^{-i2\l\sigma}\\
    &\times\; e^{in\sigma} \; \frac{(x²+x'²-iu)^{n/2}}{(x²+x'²+iu)^{n/2+m+1}}.
  \end{split}
\end{equation*}
where $m=1/2$.\absatz

Let $\X^+,\ \X^- \in \Cnu$ with $\X^+(x)+\X^-(x)=1$ for all $x\in \int{0,1}$. Furthermore
$\X^+(x)=0$ for $x\geq 3/4$ and $\X^-(x)=0$ for $x\leq 1/4$.

\begin{equation*}
    q_{n,m}^{\pm}:=\sum_{\l=0}^n \X^\pm (\l/n)\frac{\Gamma(\l+m+1)}{\Gamma(\l+1)}
    \frac{\Gamma(n-\l+m+1)}{\Gamma(n-\l+1)} \; e^{-i2\l\sigma}.
\end{equation*}
Then
\begin{equation}
    Q_n=(q_{n,m}^{+}+q_{n,m}^{-})e^{in\sigma} \; \frac{(x²+x'²-iu)^{n/2}}{(x²+x'²+iu)^{n/2+m+1}}.
\end{equation}
By the transformation $\l \to n-\l$, we get easily
\begin{equation*}
    \begin{split}
    q_{n,m}^{-}&=\sum_{\l=0}^n \X^- ((n-\l)/n)\frac{\Gamma(\l+m+1)}{\Gamma(\l+1)}
    \frac{\Gamma(n-\l+m+1)}{\Gamma(n-\l+1)} \; e^{i2\l\sigma} \; e^{-i2n\sigma}\\
    &=:\t{q}_n^{-}\;e^{-i2n\sigma}
    \end{split}
\end{equation*}
Thus
\begin{equation}
    \label{Qnqn+qn-}
    \begin{split}
    Q_n&=(q_{n,m}^{+} \; e^{in\sigma}+q_{n,m}^{-}e^{in\sigma}) \frac{(x²+x_1²-iu)^{n/2}}{(x²+x_1²+iu)^{n/2+m+1}}\\
    &=(q_{n,m}^{+} \; e^{in\sigma} + \t{q}_{n,m}^{-} \; e^{-in\sigma}) \frac{(x²+x_1²-iu)^{n/2}}{(x²+x_1²+iu)^{n/2+m+1}}
    \end{split}
\end{equation}
$\X^{-}((n-\fa)/n)$ is of the same type as $\X^{+}$ and $q_{n,m}^{+}$ and
$\t{q}_{n,m}^{-}$ behave in the same way. So, we can exchange $\t{q}_{n,m}^-$ in
\eqref{Qnqn+qn-} by $q_{n,m}^+$.
%
%
%
%
%
\subsubsection*{Partial summation and the beta function}

We define for a sequence $a:=a_n$ the difference Operator
$\Delta$ by
\begin{equation*}
  \Delta(a)_n=\Delta a_n:=a_n-a_{n-1}.
\end{equation*}
The product rule reads as follows
\begin{equation}
    \label{productrule}
    \begin{split}
    \Delta(a_n b_n)&=a_n b_n-a_{n-1}
    b_{n-1}=(a_n-a_{n-1})b_n+a_{n-1}(b_{n}-b_{n-1})\\
    &=\Delta(a)_nb_n+a_{n-1}\Delta(b)_n.
    \end{split}
\end{equation}
\noindent Define
\[\Gamma_\l:=\frac{\Gamma(\l+m+1)}{\Gamma(\l+1)}.\]
Recall the definition of the beta function. For $\re z>0$ and $\re w>0$ the function
$B(z,w)$ is defined by
\begin{equation*}
  B(z,w):=\int_0^1 t^{z-1} (1-t)^{w-1} dt.
\end{equation*}
and $B(z,w)=\frac{\Gamma(z)\Gamma(w)}{\Gamma(z+w)}$. So we have
\begin{equation}
    \label{gammalbeta}
    \Gamma_\l=\frac{(\l+1)B(\l+3/2,1/2)}{\Gamma(1/2)}.
\end{equation}
\begin{lemma}
    Let $k,N \in \N$, $z,w \in \C$ and $\beta \in \N/2$.
    \begin{list}{}
        \item{(a)
        \begin{equation*}
        \Delta_k^N B(z+k,w)=(-1)^N B(z+k-N,w+N)
        \end{equation*}
        holds if $\; \re z+k-N>0$ and $\; \re w>0$.}
        \item{(b)
        \begin{equation*}
        B(k-\beta,\beta)\leq c \; \Gamma(\beta) \; k^{-\beta}
        \end{equation*}
        holds if $k-\beta>0$.}
        \item{(c)
        \begin{equation*}
        \Delta_\l^N \Gamma_\l \leq c_N \; \l^{1/2-N}
        \end{equation*}
        holds if $\l-N>0$.}
    \end{list}
\end{lemma}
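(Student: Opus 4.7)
The plan is to prove the three parts in order, with (c) building on (a) and (b). For (a), I would argue directly from the integral representation $B(z+k,w) = \int_0^1 t^{z+k-1}(1-t)^{w-1}\,dt$. The algebraic identity $t^{z+k-1} - t^{z+k-2} = -t^{z+k-2}(1-t)$ inserted under the integral gives $\Delta_k B(z+k,w) = -B(z+k-1,w+1)$ as soon as $\re z+k-1 > 0$ and $\re w > 0$. Iterating $N$ times and observing that the hypothesis $\re z+k-N>0$ persists through each step yields (a).

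For (b), the identity $B(k-\beta,\beta) = \Gamma(k-\beta)\Gamma(\beta)/\Gamma(k)$ reduces the claim to bounding $\Gamma(k-\beta)/\Gamma(k)$ by $c\,k^{-\beta}$ for $k\in\N$, $\beta\in\N/2$ with $k>\beta$. For $k$ bounded away from $\beta$ this is the classical Stirling asymptotic for ratios of gamma values, and for the finitely many remaining values of $k$ a direct check suffices. The factor $\Gamma(\beta)$ on the right-hand side comes out automatically from the beta representation.

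For (c), I would use the identity \eqref{gammalbeta} to write $\Gamma_\l = (\l+1)\,B(\l+3/2,1/2)/\Gamma(1/2)$ and then iterate the discrete product rule \eqref{productrule}. Since $\Delta(\l+1)\equiv 1$ and higher differences of $\l+1$ vanish, induction on $N$ produces the explicit expansion
\begin{equation*}
\Delta_\l^N\Gamma_\l \;=\; \tfrac{1}{\Gamma(1/2)}\bigl[\,N\,\Delta^{N-1}B(\l+3/2,1/2) \,+\, (\l-N+1)\,\Delta^{N}B(\l+3/2,1/2)\,\bigr].
\end{equation*}
Part (a) converts each difference into a single beta value, namely $\pm B(\l+5/2-N,\,N-1/2)$ and $\pm B(\l+3/2-N,\,N+1/2)$; then part (b), applied with $k=\l+2$ and $\beta = N-1/2$ respectively $\beta = N+1/2$, bounds these by $O(\l^{1/2-N})$ and $O(\l^{-1/2-N})$. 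Multiplying the second by the polynomial factor $(\l-N+1)=O(\l)$ yields another $O(\l^{1/2-N})$ contribution, and the claim follows.

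The main obstacle is the bookkeeping behind the Leibniz expansion in (c): one must verify carefully that the induction on $N$ really leaves only the two terms above and that no subleading contribution beats the target order $\l^{1/2-N}$. Once that expansion is in hand, the half-integer structure of $\beta$ plays no further role and (a) together with (b) reduces (c) to elementary arithmetic.
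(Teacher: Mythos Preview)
Your proposal is correct and matches the paper's approach closely. Part (a) is identical; for (c) you actually supply the explicit two-term Leibniz expansion that the paper only hints at with ``an easy consequence'' (and writes out, with slightly sloppier coefficients, a few lines later when proving \eqref{symbols1}). The only real difference is in (b): you invoke Stirling's asymptotic for $\Gamma(k-\beta)/\Gamma(k)$, whereas the paper treats integer $\beta$ by direct cancellation and half-integer $\beta$ via the Wallis product formula for $\Gamma(k+1/2)/\Gamma(k)$. Both routes are equally elementary here; the paper's is marginally more self-contained, yours slightly quicker.
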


\begin{proof}
  The first assertion follows by induction. For the main step we use the equality
  \begin{equation*}
    \begin{split}
    \Delta_k B(z+k,w)&=B(z+k,w)-B(z+k-1,w)\\
    &=\int_0^1 t^{z+k-1} (1-t)^{w-1} dt-\int_0^1 t^{z+k-2} (1-t)^{w-1} dt\\
    &=-\int_0^1 t^{z+k-2} (1-t) (1-t)^{w-1} dt=-\int_0^1 t^{z+k-2} (1-t)^{w} \; dt\\
    &=-B(z+k-1,w+1).
    \end{split}
  \end{equation*}
  For the second assertion we study the case $\beta \in \N$ first. In this case we have
  \begin{equation*}
    B(k-\beta,\beta)=\frac{\Gamma(k-\beta)\Gamma(\beta)}{\Gamma(k)}
    \leq \Gamma(\beta) \; k^{-\beta}.
  \end{equation*}
  For $\beta \in \N-1/2$ we use the Wallis product formula. This formula can be derived by
  using the product development of the sine function and reads as follows
  \begin{equation*}
    \lim_{n \to \infty} \left(\frac{2 \fa 4 \fa 6 \fa \dots \fa (2n)}
      {1 \fa 3 \fa 5 \fa \dots \fa (2n-1)} \right)^2 \frac{1}{2n}=\frac{\pi}{2}.
  \end{equation*}
  Thus $\Gamma(k+1/2)/\Gamma(k)\leq c \; k^{1/2}$ and
  \begin{equation*}
    \begin{split}
    B(k-\beta,\beta)&=\frac{\Gamma(k-\beta)\Gamma(\beta)}{\Gamma(k)}\\
    &=\Gamma(\beta)\frac{\Gamma(k-\beta)}{((k-1)\fa(k-2)\dots\fa(k-\beta-1/2))
    \Gamma(k-\beta-1/2)}\\
    &\leq  c \; \Gamma(\beta) \; \frac{k^{1/2}}{k^{\beta+1/2}}
    =c \; \Gamma(\beta) \; k^{-\beta}.
    \end{split}
  \end{equation*}
  By using (\ref{gammalbeta}) the third assertion is now an easy consequence.
\end{proof}

\noindent We also need some estimates for derivatives of the beta function. Put
\[\psi:=(\log \Gamma)'=\Gamma'/ \Gamma.\] This function is sometimes called the
\emph{digamma function}. The first derivative is connected with the half series
\begin{equation*}
  \zeta(x,s):=\sum\limits_{n=0}^{\infty} \frac{1}{(n+x)^s} \mbox{ for } x>0,
\end{equation*}
called the \emph{Hurwitz zeta function}. In fact we have
\begin{equation*}
  \psi'(x)=\sum\limits_{n=0}^{\infty} \frac{1}{(n+x)^2}=\zeta(x,2)
\end{equation*}
and the derivatives $\psi^{(N)}(x)$ of $n$-th order in the point $x$ is bounded by $c_N
\n{x}^{-N}$.
\begin{bemerkung}
  For $x=1$, the Hurwitz zeta function is the well known \emph{Riemannian zeta function}
  $\zeta(s)$.
\end{bemerkung}

\noindent By these facts we obtain the following lemma.
\begin{lemma}
  For all $x \gtrsim 1,\ w>0$
  \begin{equation*}
    \d{x}^N B(x,w)\leq C_w \; \n{x}^{-w-N}
  \end{equation*}
  holds.
\end{lemma}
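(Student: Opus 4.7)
The plan is to use logarithmic differentiation to reduce derivatives of $B(x,w)$ to combinations of polygamma functions, whose decay is already under control by hypothesis. I would first write $B(x,w)=\Gamma(w)\,g(x)$ with $g(x):=\Gamma(x)/\Gamma(x+w)$, and recall that by Stirling (or by an argument analogous to the Wallis product estimate used two lemmas above) one has $g(x)\lesssim_w x^{-w}$ for $x\gtrsim 1$. So the problem is reduced to showing that $g^{(N)}(x)/g(x)\lesssim_{N,w} x^{-N}$.

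Next I would introduce the logarithmic derivative $h(x):=(\log g)'(x)=\psi(x)-\psi(x+w)$. Writing $h^{(k)}(x)=-\int_0^{w}\psi^{(k+1)}(x+s)\,ds$ and using the given bound $|\psi^{(k+1)}(t)|\leq c_{k+1}\,t^{-k-1}$ (valid for $t\gtrsim 1$), I obtain
\begin{equation*}
   |h^{(k)}(x)|\leq C_{k,w}\,x^{-k-1}\qquad\text{for all }k\geq 0,\ x\gtrsim 1.
\end{equation*}
This is the main quantitative input; note that the bound holds uniformly for $s\in[0,w]$ because $x+s\geq x\gtrsim 1$, and the factor $w$ in front of the integral is absorbed into $C_{k,w}$.

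Then I would express $g^{(N)}$ in terms of $h$ by Faà di Bruno applied to $g=\exp(\log g)$: one obtains
\begin{equation*}
  \frac{g^{(N)}(x)}{g(x)}=\sum_{(n_1,\dots,n_N)}c_{n_1,\dots,n_N}\prod_{j=1}^{N}\bigl(h^{(j-1)}(x)\bigr)^{n_j},
\end{equation*}
where the sum runs over tuples $(n_j)\in\mathds{N}_0^N$ with $\sum_{j}j\,n_j=N$. Combining the bound for $h^{(j-1)}$ above, each term satisfies
\begin{equation*}
   \Bigl|\prod_{j=1}^{N}(h^{(j-1)})^{n_j}\Bigr|\leq C_{N,w}\prod_{j=1}^N x^{-j\,n_j}=C_{N,w}\,x^{-N},
\end{equation*}
so $|g^{(N)}(x)|\leq C_{N,w}\,g(x)\,x^{-N}\leq C_{N,w}\,x^{-w-N}$, and the claim follows after multiplication by $\Gamma(w)$.

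The only point requiring some care is the combinatorial bookkeeping in Faà di Bruno, together with the verification that the pointwise bound $\Gamma(x)/\Gamma(x+w)\lesssim x^{-w}$ holds for every real $w>0$ and not just $w\in\mathds{N}/2$. For the latter, a clean route is to note that $\partial_x \log[\Gamma(x+w)/\Gamma(x)]=-h(x)\lesssim w/x$, so that $\log[\Gamma(x+w)/\Gamma(x)]-w\log x$ stays bounded as $x\to\infty$; alternatively one reads the estimate off directly from Stirling's expansion. No step depends on the particular value $m=1/2$, so the lemma will be available in the form needed by the oscillatory-integral arguments in later chapters.
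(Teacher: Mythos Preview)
Your proof is correct and follows essentially the same route as the paper: both hinge on the logarithmic derivative $h(x)=\psi(x)-\psi(x+w)$ and the polygamma bounds $|\psi^{(k)}(t)|\lesssim t^{-k}$, combined with the base estimate $B(x,w)\lesssim x^{-w}$. The only cosmetic difference is that the paper handles $N=1$ via $\partial_xB=h\cdot B$ and the mean value theorem, then appeals to iterated applications of the Leibniz product rule for higher $N$, whereas you package the higher-order step more explicitly through Fa\`a di Bruno on $g=\exp(\log g)$; the two are equivalent bookkeeping for the same computation.
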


\begin{proof}
  First we want to study the case $N=1$. Observe that
  \begin{equation*}
    \begin{split}
    \d{x} B(x,w)&=\d{x}
    \frac{\Gamma(x)\Gamma(w)}{\Gamma(x+w)}=\frac{\Gamma'(x)}{\Gamma(x+w)}\Gamma(w)
    -\frac{\Gamma'(x+w)}{\Gamma(x+w)}\frac{\Gamma(x)\Gamma(w)}{\Gamma(x+w)}\\
    &=[\psi(x)-\psi(x+w)]B(x,w).
    \end{split}
  \end{equation*}
  The estimate follows by the mean value theorem.
  The higher cases $N >1$ follow in the same way by using the product formula \eqref{productrule}.
\end{proof}
By these observations it is obvious to regard the functions $\Gamma_\l$ and
$\Gamma_{n-\l}$ as ``symbols'' of order $1/2$ with respect to the operators $\Delta$ and
$\partial$.
%
%
%
%
%
%
By the previous lemmata, one gets
\begin{equation}
    \label{symbols1}
    \n{\d{\l}^{N_1} \Delta_\l^{N_2} \Gamma_\l} \leq c_{N_1,N_2} \; \l^{1/2-N_1-N_2}
\end{equation}
for all $N_2 \leq \l$. In fact
\begin{equation*}
    \begin{split}
    \d{\l}^{N_1} \Delta_\l^{N_2} \Gamma_\l=&\frac{1}{\Gamma(1/2)} \;
    \d{\l}^{N_1} \Delta_\l^{N_2} (\l+1) B(\l+3/2,1/2)\\
    =&\frac{1}{\Gamma(1/2)} \; \d{\l}^{N_1} \big( \Delta_\l^{N_2-1} B(\l+3/2,1/2)+\l\;
    \Delta_\l^{N_2}B(\l+3/2,1/2)\big)\\
    =&\frac{1}{\Gamma(1/2)} \; \d{\l}^{N_1} \big( (-1)^{N_2-1} B(\l+5/2-N_2,-1/2+N_2)\\&+(-1)^{N_2} \;
    \l \; B(\l+3/2-N_2,1/2+N_2)\big)
    \end{split}
\end{equation*}
and the estimate (\ref{symbols1}) follows. Similarly
\begin{equation}
    \label{symbols2}
    \n{\d{\l}^{N_1} \Delta_\l^{N_2} \Delta_n^{N_3} \Gamma_{n-\l}} \leq c_{N_1,N_2,N_3} \;
    n^{1/2-N_1-N_2-N_3}
\end{equation}
for all $N_2+N_3 \leq n/4$ with $\l \leq 3n/4$.
\noindent The function $(\l,n)\mapsto \X(\l/n)$ behaves even better. An easy calculation
shows that
\begin{equation}
    \label{symbols3}
    \n{\d{\l}^{N_1}\Delta_n^{N_2} \Delta_\l^{N_3} \X(\l/n)}\leq c_{N_1,N_2,N_3} \; n^{-(N_1+N_2+N_3)}
\end{equation}
holds, for all $\l \leq 3n/4$. In fact
\[\Delta_\l \X(\l/n)=\X(\l/n)-\X((\l-1)/n)=n^{-1} \; \t{\X}_n(\l)\]
with $\t{\X}_n:=n(\X(x/n)-\X((x-1)/n))$ and $\t{\X}_n$ is of the same type than $\X_n$.
Furthermore,
\[\Delta_n \X(\l/n)=\X(\l(n-1)/(n^2-n))-\X(\l n/(n^2-n))=n^{-1} \; \t{\t{\X}}_n(\l)\]
with $\t{\t{\X}}_n:=n(\X(x/n)-\X(x/(n-1))$ and $\t{\t{\X}}_n$ is of the same type than
$\X_n$.



\absatz

For the estimates to come we also need "half derivatives". In some sense, we get the next
lemma by applying the operator "$\Delta^{1/2}$". We do our calculation in the Fourier
space, where we estimate integrals. Finally, we use the Poisson summation formula.
\begin{lemma}
  \label{poisson}
  Let $\beta\in \R$ and $c>1$. Let $\X_n \in \Cnu$ with $\X_n(x)=0$ for $x \leq 1/2$ or $x\geq \gaussk{n/c}+1$ and
  $\X_n(x)=1$ for
  $1 \leq x \leq \gaussk{n/c}$. For all $\sigma$ with $\n{1-e^{i\sigma}} \leq  1/2$
  \begin{equation*}
    \bignorm{\sum\limits_{\l=1}^{\gaussk{n/c}}  \l^{-1/2} (n-\l)^{\beta} \; e^{i\sigma\l}}=
    \bignorm{\sum \X_n(\l) \; \l^{-1/2} \;
    (n-\l)^{\beta} \; e^{i\sigma\l}} \lesssim \frac{n^{\beta}}{\n{1-e^{i\sigma}}^{1/2}}
  \end{equation*}
  holds.
\end{lemma}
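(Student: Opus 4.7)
The plan is to follow the hint in the statement and apply the Poisson summation formula, so that the singularity of the left-hand side at $\sigma=0$ reappears explicitly as an order-$1/2$ pole on the Fourier side. First I reduce to the pure half-derivative sum: since $\ell\leq\gaussk{n/c}$, the ratio $\ell/n$ stays in a compact subinterval of $[0,1)$ bounded away from $1$, so writing $(n-\ell)^\beta=n^\beta(1-\ell/n)^\beta$ I may absorb the smooth factor $(1-\ell/n)^\beta$ into $\X_n$ to produce a modified cutoff $\t{\X}_n$ of the same type (with unit-scale derivative bounds in the boundary layers and $O(n^{-k})$ bounds on $\d{x}^k\t{\X}_n$ in the interior). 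It then suffices to prove
\[
\Bignorm{\sum_{\ell\in\Z}\t{\X}_n(\ell)\;\ell^{-1/2}\;e^{i\sigma\ell}}\lesssim\n{1-e^{i\sigma}}^{-1/2}.
\]
The hypothesis $\n{1-e^{i\sigma}}\leq 1/2$ lets me choose a representative with $\n{\sigma}\leq\pi/3$, so $\n{1-e^{i\sigma}}\sim\n{\sigma}$, and by symmetry I may take $\sigma>0$.

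Next I apply Poisson summation to $f(x):=\t{\X}_n(x)\;x^{-1/2}\;e^{i\sigma x}$, which extends by zero to a $C_0^\infty(\R)$-function since $\t{\X}_n$ vanishes on $x\leq 1/2$. This gives
\[
\sum_{\ell\in\Z}f(\ell)=\sum_{k\in\Z}\h{f}(k),\qquad \h{f}(k)=\int\t{\X}_n(x)\;x^{-1/2}\;e^{i(\sigma-2\pi k)x}\;dx.
\]
For $k=0$, substituting $y=\sigma x$ yields $\h{f}(0)=\sigma^{-1/2}\int\t{\X}_n(y/\sigma)\;y^{-1/2}\;e^{iy}\,dy$. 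The remaining integral is uniformly bounded in $\sigma$ and $n$: if $\sigma n\lesssim 1$ the support has length $\lesssim 1$ and $\int y^{-1/2}\,dy$ converges there; if $\sigma n\gtrsim 1$ the integral is a smoothly truncated Fresnel-type integral, and $\int_0^\infty y^{-1/2}e^{iy}\,dy$ converges. Hence $\n{\h{f}(0)}\lesssim\sigma^{-1/2}$.

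For $k\neq 0$, the frequency $\mu:=\sigma-2\pi k$ satisfies $\n{\mu}\gtrsim\n{k}$, and two integrations by parts (no boundary terms, since $f\in C_0^\infty$) yield
\[
\n{\h{f}(k)}\lesssim\n{k}^{-2}\int\Bignorm{\tfrac{d^2}{dx^2}\bigl(\t{\X}_n(x)\;x^{-1/2}\bigr)}\,dx.
\]
The integrand splits into $\t{\X}_n''(x)\,x^{-1/2}$, $-\t{\X}_n'(x)\,x^{-3/2}$ and $\tfrac{3}{4}\t{\X}_n(x)\,x^{-5/2}$; each is absolutely integrable with an $n$-independent bound, since $\t{\X}_n''$ and $\t{\X}_n'$ are supported in the unit-length boundary layers near $x=1/2$ and $x=\gaussk{n/c}$, while the third term is controlled by the convergent tail $\int_{1/2}^\infty x^{-5/2}\,dx$. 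Summing gives $\sum_{k\neq 0}\n{\h{f}(k)}\lesssim 1\lesssim\sigma^{-1/2}$, and combining with the $k=0$ estimate closes the proof.

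The main obstacle is ensuring that the bounds in the integration-by-parts step are truly uniform in $n$ and not merely finite for each fixed $n$; the derivatives of $x^{-1/2}$ blow up at $0$, which would be fatal if $\t{\X}_n$ were supported too close to the origin. The crucial point is that the cutoff sits at $x=1/2$, not near $0$: this keeps $x^{-5/2}$ uniformly integrable on the support and confines the singular contributions of $x^{-3/2}$ and $x^{-1/2}$ to a unit-scale interval where they are $O(1)$, so neither the boundary layer near $x=1/2$ nor the far boundary layer near $x=\gaussk{n/c}$ produces an $n$-dependent constant.
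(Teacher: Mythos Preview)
Your proof is correct and follows essentially the same route as the paper: Poisson summation, a Fresnel-type bound for the central frequency, and integration by parts for the remaining frequencies; your preliminary absorption of $(1-\ell/n)^\beta$ into the cutoff is a mild streamlining the paper does not make (it carries $(n-t)^\beta$ through the integral and rescales by $n$), but the analysis is otherwise parallel. One small slip to clean up: after absorbing $(1-x/n)^\beta$, the derivatives $\t{\X}_n'$ and $\t{\X}_n''$ are not literally \emph{supported} in the boundary layers but are $O(n^{-1})$ and $O(n^{-2})$ in the interior (as you correctly noted earlier); the resulting interior contributions $\int_1^{n/c} n^{-1}x^{-3/2}\,dx$ and $\int_1^{n/c} n^{-2}x^{-1/2}\,dx$ are still uniformly bounded, so the conclusion stands.
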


\begin{proof}
  Since our function is periodic we can assume that $0\leq \sigma\leq 1/2$ holds. Put
  \begin{equation*}
    \begin{split}
    I_n(\sigma-x)=I&:=\int \X_n(t) \; t^{-1/2} (n-t)^{\beta} \; e^{i(\sigma-x)t} \; dt\\
    &=n^{1/2+\beta} \; \int \X_n(nt) \; t^{-1/2} (1-t)^{\beta} \; e^{in(\sigma-x)t} \; dt.
    \end{split}
  \end{equation*}
  Then we have
  \begin{equation*}
    \sum \X_n(\l) \; \l^{-1/2}(n-\l)^{\beta} \; e^{i\sigma\l}=
    \sum_{x\in \Z} I_n(\sigma-x).
  \end{equation*}
  Put $y:=n(\sigma-x)$. For $\n{y}\leq C$ we have $\n{I_n}\lesssim n^{1/2+\beta}$.

  For $\n{y}\geq C$ we
  get easily by partial integration and since $\X_n(n\fa)'$ is zero outside a set of measure
  $\sim 1/n$.
  \begin{equation*}
    \begin{split}
    \n{I}&\lesssim \frac{n}{\n{y}^N} \; \int_0^1 \n{\d{t}^N [\X_n(nt) t^{-1/2}(1-t)^{\beta}]} \; dt\\
    &=\frac{n}{\n{y}^N} \; \sum_{M=0}^N c_M \; \int_0^1 \n{\d{t}^M [\X_n(nt)] \; [\d{t}^{N-M} t^{-1/2}(1-t)^{\beta}]} \; dt\\
    &\lesssim \frac{n}{\n{y}^N} \; \sum_{M=1}^N c_M \; \int_0^1 \n{\d{t}^M [\X_n(nt)] \; [\d{t}^{N-M} t^{-1/2}(1-t)^{\beta}]} \; dt
    +c_0 \;  \frac{n}{\n{y}^N} (1/n)^{1/2-N}\\
    &\lesssim  \frac{n}{\n{y}^N} \; n^M \; 1/n \; (1/n)^{-1/2-N+M}+n^{1/2}\big(\frac{n}{\n{y}}\big)^N
    =n^{1/2}\big(\frac{n}{\n{y}}\big)^N\\
    &=n^{1/2} \; \n{\sigma-x}^{-N}.
    \end{split}
  \end{equation*}
  For $\n{y}\geq C$ we also have a second estimate. Observe that
  \begin{equation*}
    \begin{split}
    I&=\frac{n}{y^{1/2}} \int_0^{y} \Psi_n(t/y) \; t^{-1/2} (1-t/y)^{\beta} \; e^{it} \; dt
    =\frac{n}{y^{1/2}} \int_0^{1} \dots \; dt +\frac{n}{y^{1/2}} \int_1^{y} \dots \; dt\\
    &=:\frac{n}{y^{1/2}}(I_1 +I_2),
    \end{split}
  \end{equation*}
  with $\Psi_n(\fa)=\X_n(n\fa)$, holds.
  The first Integral can be estimated by a constant. For the second integral we get by
  integration by parts
  \begin{equation*}
    \begin{split}
    \n{I_2}&\lesssim \n{\Psi_n(1/y)}+\n{\int_{1}^{y}
    (\d{t}\Psi_n(t/y)) \; t^{-1/2}(1-t/y)^{\beta} \; e^{it} \; dt}\\
    &+\n{\int_1^y \Psi_n(t/y) \; (\d{t}t^{1/2}(1-t/y)^{\beta}) \; e^{it} \;
    dt}.
    \end{split}
  \end{equation*}
  But this is bounded by a constant, since $\Psi_n$ is a bounded
  function, the support of $\d{t}\Psi_n(\fa/y)$ has measure $\sim y/n$,
  $\d{t}\Psi_n(t/y)\lesssim n/y$, $\d{t}t^{1/2}$ is
  integrable and $\d{t}(1-t/y)^{\beta}\lesssim 1/y$.
  For $x=0$, we get the following estimates
  \begin{equation*}
    \begin{split}
    \n{I_n(\sigma-x)}&\lesssim n^{1/2+\beta}\lesssim \frac{n^{\beta}}{\sigma^{1/2}},\quad \mbox{ if } n\sigma\leq C \mbox{ and}\\
    \n{I_n(\sigma-x)}&\lesssim \frac{n^{1/2+\beta}}{y^{1/2}}=\frac{n^{\beta}}{\sigma^{1/2}},\quad \mbox{ if } n\sigma\geq C.
    \end{split}
  \end{equation*}
  Hence $\n{I_n(\sigma)}\lesssim n^{\beta}\sigma^{-1/2}$.
  For $\n{x}\geq 1$ we have $y \geq C$ and
  \begin{equation*}
    \sum_{\n{x} \geq 1} \n{I_n(\sigma-x)}\lesssim n^{\beta} \; \sum_{\n{x} \geq 1} \n{x}^{-2}\lesssim n^{\beta}
    \lesssim n^{\beta} \; \sigma^{-1/2}.
  \end{equation*}
  The lemma follows by the Poisson summation formula.
\end{proof}

\begin{korollar}
    \label{poissonkorollar}
    \begin{equation}
        \sum_{\l=1}^{n-1} \Delta_n[\X((\l-1)/n)\Gamma_{n-\l+1}] \Delta_\l\Gamma_\l
        e^{-i2\l\sigma} e^{i\omega n}\lesssim \frac{n^{-1/2}}{\n{1-e^{-i2\sigma}}^{1/2}}
    \end{equation}
    holds for all $\sigma$ with $\n{1-e^{i\sigma}} \leq  1/2$.
\end{korollar}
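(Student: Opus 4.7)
The plan is to apply the discrete product rule \eqref{productrule} to the $\Delta_n$ factor, estimate each resulting piece via the symbol bounds \eqref{symbols1}--\eqref{symbols3}, and then invoke Lemma \ref{poisson} on the remaining exponential sum in $\l$. The prefactor $e^{i\omega n}$ has modulus one and does not depend on $\l$, so it has no effect on the size of the sum.

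Concretely, the product rule gives
\begin{equation*}
\Delta_n\bigl[\X((\l-1)/n)\,\Gamma_{n-\l+1}\bigr] = \bigl[\Delta_n\X((\l-1)/n)\bigr]\,\Gamma_{n-\l+1} + \X((\l-1)/(n-1))\,\bigl[\Delta_n\Gamma_{n-\l+1}\bigr].
\end{equation*}
Reading $\X$ as the cutoff $\X^{+}$ from Section \ref{harmonicgrushin2}, whose support lies in $[0,3/4]$, we have $n-\l+1\sim n$ on the relevant range. Thus \eqref{symbols3} yields $\Delta_n\X((\l-1)/n) = O(n^{-1})\,\t{\X}_1(\l/n)$ with $\t{\X}_1$ of the same type as $\X^{+}$, and combined with the trivial bound $\Gamma_{n-\l+1} = O((n-\l+1)^{1/2}) = O(n^{1/2})$ this controls the first summand by $O(n^{-1/2})\,\t{\X}_1(\l/n)$. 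For the second summand I view $\Gamma_{n-\l+1}$ as $\Gamma_k$ at $k=n-\l+1$ and apply \eqref{symbols1} with $N_2=1$, obtaining $\Delta_n\Gamma_{n-\l+1} = O((n-\l+1)^{-1/2}) = O(n^{-1/2})\,\t{\X}_2(\l/n)$. Finally, \eqref{symbols1} also gives $\Delta_\l\Gamma_\l = O(\l^{-1/2})$.

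Collecting these bounds, the sum is majorized by a finite combination of expressions of the form
\begin{equation*}
n^{-1/2}\sum_{\l=1}^{n-1}\t{\X}_j(\l/n)\,\l^{-1/2}\,e^{-i2\l\sigma},
\end{equation*}
with $\t{\X}_j$ smooth cutoffs supported essentially in $[1,3n/4]$. This matches the hypothesis of Lemma \ref{poisson} with $-2\sigma$ in place of $\sigma$ and $\beta=0$; the assumption $\n{1-e^{i\sigma}}\leq 1/2$ transfers, up to replacing $1/2$ by a slightly larger constant, to an analogous bound on $\n{1-e^{-i2\sigma}}$, which is harmless. Applying the lemma yields the required estimate $n^{-1/2}\n{1-e^{-i2\sigma}}^{-1/2}$.

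The step I expect to be most delicate is matching the smooth cutoffs $\t{\X}_j$ produced by the product-rule expansion to the precise shape of $\X_n$ required by Lemma \ref{poisson} (which must vanish on $[0,1/2]$ and outside $[1,n/c]$ for some $c>1$). The bounded number of indices lost at the lower endpoint contribute $O(1) \lesssim n^{-1/2}\n{1-e^{-i2\sigma}}^{-1/2}$ and can be absorbed. The cutoff at the upper end is what forces $\X$ to be of $\X^{+}$-type in the statement; the symmetric $\X^{-}$-variant reduces to the above by the change of variables $\l\mapsto n-\l$ used earlier in treating $\t{q}_{n,m}^{-}$.
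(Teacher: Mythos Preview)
Your overall strategy matches the paper's: expand $\Delta_n[\X((\l-1)/n)\Gamma_{n-\l+1}]$ via the product rule, control the pieces with the symbol estimates \eqref{symbols1}--\eqref{symbols3}, and feed the result into Lemma~\ref{poisson}. However, the step ``the sum is majorized by a finite combination of expressions of the form $n^{-1/2}\sum \t{\X}_j(\l/n)\,\l^{-1/2}\,e^{-i2\l\sigma}$'' is not legitimate. A pointwise bound $|a_\l|\le b_\l$ does \emph{not} give $|\sum a_\l e^{-i2\l\sigma}|\lesssim |\sum b_\l e^{-i2\l\sigma}|$; Lemma~\ref{poisson} exploits cancellation, and that cancellation can be destroyed by replacing the amplitude. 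Concretely, $\Delta_\l\Gamma_\l$ is not $\l^{-1/2}$ times a smooth function of $\l/n$ (its $\l$-derivatives are $O(\l^{-1/2-k})$, not $O(n^{-k}\l^{-1/2})$), so it cannot be absorbed into the cutoff $\t{\X}_j$.

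The fix is what the paper does: instead of invoking the \emph{statement} of Lemma~\ref{poisson}, rerun its \emph{proof} with $\Delta_\l\Gamma_\l$ in place of $\l^{-1/2}$ and $\Delta_n[\X((\l-1)/n)\Gamma_{n-\l+1}]$ in place of $(n-\l)^\beta$ (here $\beta=-1/2$). The proof of Lemma~\ref{poisson} proceeds by Poisson summation and repeated integration by parts in the resulting integral; the only inputs are symbol-type bounds on the amplitude and its $t$-derivatives. These are exactly what \eqref{symbols1}--\eqref{symbols3} supply (note they give $\partial_\l$-bounds, not just $\Delta_\l$-bounds), so every estimate in that proof goes through verbatim. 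Your observations about endpoints and the passage from $|1-e^{i\sigma}|\le 1/2$ to a comparable bound on $|1-e^{-i2\sigma}|$ are fine.
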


\begin{proof}
    \eqref{symbols2} and \eqref{symbols3} together with the product formula \eqref{productrule} imply that the function $(n,\l)\mapsto \Delta_n[\X((\l-1)/n)\Gamma_{n-\l+1}]$
    behaves like $(n-\l)^{-1/2}$. \eqref{symbols1} implies that the function $\l \mapsto
    \Delta_\l\Gamma_\l$ behaves like $\l^{-1/2}$.
    Furthermore, $\X((\l-1)/n)$ is supported in $\intaa{0,\gaussk{n/c}}$ with a constant $c>1$.
    Hence this corollary follows by the proof of Lemma \ref{poisson} with slightly
    modifications.
\end{proof}

\begin{satz}
    \label{propqn1}
    \begin{itemize}
    \item[(a)] For every $\epsilon$ there exists a constant $C_\epsilon$ such that
    \begin{equation}
    \label{estimaterhoj}
    \begin{split}
        \Bignorm{\sum_{n\sim 2^j} &\sum_{\l=0}^n \X(\l/n) \; \frac{\Gamma(\l+m+1)}{\Gamma(\l+1)}
        \frac{\Gamma(n-\l+m+1)}{\Gamma(n-\l+1)}\; e^{-i2\l\sigma} \;
        e^{i\omega n}}\\
        &\leq C_\epsilon
        \frac{2^{j/2+j\epsilon}}{\n{1-e^{-i2\sigma}}^{3/2}} \;
        \frac{2^j}{(1+2^j\n{1-e^{i\omega}})^{1+\epsilon}},\mbox{ for all } \sigma \mbox{ and } \omega.
    \end{split}
    \end{equation}
    \item[(b)] For every $\epsilon$ there exists a constant $C_\epsilon$ such that
    \begin{equation}
    \label{estimaterhoj2}
    \begin{split}
        \Bignorm{\sum_{n\sim 2^j} &\sum_{\l=0}^n \X(\l/n) \; \frac{\l \; \Gamma(\l+m+1)}{\Gamma(\l+1)}
        \frac{\Gamma(n-\l+m+1)}{\Gamma(n-\l+1)}\; e^{-i2\l\sigma} \;
        e^{i\omega n}}\\
        &\leq C_\epsilon
        \frac{2^{j/2+j\epsilon}}{\n{1-e^{-i2\sigma}}^{5/2}} \;
        \frac{2^j}{(1+2^j\n{1-e^{i\omega}})^{1+\epsilon}},\mbox{ for all } \sigma \mbox{ and } \omega.
    \end{split}
    \end{equation}
    \end{itemize}
\end{satz}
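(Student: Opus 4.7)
My plan is to prove (a) and (b) by combining summation by parts in both $\l$ and $n$ with Lemma \ref{poisson}, exploiting the fact that the estimates established earlier (namely \eqref{symbols1}, \eqref{symbols2}, \eqref{symbols3} together with the product rule \eqref{productrule}) make $\Gamma_\l$ and $\Gamma_{n-\l}$ act like symbols of order $1/2$ with respect to $\Delta_\l$ and $\Delta_n$, while $\X(\l/n)$ is of order $0$. Denote the left-hand side of \eqref{estimaterhoj} by $S$. The factor $|1-e^{-i2\sigma}|^{-3/2}$ will come from one Abel summation in $\l$ combined with the half-integer gain of Lemma \ref{poisson}, while the $\omega$-factor will be produced by interpolating a trivial bound against the bound obtained by two Abel summations in $n$.

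For (a), I would fix $n \sim 2^j$ and first handle the inner $\l$-sum. One Abel summation converts it into
\[
\frac{1}{1-e^{-i2\sigma}}\sum_\l \Delta_\l\bigl[\X(\l/n)\Gamma_\l\Gamma_{n-\l}\bigr]\,e^{-i2\l\sigma},
\]
with vanishing boundary terms because $\X(\l/n)$ is compactly supported in the interior of $[0,n]$. By the product rule, the dominant term in the difference is $(\Delta_\l\Gamma_\l)\,\X(\l/n)\,\Gamma_{n-\l}$, which is comparable to $\l^{-1/2}(n-\l)^{1/2}$ on the support of $\X(\l/n)$; Lemma \ref{poisson} with $\beta=1/2$ then yields a bound of $n^{1/2}/|1-e^{-i2\sigma}|^{1/2}$. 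The other terms produced by the product rule, in which $\Delta_\l$ hits $\Gamma_{n-\l}$ or the cutoff, have strictly smaller symbol order and are handled in the same way, so the inner sum is bounded by $n^{1/2}/|1-e^{-i2\sigma}|^{3/2}$.

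For the outer sum I would prove two complementary bounds for $S$: a trivial one, $|S|\lesssim 2^{3j/2}/|1-e^{-i2\sigma}|^{3/2}$, obtained simply by summing the inner-sum bound over the $\sim 2^j$ values of $n$; and an improved bound $|S|\lesssim 2^{3j/2}/|1-e^{-i2\sigma}|^{3/2}\cdot(2^j|1-e^{i\omega}|)^{-2}$ obtained by two Abel summations in $n$, using the estimates \eqref{symbols2} and \eqref{symbols3} for $\Delta_n^k\Gamma_{n-\l}$ and $\Delta_n^k\X(\l/n)$ to absorb the terms in which $\Delta_n$ falls on the $n$-dependent cutoff. Geometric interpolation between these two bounds produces the factor $(1+2^j|1-e^{i\omega}|)^{-1-\epsilon}$ at the price of a small loss $2^{j\epsilon}$, which is exactly the stated estimate. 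Part (b) follows the same scheme, except that the extra factor $\l$ raises the symbol order in $\l$ by one, so one additional Abel summation is required in the $\l$-variable; this produces the improved exponent $5/2$ in place of $3/2$, while the outer analysis proceeds verbatim.

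The main obstacle is the bookkeeping of the cross-terms generated when $\Delta_n$ or $\Delta_\l$ falls on the cutoff $\X(\l/n)$: each such term must be rewritten into a form to which Lemma \ref{poisson} (or its variant Corollary \ref{poissonkorollar}) can be applied, i.e.\ as a product of a half-derivative factor like $\l^{-1/2}$ and a smooth symbol in $n-\l$. A secondary technical point is that Lemma \ref{poisson} requires $|1-e^{i\sigma}|\leq 1/2$; the complementary range is handled by a direct trivial bound since there $|1-e^{-i2\sigma}|^{-3/2}\sim 1$, and the smooth matching of the two regimes is precisely what is absorbed into the small loss $2^{j\epsilon}$ in the final estimate.
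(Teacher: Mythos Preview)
Your overall strategy—one Abel summation in $\l$, Abel summation(s) in $n$, the half-integer gain from Lemma~\ref{poisson}, and interpolation—is exactly the paper's. However, two of your specific claims are incorrect and hide the actual work.

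First, the boundary terms in the $\l$-Abel summation do \emph{not} vanish. The cutoff $\X$ here is $\X^+$ from the splitting $\X^++\X^-=1$; it satisfies $\X(x)=1$ for $x\le 1/4$ and $\X(x)=0$ for $x\ge 3/4$, so $\X(0)=1$ and there is a genuine boundary contribution at $\l=0,1$. The paper separates this out explicitly as $q_n^1+q_n^2$ and estimates it by partial summation in $n$ alone; you cannot simply drop it.

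Second, and more seriously, your claim that the product-rule term where $\Delta_\l$ hits $\Gamma_{n-\l}$ ``has strictly smaller symbol order and is handled in the same way'' is wrong on both counts. That term is $\Gamma_\l\,\Delta_\l[\X(\l/n)\Gamma_{n-\l}]\sim \l^{1/2}(n-\l)^{-1/2}$, which has the \emph{same} total order as your ``dominant'' term $\l^{-1/2}(n-\l)^{1/2}$—the half-powers are merely redistributed. Lemma~\ref{poisson} requires the factor $\l^{-1/2}$ and therefore does not apply directly here. This is precisely the obstruction the paper flags just before the proof: a second straight Abel summation in $\l$ produces $\l^{-3/2}(n-\l)^{1/2}$, and interpolation with the one-Abel bound gives only $2^{3j/4}/|1-e^{-i2\sigma}|^{3/2}$, which is insufficient. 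The paper's fix is to perform the $n$-Abel summation \emph{first} on $q_n^3$, then split the mixed difference $\Delta_n\Delta_\l[\X(\l/n)\Gamma_\l\Gamma_{n-\l}]$ via the product rule into two pieces. The piece $\Delta_n[\X\Gamma_{n-\l+1}]\,\Delta_\l\Gamma_\l\sim (n-\l)^{-1/2}\l^{-1/2}$ is exactly the form to which Corollary~\ref{poissonkorollar} applies; the other piece $\Delta_n\Delta_\l[\X\Gamma_{n-\l}]\,\Gamma_\l$ is disposed of by one \emph{further} $\Delta_\l$ (since $\sum_\l\Delta_\l[\,\cdot\,]\lesssim n^{-1}$) followed by interpolation with the cruder bound $2^j|\sigma|^{-1}|\omega|^{-1}$. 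So the two halves of the product rule receive genuinely different treatments, and your proposal to bound the inner sum by $n^{1/2}|1-e^{-i2\sigma}|^{-3/2}$ pointwise in $n$, before touching the $n$-sum, does not go through as stated.

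A minor remark: one Abel summation in $n$ already gives $|\omega|^{-1}$, and since $|1-e^{i\omega}|\le 2$ this is trivially upgraded to $|\omega|^{-1-\epsilon}$ before interpolating with the $\omega$-free bound; two Abel summations in $n$ are unnecessary.
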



    The proof is mainly based on the fact that $\Gamma_\l$ behaves like $\l^{1/2}$ and
    that $\X(\l/n) \ \Gamma_{n-\l}$ behaves like $n^{1/2}$, since $\l \leq 3n/4$.
    Observe that the double sum in (\ref{estimaterhoj}) is bounded by $2^{3j}$.
    By
    partial summation with respect to $n$ we win $2^{-j}/\n{1-e^{i\omega}}$.
    By partial summation with respect to $\l$ we get roughly two terms $\l^{-1/2}\; (n-\l)^{1/2}$
    and $\l^{1/2} \; (n-\l)^{-1/2}$. We win $2^{-j}/\n{1-e^{-i2\sigma}}$.
    Hence the double sum is bounded by $2^{2j}/\n{1-e^{-i2\sigma}}$.
    To get an additional factor $2^{-j/2}/\n{1-e^{-i2\sigma}}^{1/2}$ we use Corollary
    \ref{poissonkorollar}. Hence, together with the partial summation in $n$, we get our
    result.

    Just one more partial summation in $\l$ would not give us the desired result, since we
    would have to sum over $\l^{-3/2} \; (n-\l)^{1/2}$ which gives, after summing all up,
    $2^{3/2j}/\n{1-e^{-i2\sigma}}^2$ by interpolating this with our previous result we would get
    $2^{3/2j+1/4j}/\n{1-e^{-i2\sigma}}^{3/2}$, which is not sufficient.\absatz

\begin{proof}
    For $j\leq 10$ the estimate (\ref{estimaterhoj}) is obvious. So we can restrict to
    $j\geq 10$. Now $n\geq c 2^{10}$. We assume that $\omega \in
    \intaa{-\pi,\pi}$, then $\n{1-e^{i\omega}}\sim \n{\omega}$. Furthermore, we assume that $\sigma \in
    \intaa{-\pi/2,\pi/2}$, then $\n{1-e^{-i2\sigma}}\sim \n{\sigma}$.

    Since
    \[\sum_{\l=\l_0}^{\l_1} (a_\l-a_{\l-1}) \; e^{i\omega \l}=
    \sum_{\l=\l_0}^{\l_1-1} a_\l \; e^{i\omega \l}(1-e^{i\omega})+a_{\l_1}\; e^{i\omega \l_1}-a_{\l_0-1}
    \; e^{i\omega \l_0}
    \]
    for an arbitrary sequence $a_\l$ holds, we get
    \begin{equation*}
        \begin{split}
        \sum_{\l=0}^n \X(\l/n) &\; \frac{\Gamma(\l+m+1)}{\Gamma(\l+1)}
        \frac{\Gamma(n-\l+m+1)}{\Gamma(n-\l+1)}\; e^{-i2\l\sigma}\\
        =&\sum_{\l=0}^1 \X(\l/n) \Gamma_\l \Gamma_{n-\l} \; e^{-i2\l\sigma} +
        \sum_{\l=2}^n \X(\l/n) \Gamma_\l \Gamma_{n-\l} \; e^{-i2\l\sigma}\\
        =&\sum_{\l=0}^1 \X(\l/n) \Gamma_\l \Gamma_{n-\l} \; e^{-i2\l\sigma}
        +\Gamma_1\Gamma_{n-1} \; e^{-i2\sigma} \; (1-e^{-i2\sigma})^{-1}\\
        &+\sum_{\l=1}^n \Delta[\X(\l/n) \Gamma_\l \Gamma_{n-\l}] \; e^{-i2\l\sigma} \;
        (1-e^{-i2\sigma})^{-1}=:q_n^1+q_n^2+q_n^3,
        \end{split}
    \end{equation*}
    since $\X(x)=0$ for $x\geq 1$.
    By partial summation with respect to $n$ the function $\sum_{n\sim 2^j} q_n^1 \; e^{i\omega n}$ can
    be written as a sum of two terms of the form
    \[(1-e^{i\omega})^{-1} \sum_{n\sim 2^j} \sum_{\l=0}^1
        \Delta_n[\X(\l/n) \Gamma_\l \Gamma_{n-\l}] \; e^{-i2\l\sigma} e^{i\omega n}
    \]
    and
    \[(1-e^{i\omega})^{-1} \sum_{\l=0}^1 \X(\l/n) \Gamma_\l \Gamma_{n-\l} \;
        e^{-i2\l\sigma} e^{i\omega n}\vert_{n\sim 2^j}.\]
    By the symbol properties (\ref{symbols2})$-$(\ref{symbols3}) of the functions
    $\Gamma_{n-\l}$ and $\X(\l/n)$ both terms are bounded by
    $2^{j/2}\n{\omega}^{-1}$.

    By the same arguments we obtain that
    $\sum_{n\sim 2^j} q_n^2 \; e^{i\omega n}$ is bounded by
    $2^{j/2}\n{\sigma}^{-1}\n{\omega}^{-1}$. We are left with the sum over $q_n^3$.
    Once more we use partial summation with respect to $n$.
    Observe that
    \begin{equation*}
        \sum_{n\sim 2^j} q_n^3 \; e^{i\omega n}=(1-e^{-i2\sigma})^{-1}
        \sum_{n\sim 2^j} \sum_{\l=1}^n \Delta_\l[\X(\l/n) \Gamma_\l \Gamma_{n-\l}] \;
        e^{-i2\l\sigma} \; e^{i\omega n}
    \end{equation*}
    can be written as
    \Aeqn
    \begin{equation}
        \begin{split}
        \big((1-e^{i\omega})(1-e^{-i2\sigma})\big)^{-1} \sum_{n\sim 2^j} \Delta_n \Big[&\sum_{\l=1}^n
        \Delta_\l[\X(\l/n) \Gamma_\l \Gamma_{n-\l}] \;
        e^{-i2\l\sigma} \Big] \; e^{i\omega n}\\
        =\big((1-e^{i\omega})(1-e^{-i2\sigma})\big)^{-1}
        \Big(&\sum_{n\sim 2^j} \sum_{\l=1}^{n-1} \Delta_n\Delta_\l[\X(\l/n) \Gamma_\l \Gamma_{n-\l}] \;
        e^{-i2\l\sigma} e^{i\omega n}\\
        +&\sum_{n\sim 2^j} \Delta_\l[\X(\l/n) \Gamma_\l \Gamma_{n-\l}]\vert_{\l=n} \;
        e^{-i2n\sigma} e^{i\omega n}\Big),
        \end{split}
    \end{equation}
    together with boundary terms of the form
    \begin{equation}
        (1-e^{i\omega})^{-1} \sum_{\l=1}^n \Delta_\l[\X(\l/n) \Gamma_\l \Gamma_{n-\l}] \;
        e^{-i2\l\sigma} \; (1-e^{-i2\sigma})^{-1} \; e^{i\omega n}\vert_{n\sim 2^j}.
    \end{equation}
    \reseteqn
    Since $\X((n-1)/n)=0=\Delta\X(1)$ we just have to study the first summand in $(A)$.
    Using (\ref{symbols1})$-$(\ref{symbols3}) and the product rule it is easy to see
    that $(A)$ is bounded by $c \; 2^j\n{\sigma}^{-1}\n{\omega}^{-1}$, where $c$ is a
    constant independent of $\sigma,\ \omega$ and $j$. Unfortunately we need a slightly better
    estimate. Observe that
    \begin{equation*}
        \begin{split}
        \Delta_n\Delta_\l[\X(\l/n) \Gamma_\l \Gamma_{n-\l}]=&
        \Delta_n\Delta_\l[\X(\l/n)\Gamma_{n-\l}] \; \Gamma_\l\\
        &+\Delta_n[\X((\l-1)/n)\Gamma_{n-\l+1}] \; \Delta_\l\Gamma_\l
        \end{split}
    \end{equation*}
    Applying one more time the summation operator $\Delta_\l$ to the first term and summing over all $\l$ gives us
    \[\sum_{\l=1}^n \Delta_\l[\Delta_n\Delta_\l[\X(\l/n)\Gamma_{n-\l}] \; \Gamma_\l]\lesssim n^{-1}\]
    Thus, we get
    \begin{equation*}
        \big((1-e^{i\omega})(1-e^{-i2\sigma})\big)^{-1}
        \sum_{n\sim 2^j} \sum_{\l=1}^{n-1} \Delta_n\Delta_\l[\X(\l/n)\Gamma_{n-\l}] \; \Gamma_\l \;
        e^{-i2\l\sigma} e^{i\omega n}\lesssim \frac{1}{\n{\sigma}^{2}\n{\omega}}.
    \end{equation*}
    By interpolating this result with our previous one, we get the bound
    $2^{j/2} \n{\sigma}^{-3/2}\n{\omega}^{-1}$.
    For the second term, involving $\Delta_n[\X((\l-1)/n)\Gamma_{n-\l+1}] \;
    \Delta_\l\Gamma_\l$, we need a more refined method then just partial summation.
    For $\n{1-e^{-i2\sigma}}\geq 1/2$ we use partial summation with respect to $\l$ as we
    did for the first term. We get the estimate $2^{j/2}
    \n{\sigma}^{-2}\n{\omega}^{-1}$. This is weaker than it should be, but since $\n{1-e^{-i2\sigma}}\geq
    1/2$ this is bounded by $2^{j/2} \n{\sigma}^{-3/2}\n{\omega}^{-1}$.
    For $\n{1-e^{-i2\sigma}}\leq 1/2$ we use Corollary \ref{poissonkorollar} and get the expected bound directly. In fact
    \begin{equation*}
        \big((1-e^{i\omega})(1-e^{-i2\sigma})\big)^{-1}
        \sum_{n\sim 2^j} \sum_{\l=1}^{n-1} \Delta_n[\X((\l-1)/n)\Gamma_{n-\l+1}] \Delta_\l\Gamma_\l
        e^{-i2\l\sigma} e^{i\omega n}\lesssim \frac{2^{j/2}}{\n{\sigma}^{3/2}\n{\omega}}.
    \end{equation*}
    For $(B)$ all the calculations are very similar. The first estimate we get is that
    $(B)$ is bounded by $c \; 2^j\n{\sigma}^{-1}\n{\omega}^{-1}$.
    We write
    \begin{equation*}
        \begin{split}
        \sum_{\l=1}^n &\Delta_\l[\X(\l/n) \Gamma_\l \Gamma_{n-\l}]\;e^{-i2\l\sigma}\\
        =&\sum_{\l=1}^n \big((\Delta\X(\l/n))\Gamma_\l
        \Gamma_{n-\l}+\X((\l-1)/n)(\Delta\Gamma_{n-\l})\Gamma_\l\big)e^{-i2\l\sigma}\\
        &+\sum_{\l=1}^n \X((\l-1)/n)\Gamma_{n-\l+1}(\Delta\Gamma_\l) \;e^{-i2\l\sigma}.
        \end{split}
    \end{equation*}
    For the first sum we use one more time partial summation which gives us an additional
    factor $(n \n{\sigma})^{-1}$.
    Now by interpolation we get
    \[\big((1-e^{i\omega})(1-e^{-i2\sigma}))\big)^{-1} \sum_{\l=1}^n \Delta_\l[\X(\l/n) \Gamma_{n-\l}] \; \Gamma_\l
        \; e^{-i2\l\sigma} e^{i\omega n}\vert_{n\sim 2^j}
        \lesssim \frac{2^{j/2}}{\n{\sigma}^{3/2} \n{\omega}}.\]
    As before we use Corollary \ref{poissonkorollar} for the second sum and obtain
    \[\big((1-e^{i\omega})(1-e^{-i2\sigma}))\big)^{-1} \sum_{\l=1}^n \X((\l-1)/n) \Gamma_{n-\l+1} \;
        (\Delta\Gamma_\l) \; e^{-i2\l\sigma} e^{i\omega n}\vert_{n\sim 2^j}
        \lesssim \frac{2^{j/2}}{\n{\sigma}^{3/2} \n{\omega}}.\]
    Of course, if we omit the partial summation in $n$ and do similar calculations, we also get an
    estimate of the form
    \begin{equation*}
        \Bignorm{\sum_{n\sim 2^j} \sum_{\l=0}^n \X(\l/n) \; \frac{\Gamma(\l+m+1)}{\Gamma(\l+1)}
        \frac{\Gamma(n-\l+m+1)}{\Gamma(n-\l+1)}\; e^{-i2\l\sigma} \;
        e^{i\omega n}}
        \leq C
        \frac{2^{j/2}}{\n{\sigma}^{3/2}} \; 2^j.
    \end{equation*}
    Observe that
    $\frac{2^{j/2}}{\n{\sigma}^{3/2} \n{\omega}}\leq \frac{2^{j/2}}{\n{\sigma}^{3/2}
    \n{\omega}^{1+\epsilon}}$
    and
    \begin{equation*}
        \begin{split}
        \min\{\frac{2^{j/2}}{\n{\sigma}^{3/2}
        \n{\omega}^{1+\epsilon}}&,\ \frac{2^{j/2}}{\n{\sigma}^{3/2}} \; 2^j\}
        \lesssim
        \frac{2^{j/2+j\epsilon}}{\n{1-e^{-i2\sigma}}^{3/2}} \;
        \frac{2^j}{(1+2^j\n{1-e^{i\omega}})^{1+\epsilon}}
        \end{split}
    \end{equation*}
    which completes the proof (a).\absatz

    (b) we get in a similar way.
    We write
    \begin{equation*}
        \begin{split}
        \sum_{\l=0}^n \X(\l/n) &\; \frac{\l \ \Gamma(\l+m+1)}{\Gamma(\l+1)}
        \frac{\Gamma(n-\l+m+1)}{\Gamma(n-\l+1)}\; e^{-i2\l\sigma}\\
        =&\sum_{\l=0}^2 \X(\l/n) \l \ \Gamma_\l \Gamma_{n-\l} \; e^{-i2\l\sigma}
        +2 \ \Gamma_2\Gamma_{n-2} \; e^{-i2\sigma} \; (1-e^{-i2\sigma})^{-1}\\
        &+\sum_{\l=2}^n \Delta[\X(\l/n) \l \ \Gamma_\l \Gamma_{n-\l}] \; e^{-i2\l\sigma} \;
        (1-e^{-i2\sigma})^{-1}=:\t{q}_n^1+\t{q}_n^2+\t{q}_n^3.
        \end{split}
    \end{equation*}
    For $\t{q}_n^1$ and $\t{q}_n^2$ we get with the same methods as in (a)
    \begin{equation*}
        \sum_{n\sim 2^j} \t{q}_n^1 \; e^{i\omega n},\ \sum_{n\sim 2^j} \t{q}_n^2 \; e^{i\omega
        n}\lesssim \frac{2^{j/2}}{\n{\sigma}\n{\omega}}.
    \end{equation*}
    For $\t{q}_n^3$ we have
    \begin{equation*}
        \begin{split}
        \sum_{\l=2}^n \Delta[\X(&\l/n) \l \ \Gamma_\l \Gamma_{n-\l}] \; e^{-i2\l\sigma} \;
        (1-e^{-i2\sigma})^{-1}\\
        =&\sum_{\l=2}^n \Delta[\l \ \Gamma_\l] \; \X(\l/n) \Gamma_{n-\l}] \; e^{-i2\l\sigma} \;
        (1-e^{-i2\sigma})^{-1}\\
        &+\sum_{\l=2}^n (\l-1) \ \Gamma_{\l-1} \Delta[\X(\l/n) \Gamma_{n-\l}] \; e^{-i2\l\sigma} \;
        (1-e^{-i2\sigma})^{-1}.
        \end{split}
    \end{equation*}
    Since $\Delta[ \l \ \Gamma_\l]$ behaves like $\Gamma_\l$ we get by the proof of (a)
    \[(1-e^{-i2\sigma})^{-1} \; \sum_{n \sim 2^j} \sum_{\l=2}^n \Delta[\l \ \Gamma_\l] \; \X(\l/n) \Gamma_{n-\l} \; e^{-i2\l\sigma} \;
        (1-e^{-i2\sigma})^{-1} \; e^{i\omega n}\lesssim
        \frac{2^{j/2}}{\n{\sigma}^{5/2}\n{\omega}}.\]
    For the second sum we use one time more partial summation in $\l$ and get two terms,
    that behave like
    \Aeqn
    \begin{equation}
        (1-e^{-i2\sigma})^{-2} \; \sum_{\l} \Gamma_\l \; \Delta[\X(\l/n) \Gamma_{n-\l}] \;
    e^{-i2\l\sigma}
    \end{equation}
    and
    \begin{equation}
        (1-e^{-i2\sigma})^{-2} \; \sum_{\l} \l \ \Gamma_\l \; \Delta^2[\X(\l/n) \Gamma_{n-\l}] \;
    e^{-i2\l\sigma}
    \end{equation}
    \reseteqn
    By partial summation with respect to $n$ we get
    \[\norm{\sum_{n\sim 2^j} \sum_{\l} \Gamma_\l \; \Delta[\X(\l/n) \Gamma_{n-\l}] \;
    e^{-i2\l\sigma} \; e^{i\omega n}}\lesssim 2^j \norm{\omega}^{-1}.\]
    Now, one more partial summation in $\l$ yields that this sum is bounded by
    $\norm{\sigma}^{-1} \norm{\omega}^{-1}$. Interpolating these two results
    gives us our expected result. For (B) we get the same results.
    We omit the details.
\end{proof}

For $q_{n,m-1}^+$ we get similar results.

\begin{satz}
    \label{propqn2}
    \begin{itemize}
    \item[(a)] For every $\epsilon$ there exists a constant $C_\epsilon$ such that
    \begin{equation}
    \begin{split}
        \Bignorm{\sum_{n\sim 2^j} &\sum_{\l=0}^n \X(\l/n) \; \frac{\Gamma(\l+m)}{\Gamma(\l)}
        \frac{\Gamma(n-\l+m)}{\Gamma(n-\l)}\; e^{-i2\l\sigma} \;
        e^{i\omega n}}\\
        &\leq C_\epsilon
        \frac{2^{-j/2+j\epsilon}}{\n{1-e^{-i2\sigma}}^{1/2}} \;
        \frac{2^j}{(1+2^j\n{1-e^{i\omega}})^{1+\epsilon}},\mbox{ for all } \sigma \mbox{ and } \omega.
    \end{split}
    \end{equation}
    \item[(b)] For every $\epsilon$ there exists a constant $C_\epsilon$ such that
    \begin{equation}
    \begin{split}
        \Bignorm{\sum_{n\sim 2^j} &\sum_{\l=0}^n \X(\l/n) \; \frac{\l \; \Gamma(\l+m)}{\Gamma(\l)}
        \frac{\Gamma(n-\l+m)}{\Gamma(n-\l)}\; e^{-i2\l\sigma} \;
        e^{i\omega n}}\\
        &\leq C_\epsilon
        \frac{2^{-j/2+j\epsilon}}{\n{1-e^{-i2\sigma}}^{3/2}} \;
        \frac{2^j}{(1+2^j\n{1-e^{i\omega}})^{1+\epsilon}},\mbox{ for all } \sigma \mbox{ and } \omega.
    \end{split}
    \end{equation}
    \end{itemize}
\end{satz}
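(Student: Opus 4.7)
The plan is to follow the proof of Proposition \ref{propqn1} essentially verbatim, with the only substantive change being that the role of the order--$1/2$ symbol $\Gamma_\l = \Gamma(\l+m+1)/\Gamma(\l+1)$ is now played by the order--$(-1/2)$ symbol $\t{\Gamma}_\l := \Gamma(\l+m)/\Gamma(\l) = B(\l+1/2,1/2)/\Gamma(1/2)$. Using the same digamma--function bounds that gave rise to (\ref{symbols1})--(\ref{symbols3}), one obtains
\[ |\d{\l}^{N_1}\Delta_\l^{N_2}\t{\Gamma}_\l|\lesssim \l^{-1/2-N_1-N_2}, \qquad |\d{\l}^{N_1}\Delta_\l^{N_2}\Delta_n^{N_3}\t{\Gamma}_{n-\l}|\lesssim n^{-1/2-N_1-N_2-N_3} \]
in the same ranges of $\l$ and $n$ as before. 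In particular, every derivative or finite difference applied to $\t{\Gamma}_\l$ (resp.\ $\t{\Gamma}_{n-\l}$) now wins an additional factor of $\l^{-1}$ (resp.\ $n^{-1}$) compared with the setting of Proposition \ref{propqn1}.

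Next I would verify the natural analogue of Corollary \ref{poissonkorollar}. Inspection of the proof of Lemma \ref{poisson} shows that its bound on the integral $I_n(\sigma-x)$ scales correctly when the weight $\l^{-1/2}(n-\l)^\beta$ is replaced by $\l^{-3/2}(n-\l)^\beta$: each partial integration still produces the same gain, and the end result is improved by an overall factor of $n^{-1}$. This exactly compensates for the loss incurred when replacing $\Gamma_\l$ by $\t{\Gamma}_\l$, and yields the ``half--derivative'' estimate
\[ \Bignorm{\sum_{\l=1}^{n-1}\Delta_n[\X((\l-1)/n)\t{\Gamma}_{n-\l+1}]\,\Delta_\l\t{\Gamma}_\l\;e^{-i2\l\sigma}\;e^{i\omega n}}\lesssim \frac{n^{-3/2}}{\n{1-e^{-i2\sigma}}^{1/2}}, \]
which is what makes the Poisson trick compatible with the new symbol class.

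With these two ingredients in hand, I would rerun the decomposition $\t{q}_n^1 + \t{q}_n^2 + \t{q}_n^3$ from the proof of Proposition \ref{propqn1}, together with partial summation in $n$ (gaining $1/\n{1-e^{i\omega}}$) and in $\l$ (gaining powers of $1/\n{1-e^{-i2\sigma}}$). The trivial bound on the inner double sum is now of order $2^j$ rather than $2^{3j}$, since each summand is of size $\sim 2^{-j}$ and the set of significant pairs $(n,\l)$ has cardinality $\sim 2^{2j}$; this reduction by $2^{-2j}$ is exactly what converts the factor $2^{j/2}$ of Proposition \ref{propqn1} into the factor $2^{-j/2}$ claimed here. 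For part (b), an extra partial summation in $\l$---possible because $\Delta_\l(\l \t{\Gamma}_\l)$ behaves again like $\t{\Gamma}_\l$---raises the $\sigma$--exponent by one, yielding $\n{1-e^{-i2\sigma}}^{-3/2}$, in perfect analogy with the transition from (\ref{estimaterhoj}) to (\ref{estimaterhoj2}). A final interpolation between these bounds and the cruder estimate obtained without the partial summation in $n$ yields the claimed inequalities.

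The main technical obstacle, as in Proposition \ref{propqn1}, will be controlling the term involving $\Delta_n[\X((\l-1)/n)\t{\Gamma}_{n-\l+1}]\,\Delta_\l\t{\Gamma}_\l$ that arises after two partial summations in $\l$ and one in $n$. A brute--force partial--summation treatment of this term loses half a derivative in $\sigma$ and produces only $\n{1-e^{-i2\sigma}}^{-1}$ in part (a), respectively $\n{1-e^{-i2\sigma}}^{-2}$ in part (b), which is worse by $\n{1-e^{-i2\sigma}}^{-1/2}$ than the claimed bound. The adapted Poisson--summation identity of the previous paragraph recovers precisely this missing half derivative, in direct parallel with the role of Corollary \ref{poissonkorollar} in the preceding proposition. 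Once this step is settled, the remaining work is bookkeeping and interpolation, and introduces no new ideas.
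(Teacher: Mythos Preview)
Your plan to rerun the proof of Proposition~\ref{propqn1} verbatim overshoots in~$\sigma$. After one Abel summation in~$\l$ (producing $\t q_n^3$) and one in~$n$, the problematic sub-term already carries the prefactor $|1-e^{-i2\sigma}|^{-1}$; your adapted Poisson bound $n^{-3/2}|1-e^{-i2\sigma}|^{-1/2}$ on the remaining $\l$--sum then yields a total of order $2^{-j/2}|1-e^{-i2\sigma}|^{-3/2}|1-e^{i\omega}|^{-1}$. For small $\sigma$ this is \emph{larger} than the claimed $|1-e^{-i2\sigma}|^{-1/2}$ bound, so the argument does not close. (Your bookkeeping ``the reduction by $2^{-2j}$ converts $2^{j/2}$ into $2^{-j/2}$'' is off by $2^{-j}$, which is a symptom of the same miscount: following Proposition~\ref{propqn1} literally with coefficients scaled by $2^{-2j}$ gives $2^{-3j/2}|1-e^{-i2\sigma}|^{-3/2}$, and this is not dominated by $2^{-j/2}|1-e^{-i2\sigma}|^{-1/2}$ when $|\sigma|\le 2^{-j}$.)

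The fix is simpler than your proposal, not harder. Since $\t\Gamma_\l\sim\l^{-1/2}$ and $\t\Gamma_{n-\l}\sim(n-\l)^{-1/2}$, the inner sum $\sum_\l \X(\l/n)\,\t\Gamma_\l\,\t\Gamma_{n-\l}\,e^{-i2\l\sigma}$ already matches the hypothesis of Lemma~\ref{poisson} with $\beta=-1/2$, \emph{without} any preliminary Abel summation in~$\l$. One application of that lemma gives $n^{-1/2}|1-e^{-i2\sigma}|^{-1/2}$ directly; then partial summation in~$n$ and the final interpolation run exactly as in Proposition~\ref{propqn1}. For part~(b), note that $\Delta_\l(\l\,\t\Gamma_\l)$ is again of order $-1/2$, so a single Abel summation in~$\l$ reduces to the situation of part~(a) and produces the extra $|1-e^{-i2\sigma}|^{-1}$. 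In short, Proposition~\ref{propqn2} needs one \emph{fewer} partial summation in~$\l$ than Proposition~\ref{propqn1}, not the same number; this is presumably what the paper means by ``similar''.
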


\begin{proof}
    The proof is similar to the proof of the previous proposition. Hence we omit it.
\end{proof}

\newpage
\Section{Reductions}
%
%
%
%
%
%
\subsection{Interpolation, reduction to $p=1$}

Define the analytic family of operators $T_\alpha:=e^{i\sqrt{G}}/(1+G)^{\alpha/2}$. If we
assume now that the case $p=1$ is true, we have
\begin{equation*}
    \begin{split}
    \dnorm{T_\alpha f}_2&\leq C_\alpha \dnorm{f}_2,\quad \mbox{ if $\re \alpha=0$,}\\
    \dnorm{T_\alpha f}_1&\leq C_\alpha \dnorm{f}_1,\quad \mbox{ if $\re \alpha>1/2$.}
    \end{split}
\end{equation*}
Müller and Stein showed that the convolution kernel of the operator
$(1+L)^{-\epsilon+i\gamma}$ has bounded $L_1$-norm, for $\gamma \in \R$ and $\epsilon>0$.
The $L_1$-norms grow polynomially in $\gamma$. By transference, Corollary \ref{transfer},
the operators $(1+G)^{-\epsilon+i\gamma}$ have integral kernels with bounded Schur norms
and the Schur norms grow polynomially in $\gamma$. Hence we can use the analytic
interpolation theorem in \cite{steininterpolation} and a standard duality argument to
deduce the theorem, for arbitrary $1\leq p\leq \infty$.
%
%
%
%
%
%
\subsection{Reduction to an estimate for the local part of the kernel, part 1}
\label{reduclocalpartsection}

From now on, we take $\alpha$ to be always bigger than $1/2$, let
\[\alpha>\frac{1}{2}.\]\index{alpha}
Let $\eta$ be an even $\Cnu(\R)$ function, such that $\eta(\xi)=1$ for small $\xi$, and
$\eta(\xi)=0$, if $\norm{\xi}\geq 1$. For some large constant $N>1$, to be chosen later,
put $\eta_N(\xi):=\eta(\xi/N)$. Define
\begin{equation}
    h^\alpha_t(\xi):=(1-\eta_N)(\xi) \; \xi^{-\alpha/2} \; e^{it\sqrt{\xi}}, \mbox{ for
    all } \xi \in \R^+.\index{halpha}
\end{equation}
and $h^\alpha:=h^{\alpha}_1$. Then $h_t^\alpha(\xi)=0$ for all $\xi< N$. Furthermore, we
define \[B_r(x',u'):=B(x',\const_0 r)\times B(u',\const_0 r(r+\n{x'})).\]

\begin{prop}
    \label{reduclocalpart}
    Put
    \begin{equation*}
        \X_B(x',u',x,u):=
        \left\{ \begin{array}{c@{\;, \;}l}
        1 & \mbox{ if } (x,u) \in B_2(x',u')\\
        0 & \mbox{ otherwise }.\end{array} \right.
    \end{equation*}
    Then the integral kernel $(1-\X_B) K_{h^\alpha(G)}$ has bounded Schur norm.
    Furthermore, to prove the theorem, it suffices to show that
    $\X_{B} K_{h^\alpha(G)}$
    has bounded Schur norm.
\end{prop}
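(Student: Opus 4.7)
The proposition has two assertions: (i) the ``off-diagonal'' part $(1-\X_B) K_{h^\alpha(G)}$ has bounded Schur norm, and (ii) granted (i), it suffices for Theorem \ref{theorem2} to control $\X_B K_{h^\alpha(G)}$. The plan for (i) is to realize $h^\alpha(G)$ as a superposition of unitary wave propagators and then peel off the far part using finite speed of propagation. Set $g(\tau):=(1-\eta_N)(\tau^2)\tau^{-\alpha}e^{i\tau}$ for $\tau\ge 0$ and $g(\tau):=0$ for $\tau<0$; smoothness at $\tau=0$ follows because $(1-\eta_N)(\tau^2)$ vanishes on $\{|\tau|<\sqrt{N/2}\}$. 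Then $h^\alpha(\xi)=g(\sqrt\xi)$, and the spectral theorem together with Fourier inversion gives
\[
h^\alpha(G)=\frac{1}{2\pi}\int_{\R}\hat g(s)\,e^{is\sqrt G}\,ds.
\]
Writing $\hat g(s)=\int_0^\infty (1-\eta_N)(\tau^2)\tau^{-\alpha} e^{-i(s-1)\tau}\,d\tau$ and integrating by parts in $\tau$ repeatedly (permitted because $\alpha>1/2$ makes $\partial_\tau^k[(1-\eta_N)(\tau^2)\tau^{-\alpha}]$ integrable on $\R^+$ for each $k\ge 1$), I obtain $|\hat g(s)|\lesssim_{k} (1+|1-s|)^{-k}$ for every $k$.

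Fix $\chi\in C_c^\infty(\R)$ with $\chi=1$ on $[-3/2,3/2]$ and $\supp\chi\subseteq[-2,2]$, and split $h^\alpha(G)=A_{\mathrm{loc}}+A_{\mathrm{far}}$, where $A_{\mathrm{loc}}$ integrates against $\chi(s)\hat g(s)$ and $A_{\mathrm{far}}$ against $(1-\chi)(s)\hat g(s)$. By the finite speed of propagation the distribution kernel of $e^{is\sqrt G}$ is supported in $\{d_G((x',u'),(x,u))\le|s|\}$, hence $K_{A_{\mathrm{loc}}}$ sits inside $\{d_G\le 2\}$; by Proposition \ref{carnotmetric} this is contained in $B_2(x',u')$, so $(1-\X_B)K_{A_{\mathrm{loc}}}=0$. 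The remaining operator $A_{\mathrm{far}}=F(\sqrt G)$ has $F\in\S(\R)$ the inverse Fourier transform of the Schwartz function $(1-\chi)\hat g$. Splitting further $F(\sqrt G)=F(\sqrt G)\eta_M(G)+F(\sqrt G)(1-\eta_M)(G)$, the first summand is a compactly supported smooth multiplier of $G$, whose Schur-boundedness follows from Corollary \ref{transfer} together with the classical fact (Hulanicki) that a compactly supported smooth multiplier of $L$ on $\H_1$ has Schwartz convolution kernel; the second summand is supported on $\xi\ge M/2$ with Schwartz decay as $\xi\to\infty$, so Proposition \ref{reducmultipliertheorem} applies directly. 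This proves (i).

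For (ii), I combine (i) with a reduction from $m^\alpha(G)$ to $h^\alpha(G)$. The difference $m^\alpha(\xi)-h^\alpha(\xi)=e^{i\sqrt\xi}\bigl[(1+\xi)^{-\alpha/2}-(1-\eta_N)(\xi)\xi^{-\alpha/2}\bigr]$ decomposes into a compactly-supported-in-$\xi$ smooth piece, handled again by Corollary \ref{transfer}, and a high-frequency remainder whose amplitude $(1+\xi)^{-\alpha/2}-\xi^{-\alpha/2}$ decays like $\xi^{-\alpha/2-1}$, strictly more rapidly than the amplitude of $h^\alpha$. The same wave-integral plus cutoff argument used in (i) applies to this remainder and now yields a globally Schur-bounded kernel, since the faster amplitude decay makes each step easier. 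Therefore $K_{m^\alpha(G)-h^\alpha(G)}$ is Schur-bounded, and combining with (i), Schur-boundedness of $\X_B K_{h^\alpha(G)}$ implies Schur-boundedness of $K_{m^\alpha(G)}$, which is Theorem \ref{theorem2} at $p=1$. The main obstacle I foresee is controlling the Schwartz-class multiplier $F(\sqrt G)$ arising from $A_{\mathrm{far}}$: Proposition \ref{reducmultipliertheorem} requires $\psi(\xi)\lesssim\xi^{1/2}$ as $\xi\to 0$, whereas $F(\sqrt\cdot)$ need not vanish at the origin; my fix is to isolate a compactly supported low-frequency factor and transfer its Schur-boundedness from the Heisenberg setting via Corollary \ref{transfer}.
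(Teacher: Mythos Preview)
Your argument has a genuine gap at the finite-propagation step. You assert that the distribution kernel of the half-wave propagator $e^{is\sqrt G}$ is supported in $\{d_G\le|s|\}$, but this is false: finite propagation speed holds for $\cos(s\sqrt G)$ (and for $\sin(s\sqrt G)/\sqrt G$), which is exactly what Melrose proves and what the paper's finite-propagation proposition records, not for $e^{is\sqrt G}=\cos(s\sqrt G)+i\sin(s\sqrt G)$. Already for $-\Delta$ on $\R$ the kernel of $e^{is\sqrt{-\Delta}}$ is a linear combination of boundary values $(x-y\pm s\mp i0)^{-1}$, singular on $\{|x-y|=|s|\}$ but supported on all of $\R$. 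Hence your conclusion $(1-\X_B)K_{A_{\mathrm{loc}}}=0$ does not follow. A secondary imprecision: for $1/2<\alpha\le 1$ the transform $\hat g$ is not bounded near $s=1$ but has an integrable singularity $|s-1|^{\alpha-1}$; what you actually need (and have) is $\chi\hat g\in L^1$ and $(1-\chi)\hat g\in\S$.

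The repair is immediate and essentially reproduces the paper's proof. Since $\sqrt G\ge 0$ you may replace $g$ by its even extension $g_{\mathrm{ev}}(\tau)=g(|\tau|)$ without changing $g(\sqrt G)$, and then write $h^\alpha(G)=(2\pi)^{-1}\int\widehat{g_{\mathrm{ev}}}(s)\cos(s\sqrt G)\,ds$; now the support argument for $A_{\mathrm{loc}}$ is legitimate. The paper does this explicitly by splitting $h^\alpha$ into its cosine and (odd-extended) sine parts $f_{\alpha,1}+i\tilde g_{\alpha,1}$, expressing each as an integral of $\cos(\tau\sqrt G)$, using the identity $\sin a\sin b=\cos a\cos b-\cos(a+b)$ for the sine piece, and disposing of the Schwartz tails via Proposition~\ref{reducmultipliertheorem} (after subtracting a heat-kernel term to meet the $\xi^{1/2}$ vanishing condition at the origin). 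Your handling of $A_{\mathrm{far}}$ and of the reduction from $m^\alpha$ to $h^\alpha$ is in the same spirit and goes through once the propagation step is corrected.
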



\begin{proof}
    Define
    \begin{equation*}
        \begin{split}
        f_{\alpha,t}(\xi)&:=(1-\eta_N)(\xi) \; \n{\xi}^{-\alpha/2} \; \cos(t\sqrt{\xi}),\\
        g_{\alpha,t}(\xi)&:=(1-\eta_N)(\xi) \; \n{\xi}^{-\alpha/2} \; \sin(t\sqrt{\xi})
        \end{split}
    \end{equation*}
    Then $h^\alpha(\xi)=f_{\alpha,1}(\xi)+i g_{\alpha,1}(\xi).$
    Let
    \[\t{g}_{\alpha,t}(\xi):=(1-\eta_N)(\xi) \; \n{\xi}^{-\alpha/2} \; \sin(t\sqrt{\n{\xi}})\; \sgn(\xi).\]
    Then $\t{g}_{\alpha,t}$ is an odd function and
    \begin{equation*}
        \begin{split}
        h^\alpha(\xi)&=f_{\alpha,1}(\xi)+i \t{g}_{\alpha,1}(\xi),\quad \mbox{ for all }\xi\geq 0,\\
        h^\alpha(G)&=f_{\alpha,1}(G)+i \t{g}_{\alpha,1}(G).
        \end{split}
    \end{equation*}
    It is easily seen that for $\xi>0$
    \[(1-\eta_N)(\xi)\n{\xi}^{-\alpha/2}=\int \kphi(\tau)\; \cos(\tau\sqrt{\xi}) d\tau,\]
    where $\kphi$ is such that $\eta \kphi \in L_1$ and $(1-\eta)\kphi \in \S$.
    Hence
    \[f_{\alpha,t}(\xi)=\int (\eta \kphi)(\tau) \; \cos(\tau\sqrt{\xi}) \; \cos(t
    \sqrt{\xi})\;
    d\tau+\Phi(\sqrt{\xi}) \; \cos(t\sqrt{\xi}),\]
    with $\Phi\in \S$.
    By Proposition \ref{finitewavespeed} the support of the distribution
    \begin{equation*}\label{}
        \int (\eta\kphi)(\tau) \; \cos(\tau\sqrt{G}) \; \cos(t \sqrt{G}) \;
        d\tau
    \end{equation*}
    lies in the support of $\X_B$ for all $t\leq 1$.
    Define \[\psi(\xi):=\Phi(\sqrt{\xi})\cos(t\sqrt{\xi})-\Phi(0)e^{-\xi}.\]
    Then
    \begin{equation*}
        \begin{split}
        \n{\xi^{\l} \; \d{\xi}^{\l} \psi(\xi)} \lesssim \xi^{1/2} &\mbox{ for all } 0<\xi \leq 1,\\
        \n{\xi^{\l} \; \d{\xi}^{\l} \psi(\xi)} \lesssim \xi^{-1/2} &\mbox{ for all } 1\leq \xi < \infty.
        \end{split}
    \end{equation*}
    And thus the integral kernel of $\Phi(\sqrt{G})\cos(t\sqrt{G})$ has bounded Schur norm by
    Proposition \ref{reducmultipliertheorem} and
    the fact that $e^{-G}$ is the heat-kernel, which has bounded Schur norm.
    Thus, if we denote the integral kernel of $f_{\alpha,t}(G)$ by $K_f$, then $(1-\X_B) \ K_f$
    has bounded Schur norm.

    Since $\t{g}_{\alpha,t}$ is an odd function we get by the Fourier transform and the summation
    formula for the $\cos$ function
    \begin{equation*}
        \begin{split}
        \t{g}_{\alpha,t}&=\int \kphi(\tau)\; \sin(\tau\sqrt{\xi}) \; \sin(t \sqrt{\xi}) \; d\tau\\
        &=\int \kphi(\tau)\; \cos(\tau\sqrt{\xi}) \; \cos(t \sqrt{\xi}) \; d\tau
        -\int \kphi(\tau)\; \cos((\tau+t)\sqrt{\xi}) \; d\tau\\
        \end{split}
    \end{equation*}
    with some appropriate function $\kphi$. This can be written as
    \begin{equation*}
        \begin{split}
        &\int (\eta\kphi)(\tau) \; \cos(\tau\sqrt{\xi}) \; \cos(t \sqrt{\xi}) \; d\tau +
        \Phi(\sqrt{\xi})\;\cos(t\sqrt{\xi})\\
        &-\int (\eta\kphi)(\tau)\; \cos((\tau+t)\sqrt{\xi}) \; d\tau-
        \int ((1-\eta)\kphi)(\tau)\; \cos((\tau+t)\sqrt{\xi}) \; d\tau,\\
        \end{split}
    \end{equation*}
    with a smooth function $\Phi \in \S$.
    As for $f_{\alpha,t}$, we know that for all $\tau \leq 1$ the supports of the distributions
    \begin{equation*}\label{}
        \begin{split}
        &\int (\eta\kphi)(\tau) \; \cos(\tau\sqrt{G}) \; \cos(t \sqrt{G}) \; d\tau  \mbox{ and }\\
        &\int (\eta\kphi)(\tau)\; \cos((\tau+t)\sqrt{G}) \;
        d\tau
        \end{split}
    \end{equation*}
    lie in the support of $\X_B$.
    Furthermore, the kernel of $\Phi(\sqrt{G})\cos(t\sqrt{G})$ has bounded Schur norm for all
    $\n{t}\leq 1$, as we have seen before.

    We are left with the operator $\int ((1-\eta)\kphi)(\tau)\; \cos((\tau+t)\sqrt{G}) \; d\tau$
    which can be written as
    \begin{equation*}\label{}
        \begin{split}
        \int ((1-\eta)\kphi)(\tau)\; &[\cos(\tau\sqrt{G})\cos(t\sqrt{G})-\sin(\tau\sqrt{G})\sin(t\sqrt{G})] \;
        d\tau=\\
        &\Phi_1(\sqrt{G})\cos(t\sqrt{G})+\Phi_2(\sqrt{G})\sin(t\sqrt{G})
        \end{split}
    \end{equation*}
    with appropriate functions $\Phi_1$ and $\Phi_2$ supported away from the origin.
    Once more we can show that the functions
    \[\Phi_1(\sqrt{\xi})\cos(t\sqrt{\xi})-\Phi_1(0)e^{-\xi},\;
    \Phi_2(\sqrt{\xi})\sin(t\sqrt{\xi})-\Phi_2(0)e^{-\xi}\]
    fulfill the conditions of Proposition
    \ref{reducmultipliertheorem} and thus the integral kernels of the operators
    $\Phi_1(\sqrt{G})\cos(t\sqrt{G})$ and $\Phi_2(\sqrt{G})\sin(t\sqrt{G})$
    have bounded Schur norms, since the heat-kernel has bounded Schur norm.
    Thus, if we denote the integral kernel of $g_{\alpha,t}(G)$ by $K_g$, then $(1-\X_B) \ K_g$
    has bounded Schur norm.
    Hence, if we know that $\X_B K_{h^\alpha(G)}$ has bounded Schur norm, we could
    conclude that $K_{h^\alpha(G)}$ has bounded Schur norm.\absatz

    The last thing we have to prove is that if the kernel of $h^\alpha(G)$ has bounded Schur norm,
    this is also true for the kernel of $(1+G)^{-\alpha/2}e^{i\sqrt{G}}$.
    We write
    \[(1+\xi)^{-\alpha/2}\;
    e^{i\sqrt{\xi}}=\eta_N(\xi)(1+\xi)^{-\alpha/2}e^{i\sqrt{\xi}}+\xi^{\alpha/2}(1+\xi)^{-\alpha/2}
    (1-\eta_N)(\xi)\xi^{-\alpha/2}e^{i\sqrt{\xi}}.\]
    The functions $\eta_N(\xi)(1+\xi)^{-\alpha/2}e^{i\sqrt{\xi}}-e^{i\xi}$,
    $\xi^{\alpha/2}(1+\xi)^{-\alpha/2}-1+e^{-\xi}$
    satisfy the
    hypothesis of Proposition \ref{reducmultipliertheorem} and thus the operators
    $\eta_N(G)(1+G)^{-\alpha/2}e^{i\sqrt{G}}$ and $G^{\alpha/2}(1+G)^{\alpha/2}$ have
    bounded Schur norms. Since $(1-\eta_N)(G)G^{-\alpha/2}e^{i\sqrt{G}}=h^\alpha(G)$ the proposition
    has been proven.
\end{proof}

A further reduction allows us to exchange the indicator function $\X_{B}$ by a smooth
variant, this will be shown in part 2 of this section.

\subsection{The case of large $x'$}\label{reducx1chapter}

Before we go on with the proof of the theorem, we want to mention how one can get uniform
estimates for $\dnorm{\X_{B} K_{h^\alpha(G)}(x',0,\fa,\fa)}_{L_1}$ for $x'>>1$. Since we
have not done all calculations in this section in detail, we formulate the key estimate
as a conjecture and show how this conjecture implies the result for $x'>>1$.

Let us define operators $G_{\epsilon}$ for $\epsilon \leq 1/4$ in the following way
\[G_\epsilon:=-\d{x}^2-(1+2\epsilon x+\epsilon^2 x^2)\d{u}^2.\]
These operators are uniformly elliptic operators on the set
\mbox{$\menge{(x,u)}{\norm{(x,u)}\leq c}$}, for every $c>0$. By Fourier integral methods,
one can show that solutions to corresponding wave equations fulfill estimates in the
sense of (\ref{rm1}) and (\ref{rm2}), uniformly in $\epsilon \leq 1/4$.
\begin{satz}
    \label{FIO}
    Let $\alpha\geq 1/2$, $1<p<\infty$ and $c>0$.
    There exists a constant $C_{\alpha,p,c}$ such that for all $\epsilon\leq 1/4$, $t$ sufficiently small and $f$
    supported in \mbox{$\menge{(x,u)}{\norm{(x,u)}\leq c}$}
    \begin{equation}
        \label{FIO00}
    \Bigdnorm{\exp(it\sqrt{G_\epsilon})f}_{L_p} \leq C_{\alpha,p,c} \dnorm{f}_{W_p^{\alpha}}
    \end{equation}
    holds.
\end{satz}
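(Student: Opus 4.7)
The plan is to reduce the statement to the $L^p$-boundedness theorem for Fourier integral operators of Seeger--Sogge--Stein via the classical geometric optics parametrix, with careful attention to uniformity in the parameter $\epsilon$. First I would check that on the relevant compact region $G_\epsilon$ is uniformly elliptic: its principal symbol is
\[
(G_\epsilon)_{pr}(x,u,\xi_1,\xi_2)=\xi_1^2+(1+\epsilon x)^2\,\xi_2^2,
\]
which stays bounded below by a positive constant times $|\xi|^2$ for $|x|\le c$ and $\epsilon\le 1/4$, provided $c$ is small enough that $|1+\epsilon x|\gtrsim 1$. (If $c$ is large we restrict to the region swept out by finite speed of propagation from $\{|(x,u)|\le c\}$ in time $t$; this is the meaning of the clause that $t$ is sufficiently small.) Thus the associated wave operator $\partial_t^2+G_\epsilon$ is strictly hyperbolic with smooth real coefficients depending smoothly on $\epsilon\in[0,1/4]$.

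Next, for small $t$ I would invoke the Duistermaat geometrical optics ansatz cited in the introduction: there exist elliptic Fourier integral operators $T_\pm^{t,\epsilon}$, of order $0$, whose phases $\varphi_\pm(t,x,u,\xi)$ solve the eikonal equation
\[
\partial_t\varphi_\pm=\pm\sqrt{(G_\epsilon)_{pr}(x,u,\nabla_{x,u}\varphi_\pm)},\qquad \varphi_\pm|_{t=0}=(x,u)\cdot\xi,
\]
and whose amplitudes $a_\pm(t,x,u,\xi;\epsilon)$ are obtained from the transport equations, so that
\[
\exp(it\sqrt{G_\epsilon})=T_+^{t,\epsilon}+T_-^{t,\epsilon}+R^{t,\epsilon},
\]
with $R^{t,\epsilon}$ an infinitely smoothing operator. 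For $t$ in a sufficiently small interval the canonical relations of $T_\pm^{t,\epsilon}$ are local canonical graphs, so the SSS theorem applies and yields
\[
\bigdnorm{T_\pm^{t,\epsilon}f}_{L_p(\R^2)}\le C\,\bigdnorm{f}_{W_p^{(d-1)|1/p-1/2|}}=C\,\bigdnorm{f}_{W_p^{|1/p-1/2|}},
\]
since the topological dimension is $d=2$. Because $\alpha\ge 1/2>|1/p-1/2|$ for every $1<p<\infty$, this bound together with the trivial estimate for the smoothing remainder $R^{t,\epsilon}$ (on a compact region) will give the desired inequality.

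The main obstacle is the uniformity in $\epsilon$. One has to verify that the phases $\varphi_\pm$, the principal and lower-order amplitude symbols, and the canonical relations all depend smoothly on $\epsilon\in[0,1/4]$, so that finitely many seminorms of the symbols are controlled uniformly. This is a standard but somewhat tedious consequence of the smooth dependence of the Hamilton flow of $(G_\epsilon)_{pr}$ on $\epsilon$ and of the fact that the bicharacteristic flow stays inside a fixed compact set on a uniform time interval. Given this, the constants in the SSS theorem, which depend only on a finite number of seminorms of the amplitude and on derivatives of the phase, can be chosen uniformly in $\epsilon$. A further minor point is that the support condition on $f$ combined with finite speed of propagation for the strictly hyperbolic operator $\partial_t^2+G_\epsilon$ confines the parametrix construction to a fixed compact set, which both guarantees the global validity of the local geometric optics representation and makes the smoothing part $R^{t,\epsilon}$ bounded on every $L_p$ uniformly in $\epsilon$ and $t$ small.
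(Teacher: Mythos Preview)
Your proposal is correct and follows essentially the same route as the paper: reduce to the Seeger--Sogge--Stein theorem via the geometric optics parametrix for the strictly hyperbolic operator $\partial_t^2+G_\epsilon$, control the smoothing remainder on a compact set (the paper does this via the energy inequality), and obtain uniformity in $\epsilon$ from the smooth dependence of the eikonal and transport equations on $\epsilon$. The paper's sketch spells out the transport equations and the remainder estimate more explicitly, while you invoke the Duistermaat construction as a black box, but the substance is the same.
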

\noindent Such estimates are well known. At the end of the chapter we will give a sketch
of the proof. For details see \cite{seeger} and \cite{sogge}. In fact, Seeger, Sogge and
Stein showed that Proposition \ref{FIO} is true for an elliptic operator defined on a
compact manifold instead of $G_\epsilon$. The compactness is here not necessary, since we
have finite speed of wave propagation and since the functions $f$ are supported in a
compact set.

Since the operator $G_\epsilon$ is elliptic on the support of $f$ it should be possible
to exchange the usual Sobolev norm $\dnorm{\fa}_{W_p^{\alpha}}$ by norms
$\dnorm{\fa}_{L^{\alpha}_p}:=\dnorm{(1+G_\epsilon)^{\alpha/2}\fa}_{L_p}$. In addition,
the assertion (\ref{FIO00}) should also hold true for $p=1$ and with a slightly worse
exponent $\alpha>1/2$. We conjecture the following.
\begin{vermutung2}
    \label{FIOvermutung}
    Let $\alpha> 1/2$ and $c>0$.
    There exists a constant $C_{\alpha,c}$ such that for all $\epsilon\leq 1/4$, $t$ sufficiently small and $f$
    supported in \mbox{$\menge{(x,u)}{\norm{(x,u)}\leq c}$}
    \begin{equation}
        \label{FIO00}
    \Bigdnorm{\frac{\exp(it\sqrt{G_\epsilon})}{(1+G_\epsilon)^{\alpha/2}}f}_{L_1} \leq C_{\alpha,c}
    \dnorm{f}_{L_1}
    \end{equation}
    holds.
\end{vermutung2}
This conjecture implies now the following proposition.
\begin{satz}
    \label{reduktionx1}
    Conjecture \ref{FIOvermutung} implies that for
    every $\alpha > 1/2$ there exist constants $C,C_\alpha$, such that
    \index{const1}
    \begin{equation}
        \sup\limits_{\n{x'}\geq C} \dnorm{K_{h^\alpha(G)}(x',0,x,u)}_{L_1(x,u)}\leq C_{\alpha}
    \end{equation}
    holds.
\end{satz}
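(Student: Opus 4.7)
The plan is to exploit the fact that for $|x'|\geq C$ large, an affine change of coordinates that dilates the $u$-direction by $1/x'$ conjugates the Grušin operator $G$, restricted to a neighborhood of the set $\{x=x'\}$, into the uniformly elliptic operator $G_\epsilon$ of Proposition \ref{FIO} with small parameter $\epsilon = 1/x'$. Conjecture \ref{FIOvermutung} then supplies exactly the $L_1$-estimate we need, uniformly in the base point.

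First I would localize. Arguing as in the proof of Proposition \ref{reduclocalpart}, I split $h^\alpha$ into cosine and sine pieces, insert a Fourier-type representation of the symbol and invoke the finite propagation speed Proposition \ref{finitewavespeed}; the smooth remainders that arise are absorbed by the multiplier result Proposition \ref{reducmultipliertheorem}. This reduces the task to estimating the $L_1$-norm of the restriction of $K_{h^\alpha(G)}(x',0,\cdot,\cdot)$ to the optimal control ball $B_G((x',0),c)$, which by Proposition \ref{carnotmetric} is contained in $B(x',c')\times B(0,c'(1+|x'|))$.

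Next I would perform the change of coordinates $\Psi_{x'}:(x,u)\mapsto(y,v):=(x-x',u/x')$. A direct calculation shows that $\Psi_{x'}^*\,G\,(\Psi_{x'}^{-1})^* = G_\epsilon$ with $\epsilon=1/x'\leq 1/C$, and since $\Psi_{x'}$ is an affine volume-preserving map up to a global factor $x'$ that cancels between source and target measures, the $L_1$-norm of the kernel is preserved:
\begin{equation*}
\dnorm{K_{h^\alpha(G)}(x',0,\cdot,\cdot)}_{L_1(dx\,du)} = \dnorm{K_{h^\alpha(G_\epsilon)}(0,0,\cdot,\cdot)}_{L_1(dy\,dv)}.
\end{equation*}
Moreover, the localizing set maps into a fixed ball of size $O(1)$ in the $(y,v)$ variables, uniformly in $x'\geq C$. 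Writing
\begin{equation*}
h^\alpha(G_\epsilon) = \bigl[(1-\eta_N)(G_\epsilon)\,G_\epsilon^{-\alpha/2}(1+G_\epsilon)^{\alpha/2}\bigr]\cdot\bigl[e^{i\sqrt{G_\epsilon}}(1+G_\epsilon)^{-\alpha/2}\bigr],
\end{equation*}
the first factor is a classical zeroth-order multiplier of the elliptic $G_\epsilon$ and contributes bounded Schur norm uniformly in $\epsilon$; the second factor is exactly the operator controlled by Conjecture \ref{FIOvermutung}. Composition, together with the standard fact that $L^1\to L^1$ boundedness of an operator supported on a set of bounded measure forces its kernel to have bounded Schur norm, yields the desired uniform estimate.

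The main obstacle is Conjecture \ref{FIOvermutung} itself rather than the proposition above: the argument sketched here, conditional on the conjecture, is really just scaling and transference. Establishing the conjecture requires pushing the $L^p$ Fourier integral operator estimates of Seeger--Sogge--Stein, stated in Proposition \ref{FIO} for $1<p<\infty$ and the Euclidean Sobolev norm $\dnorm{\cdot}_{W_p^\alpha}$, down to the endpoint $p=1$ with the critical loss $\alpha>1/2$, and replacing the Euclidean Sobolev norm by the one adapted to $G_\epsilon$. Both steps are standard in principle but delicate in detail, which is presumably why only the general idea is presented for the large $|x'|$ regime.
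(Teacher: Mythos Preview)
Your scaling-and-transference approach is essentially the paper's, but you have missed one ingredient: Conjecture~\ref{FIOvermutung} asserts the estimate only for \emph{sufficiently small} $t$, whereas you invoke it directly for $e^{i\sqrt{G_\epsilon}}(1+G_\epsilon)^{-\alpha/2}$ at $t=1$. The small-time restriction is genuine---the eikonal equation underlying the FIO parametrix in Section~\ref{reducx1chapter} is only solvable on a short interval---so this is a gap, not a technicality. The paper closes it by first obtaining, via the conjugation you describe, a uniform bound on $\dnorm{K_{h_t^{\alpha}(G)}(x',0,\cdot,\cdot)}_{L_1}$ for $\n{x'}\geq 4$ and small $t<t_0$, and then using the homogeneity of $G$ under the dilation $\delta_r:(x,u)\mapsto(rx,r^2u)$ to rescale from time $t$ to time $1$. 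This rescaling pushes the threshold out to $\n{x'}\geq 4/t_0=:C$, which is precisely how the constant $C$ in the statement arises. Your write-up does not explain where $C$ comes from, and indeed cannot, because you never leave $t=1$.

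A second, smaller issue: your first factor $(1-\eta_N)(G_\epsilon)\,G_\epsilon^{-\alpha/2}(1+G_\epsilon)^{\alpha/2}$ is a zeroth-order multiplier, and such operators are in general \emph{not} bounded on $L_1$. The paper sidesteps this by inserting an extra $G_\epsilon^{-\delta}$, i.e.\ working with the negative-order multiplier $(1-\eta_N)(G_\epsilon)(1+G_\epsilon)^{\alpha/2}G_\epsilon^{-\alpha/2-\delta}$, at the cost of proving the bound for $h_t^{\alpha-\delta}(G)$ instead of $h_t^{\alpha}(G)$; this is harmless since the claim is for all $\alpha>1/2$. Your initial localization via Proposition~\ref{reduclocalpart} is not wrong, but it is also not needed: once the conjecture is applied and the homogeneity used, the Schur bound on the full kernel follows directly.
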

Hence, if the conjecture was true, this proposition together with our theorem would imply
that the operator $\exp(i\sqrt{G})(1+G)^{-\alpha/2}$ extends to a bounded operator on
$L_p(\R^2)$ for $\alpha>\n{1/p-1/2}$ and $1\leq p \leq \infty$.

\begin{proof}
    Since
    \[\Big(-\d{x}^2-\Big(1+\frac{x^2}{x_1^2}+\frac{2x}{x_1}\Big)\d{u}^2\Big)f(\fa+x_1,\fa x_1)\vert_{(x-x_1,u/x_1)}
    =(-\d{x}^2-x^2\d{u}^2)f\vert_{(x,u)},\]
    we can express the kernel of $m(G)$ by the kernel of $m(G_{1/x_1})$ for every bounded function $m$. We have
    \begin{equation*}
        \begin{split}
        \int &K_{m(G_{1/x_1})}(x',u',x-x_1,u/x_1) \; f(x'+x_1,u'x_1) \; dx' \; du'\\
        &=\int K_{m(G)}(x',u',x,u) \; f(x',u') \; dx' \; du'
        \end{split}
    \end{equation*}
and thus
\begin{equation*}
    x_1^{-1} \; K_{m(G_{1/x_1})}(x'-x_1,u'/x_1,x-x_1,u/x_1)=K_{m(G)}(x',u',x,u).
\end{equation*}
By Proposition \ref{FIO} the operator
$\exp(t\sqrt{G_\epsilon})(1+G_\epsilon)^{-\alpha/2}$, with kernel
$M_{\epsilon,t}^\alpha$, is bounded from $L_1(\Omega_1)$ to $L_1$ for small $t<t_0$, with
$t_0$ a sufficiently small constant. Hence
\begin{equation*}
    \dnorm{M_{\epsilon,t}^\alpha(x',0,x,u}_{L^1(x,u)}\leq C_\alpha
\end{equation*}
for all $\n{x'}\leq 2$ and $C_\alpha$ independent of $\epsilon$. Since the operator
\[(1-\eta_N)(G_\epsilon)(1+G_\epsilon)^{\alpha/2}G_\epsilon^{-\alpha/2-\delta},\] for small $\delta$, is
bounded on $L_1$ we get that the kernel $\t{M}_{\epsilon,t}^{\alpha-\delta}$ of the
operator $h_t^{\alpha-\delta}(G_\epsilon)$ has bounded Schur norm. Hence
\begin{equation*}
    x_1^{-1} \; \int \n{\t{M}_{\epsilon,t}^{\alpha-\delta}(x'-x_1,0,x-x_1,u/x_1)} \; dx \; du \leq C_\alpha,
\end{equation*}
for all $\n{x'-x_1}\leq 1$. With $\epsilon=1/x_1$ this leads to
\begin{equation*}
    \int K_{h_t^{\alpha-\delta}(G)}(x',0,x,u) \; dx \; du \leq C_\alpha,
\end{equation*}
for all $\n{x'-x_1}\leq 1$. With $x'=x_1$ we end up with
\begin{equation*}
    \int \n{K_{h_t^{\alpha-\delta}(G)}(x',0,x,u)} \; dx \; du \leq C_\alpha,
\end{equation*}
uniformly in $x'$, provided $x'\geq 4$. By the homogeneity of $G$ with respect to the
dilation $(x,u)\mapsto (rx,r^2u)$, it follows that
\begin{equation*}
    \sup\limits_{\n{x'}\geq 4/t_0} \int K_{h^{\alpha-\delta}(G)}(x',0,x,u) \; dx \; du \leq
    C_\alpha.
\end{equation*}
By setting $C:=4/t_0$ the proposition has been proven.
\end{proof}

\subsubsection*{Fourier integral operators}


In this section we deal with operators of the form
\begin{equation}
    \label{FIOdef}
    Af(x)=\int a(x,\xi) \; e^{i\kphi(x,\xi)} \; \h{f}(\xi) \; d\xi,
\end{equation}
where the amplitude $a$ is a real valued function in a symbol class $S^m_{\rho,\delta}$,
$\rho>0$, $\delta<1$, and the phase function $\phi$ fulfills
\begin{list}{}
    \item{(a) $\kphi$ is smooth, real valued and homogeneous of degree $1$ in $\xi$.}
    \item{(b) the gradient $\grad_x\kphi$ is nowhere vanishing on the support of a, for
    all $\xi\not=0$.}
\end{list}
\absatz \noindent These operators are a special cases of \emph{Fourier integral
operators}. We refer to Sogge \cite{sogge} and Duistermaat \cite{duistermaat} for a
general definition.

The aspect of interest for us is that the operator defined in \eqref{FIOdef} is
essentially the solution operator to a strictly hyperbolic differential equation.

\subsubsection*{Sketch of the proof of Proposition \ref{FIO}}


We use the abbreviations $z:=(x,u)$, $\zeta:=(\xi,\eta)$. Let
$P_\epsilon:=\d{t}^2+G_\epsilon$. Since $P_{\epsilon}$ is strictly hyperbolic we can
factor its principal symbol,
\begin{equation*}
    p(x,u,\xi,\eta,\tau)=(\tau-\lambda_+(x,u,\xi,\eta))(\tau-\lambda_-(x,u,\xi,\eta))
\end{equation*}
with $\lambda_\pm=\pm (\xi^2 +(1+x\epsilon)^2\eta^2)^{1/2}$. The \emph{eikonal equation}
\begin{equation*}
    \label{eikonalgleichung}
    \left\{ \begin{array}{r@{\;=\;}l}
        \d{t}\kphi^\pm &\lambda_\iota(x,\grad_x\kphi^\pm)=\pm((\d{x}\kphi^\pm)^2+
        (1+x\epsilon)^2(\d{u}\kphi^\pm)^2)^{1/2},\\
        \kphi^\pm\vert_{t=0}& \skalar{z}{\zeta}\end{array} \right. ,
\end{equation*}
is a system of two first order nonlinear differential equation for $\kphi^+$ and
$\kphi^-$, and it can be solved at least for small $t$. By the initial condition
$\kphi^\pm(0,z,\zeta)=\skalar{z}{\zeta}$ the solutions $\kphi^+,\ \kphi^-$ will satisfy
the requirements (a) and (b) for small $t$, $\n{t}\leq \delta$, $\delta$ sufficiently
small and independent of $\epsilon$.

We suppose that an approximate solution $\t{v}$ to the Cauchy problem
\begin{equation}
    \label{cpreduction}
    (\d{t}^2+G_\epsilon)v=P_\epsilon v=0, \quad v\vert_{t=0}=f_0,\quad \d{t} v\vert_{t=0}=f_1,
\end{equation}
for $f_0,\ f_1 \in \S$, can be written as
\begin{equation*}
    \t{v}(t,x,u)=\sum_{\iota \in \{+,-\},\ k\in \{0,1\}}
    \int a^{\iota,k}(t,z,\zeta) \; e^{i\kphi^\iota(t,z,\zeta)} \; \h{f_k}(\zeta) \; d\zeta,
\end{equation*}
where $a^{\iota,k}$ is a symbol of order $-k$. For simplicity we only consider the case
$f_0 \in \S$ and $f_1=0$. The case $f_0=0$ and $f_1 \in \S$ can be obtained similarly.
Then, the general case follows since our wave equation is linear.

Put $f_1=0$. We suppose that $\t{v}$ is given as $\t{v}=\t{v}^+ +\t{v}^-$, with
\begin{equation}
    \label{udefinition}
    \t{v}^\iota(t,x,u)=
    \int a^{\iota}(t,z,\zeta) \; e^{i\kphi^\iota(t,z,\zeta)} \; \h{f_0}(\zeta) \; d\zeta,
\end{equation}
for $\iota \in \{+,-\}$ and $a^\iota$ is a symbol of order $0$ and given as a sum over
symbols $a_k^\iota$ in $S^{-k}$
\begin{equation}
    \label{summea}
    a^\iota(t,z,\zeta)=\sum\limits_{k\leq 0} a_k^\iota(t,z,\zeta).
\end{equation}
Now we compute the symbols $a_k^\iota$. Since $\kphi^\pm \vert_{t=0}=\skalar{z}{\zeta}$
and $u$ should be a solution with $u\vert_{t=0}=f_0$ and $\dt u\vert_{t=0}=0$ we have the
following equations for $a^+$ and $a^-$.
\begin{equation*}
    \begin{split}
        (a^+ + a^-)\vt&=1\\
        [i\dt\kphi^+a^++i\dt\kphi^-a^-+\dt(a^++a^-)]\vert_{t=0}&=0
    \end{split}
\end{equation*}
Since $\kphi^\pm$ fulfills the eikonal equation \eqref{eikonalgleichung} the second
equation can be written as
\begin{equation}
    \label{sigmaa+a-}
    [i\sigma(a^+ - a^-)+\dt(a^+ + a^-)]\vert_{t=0}=0,
\end{equation}
with $\sigma:=(\xi^2+(1+x\epsilon)^2\eta^2)^{1/2}$. Since $\sigma$ is of order $1$ and
$\dt(a^+ + a^-)$ is a symbol of order $0$, the highest order term in $a$ has to fulfill
\[\sigma(a_0^+ - a_0^-)\vert_{t=0}=0.\]
Put
\begin{equation}
    d_k:=a_k^+ +a_k^-,\quad b_k:=a_k^+-a_k^-
\end{equation}
In order to fulfill \eqref{sigmaa+a-} and with respect to the symbol orders of $\sigma,\
b_k$ and $d_k$ we choose $b_k$ and $d_k$ so that
\begin{equation}
    \label{dkbk}
    i\sigma b_k\vert_{t=0}=-(\dt d_k)\vert_{t=0}.
\end{equation}
Furthermore, to approximate a solution of the wave equation, i.e. to ensure that
$P_\epsilon \t{v}$ is small, $a_k^+$ and $a_k^-$ have to fulfill a transport equation. We
get these equation in the following way. By Applying our operator $P_\epsilon$ to
$\t{v}^\iota$, for $\iota \in \{+,-\}$, we get
\begin{equation*}
    P_\epsilon \t{v}^\iota=\int c^\iota(t,z,\zeta) \; e^{i\kphi^\iota(t,z,\zeta)}\; \h{f_0}(\zeta) d\zeta
\end{equation*}
with
\begin{equation}
    \label{FIO2}
    \begin{split}
    c^\iota(t,z,&\zeta)=e^{-i\kphi^\iota} \; P_\epsilon(e^{i\kphi^\iota} a^\iota)\\
    =&i\big(2(\d{t}\kphi^\iota)(\dt a^\iota)-2(\dx \kphi^\iota)(\dx a^\iota)-2(1+2\epsilon x+\epsilon^2
    x^2)(\du \kphi^\iota)(\du a^\iota)\big)\\
    &+i(P_\epsilon\kphi^\iota) a^\iota+(P_\epsilon a^\iota).
    \end{split}
\end{equation}
Because $\t{v}$ should be an approximate solution we have to show that $c^\pm$ has order
$-N$, with $N$ sufficiently large. Thus we set $c^\pm$ equal to zero and solve for the
symbols $a_k^\pm$. The leading term is
\[i\big(2(\d{t}\kphi^\iota)(\dt a_0^\iota)-2(\dx \kphi)(\dx a_0^\iota)-2(1+2\epsilon x+\epsilon^2
    x^2)(\du \kphi^\iota)(\du a_0^\iota)\big)\\
    +i(P_\epsilon\kphi^\iota) a_0^\iota\]
which is of order $1$. Define
\[V^\iota:=(\dt \kphi^\iota) \dt-(\dx \kphi^\iota) \dx-(1+2\epsilon x+\epsilon^2 x^2)(\du \kphi^\iota)\du.\]
We study now the \emph{transport equation}
\begin{equation*}
        (V^\iota a_0^\iota)(t,z,\zeta) + (P_\epsilon\kphi)a_0^\iota(t,z,\zeta) =0,
\end{equation*}
for $\iota \in \{+,-\}$ and with initial conditions
\begin{equation}
    (a_0^+-a_0^-)\vt=b_0\vert_{t=0}=0,\quad (a_0^+ + a_0^-)\vt=d_0\vert_{t=0}=1.
\end{equation}
Since $V^\iota$ is a real vector field, we can solve these equations, on the same
$t$-interval and get solutions $a_0^\iota$ in the symbol class $S^0$. Furthermore, we
have $a_0^+-a_0^-=0$ and $a_0^++a_0^-=1$. By Rewriting (\ref{FIO2}) and setting equal to
zero we get
\begin{equation*}
    \begin{split}
        V^\iota\Big(\sum_{k\leq -1} a_k^\iota\Big)+(P_\epsilon \kphi^\iota)\Big(\sum_{k\leq -1}
        a_k^\iota\Big)-i\Big(P_\epsilon\sum_{k \leq 0}a_k^\iota\Big)=0,
    \end{split}
\end{equation*}
for $\iota \in \{+,-\}$. This gives us new transport equations for $a_{-1}^+$ and
$a_{-1}^-$.
\begin{equation*}
        (Va_{-1}^\iota)(t,z,\zeta) + (P_\epsilon\kphi^\iota)a_{-1}^\iota(t,z,\zeta)-i(P_\epsilon a_0^\iota)=0.
\end{equation*}
Since we have already calculated $d_0$, we get for these transport equations with initial
conditions, chosen with respect to \eqref{dkbk},
\begin{equation*}
    i\sigma(a_{-1}^+-a_{-1}^-)\vt=i\sigma b_{-1}\vert_{t=0}=-\dt d_0\vt,\quad (a_{-1}^+ + a_{-1}^-)\vt=d_{-1}\vert_{t=0}=0
\end{equation*}
unique solutions $a_{-1}^+$ and $a_{-1}^-$.
Iteratively, we can solve the transport
equations
\begin{equation*}
        (V^\iota a_{k}^\iota)(t,z,\zeta) + (P_\epsilon\kphi^\iota)a_{k}^\iota(t,z,\zeta)-i(P_\epsilon a_{k+1}^\iota)=0.
\end{equation*}
with initial conditions
\begin{equation*}
    i\sigma(a_{k}^+ - a_{k}^-)\vt=i\sigma b_{k}\vert_{t=0}
    =-\dt d_{k+1}\vert_{t=0},\quad (a_{k}^+ + a_{k}^-)\vt=d_{k}\vert_{t=0}=0
\end{equation*}
for all $k\leq -1$ and $\iota \in \{+,-\}$.

We choose now a sufficiently large $N \in \N$. Put $\t{a}^\iota:=\sum_{-N \leq k\leq 0}
a_k^\iota$, for \mbox{$\iota \in \{+,-\}$}, and
\[\t{v}(t,z):=\sum_{\iota \in \{+,-\}} \int \t{a}^\iota(t,z,\zeta) \; e^{i\kphi^\iota(t,z,\zeta)} \;
\h{f_0}(\zeta) \; d\zeta.\] Then
\begin{equation*}
    \begin{split}
    (P_\epsilon \t{v})(t,z)&=F(t,z),\\
    \t{v}\vert_{t=0}&=f_0,
    \end{split}
\end{equation*}
where \[F(t,z):=\sum_{\iota \in \{+,-\}} \int
    \t{c}^\iota(t,z,\zeta) \; e^{i\kphi^\iota(t,z,\zeta)} \; \h{f_0}(\zeta) \; d\zeta\] and
$\t{c}^\iota$ is a symbol of order $-N$, for $\iota \in \{+,-\}$. Furthermore, $\dt
\t{v}\vert_{t=0}$ is given by
\begin{equation*}
    \begin{split}
    \dt \t{v}(0,z)&=\sum_{k=-N}^0 \int [i\sigma b_k+(\dt d_k)](0,z,\zeta)
    \;  e^{i\skalar{z}{\zeta}} \; \h{f_0}(\zeta) \; d\zeta\\
    &=\int (\dt d_{-N})(0,z,\zeta)
    \;  e^{i\skalar{z}{\zeta}} \; \h{f_0}(\zeta) \; d\zeta.\\
    \end{split}
\end{equation*}
Put $g(z):=\int (\dt d_{-N})(0,z,\zeta) \;  e^{i\skalar{z}{\zeta}} \; \h{f_0}(\zeta) \;
d\zeta$.

Now, if a exact solution $v$ for the Cauchy problem \eqref{cpreduction} is given, the
function $w:=\t{v}-v$ fulfills the inhomogeneous wave equation
\begin{equation*}
    \left\{ \begin{array}{c@{\;=\;}l}
        (\dt^2+G_\epsilon)w & F, \\
        w\vt & 0,\\
        \dt w\vt & g.\end{array} \right.
\end{equation*}
Since our operator $G_\epsilon$ is just a smooth perturbation of the Laplacian, we have
finite speed of wave propagation. Since $f$ is compactly supported we can find an open
set $\Omega_2$ with $\ab{\Omega_2}$ compact and independent of $\epsilon$ such that
$\supp v \teil \Omega_2$.


By general theory (see \cite{evans}), in fact by using the energy inequality, we obtain
the estimate
\begin{equation}
    \dnorm{w(t,\fa)}_{L_2(\Omega_2)}\leq C_{t}\Big(\dnorm{g}_{L_2(\Omega_2)} +\int_0^t \dnorm{F(\tau,\fa)}_{L_2(\Omega_2)} \;
    d\tau\Big),
\end{equation}
with $C_{t}$ independent of $\epsilon$. Since $\t{c}^\pm$ and $\dt d_{-N}$ are both
symbols of order $-N$, we get
\begin{equation*}
    \dnorm{w(t,\fa)}_{L_2(\Omega_2)}\leq C_{t} \dnorm{f_0}_{L_1}.
\end{equation*}
By the Cauchy-Schwarz inequality, the $L_1$-norm of $w(t,\fa)$ on $\Omega_2$ is bounded
by the $L_2$-norm of $w(t,\fa)$ on $\Omega_2$.

We are left with the estimation of Fourier integral operators of the form
\begin{equation*}
    f \mapsto Af(t,x,u)=
    \int a(t,z,\zeta) \; e^{i\kphi(t,z,\zeta)} \; \h{f}(\zeta) \; d\zeta,
\end{equation*}
where $a$ is a symbol of order $0$ and $\phi$ fulfills the requirements $(a)$ and $(b)$.
Furthermore, $a$ and $\kphi$ depend smoothly on $\epsilon$. By well known regularity
properties of Fourier integral operators, we obtain that the operator $A$ is bounded from
$W_p^\alpha$ to $L_p$ for $\alpha\geq \n{1/2-1/p}$ and $1<p<\infty$. We refer here to
\cite{seeger} and \cite{sogge}. This completes the sketch of the proof.





\subsection{Reduction to an estimate for the local part of the kernel, part 2}

We now exchange the indicator function $\X_B$ by a smooth variant of it. Let $\t{\X}_B
\in \Cu(\R^4)$, with
\[\t{\X}_B(x',u',x,u)=1 \mbox{ for all } (x',u',x,u) \in \supp{\X_B}\]
and \[\t{\X}_B(x',u',x,u)=0 \mbox{ for all } (x',u',x,u) \mbox{ with } (x,u) \notin
B_4(x',u').\] $\t{\X}_B$ is a smooth function supported near the diagonal
$(x,u)=(x',u')$.

Put
    \[\Omega:=\menge{(x,u)\in \R^2}{\n{x}<2 \const_1}.\]\index{xbtilde}
\begin{satz}
    \label{reduclocalpart2}
  To prove the theorem, it suffices to show that for all $\alpha>1/2$ the operator
  \begin{equation}
    \label{reducprop2}
    f\mapsto \int \t{\X}_{B} K_{h^\alpha(G)}(x',u',\fa,\fa) \; f(x',u') \; dx' \; du'
  \end{equation}
  is bounded from $L_p(\Omega)$ to $L_p(\R^2)$ for every $1<p<\infty$.
\end{satz}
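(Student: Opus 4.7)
The strategy is to build on Proposition~\ref{reduclocalpart}, which has already reduced Theorem~\ref{theorem2} to establishing that the kernel $\X_B K_{h^\alpha(G)}$ has bounded Schur norm for every $\alpha > 1/2$; I would derive this from the $L_p(\Omega) \to L_p(\R^2)$-boundedness of the smoothed localized operator by way of two kernel-level observations together with an analytic interpolation.

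First, since $\t{\X}_B \equiv 1$ on $\supp \X_B$ by construction, one has the pointwise inequality $|\X_B K_{h^\alpha(G)}| \leq |\t{\X}_B K_{h^\alpha(G)}|$, so any bound for the kernel $\t{\X}_B K_{h^\alpha(G)}$ transfers directly to $\X_B K_{h^\alpha(G)}$ and the rough indicator may be replaced by its smooth majorant at no cost. Second, the support of $\t{\X}_B(x',u',x,u)$ is confined to $(x,u) \in B_4(x',u')$, which forces $|x - x'| \leq 4\const_0$; hence for $(x',u') \in S_{\const_1}$ the output variable $(x,u)$ lies in a bounded enlargement of $S_{\const_1}$, and after a harmless adjustment of constants this enlargement is contained in $\Omega = S_{2\const_1}$. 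Consequently extending input functions $f$ by zero from $S_{\const_1}$ to $\Omega$ does not alter the action of the localized operator, and the natural functional setting is precisely $L_p(\Omega) \to L_p(\R^2)$.

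Third, to pass from the $L_p$-boundedness for $1 < p < \infty$ demanded by the proposition back to the $L_1$-Schur estimate required by Proposition~\ref{reduclocalpart}, I would reapply the analytic interpolation scheme already described in Section~4.1, now to the analytic family $T_\alpha^{\mathrm{loc}}$ with kernel $\t{\X}_B K_{h^\alpha(G)}$: on $\re \alpha = 0$ one has $L_2$-boundedness for free (from the spectral theorem together with $\dnorm{\t{\X}_B}_\infty \leq 1$), while the hypothesis supplies $L_p$-bounds on $\re \alpha = 1/2 + \delta$ for every $1 < p < \infty$, the polynomial vertical growth needed for Stein's theorem being furnished by the transferred kernel estimates of Section~\ref{transferencesection}. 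Combining the resulting interpolated bounds with the nonlocal contribution already controlled by Proposition~\ref{reduclocalpart} and with the duality argument of Section~4.1 recovers Theorem~\ref{theorem2} in the full range $1 \leq p \leq \infty$ at the critical index $\n{1/p - 1/2}$.

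The main obstacle is not this reduction itself, which amounts to bookkeeping about support sets and one application of the analytic interpolation theorem, but rather the $L_p$-boundedness statement the proposition isolates: establishing $L_p(\Omega) \to L_p(\R^2)$-boundedness of the operator with kernel $\t{\X}_B K_{h^\alpha(G)}$ is precisely the substantial work of the subsequent chapters, in particular the dyadic decomposition of the joint spectrum of $G$ and $iU$ in Chapter~5 and the oscillatory-integral analysis in the new coordinates $(X, Y, s)$ in Chapter~6.
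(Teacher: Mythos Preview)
Your third step contains a genuine gap. Analytic interpolation between the line $\re\alpha=0$ (where you have $L_2$) and the line $\re\alpha=1/2+\delta$ (where the hypothesis gives $L_p$ for $1<p<\infty$) only produces $L_q$-bounds for exponents $q$ strictly between $1$ and $\infty$; no matter how close to $1$ you push the $p$ on the right endpoint, the interpolated exponent never reaches $q=1$. But Proposition~\ref{reduclocalpart} requires precisely the $L_1$ Schur bound $\sup_{x',u'}\int|\X_B K_{h^\alpha(G)}(x',u',x,u)|\,dx\,du<\infty$, and that is not recovered by any $L_p\to L_p$ bound with $p>1$. So the loop does not close. (There is also a minor issue with your $L_2$ claim: the operator with kernel $\t\X_B K_{h^{i\gamma}(G)}$ is not a spectral multiplier of $G$, so its $L_2$-boundedness, with polynomial growth in $\gamma$, does not come directly from the spectral theorem; one would have to argue by subtracting the nonlocal part, whose Schur norm is controlled, from the full operator $h^{i\gamma}(G)$.)

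The paper's argument is different and avoids interpolation altogether. Given $\alpha>1/2$, write $\alpha=\alpha'+\epsilon$ with $\alpha'>1/2$, so that formally
\[
K_{h^\alpha(G)}(x',u',\fa,\fa)=h^{\alpha'}(G)\bigl[K_{(1+G)^{-\epsilon}}(x',u',\fa,\fa)\bigr].
\]
From M\"uller--Stein one knows $(1+L)^{-\epsilon}\delta_0\in L_p(\H_1)$ for some $p>1$ and is rapidly decreasing away from the origin; by transference (Proposition~\ref{transferproposition}) the function $(x,u)\mapsto K_{(1+G)^{-\epsilon}}(x',u',x,u)$ lies in $L_p$ on $\Omega$ and in $L_2$ on $\R^2\setminus\Omega$, uniformly in $(x',u')$ with $|x'|\le C_0$. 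The hypothesis (together with Proposition~\ref{reduclocalpart} for the nonlocal piece) says $h^{\alpha'}(G)$ is bounded $L_p(\Omega)\to L_p$ and $L_2\to L_2$, so $K_{h^\alpha(G)}(x',u',\fa,\fa)\in L_p+L_2$ with uniformly bounded norm. Finally the cutoff $\X_B(x',u',\fa,\fa)$ has bounded support of size $\lesssim 1+|x'|$, hence lies in every $L^{q'}$; H\"older then gives the uniform $L_1$ bound on $\X_B K_{h^\alpha(G)}(x',u',\fa,\fa)$ required by Proposition~\ref{reduclocalpart}. The point is that a \emph{single} $L_p$-bound for some $p>1$, applied to the concrete test function $K_{(1+G)^{-\epsilon}}(x',u',\fa,\fa)$, already yields the pointwise-in-$(x',u')$ $L_1$ kernel estimate via H\"older; no interpolation family is needed.
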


\begin{proof}
    Let $\alpha>1/2$.
    We assume now that the operator defined in \eqref{reducprop2} is bounded from
    $L_p(\Omega)$ to $L_p(\R^2)$ for every $1<p<\infty$.
    By Proposition \ref{reduclocalpart} we have to show that there exists a constant $C$ with
    \begin{equation}
        \label{reducprop5}
        \sup_{x'\leq \const_0,u'} \int \n{\X_B K_{h^{\alpha}(G)}(x',u',x,u)} \; dx \; du \leq C.
    \end{equation}
    Since $\alpha>1/2$ we can choose an $\epsilon>0$ such that
    $\alpha-\epsilon>1/2$. Put $\alpha':=\alpha-\epsilon$.
    Since we known from Proposition \ref{reduclocalpart} that the kernel $(1-\t{\X}_{B})K_{h^{\alpha'}}$ has bounded Schur
    norm,
    we get that the operator $h^{\alpha'}(G)$ is bounded from $L_p(\Omega)$ to $L_p(\R^2)$ for $1<p<\infty$.

    In \cite{mueller} it was shown that $(1+L)^{-\epsilon}\delta_0$ is in $L_p$ for some $p>1$ and that
    $(1+L)^{-\epsilon}\delta_0$ is rapidly decreasing away from the origin. By
    the transfer principle, Proposition \ref{transferproposition}, we get that
    the function
    \[f_{x',u'}^1:(x,u)\mapsto K_{(1+G)^{-\epsilon}}(x',u',x,u)\X_\Omega(x,u),\]
    lies in $L_p$ and that
    the function
    \[f_{x',u'}^2:(x,u)\mapsto K_{(1+G)^{-\epsilon}}(x',u',x,u)(1-\X_\Omega)(x,u),\]
    lies in $L_2$.
    Now, by applying the operator $h^{\alpha'}(G)$ to the functions
    $f_{x',u'}^1$ and $f_{x',u'}^{2}$ for $\n{x'}\leq C_0$, $C_0$ a constant, we get
    \begin{equation*}
        \dnorm{h^{\alpha'}(G)f^1_{x',u'}}_p\leq C, \quad \dnorm{h^{\alpha'}(G)f^2_{x',u'}}_2\leq C
    \end{equation*}
    with a constant $C$ only depending on $C_0$, $p$ and $\alpha$. Since
    \[K_{h^\alpha(G)}(x',u',x,u)=h^{\alpha'}(G)f_{x',u'}(x,u),\]
    and since the function $(x,u)\mapsto \X_{B}(x',u',x,u)$ is in $L_1$, with norm bounded
    by $(1+\n{x'})$ for every $(x',u')$, there is for every $C_0>0$ a constant $C_1>0$ with
    \begin{equation*}
        \sup_{\n{x'}\leq C_0,u'} \int \n{\X_B K_{h^{\alpha}(G)}(x',u',x,u)} \; dx \; du \leq
        C_1.
    \end{equation*}
    This gives us the assertion \eqref{reducprop5} and proves the proposition.
\end{proof}
With slight modifications of the proof of Proposition \ref{reduclocalpart2} we can also
show that the Conjecture \ref{FIOvermutung} together with the assumption
\eqref{reducprop2} implies that for every $1\leq p\leq \infty$ the operator
$\exp(i\sqrt{G})(1+G)^{-\alpha/2}$ extends to a bounded operator on $L_p(\R^2)$, provided
$\alpha>\norm{1/p-1/2}$.
%
%
%
%
%
%
%
%
\newpage
\Section{Preparations}
%
%
%
%
%
%
%
%
\subsection{Dyadic decomposition.}
\label{dyadicsection}
%
%

We have seen in Section \ref{harmonicgrushin1} that the spectrum of the Gru\v{s}in
operator lies in the union of the rays
\begin{equation*}
    \begin{split}
    \ray_{n, \epsilon}&:=
    \Bigmenge{(\epsilon\lambda,\tau)}{\tau=(2n+1)\lambda,\; \lambda>0}, \quad \epsilon:=\pm 1,\ n \in \N_0\\
    \ray_\infty&:=\menge{(0,\tau)}{\tau\geq 0}.
    \end{split}
\end{equation*}
This is the joined spectrum of the operators $G$ and $iU$. Since $G$ is a positive
operator, the spectrum of $G$ is contained in $\bigcup_{n \in \N_0} \ray_{n,1} \cup
\ray_\infty$.

In Section \ref{carnot} we have studied spheres belonging to the optimal control metric
associated to $G$. This gave us a description of the singularities of the distribution
kernel of $\cos(\sqrt{G})$. The singularities lie in a rather complicated curve, that
contains many, for $x'=0$ infinitely many, edges.

In this section, we decompose the integral kernel of $h^\alpha(G)$ in a sum of integral
kernels $K_{k,j}^\epsilon$ such that these parts of the integral kernel coincides, in
some way, with the edges in the singular support of $\cos(\sqrt{G})$.

Roughly, for every $(k,j,\epsilon)$, we choose a rectangle in the joint spectrum of $iU$
and $G$ of length $2^{j}$ in the $n$-direction and length $2^{2k-j}$ in the $\lambda$
direction. $\epsilon(2n+1)$ corresponds to the spectrum of $-iGU^{-1}$ and $\lambda$ to
the spectrum of $iU$. The dyadic operators with integral kernels $K_{k,j}^\epsilon$ are
of the form $h^\alpha(G)\X_{2k-j}j(iU)\X_j(-iGU^{-1})$ where $\X$ is a cut-off function.
Since $G=(iU) (-iGU^{-1})$ we can compute the integral kernels of these dyadic parts by
the functional calculus of $iU$ and $-iGU^{-1}$.

The Fourier transform gives us a spectral decomposition of $iU$ and in Chapter
\ref{strichartzchapter} we derived the spectral decomposition of the operator $-iGU^{-1}$
as a sum over singular integral operators $\P_{n,\epsilon}$. We get an explicit formula
for the integral kernel $K_{k,j}^\epsilon(x',u',x,u)$ away from the diagonal
$u'=u$.\absatz

By Proposition \ref{spektralzerlegung1} the operator $h^\alpha(G)$ can be decomposed in
the following way
\begin{equation*}
    h^\alpha(G)f=\sum\limits_{\epsilon=\pm 1}\sum\limits_{n=0}^{\infty} \pint h^\alpha((2n+1)\lambda) \;
    [\P_{\lambda,n,\epsilon}f] \; d\lambda.
\end{equation*}
Let $\X_j,\ j \in \Z$ denote a dyadic decomposition of unity on $\R^+$. Define now
\begin{equation*}
    \begin{split}
    H_{k,j}^\epsilon f&:=\sum\limits_{n=0}^{\infty} \pint h^\alpha((2n+1)\lambda) \; \X_{2k-j}(\lambda) \;
    \X_j(2n+1) \;
    [\P_{\lambda,n,\epsilon}f] \; d\lambda,
    \end{split}
\end{equation*}
for $j \in \N_0,\ k \in \Z,\ \epsilon\in\{-1,1\},\ \lambda \geq 0$.
Since
\begin{equation*}
  2^{k-1}\leq \sqrt{(2n+1)\lambda}\leq 2^{k+1}
\end{equation*}
on the support of $h_{k,j}^\epsilon$, we have $h_{k,j}^{\epsilon}=0$, unless $2^k\geq
N/4$. So, if we fix any $k_0\gg 1$, we may choose $N$ sufficiently large so that
\begin{equation}
  h^\alpha(G)=\sum\limits_{\ueber{\epsilon=\pm 1}{k\geq k_0,\ j\in \N_0}} H_{k,j}^{\epsilon}.
\end{equation}
Furthermore, we choose $k_0$ sufficiently large so that we can delete the factor
$(1-\eta)(\fa/N)$ in $h^\alpha$. In the following, we use the abbreviation $\l=2k-j$.
Define
\[\t{\X}(x):=\n{x}^{-\alpha/2} \X(x).\] Then
\begin{equation*}
    \t{\X}_{\l}(\lambda)\t{\X}_j(2n+1)=2^{k\alpha} \;
    \big((2n+1)\n{\lambda}\big)^{-\alpha/2} \; \X_\l(\lambda) \; \X_j(2n+1)
\end{equation*}
and
\begin{equation*}
    \begin{split}
    h^\alpha((2n+1)\lambda)  &\; \X_{\l}(\lambda) \; \X_j(2n+1)\\
    &=((2n+1)\lambda)^{-\alpha/2} \;
    e^{i\sqrt{(2n+1)\lambda}} \; \X_{\l}(\lambda) \; \X_j(2n+1)\\
    &=2^{-\alpha k} \; e^{i\sqrt{(2n+1)\lambda}} \; \t{\X}_{\l}(\lambda) \;
    \t{\X}_j(2n+1).
    \end{split}
\end{equation*}
Since $\t{\X}$ is of similar type as $\X$, we shall again write $\X$ in place of $\t{X}$.
Observe that
\[
    H_{k,j}^\epsilon f:=\sum_{\t{\epsilon}=\pm 1}\sum\limits_{n=0}^{\infty} \pint 2^{-\alpha k}
    e^{i\sqrt{(2n+1)\lambda}}
    \; \X_{\l}(\epsilon\t{\epsilon}\lambda) \;
    \X_j(2n+1) \;
    [\P_{\lambda,n,\t{\epsilon}}f] \; d\lambda,\]
and hence we get with Corollary \ref{spektralzerlegung2}
\begin{equation*}
  \begin{split}
  H_{k,j}^\epsilon f&=2^{-\alpha k} \sum_{\t{\epsilon}=\pm 1}
  \sum_n \X_j(\epsilon\t{\epsilon}(2n+1)) \; \gamma_n^{\epsilon}(iU) \; (\P_n^{\t{\epsilon}}
  f)\\
  &=2^{-\alpha k} \sum_n \X_j(2n+1) \; \gamma_n^{\epsilon}(iU) \; (\P_n^\epsilon f),
  \end{split}
\end{equation*}
with
\begin{equation*}
  \gamma_n^\epsilon(\lambda):=\t{\X}_{\l}(\epsilon\lambda) \; e^{i\sqrt{(2n+1)\norm{\lambda}}}.
\end{equation*}
We denote the integral kernel of the operator $H_{k,j}^{\epsilon}$ by
$K_{k,j}^{\epsilon}$. Away from the diagonal, $K_{k,j}^{\epsilon}$ is given by
\begin{equation*}
    \begin{split}
    K_{k,j}^\epsilon &(x',0,x,u)=
    \frac{2^{-\alpha k}}{2\pi}\\
    &\times\sum_{n=0}^{\infty} \X_j(2n+1) \int_{-\infty}^\infty P_n^\epsilon(x',0,x,u-s)
    \; e^{i\sqrt{(2n+1)\n{\lambda}}} \; \X_{\l}(\epsilon\lambda) \; e^{-i\lambda s} \; d\lambda \; ds.
    \end{split}
\end{equation*}
We put
\begin{equation}
  \Phi_{\l,n}^\epsilon(s):=\frac{1}{2\pi} \int_{-\infty}^{\infty} e^{i\sqrt{(2n+1)\n{\lambda}}} \;
  \X_{\l}(\epsilon\lambda) \; e^{-i\lambda s} \; d\lambda.
\end{equation}
Then $\Phi_{\l,n}^{-1}(s)=\Phi_{\l,n}^{1}(-s)$ and
\begin{equation*}
  K_{k,j}^\epsilon(x',0,x,u)=
  \frac{2^{-\alpha k}}{2\pi} \sum_{n=0}^{\infty} \X_j(2n+1) \int_{-\infty}^{\infty}
  P_n^\epsilon(x',0,x,u-s) \; \Phi_{\l,n}^\epsilon(s)
  \; ds.
\end{equation*}
Since $P_n^{-1}(x',0,x,u)=P_n^1(x',0,x,-u)$ and $\Phi_{\l,n}^{-1}(s)=\Phi_{\l,n}^{1}(-s)$
we get
\begin{equation*}
    K_{k,j}^{-1}(x',0,x,u)=K_{k,j}^1(x',0,x,u).
\end{equation*}
Hence we can restrict to the case $\epsilon=1$. Put
\[K_{k,j}:=K_{k,j}^1.\]
Thus, to prove the theorem it suffices to show the following proposition. Recall that in
the previous section we have defined
\[\Omega:=\menge{(x,u)\in \R^2}{\n{x}<2 \const_1}.\]
\begin{prop}
    \label{dyadicprop}
    If $\alpha>1/2$, then
    \begin{equation*}
        \sum\limits_{\ueber{\epsilon=\pm 1}{k\geq k_0,\ j\geq 0}}
        \dnorm{\t{\X}_B K_{k,j}}_{(L_p(\Omega),L_p)} < \infty.
    \end{equation*}
    for every $p$, $1<p<\infty$, where $\dnorm{K}_{(L_p(\Omega),L_p)}$ denotes the operator norm of
    the
    integral operator $f\mapsto \int K(x',u',\fa,\fa) \; f(x',u') \; dx' \; du'$ from
    $L_p(\Omega)$ to $L_p$.
\end{prop}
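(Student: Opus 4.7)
The plan is to reduce Proposition \ref{dyadicprop} to a Schur-norm bound on $\tilde{\X}_B K_{k,j}$ for $p=1$, then to extract enough decay in $k$ and $j$ from a careful stationary-phase analysis of $\Phi_{\ell,n}^1$ together with the partial summation estimates already established.

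First I would get the $L_2$ piece for free: by Plancherel and the joint spectral resolution of $G$ and $iU$, the operator $H_{k,j}^\epsilon$ is bounded on $L_2(\R^2)$ with norm $\lesssim 2^{-\alpha k}$, and only $O(1)$ pairs $(k,j)$ contribute at each dyadic frequency of $G$, so the $L_2 \to L_2$ norm of $\tilde{\X}_B K_{k,j}$ is also under control. Marcinkiewicz-type interpolation between $L_1$ and $L_2$ then reduces the proposition to establishing, for some $\delta>0$ depending on $\alpha - 1/2$ and every $\epsilon>0$, the bound
\begin{equation*}
\sup_{|x'|\leq 2\const_1,\; u'\in\R}\;\sum_{j\in\N_0}\;
\bigdnorm{\tilde{\X}_B K_{k,j}(x',u',\fa,\fa)}_{L_1(\R^2)} \leq C_\epsilon\; 2^{\epsilon k}
\end{equation*}
uniformly in $k \ge k_0$; since the factor $2^{-\alpha k}$ is baked into $K_{k,j}$, choosing $\epsilon < \alpha - 1/2$ makes the geometric series in $k$ converge. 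Translation invariance in $u$ reduces matters to $u'=0$, which is the setting of Chapter \ref{strichartzchapter}.

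Next I would insert the explicit formulas from Chapter \ref{strichartzchapter}. Write $P_n^1(x',0,x,u) = C[Q_n - Q_{n-2}]$ with $Q_n$ as in \eqref{qndefinition}, and split $Q_n = Q_n^+ + Q_n^-$ via the cut-offs $\X^\pm$ so that the two halves are controlled by Propositions \ref{propqn1} and \ref{propqn2}. This reduces the Schur estimate to controlling, for each $\pm$ and each dyadic block $n\sim 2^j$, the oscillatory integral
\begin{equation*}
\int_{-\infty}^\infty \frac{(R-i(u-s))^{n/2}}{(R+i(u-s))^{n/2+m+1}}\;e^{\pm i n\sigma(x,x',u-s)}\;\Phi_{\ell,n}^1(s)\; ds,
\end{equation*}
weighted by the slowly varying symbols $q_{n,m}^\pm$. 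A stationary-phase expansion of $\Phi_{\ell,n}^1$ (the phase $\sqrt{(2n+1)\lambda}-\lambda s$ has its critical point at $\lambda = (2n+1)/(4s^2)$, which meets the support of $\X_\ell$ precisely when $s \sim \sqrt{(2n+1)/2^{2\ell}}\sim 2^{j/2-\ell}$) gives $\Phi_{\ell,n}^1(s) \approx 2^{3\ell/2}\, \chi(2^{\ell - j/2} s)\, e^{i(2n+1)/(4s)}$ modulo a rapidly decaying remainder.

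The heart of the argument is then the combined oscillatory integral in $s$. After substituting the stationary-phase expression for $\Phi_{\ell,n}^1$ and summing over $n\sim 2^j$, the $n$-sum is controlled by Propositions \ref{propqn1} and \ref{propqn2}, which furnish the decay $(1+2^j|1-e^{i\omega}|)^{-1-\epsilon}$ in the frequency $\omega$ conjugate to $n$ and the singular factor $|1-e^{-i2\sigma}|^{-3/2}$ in $\sigma$. The $s$-integral then has phase $n\sigma(x,x',u-s) + (2n+1)/(4s)$; I would change variables to $(X,Y,s)$ defined by
\begin{equation*}
X=\frac{Rw-2xx'(u-s)}{2Rxx'+(u-s)w},\quad Y=\frac{(R^2+(u-s)^2)w}{2Rxx'+(u-s)w},
\end{equation*}
so that $\sigma = \arctan X$ and $\partial_s\sigma$ takes the clean form $X/Y$, and integrate by parts / apply stationary phase in this frame. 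Finally, integrating the resulting bound against $(x,u)\in \supp\tilde{\X}_B$ in the same $(X,Y,s)$-coordinates and summing over $j$ yields the required $2^{\epsilon k}$ Schur bound.

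The main obstacle is genuinely the regime $0 < |x'| \lesssim 1$. There the sphere $S_G(x',0)$ loses the rotational symmetry visible for $x'=0$, the factor $|1-e^{-i2\sigma}|$ can vanish only transversally, and the Jacobian of the map $\psi : (x,u,s)\mapsto(X,Y,s)$ cannot be computed by a direct differentiation; one has to recognise an implicit identity relating $\det(\psi^{-1})$ to the phase and exploit it through a refined integration by parts in $s$. Once this $s$-integration is tamed, the remaining summations in $n$ and $j$ are handled by the harmonic-analytic estimates of Chapter \ref{strichartzchapter} and the argument closes.
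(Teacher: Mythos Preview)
Your proposal is correct and follows essentially the same route as the paper: reduction to an $L_1$ Schur bound (the paper's Proposition~\ref{dyadicprop2}), stationary phase for $\Phi_{\ell,n}$, the explicit $Q_n$ formulas together with the summation estimates of Propositions~\ref{propqn1}--\ref{propqn2}, the change of variables to $(X,Y,s)$ with its explicit Jacobian identity, and repeated integration by parts in $s$ governed by the control quantity $\Sigma$. The only point you underplay is the stationary-phase remainder $E_N$: the paper does not dismiss it as ``rapidly decaying'' but handles the corresponding piece of $K_{k,j}$ separately by transference from the M\"uller--Stein $L_p$ estimates on $\H_1$, and the paper also organises the main term into the pieces $F_{k,j},G_{k,j},H_{k,j}$ according to the size of $w(s)$ and the two regimes $2^{j-k}\leq 1/\const_2$ versus $2^{k-j}\leq \const_2$, though this is bookkeeping rather than a new idea beyond what you outline.
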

\subsection{Integral formulas for $K_{k,j}$}
In this section we completely follow the proof of Müller and Stein. More exactly, we
reproduce the Sections $1.2$ and $2$ of \cite{mueller}. The only difference here is that
we do not have translation invariant operators. Of course, we use our formulas for the
spectral projection operators $\P_n$ we derived in Section \ref{harmonicgrushin1} and
Section \ref{harmonicgrushin2}.\absatz

Recall the definition of $\Phi_{\l,n}^1$,
\begin{equation}
    \label{phidef}
  \Phi_{\l,n}^1(s):=\frac{1}{2\pi} \int_{-\infty}^{\infty} e^{i\sqrt{(2n+1)\n{\lambda}}} \;
  \X_{\l}(\lambda) \; e^{-i\lambda s} \; d\lambda.
\end{equation}
We define $\Phi_{\l,n}:=\Phi_{\l,n}^1$.

\begin{lemma}
    Let $f \in \Cnu(\R)$ be supported in $\intaa{1/2,2}$. For every $N\in \N$ there exist functions
    $f_0, \dots f_N \in \Cnu(\R)$ supported in $\intaa{1/4,4}$ and $E_N \in \Cu(\R^2)$, such that for
    $(a,b) \in \R^2$ with $\n{(a,b)}>1$
    \begin{equation*}
        \int_{-\infty}^{\infty} e^{i(ax-bx^2/2)} \; f(x) \; dx=e^{ia^2/(2b)} \; \sum_{\nu=0}^N
        b^{-1/2-\nu} \; f_{\nu}(a/b)+E_N(a,b),
    \end{equation*}
    where $E_N$ satisfies
    \[E_N^{(\alpha)}=O(\n{(a,b)}^{-N/2-1}), \mbox{ for every } \alpha \in \N^2.\]
\end{lemma}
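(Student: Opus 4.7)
The plan is to prove this via classical stationary phase. The phase $\phi(x) = ax - bx^2/2$ has a unique nondegenerate critical point at $x_c = a/b$ (when $b \neq 0$), with critical value $\phi(x_c) = a^2/(2b)$ and $\phi''(x_c) = -b$; the identity $ax - bx^2/2 = a^2/(2b) - (b/2)(x - a/b)^2$ makes the anticipated oscillation $e^{ia^2/(2b)}$ transparent. I will decompose via a smooth cutoff $\eta \in \Cnu(\R)$ with $\eta \equiv 1$ on $[1/4, 4]$ and $\supp \eta \subset [1/8, 8]$, writing
\[
I(a,b) := \int e^{i(ax - bx^2/2)} f(x)\,dx = \eta(a/b)\,I(a,b) + (1-\eta(a/b))\,I(a,b) =: I_{\mathrm{st}} + I_{\mathrm{nst}}.
\]

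For $I_{\mathrm{nst}}$, I will verify that when $a/b \notin [1/4,4]$ the lower bound $|a - bx| \gtrsim \max(|a|,|b|) \sim |(a,b)|$ holds throughout $\supp f = [1/2,2]$, so repeated integration by parts via the operator $L := (i(a-bx))^{-1}\partial_x$ yields $|I_{\mathrm{nst}}| \lesssim_M |(a,b)|^{-M}$ for every $M$. Differentiating with respect to $a$ or $b$ merely inserts bounded polynomial factors $x^j$ under the integral, so the same scheme gives the identical rapid decay for arbitrary $a,b$-derivatives; hence $I_{\mathrm{nst}}$ is absorbed into $E_N$ with room to spare.

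For $I_{\mathrm{st}}$, the combination $|a/b| \leq 8$ and $|(a,b)| > 1$ forces $|b| \gtrsim |(a,b)| \gtrsim 1$, so $b$ is effectively large; taking $b>0$ (the sign $b<0$ is symmetric under $i \mapsto -i$), the substitution $y = \sqrt{b}\,(x - a/b)$ converts the integral to
\[
I_{\mathrm{st}}(a,b) = \eta(a/b)\,e^{ia^2/(2b)}\,b^{-1/2} \int e^{-iy^2/2}\,f\!\left(\tfrac{a}{b} + \tfrac{y}{\sqrt{b}}\right) dy.
\]
Taylor-expanding the amplitude around $y=0$ to order $2N+1$ and using the Fresnel moments (odd powers integrating to zero against $e^{-iy^2/2}$, even powers producing explicit constants $c_\nu$) yields the main term $e^{ia^2/(2b)} \sum_{\nu=0}^{N} c_\nu\,b^{-1/2-\nu}\,f^{(2\nu)}(a/b)$, so the announced formula holds with $f_\nu(t) := c_\nu\,\eta(t)\,f^{(2\nu)}(t) \in \Cnu([1/8,8]) \subset \Cnu([1/4,4])$.

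The remaining work is the error estimate. The Taylor remainder contributes an integral of the shape $b^{-N-1}\int e^{-iy^2/2}\,y^{2N+2}\,r(y)\,dy$ where $r$ is bounded with bounded derivatives uniformly in $a,b$. The naive bound diverges because $\supp r$ in $y$ grows like $\sqrt{b}$, so I will split $y$ dyadically and integrate by parts against the Gaussian on the tails using $e^{-iy^2/2} = (iy)^{-1}\partial_y e^{-iy^2/2}$, which trades each $y$ factor for a derivative of $r$ and thereby produces the desired $O(b^{-N-1}) = O(|(a,b)|^{-N-1})$ bound, comfortably stronger than the claimed $O(|(a,b)|^{-N/2-1})$. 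Derivatives $\partial_a^{\alpha_1}\partial_b^{\alpha_2} E_N$ are treated identically: differentiation under the $x$-integral produces integrals of the same form with amplitude $x^{\alpha_1 + 2\alpha_2} f(x)$, to which the same case-split applies with the same bounds. The principal technical obstacle is precisely this IBP bookkeeping on the tails of $y$, especially tracking how the $b^{-1/2}$ factors accumulate when derivatives in $a$ or $b$ are interchanged with the stationary-phase expansion; this is routine but must be written out carefully to confirm uniformity in $(a,b)$ across the regime $|(a,b)| > 1$.
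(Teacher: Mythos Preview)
The paper does not actually prove this lemma: it cites M\"uller and Stein \cite{mueller}, Lemma~1.4, and simply remarks that ``the proof bases upon the method of stationary phase.'' Your outline is precisely the standard stationary-phase argument for a quadratic phase, so it agrees with what the paper invokes.

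One small slip worth fixing: the inclusion you wrote, $\Cnu([1/8,8]) \subset \Cnu([1/4,4])$, is backwards. This is harmless, however, because the factor $f^{(2\nu)}$ in your definition $f_\nu(t)=c_\nu\,\eta(t)\,f^{(2\nu)}(t)$ already has support in $[1/2,2]\subset[1/4,4]$, so $f_\nu$ lands in $\Cnu([1/4,4])$ regardless of the support of $\eta$; indeed $\eta\equiv 1$ there, so the $\eta$ factor is redundant in $f_\nu$ and only serves to localise $I_{\mathrm{st}}$.
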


\begin{proof}
    Müller and Stein \cite{mueller}, Lemma 1.4.
    The proof bases upon the method of stationary phase.
\end{proof}

We may apply the lemma to $\Phi_{\l,n}$, since $\sqrt{(2n+1)2^{\l}}\sim 2^k \gg 1$, and
obtain
\begin{equation}
    \label{diadic2-1}
    \begin{split}
    \Phi_{\l,n}(u)=&e^{i(2n+1)/(4u)} \; 2^\l \sum_{\nu=0}^N (2^{\l+1}u)^{-1/2-\nu} \; f_\nu
    \Big(\sqrt{\frac{2n+1}{2^\l}} \frac{1}{2u}\Big)\\
    &+2^\l E_N(\sqrt{(2n+1)2^\l},2^{\l+1}u),
    \end{split}
\end{equation}
with $f\nu$ and $E_N$ as in the lemma. Put $a_{n,\l}:=\sqrt{(2n+1)2^\l}$. Since
$a_{n,\l}\sim 2^k$ and since
\[
    (2^{\l+1}u)^{-1/2-\nu}=a_{n,\l}^{-1/2-\nu}a_{n,\l}^{1/2+\nu}(2^{\l+1}u)^{-1/2-\nu}=a_{n,\l}^{-1/2-\nu}\Big(\sqrt{\frac{M+2n}{2^\l}}\frac{1}{2u}\Big)^{1/2+\nu},
\]
the $\nu$-th term in (\ref{diadic2-1}) is given by
\[
    a_{n,\l}^{-1/2-\nu} \; \t{f}_\nu \Big(\sqrt{\frac{M+2n}{2^\l}}\frac{1}{2u}\Big),
\]
with $\t{f}_nu(x)=x^{1/2+\nu}f_{\nu}(x)$. Since $\t{f}_\nu$ is of the same type as
$f_\nu$ and we only have to sum over finitely many $\nu$ we may reduce to the case where
$\Phi_{\l,n}$ is either of the form
\begin{equation*}
  (a) \quad 2^\l \; a_{n,\l}^{-1/2-\nu} \; f\Big(\sqrt{\frac{2n+1}{2^\l}}\frac{1}{4u}\Big) \;
  e^{i(2n+1)/(4u)},
\end{equation*}
with $f \in \Cnu(\R)$ supported in $\intaa{1/8,2}$ and $\nu \in N_0$, or of the form
\begin{equation*}
    (b) \quad \Phi_{\l,n}(u):=2^{\l} \; E_N(a_{n,\l},2^{\l+1}u).
\end{equation*}
First we study the case (b).\absatz

Müller and Stein showed that the corresponding operators with convolution kernels
\[\t{K}_{k,j}:=2^{-\alpha k}\sum_n \X_j(1+2n)\int P_n^\H(x,y,u-s) \; \Phi_{2k-j,n}(s) \; ds,\]
where $P_n^\H$ is the kernel of the projection operator $\P_n^\H$ on the Heisenberg group
and $\Phi_{2k-j,n}$ is given by (b), are $L_p$ bounded for $1<p<\infty$. Furthermore,
they showed that one can sum up all $L_p$-operator norms for $\alpha>0$. Observe that
$\alpha>1/2$. Hence the operator with convolution kernel $\t{K}:=\sum_{k,j} K_{k,j}$ is
bounded on $L_p$ for $1<p<\infty$.\absatz

Let $\pi$ be the representation of $\H_1$ with $\pi(L)=G$. We mentioned in Section
\ref{transferencesection} that one can transfer estimates for operators on $\H_1$ to
operators on $\R^2$. By transfer methods the operator with integral kernel $K:=\sum_{k,j}
K_{k,j}$, where $K_{k,j}$ is given by
\begin{equation*}
  K_{k,j}(x',u',x,u)=
  \frac{2^{-\alpha k}}{2\pi} \sum_{n=0}^{\infty} \X_j(2n+1) \int_{-\infty}^{\infty}
  P_n(x',u',x,u-s) \; \Phi_{\l,n}(s)
  \; ds
\end{equation*}
and $\Phi_{\l,n}$ is given by (b), is bounded on $L_p$. Furthermore, also the operator
with truncated kernel $\t{\X}_B K$ is bounded on $L_p$. We do not want to go in the
details here. There is one difficulty. One can use transference, usually, only for
bounded measures and the convolution kernel $\t{K}$ is not bounded. We just want to
mention an argument that one has to use, to make the transference principle work.\absatz

{\footnotesize We take a dyadic decomposition of unity $\psi_r^\epsilon$ in the following
way. Put
\[\psi_r^\epsilon:=\sum_{\n{j}\leq r,\ \n{k}\leq r} \X_{2k-j}(\epsilon \lambda)
\X_j(2n+1),\] where $\X$ is chosen as in section \ref{dyadicsection}. We define
$\Psi_r^\epsilon:={\cal G}^{-1}(\psi_r^\epsilon)$, where $\cal{G}$ is the Gelfand
transform for the algebra of radial functions on $\H_1$. Furthermore, we put
$\Phi_r^\epsilon:=\pi(\Psi_r^\epsilon)$. Then the set
\[X:=\menge{\Phi_r^\epsilon(f)}{f \in \Cnu(\R^2),\ r\in \N_0,\ \epsilon=\pm 1}\] is dense
in $L_p(\R^2)$. Now we define the operator $A$ on $L_p(\R^2)$ by
\begin{equation}
    \label{partb}
    A(\Phi_r^\epsilon f):=\sum_{k,j} \pi(\conv{\t{K}_{k,j}}{\Psi_r^\epsilon})f.
\end{equation}
By transfer methods we can deduce that $\pi(\conv{\phi_r^\epsilon}{\t{K}_{k,j}})$ is
bounded on $L_p$, since $\conv{\phi_r^\epsilon}{\t{K}_{k,j}}$ is a bounded measure. The
convolution kernel of the original operator $H_{k,j}^1$ on the Heisenberg group is given
by ${\cal G}^{-1}(\phi_{k,j})$ where $\phi_{k,j}=(\lambda,n)\mapsto
\phi_{k,j}(\lambda,n)$ has compact support in $\lambda$ with $\lambda\sim 2^{2k-j}$
(compare \cite{mueller}, section 1.1). Thus we can localize the Fourier transform of
$\Phi_{\l,n}$, given by (b), to the same region. Hence for every $(r,\epsilon)$ there are
only finitely many $k,\ j$ we have to sum up in (\ref{partb}). Therefore, the operator
$A$ is bounded on $L_p$. But $A$ is equal to the operator with integral kernel
$K$.}\absatz

\noindent The case that $\Phi_{\l,n}$ is given by (a) is left.

Since
\[ a_{n,\l}^{-1/2-\nu}\; \chi_j(2n+1)=2^{-k/2-\nu k} \; \t{\chi}_j(2n+1),\]
with $\t{\chi}$ of similar type than $\X_j$, we only have to prove the following
proposition instead of Proposition \ref{dyadicprop}. We have exchanged $\alpha$ by the
critical index $m$.

\begin{prop}
    \label{dyadicprop2}
  Suppose that $K_{k,j}$ is given by
  \begin{equation*}
    K_{k,j}(x',0,x,u):= 2^{-m k} \sum_n \X_j(m+n) \int P_n(x',0,x,u/2-s/2) \;
    \Phi_{k,j,n}(s) \; ds
  \end{equation*}
  with
  \begin{equation*}
    \Phi_{k,j,n}(s):=2^{3k/2-j}f\Big(\sqrt{\frac{m+n}{2^{2k-j}}}\frac{1}{s}\Big)
    \; e^{i(m+n)/s},
  \end{equation*}
  $f \in \Cnu(\R)$ supported in $\intaa{1/8,2}$, $\chi \in \Cnu(\R)$ supported in $\intaa{1/2,2}$,
  $\chi_j=\chi(2^{-j}\fa)$ and $k\geq 0$ sufficiently large. Then for every $\epsilon>0$
  there exists a constant $C_\epsilon$ with
  \begin{equation*}
    \sup\limits_{\n{x'}\leq 2\const_1} \; \sum_{j\in \N_0} \; \dnorm{\t{\X}_{B}K_{k,j}(x',0,\fa,\fa)}_{L_1}
    \leq C_{\epsilon} \; 2^{\epsilon k}.
  \end{equation*}
\end{prop}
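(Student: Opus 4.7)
The plan is to substitute the explicit formula $P_n(x',0,x,u/2-s/2)=C[Q_n-Q_{n-2}]$ from Section 3.1 into the definition of $K_{k,j}$, expand $Q_n$ via the decomposition
\[Q_n=(q_{n,m}^+\, e^{in\sigma}+\t{q}_{n,m}^-\, e^{-in\sigma})\,\frac{(R-i(u-s))^{n/2}}{(R+i(u-s))^{n/2+m+1}}\]
of \eqref{Qnqn+qn-}, and invoke the $\l\to n-\l$ symmetry discussed there to reduce to the $+$-piece alone. The $n$-independent denominator $(R+i(u-s))^{-m-1}$ factors out, and the remaining $e^{in\sigma}(R-i(u-s))^{n/2}/(R+i(u-s))^{n/2}$ combines with the amplitude phase $e^{i(m+n)/s}$ from $\Phi_{k,j,n}$ via identity \eqref{arctan1} to produce the total phase
\[n\arctan X(x,x',u-s)+(m+n)/s,\]
where $X=(Rw-2xx'(u-s))/(2Rxx'+(u-s)w)$ and $R,w,a$ are as in Section 3.1.

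Next, pass to the natural coordinates $(X,Y,s)=\psi(x,u,s)$ from Chapter 2, where $Y=(R^2+(u-s)^2)w/(2Rxx'+(u-s)w)$; in these coordinates $\partial_s$ of the phase simplifies essentially to $X/Y$. The Jacobian of $\psi^{-1}$ is not available by direct differentiation, since $X$ and $Y$ depend on $w=w(x,u-s)$ only through the algebraic identity $w^2+4(xx')^2=R^2+(u-s)^2$, but an explicit closed form for $\det(\psi^{-1})$ can be extracted from this relation. Once $\det(\psi^{-1})$ is at our disposal, apply Proposition \ref{propqn1} to the double $(\l,n)$-sum in $q_{n,m}^+$: with $\omega:=\arctan X+1/s$ as the phase parameter attached to $n$ and $\sigma=\sigma(x,x',u-s)$ attached to $\l$, we obtain the pointwise amplitude bound
\[\frac{2^{j/2+j\epsilon}}{|1-e^{-i2\sigma}|^{3/2}}\cdot\frac{2^j}{(1+2^j|1-e^{i\omega}|)^{1+\epsilon}},\]
uniformly in the remaining variables. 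The dependence of the cut-off $f(\sqrt{(m+n)/2^{2k-j}}/s)$ on $n$ is absorbed beforehand by a partial summation in $n$, which costs at most logarithmic factors.

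The proof then concludes by a refined integration by parts in $s$ to extract the missing decay; the oscillations in $s$ of $\arctan X$ and of $1/s$ can combine destructively outside a thin neighbourhood of the critical set $\omega=0$, so the integration by parts must be organised to exploit both terms simultaneously. When $|1-e^{-i2\sigma}|$ is very small --- equivalently, $w$ is small compared with $xx'$ --- the representation above is replaced by $P_n=C_m\,\d{u} R_n$ of \eqref{Pn=dtRn}, together with Proposition \ref{propqn2}, which furnishes the sharper bounds needed in that regime. Integrating the resulting pointwise estimates against $\t{\X}_B$, whose $(x,u)$-support has measure $\lesssim 1+|x'|\leq 1+2\const_1$, and summing the geometric series in $j$, these contributions balance the prefactor $2^{-mk}=2^{-k/2}$ up to the admissible loss $2^{\epsilon k}$.

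The principal technical obstacle is the second step: both writing down a manageable expression for $\det(\psi^{-1})$ and organising the integration by parts so that the destructive interference between the two phases $n\arctan X$ and $(m+n)/s$ is controlled uniformly in the range $0<|x'|\lesssim 1$. In this transition regime the sphere $S_G(x',0)$ loses the reflective symmetry it enjoys at $x'=0$ (compare Figures 2 and 3), and the stationary-phase cancellation employed by M\"uller and Stein at $x'=0$ on $\H_1$ cannot be transplanted verbatim; extra care is needed to retain enough decay while keeping the amplitude integrable in the new coordinates.
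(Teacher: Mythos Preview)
Your outline follows the same architecture as the paper's proof: substitute $P_n=C[Q_n-Q_{n-2}]$, split $Q_n$ into $\pm$-pieces, identify the phase $\omega^\pm$, pass to the coordinates $(X,Y,s)$, apply Propositions~\ref{propqn1} and~\ref{propqn2}, and switch to the $R_n$-representation \eqref{Pn=dtRn} when $w$ is small. The paper likewise singles out the Jacobian and the integration by parts in $s$ as the main technical burden.

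Two points where your sketch diverges or is too loose:

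\emph{First}, the $n$-dependence of $f(\sqrt{(m+n)/2^{2k-j}}\,s^{-1})$ is not removed by partial summation in $n$. The paper writes $f=g\circ\log$ with $g$ supported in $[-\pi,\pi]$, expands $g$ in a Fourier series $\sum_\nu a_\nu e^{i\nu\cdot}$ with $a_\nu=O(\n{\nu}^{-N})$, and absorbs $(m+n)^{i\nu}$ into a modified cut-off $\chi_{(\nu),j}$. This decouples $n$ from $s$ cleanly; a partial-summation alternative would interact awkwardly with the already delicate $n$-summation inside Proposition~\ref{propqn1}.

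\emph{Second}, and more seriously, the closing step ``integrate the pointwise estimate against $\t{\X}_B$, whose support has measure $\lesssim 1+\n{x'}$, and sum the geometric series in $j$'' does not work. The bound from Proposition~\ref{propqn1} carries $\n{1-e^{-2i\sigma}}^{-3/2}\sim (a/w)^{3/2}$, which is not integrable in $(x,u)$ over $\supp\t{\X}_B$, and the Jacobian adds a further $\n{XY-x_1^2X^2}^{-1/2}$ singularity. The paper first rescales $(x,u,s)\mapsto(2^{(j-k)/2}x,2^{j-k}u,2^{j-k}s)$, $x_1:=2^{(k-j)/2}x'$, so that $s\sim 1$; then splits into $2^{j-k}\leq 1/\const_2$ (further decomposed into pieces $F_{k,j},G_{k,j},H_{k,j}$ according to the size of $2^{2(k-j)}w^2$) and $2^{k-j}\leq\const_2$; and for each piece introduces a control quantity $\Sigma(s)\sim 2^{-j}(2^{(k-j)/2}+w^{-1})^2/(\d{s}\kphi)^2$ deciding when integration by parts in $s$ gains a full $2^{-j}$ and when one instead exploits the smallness of $\{\Sigma\gtrsim 1\}$. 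The resulting $(X,Y,s)$-integrals require the dedicated Lemmas~\ref{lemmayint1}--\ref{gkj2}, which handle the interaction of the $\n{Y-x_1^2X}^{-1/2}$ and $\n{Y-2^{j-k}s^2X}^{-1}$ singularities. None of this is captured by a support-measure argument, and the $j$-sum is not geometric: the individual bounds are of types $2^{-j/2+\epsilon j}k$, $2^{2\epsilon k}$, and $\min\{2^{2\epsilon j},2^{k-j/2+\epsilon j}\}$, which must be assembled with care to reach $C_\epsilon 2^{\epsilon k}$.
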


Before we prove Proposition \ref{dyadicprop2}, we do some more reductions. Let $\t{\chi}
\in \Cnu$ with $\t{\chi}(x)=1$ for $1/4\leq x \leq 16$ and supported in $\intaa{1/8,32}$.
Since $\chi_j(m+n)\Phi_{k,j,n}(s)=0$, unless $1/4\leq 2^{k-j} s \leq 16$
\[\chi_j(m+n)\Phi_{k,j,n}(s)=\chi_j(m+n)\t{\chi}(2^{k-j}s)\Phi_{k,j,n}(s).\]
Moreover, writing
\[
    f\Big(\sqrt{\frac{m+n}{2^{2k-j}}}\frac{1}{s}\Big)=g\Big(\log\Big(\frac{m+n}{2^{2k-j}}s^{-2}\Big)\Big),\]
with $g$ smooth and supported in $\intaa{\log 1/16, \log 2}\teil \intaa{-\pi,\pi}$, we
see that
\begin{equation*}
    \Phi_{k,j,n}(s)=\sum_{\nu \in \Z} a_\nu \Big(\frac{m+n}{2^{2k-j}s^2}\Big)^{i\nu}\; 2^{3k/2-j}
    \; \t{\chi}(2^{k-j}s)\; e^{i(m+n)/s}=:\sum_{\nu \in \Z} a_\nu \Phi_{k,j,n,\nu}(s),
\end{equation*}
where
\begin{equation}
    \label{anu}
    a_\nu=O(\n{\nu}^{-N}),
\end{equation}
for every $N \in \N$. By the definition
\[
    K_{k,j,\nu}(x',0,x,u):= 2^{-m k} \sum_n \X_j(m+n) \int P_n(x',0,x,u/2-s/2) \;
    \Phi_{k,j,n,\nu}(s) \; ds
\]
we get
\begin{equation*}
    K_{k,j}=\sum_\nu a_\nu \; K_{k,j,\nu}.
\end{equation*}
By defining $\chi_{(\nu)}(s):=s^{-i2\nu} \t{\chi}(s)$, we still have $\chi_{(\nu)} \in
\Cnu$ and supported in $\intaa{1/8,32}$. Furthermore,
\begin{equation}
    \label{chinu}
    \dnorm{\chi_{(\nu)}^{(\alpha)}}_\infty=O((1+\n{\nu})^{\alpha}),
\end{equation}
for all $\alpha \in \N$ and
\begin{equation*}
    \begin{split}
    \Phi_{k,j,n,\nu}(s)&=2^{3k/2-j} \; \Big(\frac{m+n}{2^{2k-j}s^2}\Big)^{i\nu}
    \; \t{\chi}(2^{k-j}s)\; e^{i(m+n)/s}\\
    &=2^{3k/2-j} \; \chi_{(\nu)}(2^{k-j}s)\; 2^{-ij\nu} \; (m+n)^{i\nu} \; e^{i(m+n)/s}.
    \end{split}
\end{equation*}
We define now \[\chi_{(\nu),j}(x):=\chi_{(\nu)}(2^{-j}x)=(2^{-j}x)^{i\nu} \chi(2^{-j}x)\]
and get $\chi_{(\nu),j}(m+n)=2^{-ij\nu}(m+n)^{i\nu}\chi_j(m+n)$. Thus,
\begin{equation*}
    \chi_j(m+n) \; \Phi_{k,j,n,\nu}(s)=2^{3k/2-j} \; \chi_{(\nu),j}(m+n) \; \chi_{(\nu)}(2^{k-j}s)\; e^{i(m+n)/s}.
\end{equation*}
By inserting the formulas for $\Phi_{k,j,n,\nu}$, we obtain
\begin{equation}
    \label{kkjnu}
    \begin{split}
    K_{k,j,\nu}&(x',0,x,u):= 2^{-m k} \sum_n \X_j(m+n)\\
    &\times \int P_n(x',0,x,(u-s)/2) \;
    \Phi_{k,j,n,\nu}(s) \; ds\\
    =&2^{k/2} \; 2^{(m-1)(j-k)}
    \sum_n \chi_{(\nu),j}(m+n) \\
    &\times \int \chi_{(\nu)}(2^{k-j}s) \;
    (2^{-mj}P_n)(x',0,x,(u-s)/2) \; e^{i(m+n)/s} \; ds.\\
    \end{split}
\end{equation}
Now we need our explicit formulas for $P_n$, which we derived in Chapter
\ref{strichartzchapter}. We have
\begin{equation}
    P_n(x',0,x,(u-s)/2)=C \; [Q_n-Q_{n-2}](x',0,x,(u-s)/2),
\end{equation}
where
\begin{equation*}
    Q_n(x',0,x,(u-s)/2)=(q_{n,m}^{+} \; e^{in\sigma} + \t{q}_{n,m}^{-} \; e^{-in\sigma})
    \frac{(x²+x_1²-i(u-s))^{n/2}}{(x²+x_1²+i(u-s))^{n/2+m+1}}.
\end{equation*}
$q_{n,m}^+$ is given by
\begin{equation*}
    \sum_{\l=0}^n
    \X(\l/n)\frac{\Gamma(\l+m+1)}{\Gamma(\l+1)}
    \frac{\Gamma(n-\l+m+1)}{\Gamma(n-\l+1)} \; e^{-i2\l\sigma},
\end{equation*}
where $\X\in \Cnu$ with $\X(x)=0$ for $x\geq 3/4$. Furthermore, $\t{q}_{n,m}^{-}$ is
similar to the function $q_{n,m}^-:=\ab{q_{n,m}^+}$. $\sigma$ is given by
\begin{equation}
    \label{defsigmaprep}
    e^{i\sigma}=\frac{2xx'+iw}{\sqrt{R^2+(u-s)^2}},
\end{equation}
with $w=\sqrt{(x^2-x'^2)^2+(u-s)^2}$ and $R=x^2+x'^2$. Define now
\begin{equation*}
    \omega^+(x',x,u,s)=\arctan\Big(\frac{Rw-2xx'(u-s)}{R2xx'+(u-s)w}\Big)+\frac{1}{s}
\end{equation*}
and
\begin{equation*}
    \omega^-(x',x,u,s)=-\arctan\Big(\frac{Rw+2xx'(u-s)}{R2xx'-(u-s)w}\Big)+\frac{1}{s}.
\end{equation*}
$\arctan$ denotes the branch of $\tan^{-1}$ taking values in $\intaa{0,\pi}$, since
\mbox{$\n{Rw} \geq \n{2xx_1u}$} and thus the imaginary part is always positive (compare
\eqref{arctan1} and \eqref{arctan2}). Observe that
\begin{equation}
    \label{omega-=omega+}
    \begin{split}
    \omega^-(x',x,-u,-s)=&-\Big(\arctan\Big(\frac{Rw-2xx'(u-s)}{R2xx'+(u-s)w}\Big)+\frac{1}{s}\Big)\\
    =&-\omega^+(x',x,u,s)
    \end{split}
\end{equation}
holds. We define
\begin{equation*}
    Q_n^\pm(x',0,x,(u-s)/2):=q_n^{\pm} \; e^{\pm in\sigma} \frac{(x²+x_1²-i(u-s))^{n/2}}{(x²+x_1²+i(u-s))^{n/2+m+1}}
\end{equation*}
and $P_n^\pm:=Q_n^\pm+Q_{n-2}^\pm$. By these definitions we get
\begin{equation*}
    \begin{split}
    Q_n^\pm\; e^{i(m+n)/s}&=Q_n^\pm\; e^{i(m+n+1)/s} \; e^{-i/s}\\
    &=q_{n,m}^{\pm} \; e^{\pm in\sigma} \; \frac{(x²+x_1²-i(u-s))^{n/2}}{(x²+x_1²+i(u-s))^{n/2+m+1}} \;
    e^{i(m+n+1)/s} \; e^{-i/s}\\
    &=q_{n,m}^{\pm} \; e^{i(n+m+1)\omega^\pm} \; e^{-i/s} \; e^{\mp i(m+1)\sigma} \;
    \n{R+i(u-s)}^{-m-1}.
    \end{split}
\end{equation*}
By \eqref{defsigmaprep} and \eqref{omega-=omega+}, we find out that the term
$Q_n^-(x',0,x,(-u+s)) \; e^{i(m+n)/s}$ is similar to $\ab{Q_n^+(x',0,x,(u-s)) \;
e^{i(m+n)/s}}$. Since we are interested in $L_1$-norms of $K_{k,j,\nu}$ and by
\eqref{kkjnu}, we only have to consider $Q_n^+$ in our estimates to come. In fact, if we
define $K_{k,j,\nu}^+$ in such a way that it only involves $+$-terms and $K_{k,j,\nu}^-$
so that it only involves $-$-terms, we deduce from the $L_1$-boundedness of
$K_{k,j,\nu}^+$ the $L_1$-boundedness of $\ab{K_{k,j,\nu}^+}$. If we now exchange $u$ by
$-u$ and $s$ by $-s$ we get the $L_1$-boundedness of $K_{k,j,\nu}^-$.

Furthermore, define
\begin{equation}
    \zeta^\pm_{\nu,j,m}(x',x,u,s):=\sum_n \chi_{(\nu),j}(m+n) \; q_n^\pm \; 2^{-mj} \; e^{i
    (n+m+1)\omega^\pm}.
\end{equation}
Thus,
\begin{equation}
    \label{qnnu}
    \begin{split}
    2^{k/2} &\; 2^{(m-1)(j-k)}
    \sum_n \chi_{(\nu),j}(m+n)\\ &\times \int \chi_{(\nu)}(2^{k-j}s) \;
    (2^{-mj}Q_n^\pm)(x',0,x,(u-s)/2) \; e^{i(m+n)/s} \; ds\\
    =&2^{k/2} \; 2^{(m-1)(j-k)}\\
    &\times
    \int \frac{\zeta_{\nu,j,m}^\pm(x',x,u,s)}{(R^2+(u-s)^2)^{(m+1)/2}}
    \; e^{-i/s} \; e^{\mp i(m+1)\sigma} \; \chi_{(\nu)}(2^{k-j}s)\; ds.
    \end{split}
\end{equation}
In order to simplify the notation, we only consider $\nu=0$ from now on. Our estimates
for $K_{k,j,\nu}$ are depending on estimates we get for $\zeta_{\nu,j,m}^\pm$ and
$\zeta_{\nu,j,m-1}^\pm$. For $\zeta_{\nu,j,m}^\pm$ resp. $\zeta_{\nu,j,m-1}^\pm$ we use
our estimates in Proposition \ref{propqn1} and Proposition \ref{propqn2}. These estimates
only depend on $4$ derivatives of $\X_\nu$. And hence, by \eqref{chinu}, these estimates
are bounded by $O((1+\n{\nu})^4)$. Since $a_\nu$ descends as fast, as we want (see
\eqref{anu}), it will be clear, at the end of our proof, that it suffices to show
estimates for $\nu=0$. Put
\[\zeta_{j,m}:=\zeta_{0,j,m}.\]
Hence we get, with this new definition and by \eqref{qnnu}, that
\begin{equation*}
    \begin{split}
    2^{k/2} &\; 2^{(m-1)(j-k)}\\
    \times& \sum_n \chi_{(0),j}(m+n) \; \int \chi_{(0)}(2^{k-j}s) \;
    (2^{-mj}Q_n^\pm)(x',0,x,(u-s)/2) \; e^{i(m+n)/s} \; ds\\
    =&2^{k/2} \; 2^{(m-1)(j-k)}
    \int \frac{\zeta_{j,m}^\pm(x',x,u,s)}{(R^2+(u-s)^2)^{(m+1)/2}}
    \; e^{-i/s} \; e^{\mp i(m+1)\sigma} \; \chi(2^{k-j}s)\; ds.
    \end{split}
\end{equation*}
For $Q_{n-2}^+$ we get
\begin{equation*}
    \begin{split}
    Q_{n-2}^+\; e^{i(m+n)/s}&=
    q_{n-2}^{+} \; e^{i((n-2)+m+1)\omega} \; e^{i/s} \; e^{-i(m-1)\sigma} \;
    \n{R+iu}^{-m-1}.
    \end{split}
\end{equation*}
Observe that
\[
    \X_j(m+n)=\X_j(m+n-2)+2^{-j}(2^j \X_j(m+n)-2^j \X_j(m+n-2)).
\]
Now let $\t{\X}_j(x):=2^j \X_j(x+2)-2^j\X_j(x)=2 \int_0^1 \X'((x+2-2t)/2^j) \; dt$. Since
this function has similar properties as $\X_j$, we obtain
\begin{equation*}
    \begin{split}
    2^{k/2+(m-1)(j-k)} \sum_n &\chi_{j}(m+n) \; \int \chi_{(0)}(2^{k-j}s) \;
    (2^{-mj}Q_{n-2}^\pm) \; e^{i(m+n)/s} \; ds\\
    =2^{k/2} \; 2^{(m-1)(j-k)} \; \Big(&\int \frac{\zeta_{j,m}^\pm(x',x,u,s)}{(R^2+(u-s)^2)^{(m+1)/2}}
    \; e^{i/s} \; e^{\mp i(m+1)\sigma} \; \chi(2^{k-j}s)\; ds\\
    +2^{-j} \; &\int \frac{\t{\zeta}_{j,m}^\pm(x',x,u,s)}{(R^2+(u-s)^2)^{(m+1)/2}}
    \; e^{i/s} \; e^{\mp i(m+1)\sigma} \; \chi(2^{k-j}s)\; ds\Big),
    \end{split}
\end{equation*}
with $\t{\zeta}_{j,m}$ of the same type as $\zeta_{j,m}$. From now on we denote
$K_{k,j,0}$ again by $K_{k,j}$. Thus,
\begin{equation}
    \label{Kkj,2^j-k<1}
    \begin{split}
    K_{k,j}&(x',0,x,u)=\sum_{\epsilon\in\{+,-\}} C \; 2^{k/2} \; 2^{(m-1)(j-k)}\\
    \times& \Big(\int \frac{\zeta_{j,m}^\epsilon (x',x,u,s)}{(R^2+(u-s)^2)^{(m+1)/2}}
    \; (e^{-i/s}-e^{i/s}) \; e^{-\epsilon i(m+1)\sigma} \; \chi(2^{k-j}s)\; ds\\
    &-2^{-j} \; \int \frac{\t{\zeta}^\epsilon_{j,m}(x',x,u,s)}{(R^2+(u-s)^2)^{(m+1)/2}}
    \; e^{i/s} \; e^{-\epsilon i(m+1)\sigma} \; \chi(2^{k-j}s)\; ds\Big),
    \end{split}
\end{equation}
For the region where $w(s)$ is small we use
(\ref{Pn=dtRn}) and obtain
\begin{equation*}
    R_n=(q_{n,m-1}^+ \; e^{in\sigma} + \t{q}_{n,m-1}^{-} \; e^{-in\sigma})
    \frac{(x²+x_1²-i(u-s))^{n/2}}{(x²+x_1²+i(u-s))^{n/2+m}}
\end{equation*}
with $q_{n,m-1}^+$ given by
\begin{equation*}
    \sum_{\l=0}^n
    \X(\l/n)\frac{\Gamma(\l+m)}{\Gamma(\l+1)}
    \frac{\Gamma(n-\l+m)}{\Gamma(n-\l+1)} \; e^{-i2\l\sigma}.
\end{equation*}
and $\t{q}_{n,m-1}^{-}$ similar to $q_{n,m-1}:=\ab{q_{n,m-1}^+}$. Put
\begin{equation*}
    \begin{split}
    R_n^+:=q_{n,m-1}^{+} \; e^{in\sigma} \; \frac{(x²+x_1²-iu)^{n/2}}{(x²+x_1²+iu)^{n/2+m}}
    \; e^{i(m+n)/s} \; e^{-i/s} \; e^{-i(m+n)/s}.
    \end{split}
\end{equation*}
Then
\begin{equation*}
    R_n^+ \; e^{i(m+n)/s}
    =q_{n,m-1}^{+} \; e^{i(n+m)\omega^+} \; e^{-im\sigma} \; \n{R+iu}^{-m}.
\end{equation*}
Thus we obtain a second formula for $K_{k,j}$. $K_{k,j}$ is given by
\begin{equation}
    \label{Kkj,2^k-j<1}
    \begin{split}
    K_{k,j}(x',0,x,u)&=\sum_{\epsilon \in \{+,-\}} C \; 2^{k/2} \; 2^{(m-3)(j-k)}\\
    &\times \int \frac{\zeta_{k,j,m-1}^\epsilon (x',x,u,s)}{(R^2+(u-s)^2)^{m/2}}
    \; e^{-\epsilon im\sigma} \; \X(2^{k-j}s)\; ds,
    \end{split}
\end{equation}
with
\[\zeta_{j,m-1}^\pm(x,x_1,u,s):=\sum_n \chi_{j}(m+n) \; q^\pm_{n,m-1} \; 2^{mj} \; e^{i (n+m)\omega^\pm}\]
and some constant $C$.\absatz

We now distinguish the cases when $j-k$ is small and when $k-j$ is small. We define the
constant $\const_2$ by
\begin{equation}
    \label{konstante2}\index{const2}
    \const_2:=\frac{1}{16^2} \frac{1}{(\const_0+\const_1)^2}.
\end{equation}
Furthermore, recall that we have to integrate $K_{k,j}$ over the set
\begin{equation}
    \label{otdef}
    \t{O}(x'):=\menge{(x,u)}{\n{x-x'}\leq 2 \const_0,\ \n{u}\leq 2\const_0(2+\n{x'})}
\end{equation}
and that $\n{x'}\leq 2 \const_1$.

\subsubsection*{$K_{k,j}$ for the case $2^{j-k}\leq 1/\const_2$}

Let $\rho \in \Cnu$ such that $\rho(x)=1$ for $\n{x}\leq 2^{12}$ and $\rho(x)=0$ for
$\n{x}\geq 2^{13}$. We split $K_{k,j}$ into
\begin{equation*}
    \begin{split}
    K&_{k,j}(x',0,x,u):=2^{-m k} \sum_n \X_j(m+n) \int P_n(x',0,x,(u-s)/2) \;
    \Phi_{k,j,n}(s) \; ds\\
    =&2^{-m k} \sum_n \X_j(m+n) \int P_n(x',0,x,(u-s)/2) \;
    \Phi_{k,j,n}(s) \; (1-\rho)(2^{4(k-j)}w(s)^2) \; ds\\
    &+2^{-m k} \sum_n \X_j(m+n) \int P_n(x',0,x,(u-s)/2) \;
    \Phi_{k,j,n}(s) \; \rho(2^{4(k-j)}w(s)^2)\; ds.
    \end{split}
\end{equation*}
We use integration by parts in the second term. We find that $K_{k,j}$ is a sum of terms
of the following types \alpheqn
\begin{equation}
    \label{intparts1}
    \begin{split}
    2^{-m k} \sum_n \X_j(m+n) \int &P_n(x',0,,x,(u-s)/2) \;
    \Phi_{k,j,n}(s) \\ &\times(1-\rho)(2^{4(k-j)}w(s)^2)\; ds,
    \end{split}
\end{equation}
\begin{equation}
    \label{intparts2}
    \begin{split}
    2^{-m k+2k-j} \sum_n \X_j(m+n) \int &R_n(x',0,x,(u-s)/2) \;
    \t{\Phi}_{k,j,n}(s)\\ &\times\rho(2^{4(k-j)}w(s)^2)\; ds
    \end{split}
\end{equation}
and
\begin{equation}
    \label{intparts3}
    \begin{split}
    2^{-m k} \; 2^{4(k-j)} \; \sum_n \X_j(m+n) \int &R_n(x',0,x,(u-s)/2) \;
    \Phi_{k,j,n}(s) \; (u-s) \\ &\times\rho'(2^{4(k-j)}w(s)^2) \; ds.
    \end{split}
\end{equation}
\reseteqn We have seen before that it suffices to consider only $+$-terms, since we get
the $L_1$-estimates for the $-$-terms by exchanging $u$ with $-u$ and $s$ with $-s$. To
make our notation simpler, we put $\omega:=\omega^+$ and $\zeta_{j,m}:=\zeta_{j,m}^+$.
Observe that
\begin{equation*}
  \begin{split}
  \zeta_{j,m} \; e^{\pm i/s}&=\sum_{n=0}^\infty \X_j(m+n) \; q^+_{n,m}(\sigma) \;
  2^{-mj} \; e^{i(n+m+1\pm 1)\omega} \\ &\times \frac{R2xx'+(u-s)w \mp i(Rw-2xx'(u-s))}{R²+(t-s)²}\\
  &=\t{\zeta}_j \; \frac{R2xx'+(u-s)w \mp i(Rw-2xx'(u-s))}{R²+(t-s)²}
  \end{split}
\end{equation*}
with $\t{\zeta}_{j,m}^\pm:=\zeta_{j,m} \; e^{\pm i\omega}$ of the same type as $\zeta_j$.
Since $2^{-j}\leq 1$ we can write (\ref{intparts1}), only considering the $+$-terms, as a
finite sum of terms of the following types (compare \eqref{Kkj,2^j-k<1})
\begin{equation*}
    \begin{split}
    2^{k/2}& \; 2^{(m-1)(j-k)} \; \int
    \frac{\zeta_{j,m}(x',x,u,s)}{(R^2+(u-s)^2)^{(m+1)/2}}\\
    &\times \frac{R2xx'+(u-s)w + \epsilon i(Rw-2xx'(u-s))}{R²+(u-s)²} \; e^{-i(m+1)\sigma} \\
    &\times(1-\rho)(2^{4(k-j)}w(s)^2) \; \X(2^{k-j}s) \; ds
    \end{split}
\end{equation*}
with $\epsilon=\pm 1$.
We can restrict to the case $\epsilon=1$, since the second case is similar. For
(\ref{intparts2}), only considering the $+$-terms, we get (compare \eqref{Kkj,2^k-j<1})
\begin{equation*}
    \begin{split}
    2^{-m k+2k-j} \sum_n \X_j(m+n) \int &R_n^+(x',0,x,(u-s)/2) \;
    \t{\Phi}_{k,j,n}(s) \; \rho(2^{4(k-j)}w(s)^2)\; ds\\
    =2^{k/2} \; 2^{(m-3)(j-k)} \; \int &\frac{\zeta_{j,m-1}(x',x,u,s)}{(R^2+(u-s)^2)^{m/2}}
    \; e^{-im\sigma} \\ &\rho(2^{4(k-j)}w(s)^2) \; \X(2^{k-j}s) \; ds.
    \end{split}
\end{equation*}
And for the last term (\ref{intparts3}), only considering the
$+$-terms, we get (compare \eqref{Kkj,2^k-j<1})
\begin{equation*}
    \begin{split}
    2^{-m k+4(k-j)} &\sum_n \X_j(m+n)\\
     &\times \int R_n^+(x',0,x,(u-s)/2) \;
    \t{\Phi}_{k,j,n}(s) \; (u-s) \; \rho'(2^{4(k-j)}w(s)^2)\; ds\\
    =&2^{k/2} \; 2^{(m-5)(j-k)-j} \; \int \frac{\zeta_{j,m-1}(x',x,u,s)}{(R^2+(u-s)^2)^{m/2}}
    \; (u-s) \; e^{-im\sigma} \\ &\rho'(2^{4(k-j)}w(s)^2) \;\X(2^{k-j}s)\; ds.
    \end{split}
\end{equation*}

We now replace $x$ by $2^{(j-k)/2}x$, $u$ by $2^{j-k}u$ and $s$ by $2^{j-k}s$.
Furthermore, we set $x_1:=2^{(k-j)/2}x'$ and define
\[\zeta_{k,j,m}(x_1,x,u,s):=\sum_n \chi_{j}(m+n) \; q^+_{n,m} \; 2^{mj} \;
e^{i (n+m+1)\kphi(s)}\] and
\[\zeta_{k,j,m-1}(x_1,x,u,s):=\sum_n \chi_{j}(m+n) \; q^+_{n,m-1} \; 2^{mj} \; e^{i (n+m)\kphi(s)},\]
with
\begin{equation}
    \label{kphidef}
    \kphi(s):=\arctan\Big(\frac{Rw-2xx_1(u-s)}{R2xx_1+(u-s)w}\Big)+\frac{2^{k-j}}{s}.\index{kphidefinition}
\end{equation}
Hence we have to study the following terms.
\alpheqn
\begin{equation}
    \label{fkj}
    \begin{split}
    F_{k,j}(x_1,x,u)&:=2^{k/2} \; 2^{m(j-k)}
    \; \int \frac{\zeta_{k,j,m}(x_1,x,u,s)}{(R^2+(u-s)^2)^{(m+3)/2}} \;
    e^{-i(m+1)\sigma}\\
    &\times (R2xx_1+(u-s)w - i(Rw-2xx_1(u-s)))\\
    &\times (1-\rho)(2^{2(k-j)}w(s)^2) \; \X(s) \; ds
    \end{split}
\end{equation}
for \eqref{intparts1}). 

\begin{equation}
    \label{gkj}
    \begin{split}
    G_{k,j}&(x_1,x,u):=2^{k/2} \; 2^{(m-1)(j-k)}\\
    &\times \int \frac{\zeta_{k,j,m-1}(x_1,x,u,s)}{(R^2+(u-s)^2)^{m/2}}
    \; e^{-im\sigma} \; \rho(2^{2(k-j)}w(s)^2) \; \X(s) \; ds
    \end{split}
\end{equation}
for \eqref{intparts2}. And at last

\begin{equation}
    \label{hkj}
    \begin{split}
    H_{k,j}&(x_1,x,u):=2^{k/2} \; 2^{(m-2)(j-k)-j}\\
    &\times \int \frac{\zeta_{k,j,m-1}(x_1,x,u,s)}{(R^2+(u-s)^2)^{m/2}}
    \; (u-s) \; e^{-im\sigma} \;  \rho'(2^{2(k-j)}w(s)^2) \;\X(s)\; ds
    \end{split}
\end{equation}
for \eqref{intparts3}.
\reseteqn

\subsubsection*{$K_{k,j}$ for the case $2^{j-k}\geq \const_2$}

In this case we only use formula \eqref{Kkj,2^k-j<1}. Thus we have to study
\begin{equation}
    \label{gkjb}
    \begin{split}
    G_{k,j}(x,x_1,u):=&2^{k/2} \; 2^{(m-1)(j-k)}
    \; \int \frac{\zeta_{k,j,m-1}(x_1,x,u,s)}{(R^2+(u-s)^2)^{m/2}}
    \; e^{-im\sigma} \; \X(s) \; ds.
    \end{split}
\end{equation}

%
%
%
%
%
%
\subsection{Calculations for the phase function $\kphi$}

In this section, we study the phase function $\kphi$ in $\zeta_{k,j,m}$ resp.
$\zeta_{k,j,m-1}$. This function was defined in \eqref{kphidef}. In the next chapter, we
estimate $F_{k,j},\ G_{k,j}$ and $H_{k,j}$ by partial integration, hence we have to study
the first and second derivative of $\kphi$.

We use the abbreviations
\begin{equation*}
\begin{split}
  R&:=x²+x_1²,\quad \index{Rs}
  w:=\sqrt{R²-4(xx_1)²+(u-s)²},\\ \index{ws}
  a&:=\sqrt{R²+(u-s)²}. \index{as}
\end{split}
\end{equation*}
Let
\begin{equation*}
  \begin{split}
    \kphi_0(s)&:=\arctan\left(\frac{x²+x_1²}{u-s}\right)+\frac{2^{k-j}}{s},\\
    \kphi(s)&:=\arctan\left(\frac{Rw-2xx_1(u-s)}{2Rxx_1+(u-s)w}\right)
    +\frac{2^{k-j}}{s}.
  \end{split}
\end{equation*}
Some easy computations yield
\begin{equation*}
  \begin{split}
  \d{s}w(s)&=\frac{1}{w}(u-s)=\frac{u-s}{((x²-x_1²)+(u-s)²)^{1/2}},\\
  \d{s}a(s)&=\frac{1}{a}(u-s)=\frac{u-s}{((x²+x_1²)+(u-s)²)^{1/2}},\\
  \d{s}\kphi_0(s)&=\frac{R}{R²+(u-s)²}-\frac{2^{k-j}}{s²},\\
  \d{s}\kphi(s)&=\frac{Rw-2xx_1(u-s)}{(R²+(u-s)²)w}-\frac{2^{k-j}}{s²},\\
  \d{s}²\kphi_0(s)&=\frac{R}{(R²+(t-s)²)²}2(t-s)+2\frac{2^{k-j}}{s³},\\
  \d{s}²\kphi(s)&=\frac{Rw^{-1}(u-s)+2xx_1}{a^2w}\\
    &-\frac{Rw-2xx_1(u-s)}{a^4w²}\big((2(u-s)w+a^2w^{-1}(u-s)\big)+
    2\frac{2^{k-j}}{s^3}.
  \end{split}
\end{equation*}
\begin{bemerkung}
For $x_1=0$, the phase function is the same than in the Heisenberg case. We have
\begin{equation*}
  \begin{split}
  \kphi(s)&=\arctan\left(\frac{R}{t-s}\right)+\frac{2^{k-j}}{s}\\
  \d{s}\kphi(s)&=\frac{R}{R²+(t-s)²}-\frac{2^{k-j}}{s²}.
  \end{split}
\end{equation*}
\end{bemerkung}

\newpage
\begin{lemma}
  \label{ll}
  If $s \sim 1$ then
  \begin{equation*}
    \begin{split}
    \norm{\d{s} \kphi(s)}&\leq c2^{k-j}+2a^{-1}\\
    \norm{\d{s}²\kphi(s)}&\leq c2^{k-j}+6(aw)^{-1}\lesssim (2^{(k-j)/2}+(aw)^{-1/2})^2
    \end{split}
  \end{equation*}
  and hence
  \begin{equation*}
    \bignorm{2^{-j}\frac{\d{s}²\kphi}{(\d{s}\kphi)²}(s)}\lesssim 2^{-j}
    \; \left(\frac{2^{(k-j)/2}+(aw)^{-1/2}}{\d{s}\kphi}\right)^2.
  \end{equation*}
\end{lemma}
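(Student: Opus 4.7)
The plan is to prove each of the three bounds by plugging in the explicit formulas already computed for $\partial_s\kphi$ and $\partial_s^2\kphi$ and then invoking only elementary estimates between the auxiliary quantities $R$, $w$, $a$. Throughout I shall use, for $s\sim 1$, the trivial bounds $2^{k-j}/s^2 \lesssim 2^{k-j}$ and $2^{k-j}/s^3\lesssim 2^{k-j}$, which absorb the contribution of the term $2^{k-j}/s$ in $\kphi$ into the constant $c\,2^{k-j}$ appearing on the right-hand side of each inequality. It therefore remains to estimate the ``geometric'' part of the derivatives, namely $(Rw-2xx_1(u-s))/(a^2w)$ for $\partial_s\kphi$ and the two fractions appearing in the formula for $\partial_s^2\kphi$.

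First I would record the elementary inequalities that will be used repeatedly: $R\le a$, $|2xx_1|\le R$ (AM--GM), $|u-s|\le w$ and $|u-s|\le a$ (by definition of $w$ and $a$), and $w\le a$. For the first assertion the goal is to show $|Rw-2xx_1(u-s)|/(a^2w)\le 2/a$, which reduces to $|Rw-2xx_1(u-s)|\le 2aw$. This follows immediately from the triangle inequality and the above bounds, since $|Rw|\le aw$ and $|2xx_1(u-s)|\le R\,|u-s|\le Rw\le aw$. (In fact the quantity is non-negative, since the algebraic identity $(Rw-2xx_1(u-s))(Rw+2xx_1(u-s))=(x^2-x_1^2)^2\,a^2\ge 0$ ensures $Rw\ge 2xx_1(u-s)$; this identity was essentially used already in the definition of $\sigma$.)

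For $\partial_s^2\kphi$ the same toolkit applies. The first fraction $(Rw^{-1}(u-s)+2xx_1)/(a^2w)$ has numerator bounded by $2R$ (using $|u-s|\le w$ and $|2xx_1|\le R$), and since $a^2\ge Ra$ one gets a bound $\le 2/(aw)$. For the second fraction one factors it as $(Rw-2xx_1(u-s))\cdot\bigl(2(u-s)w+a^2(u-s)/w\bigr)/(a^4w^2)$. The first factor is already estimated by $2aw$ and the second by $2w^2+a^2\lesssim a^2$ (using $w\le a$), so the whole expression is $\lesssim 6/(aw)$. Adding the $2^{k-j}$ term from $2\cdot 2^{k-j}/s^3$ yields the required bound $|\partial_s^2\kphi(s)|\le c\,2^{k-j}+6(aw)^{-1}$. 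Finally, the expansion $(2^{(k-j)/2}+(aw)^{-1/2})^2=2^{k-j}+2\cdot 2^{(k-j)/2}(aw)^{-1/2}+(aw)^{-1}$ shows that this last bound is controlled by a constant multiple of $(2^{(k-j)/2}+(aw)^{-1/2})^2$, and the final inequality for $2^{-j}\partial_s^2\kphi/(\partial_s\kphi)^2$ then follows by dividing through by $(\partial_s\kphi)^2$.

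There is no real obstacle: the only point requiring a moment's thought is ensuring that the numerator $Rw-2xx_1(u-s)$ appearing in $\partial_s\kphi$ is genuinely bounded by $2aw$ and not just by $2a^2$, which is what makes the factor $w^{-1}$ in the statement come out correctly; this is settled by the observation $|u-s|\le w$ (equivalently, by the identity above). Once this is in place the proof is just bookkeeping.
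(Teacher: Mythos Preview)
Your proof is correct and follows essentially the same route as the paper's own argument: both reduce to the elementary inequalities $|u-s|\le w$ and $w\le a$ (together with $R\le a$ and $|2xx_1|\le R$) and then bound each term of the explicit formulas for $\partial_s\kphi$ and $\partial_s^2\kphi$. The paper's proof is a one-line sketch giving only the final chain $|\partial_s^2\kphi|\le 2a^{-1}w^{-1}+2a^{-3}w+2a^{-1}w^{-1}+c\,2^{k-j}\le 6(aw)^{-1}+c\,2^{k-j}$, whereas you spell out each step; your extra observation that $Rw-2xx_1(u-s)\ge 0$ via the factorization $(Rw)^2-(2xx_1(u-s))^2=(x^2-x_1^2)^2a^2$ is correct but not needed for the bound.
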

\begin{proof}
  Follows easily since $\norm{u-s}\leq w$ and $w \leq a$ and thus
    \[\norm{\d{s}²\kphi(s)}\leq 2a^{-1}w^{-1}+2a^{-3}w+2a^{-1}w^{-1}+c2^{k-j}\leq 6 (aw)^{-1}+c2^{k-j}.\]
\end{proof}

%
%
%
%
%
%
\subsection{The change of coordinates}

Let
\begin{equation*}
  \t{\kphi}(s):=\arctan\left(\frac{Rw-2xx_1(u-s)}{2Rxx_1+(u-s)w}\right).
\end{equation*}
Then
\begin{equation*}
  \begin{split}
  \d{s}\t{\kphi}(s)&=\frac{Rw-2xx_1(u-s)}{(R²+(u-s)²)w}.
  \end{split}
\end{equation*}
The following change of coordinates turns out to be useful
\begin{equation*}
  \begin{split}
  &X:=\frac{Rw-2xx_1(u-s)}{2Rxx_1+(u-s)w},\quad Y:=\frac{(R²+(u-s)²)w}{2Rxx_1+(u-s)w},\\
  &s:=s,\quad \psi(x,u,s):=(X,Y,s).
  \end{split}
\end{equation*}\index{X} \index{Y}
Observe that $\sgn X=\sgn Y$. Some easy computations show
\begin{equation}
    \label{coord1}
  \begin{split}
  &\frac{a²}{\norm{2Rxx_1+(u-s)w}}=\j{X},\quad w=\frac{\norm{Y}}{\j{X}},\\
  &\frac{Rw-2xx_1(u-s)}{(R²+(u-s)²)w}=\frac{X}{Y}=\d{s}\t{\kphi}(s).
  \end{split}
\end{equation}

\noindent One can compute the inverse of this transformation by solving a third order
equation in $x^2$ and a second order equation in $u$. But unfortunately we have no simple
solutions for these equations.

Nevertheless, we have a simple expression for the functional determinant in the new
coordinates.
\begin{equation}
    \label{funktionaldeterminante}
  \begin{split}
  \norm{\det(\psi')}^{-1}&=\frac{w\norm{2xx_1R+(u-s)w}³}
    {a^4\norm{2x^5-2xx_1^4+2x(u-s)²-2x_1(u-s)w}}\\
    &=\frac{\norm{Y}}{2\j{X}^3} \frac{1}{\sqrt{\norm{XY-x_1²X²}}}.
  \end{split}
\end{equation}
This calculation will be done in the following two lemmata.
\begin{lemma}
  The following equation holds
  \begin{equation*}
    \begin{split}
    \norm{\det(\psi')}^{-1}&=\frac{w\norm{2xx_1R+(u-s)w}³}{a^4}
    \frac{1}{\norm{2x^5-2xx_1^4+2x(u-s)²-2x_1(u-s)w}}\\
    &=\frac{\norm{Y}}{\j{X}^4} \frac{a²}{\norm{2x^5-2xx_1^4+2x(u-s)²-2x_1(u-s)w}}.
    \end{split}
  \end{equation*}
\end{lemma}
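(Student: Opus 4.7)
The plan is to compute $|\det \psi'|$ by direct reduction to a two-dimensional Jacobian, followed by a quotient-rule expansion. Since $\psi(x,u,s) = (X, Y, s)$ has identity third component, the $3\times 3$ Jacobian matrix has third row $(0,0,1)$, and expanding the determinant along that row reduces the computation to
\begin{equation*}
\det \psi' = \det \frac{\partial(X,Y)}{\partial(x,u)},
\end{equation*}
with $s$ treated as a parameter. This kills the $\partial_s X,\ \partial_s Y$ contributions that would otherwise complicate the bookkeeping.

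Next I would write $X = N_X/D$ and $Y = N_Y/D$ with the natural choices
\begin{equation*}
N_X := Rw - 2xx_1(u-s), \quad N_Y := a^2 w, \quad D := 2Rxx_1 + (u-s)w,
\end{equation*}
and apply the quotient rule in wedge-product form to obtain
\begin{equation*}
\det \frac{\partial(X,Y)}{\partial(x,u)} = \frac{1}{D^2} J(N_X, N_Y) - \frac{N_Y}{D^3} J(N_X, D) + \frac{N_X}{D^3} J(N_Y, D),
\end{equation*}
where $J(f,g) := \partial_x f\,\partial_u g - \partial_u f\,\partial_x g$. The individual partial derivatives are obtained from $\partial_x R = 2x$, $\partial_u R = 0$, $\partial_x w = 2x(x^2-x_1^2)/w$, $\partial_u w = (u-s)/w$, $\partial_x a^2 = 4xR$, $\partial_u a^2 = 2(u-s)$, and substituted into the three brackets.

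Two structural observations organize the simplification. First, the complex factorization
\begin{equation*}
D + i N_X = \bigl(R - i(u-s)\bigr)\bigl(2xx_1 + iw\bigr)
\end{equation*}
immediately gives $N_X^2 + D^2 = (R^2 + (u-s)^2)(4x^2x_1^2 + w^2) = a^2 \cdot a^2 = a^4$, which explains \emph{a priori} why a factor of $a^4$ appears in the final numerator. Second, the reduction $w^2 + R(x^2 - x_1^2) = 2x^2(x^2-x_1^2) + (u-s)^2$, derived by substituting $w^2 = (x^2-x_1^2)^2 + (u-s)^2$, repeatedly simplifies $\partial_x$-derivatives whenever $w$ appears. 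After expanding the three Jacobian brackets, massive cancellations leave a numerator proportional to $a^4 \cdot E$ with $E := 2x^5 - 2xx_1^4 + 2x(u-s)^2 - 2x_1(u-s)w$, which, taken in absolute value and inverted, yields the claimed identity.

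The main obstacle is the algebraic bookkeeping: the three Jacobian brackets each involve high-degree polynomial expressions in $x, x_1, u-s$, and $w$, and extracting the compact form of $E$ requires careful tracking of cancellations across all three contributions. A systematic strategy is to group all contributions by powers of $w$, using $w^2 = (x^2-x_1^2)^2 + (u-s)^2$ to eliminate even powers: the odd-$w$ terms should combine to give the $-2x_1(u-s)w$ piece of $E$, while the $w^{-1}$ coefficient should collapse to $2x^5 - 2xx_1^4 + 2x(u-s)^2$ after the $a^4 = N_X^2 + D^2$ factor is pulled out.
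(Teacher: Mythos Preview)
Your approach is correct and is essentially the same direct Jacobian computation the paper performs; the paper's own proof consists only of the remark that the calculation is ``elementary but complex'' and that ``an algebra system like Maple is very useful.'' Your proposal is in fact more informative than the paper's: the reduction to the $2\times 2$ Jacobian via the identity third component, the quotient-rule decomposition into the three brackets $J(N_X,N_Y)$, $J(N_X,D)$, $J(N_Y,D)$, and especially the complex factorization $D+iN_X=(R-i(u-s))(2xx_1+iw)$ explaining the $a^4$ factor are genuine structural observations that the paper does not record. The only caveat is that your final paragraph is a sketch rather than a verification---the cancellation producing $E=2x^5-2xx_1^4+2x(u-s)^2-2x_1(u-s)w$ still has to be carried out---but that is exactly the ``elementary but complex'' bookkeeping the paper relegates to a computer algebra system, and your grouping-by-powers-of-$w$ strategy is a reasonable way to do it by hand.
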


\begin{proof}
  The proof is elementary but complex.
  For these computations an algebra system like Maple is very useful.
\end{proof}

\begin{lemma}
  The following holds
  \begin{equation*}
    XY-x_1²X²=\frac{(2x^5-2xx_1^4+2x(u-s)²-2x_1(u-s)w)²}{4(2xx_1R+(u-s)w)²}.
  \end{equation*}
\end{lemma}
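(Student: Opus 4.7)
The plan is to verify this as a direct algebraic identity. Write $D := 2Rxx_1 + (u-s)w$, $N_X := Rw - 2xx_1(u-s)$ and $N_Y := a^2 w = (R^2 + (u-s)^2)w$, so that $X = N_X/D$ and $Y = N_Y/D$. Factoring, $XY - x_1^2 X^2 = X(Y - x_1^2 X) = N_X(N_Y - x_1^2 N_X)/D^2$, so it suffices to prove the polynomial identity
\begin{equation*}
4\,N_X(N_Y - x_1^2 N_X) = A^2, \qquad A := 2x^5 - 2xx_1^4 + 2x(u-s)^2 - 2x_1(u-s)w.
\end{equation*}

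First I would simplify the easier factor. Since $R^2 - x_1^2 R = R x^2$, we get
\begin{equation*}
N_Y - x_1^2 N_X = w(Rx^2 + (u-s)^2) + 2xx_1^3(u-s).
\end{equation*}
Expanding the product $N_X(N_Y - x_1^2 N_X)$ and substituting $w^2 = (x^2-x_1^2)^2 + (u-s)^2$ whenever $w^2$ appears, I would collect terms by their dependence on $w$: terms with no $w$ (polynomial in $x,x_1,u-s$), terms linear in $w$ with factor $(u-s)$, and terms linear in $w$ with factor $(u-s)^3$. Explicitly, the product becomes
\begin{equation*}
R^2 x^2 (x^2-x_1^2)^2 + R(x^2-x_1^2)^2(u-s)^2 + R^2x^2(u-s)^2 + R(u-s)^4 - 4x^2x_1^4(u-s)^2 - 2xx_1 R(x^2-x_1^2)(u-s)w - 2xx_1(u-s)^3 w.
\end{equation*}

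Next I would expand $(A/2)^2$ by writing $A/2 = xR(x^2-x_1^2) + x(u-s)^2 - x_1(u-s)w$ and squaring, again using $w^2 = (x^2-x_1^2)^2 + (u-s)^2$. Comparing the two expressions term by term, the $w(u-s)$ and $w(u-s)^3$ terms match immediately, as does the $(u-s)^4$ term. The matching of the remaining $w$-free terms reduces, after dividing by $x^2$, to the elementary identity
\begin{equation*}
(x^2-x_1^2)^2 + R^2 - 4x_1^4 = 2R(x^2-x_1^2),
\end{equation*}
which holds since both sides expand to $2x^4 - 2x_1^4$. There is no real obstacle here beyond careful bookkeeping; a computer algebra check of the polynomial identity in the three formal variables $x,x_1,u-s$ (with $w^2$ replaced by $(x^2-x_1^2)^2+(u-s)^2$) makes the verification mechanical.
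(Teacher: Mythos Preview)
Your proof is correct and takes essentially the same approach as the paper: direct algebraic verification of the identity. The paper itself simply writes out $XY - x_1^2X^2$ over the common denominator and then states that ``an algebra system is very useful for these calculations,'' whereas you carry out the computation explicitly by hand with a clean organization (isolating the $w$-free, $w(u-s)$, and $w(u-s)^3$ terms), reducing the verification to the elementary identity $(x^2-x_1^2)^2 + R^2 - 4x_1^4 = 2R(x^2-x_1^2)$.
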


\begin{bemerkung}
  For $x_1=0$ we get $XY=\frac{R(R^2+(u-s)^2)}{(u-s)^2}$.
\end{bemerkung}

\begin{proof}
  \begin{equation*}
    \begin{split}
    XY-x_1²X²&=\frac{(Rw-2xx_1(u-s))((u-s)²+R²)w}{(2xx_1R+(u-s)w)²}-x_1²
    \frac{(Rw-2xx_1(u-s))²}{(2xx_1R+(u-s)w)²}\\
    &=\frac{(2x^5-2xx_1^4+2x(u-s)²-2x_1(u-s)w)²}{4(2xx_1R+(u-s)w)²}
    \end{split}
  \end{equation*}
  As before, an algebra system is very useful for these calculations.
\end{proof}
\noindent Thus we get
\begin{equation*}
  \begin{split}
    \norm{\det(\psi')}^{-1}&=\frac{w\norm{2xx_1R+(u-s)w}³}{a^4}
    \frac{1}{\norm{2x^5-2xx_1^4+2x(u-s)²-2x_1(u-s)w}}\\
    &=\frac{w\norm{2xx_1R+(u-s)w}^2}{2a^4}
    \frac{1}{\sqrt{\norm{XY-x_1²X²}}}\\
    &=\frac{w}{2\j{X}²} \frac{1}{\sqrt{\norm{XY-x_1²X²}}}
    =\frac{\norm{Y}}{2\j{X}³} \frac{1}{\sqrt{\norm{XY-x_1²X²}}}
  \end{split}
\end{equation*}
and we have proven equation (\ref{funktionaldeterminante}).

\begin{bemerkung}
  For $x_1=0$ we use the same coordinates as in \cite{mueller}, i.e.
  \[
    X=\frac{R}{u-s},\quad Y=\frac{R²+(u-s)²}{u-s},\quad s=s.
  \]
  The functional determinant
  \[
    \norm{\det(\psi')}^{-1}=\frac{\norm{Y}}{2\j{X}^4}\frac{\j{X}}{\sqrt{\norm{XY}}},\]
    is, of course, also the same, except for the extra term
    $\j{X}/\sqrt{\norm{XY}}=R^{-1/2}$, which reflects the fact that our manifold $\R^2$ has one dimension less
    than the Heisenberg group $\H_1$.
\end{bemerkung}

In Section \ref{carnot} we mentioned that the case $x_1\not=0$ is much harder to
understand than the case $x_1=0$. Here is one reason for this fact. Our coordinate
transform we have to use is much more complicated for $x_1\not=0$ as for $x_1=0$.\absatz

As far as we know, there is no easier transform that allows us to estimate the $L_1$-norm
of the kernel $K_{k,j}$ properly.\absatz

To see that we really can use this transformation, we have to take a closer look at the
inverse of it. Though we can not hope to get an explicit and useful formula for
$\psi^{-1}$, we are able to show that $\psi$ can be restricted to a finite number of sets
$\Omega_n$ such that $\psi|_{\Omega_n}$ is invertible. Hence we are allowed to use
the transformation formula.\\

Observe that by \eqref{coord1} we get the following equations for $\t{u}=u-s$
\begin{equation}
    \label{coord2}
    \begin{split}
    \frac{\n{X}}{\j{X}}&=\frac{Rw-2xx_1\t{u}}{R^2+\t{u}^2}\\
    \t{u}^2&=\frac{Y^2}{\j{X}^2}-R^2+4x^2x_1^2
    \end{split}
\end{equation}
From the first equation we deduce
\begin{equation*}
    \t{u}=\pm\sqrt{\frac{R\n{Y}}{\n{X}}-R^2+x^2x_1^2\frac{\j{X}^2}{X^2}}-xx_1\frac{\j{X}}{\n{X}}.
\end{equation*}
This together with the second equation from \eqref{coord2} gives us for
$\epsilon\in\{-1,1\}$
\begin{equation*}
    \frac{Y^2}{\j{X}^2}-R^2+4x^2x_1^2=\Big(\epsilon\sqrt{\frac{R\n{Y}}{\n{X}}-R^2+x^2x_1^2\frac{\j{X}^2}{X^2}}-xx_1\frac{\j{X}}{\n{X}}\Big)^2,
\end{equation*}
which implies
\begin{equation*}
   \Big(\frac{Y^2}{\j{X}^2}+4x^2x_1^2-2x^2x_1^2\frac{\j{X}^2}{X^2}
   -R\frac{Y}{X}\Big)^2-4\Big(R\frac{Y}{X}-R^2+x^2x_1^2\frac{\j{X}^2}{X^2}\Big)x^2x_1^2\frac{\j{X}^2}{X^2}=0.
\end{equation*}
But this an equation of third order (due to $R^2x^2x_1^2=(x^2+x_1^2)^2x^2x_1^2$) in $x^2$
and hence can be solved.

Now, for a given $(x,\t{u})$, one can find a locally defined function $\psi_{-1}$ with
\[\psi_{-1}(X,Y)=\psi_{-1}\Big(\frac{Rw-2xx_1\t{u}}{2Rxx_1+\t{u}w},\frac{(R²+\t{u}²)w}{2Rxx_1+\t{u}w}\Big)=x^2\]
and hence $x=\sgn(x)\sqrt{\psi_{-1}(X,Y)}$. In addition,
\[Y^2/\j{X}^2-(\psi_{-1}(X,Y)+x_1^2)^2+4\psi_{-1}(X,Y)x_1^2= \t{u}^2\] and
$\t{u}=\sgn(u)\sqrt{Y^2/\j{X}^2-(\psi_{-1}(X,Y)+x_1^2)^2+4\psi_{-1}(X,Y)x_1^2}$.

Furthermore, there is a finite number $N$, a measurable set $\Omega_0$ of measure $0$ and
open sets $\Omega_1, \dots, \Omega_N$ such that $\bigcup_{n=0}^N \Omega_n=\R^2$ and
$\psi|_{\Omega_n}$ is a diffeomorphism for every $n\in \{1, \dots , N\}$. On all these
sets we may apply the transformation formula. We obtain the following proposition.

\begin{satz}
    There exists a constant $C$ such that for every measurable \mbox{$f:\R^3\to\C$} with
    $(X,Y,s)\mapsto f(X,Y,s) \norm{Y}\; \j{X}^{-3} \; \norm{XY-x_1²X²}^{-1/2} \in L_1(\R^3)$
    \begin{equation*}
        \begin{split}
        \int \n{f(X(x,&u,s),Y(x,u,s),s)}\; dx \; du \; ds\leq C \int
        \n{f(X,Y,s)}\; \norm{\det(\psi')}^{-1} \; dX \; dY \; ds\\
        &=C \int \n{f(X,Y,s)}\; \frac{\norm{Y}}{2\j{X}³} \frac{1}{\sqrt{\norm{XY-x_1²X²}}} \; dX \; dY \; ds
        \end{split}
    \end{equation*}
    holds.
\end{satz}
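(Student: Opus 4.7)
My plan is to make rigorous the decomposition sketched in the paragraph preceding the statement and then apply the ordinary change of variables on each piece. The map $\psi(x,u,s)=(X,Y,s)$ is smooth on the complement of the critical set $Z:=\{(x,u,s):\det\psi'(x,u,s)=0\}$. Since the components of $X,Y$ are algebraic in $(x,u,s)$, the set $Z$ is contained in a real algebraic variety, hence has Lebesgue measure zero, and contributes nothing to the integral on the left. On the regular set $\R^3\setminus Z$ I would cover $\R^3$ (up to a null set) by finitely many open pieces $\Omega_1,\ldots,\Omega_N$ on each of which $\psi$ is a diffeomorphism onto its image. Applying the ordinary change of variables on each piece and summing gives
\begin{equation*}
\int |f(\psi(x,u,s))|\,dx\,du\,ds \;=\; \sum_{n=1}^N \int_{\psi(\Omega_n)} |f(X,Y,s)|\,|\det\psi'|^{-1}\,dX\,dY\,ds \;\leq\; N\int_{\R^3} |f|\,|\det\psi'|^{-1}\,dX\,dY\,ds,
\end{equation*}
where the final bound uses that every point of $\R^3$ lies in at most $N$ of the images $\psi(\Omega_n)$.

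To justify the finite decomposition I would exploit the algebraic inversion already performed just above the statement. Introducing the shift $\t u := u-s$, the two relations in \eqref{coord2} involve only $(x,\t u)$ and $(X,Y)$, so that the decomposition is essentially one of the $(x,\t u)$-plane, independent of $s$. Solving the first identity for $\t u$ and substituting into the second yields, after clearing denominators, a polynomial equation of degree three in $x^2$ whose coefficients are smooth in $(X,Y,x_1)$. For fixed $(X,Y)$ this has at most three real roots in $x^2$; together with the two choices of $\sgn(x)$ and the two choices of $\sgn(\t u)$, this gives a uniform bound $N\leq 12$ on the number of preimages of $\psi$, with $N$ independent of $x_1$. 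The inverse function theorem applied at each regular point, together with selection of consistent branches on the connected components of $\R^3\setminus Z$, produces the desired open cover.

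The main obstacle I anticipate is the careful gluing of the local branches of $\psi^{-1}$ into genuinely open sets $\Omega_n$ uniformly in $x_1$; this amounts to analyzing the discriminant locus of the cubic in $x^2$ and the ramification caused by the sign choices for $x$ and $\t u$. Once this is carried out, substituting the explicit formula
\[
|\det(\psi')|^{-1}=\frac{|Y|}{2\j{X}^3\sqrt{|XY-x_1^2 X^2|}}
\]
from \eqref{funktionaldeterminante} into the right-hand side yields the claimed inequality with $C=N$, and the integrability hypothesis on $f$ ensures that every step above is absolutely convergent.
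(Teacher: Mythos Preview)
Your proposal is correct and follows essentially the same route as the paper: both reduce the inversion to a cubic in $x^2$ via the two identities in \eqref{coord2}, combine this with the sign choices for $x$ and $\t u=u-s$ to obtain a uniform bound on the number of preimages, decompose the domain (in the $(x,\t u)$-plane, since the $s$-slot is trivial) into finitely many open pieces on which $\psi$ is a diffeomorphism plus a null set, and then apply the ordinary change-of-variables formula on each piece. Your write-up is in fact slightly more explicit than the paper's in a couple of places (the algebraic-variety argument for the critical set, and the concrete bound $N\le 12$), and the obstacle you flag---the gluing of branches into honest open sets---is exactly the step the paper also leaves at the level of a sketch.
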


\newpage
\Section{The proof of Proposition \ref{dyadicprop2}}

Define
\begin{equation}
    \label{odef}
    O(x_1):=\menge{(x,u)}{\n{x-x_1}\leq 2^{(k-j)/2}2 \const_0,\ \n{u}\leq 2^{k-j}
    2\const_0(2+2\const_1)}.
\end{equation}
\index{o(x_1)}
 To prove Proposition
\ref{dyadicprop2}, we now show that the following two estimates hold (compare
\eqref{otdef}).\absatz

\noindent{\it (1) For every $\epsilon>0$ and for all $A \in \{F_{k,j},\ G_{k,j},\
H_{k,j}\}$, defined by \eqref{fkj}, \eqref{gkj} and \eqref{hkj}, there exists a constant
$C_\epsilon$ such that the estimate
\begin{equation}
    \label{proof1}
    \sum_{j\in \N_0,\ 2^{j-k}\leq 1/\const_2} \sup_{\n{x_1}\leq 2\const_1 2^{(k-j)/2}} \dnorm{A}_{L_1(O(x_1))}\leq C_\epsilon \; 2^{k\epsilon}
\end{equation}
holds.}\absatz\absatz

\noindent{\it (2) For every $\epsilon>0$ and for $G_{k,j}$, defined by \eqref{gkjb},
there exists a constant $C_\epsilon$ such that the estimate
\begin{equation}
    \label{proof2}
    \sum_{j\in \N_0,\ 2^{k-j}\leq \const_2} \sup_{\n{x_1}\leq 2\const_1 2^{(k-j)/2}} \dnorm{G_{k,j}}_{L_1(O(x_1))}\leq C_\epsilon \; 2^{k\epsilon}
\end{equation}
holds.}\absatz

Unfortunately, we cannot express $O(x_1)$ in the new coordinates. We only have to
integrate over such $X$ and $Y$ so that for given $s$ and $x_1$ the corresponding $x$ and
$u$, with $X=X(x_1,x,u,s)$ and $Y=Y(x_1,x,u,s)$, are in $O(x_1)$. For this we write
\[\int_{O(x_1)} \dots \; dX \; dY.\]
%
%
%
%
%
%
\subsection{Some integrations in the new coordinates}

Before we prove the assertions \eqref{proof1} and \eqref{proof2}, we show a few technical
lemmata concerning integration with respect to new coordinates $X$ and $Y$.
\begin{lemma}
  \label{lemmayint1}
  The following estimates hold true

  \noindent(a)
  \begin{equation*}
    \int\limits_{c_0 \leq \norm{Y}\leq c_1}
    \frac{\sqrt{\norm{Y}}}{\sqrt{\norm{Y-x_1²X}}} \; dY\lesssim c_1,
  \end{equation*}
  \noindent(b)
  \begin{equation*}
    \begin{split}
    \int\limits_{c_0 \leq \norm{Y}\leq c_1}
    \frac{1}{\sqrt{\norm{Y}}}\frac{1}{\sqrt{\norm{Y-x_1²X}}} \; dY
    \lesssim 1 + \log\Big(\frac{c_1}{c_0}\Big).
    \end{split}
  \end{equation*}
\end{lemma}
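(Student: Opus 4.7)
The plan is to split the integration domain according to the relative size of $\n{Y}$ and $\n{Y-x_1^2X}$, treating separately the region where the two quantities are comparable and the region where $Y$ is near the singularity $x_1^2X$.

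For part (a), I would write the domain $\{c_0\leq \n{Y}\leq c_1\}$ as the union of $I_1:=\{Y:\n{Y-x_1^2X}\geq \n{Y}/2\}$ and $I_2:=\{Y:\n{Y-x_1^2X}<\n{Y}/2\}$, both intersected with $\{c_0\leq\n{Y}\leq c_1\}$. On $I_1$ the integrand is bounded by $\sqrt{\n{Y}}/\sqrt{\n{Y}/2}\lesssim 1$, so the contribution is $\lesssim c_1$. On $I_2$ one has $\n{Y}\sim \n{x_1^2X}$; in particular $\n{x_1^2X}\lesssim c_1$ in this region, and $\n{Y-x_1^2X}<\n{x_1^2X}$, so by the substitution $t=Y-x_1^2X$ the integral is bounded by
\begin{equation*}
\sqrt{\n{x_1^2X}}\int_{-\n{x_1^2X}}^{\n{x_1^2X}}\frac{dt}{\sqrt{\n{t}}}\lesssim \n{x_1^2X}\lesssim c_1.
\end{equation*}

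For part (b) I would use the same decomposition. On $I_1$ the integrand is bounded by $\n{Y}^{-1/2}\cdot (\n{Y}/2)^{-1/2}\lesssim \n{Y}^{-1}$, so integration over $c_0\leq \n{Y}\leq c_1$ yields the logarithmic term $\lesssim 1+\log(c_1/c_0)$. On $I_2$, again $\n{Y}\sim \n{x_1^2X}$, and so the integrand is bounded by $\n{x_1^2X}^{-1/2}\n{Y-x_1^2X}^{-1/2}$; by the same substitution as above one obtains
\begin{equation*}
\n{x_1^2X}^{-1/2}\int_{-\n{x_1^2X}}^{\n{x_1^2X}}\frac{dt}{\sqrt{\n{t}}}\lesssim 1.
\end{equation*}

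There is no real obstacle here; this is a standard square-root singularity estimate once the natural dichotomy has been set up. The only point requiring a little care is verifying that in the ``close'' region $I_2$ one truly has $\n{Y}\sim \n{x_1^2X}$ so that one may pull $\n{x_1^2X}^{\pm 1/2}$ out of the integral, but this is immediate from the defining inequality $\n{Y-x_1^2X}<\n{Y}/2$, which gives $\n{Y}/2\leq \n{x_1^2X}\leq 3\n{Y}/2$.
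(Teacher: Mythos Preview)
Your proof is correct. The dichotomy $\{\n{Y-x_1^2X}\geq \n{Y}/2\}$ versus $\{\n{Y-x_1^2X}<\n{Y}/2\}$ handles both parts cleanly, and your verification that $\n{Y}\sim\n{x_1^2X}$ on $I_2$ is exactly what is needed.

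The paper takes a slightly different route: it first rescales $Y\to Y/(x_1^2\n{X})$, which moves the singularity to $Y=1$, and then splits the resulting integral into the regions $\n{Y}\leq 1/2$, $\n{Y}\sim 1$, $\n{Y}\geq 2$ (together with a further case distinction according to whether $x_1^2\n{X}$ is large or small). Your decomposition is more direct and parameter-free: by comparing $\n{Y-x_1^2X}$ with $\n{Y}$ you avoid the rescaling and the extra cases on the size of $x_1^2\n{X}$. The paper's rescaling has the mild advantage of reducing to a fixed model integral, but your argument is shorter and just as rigorous.
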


\begin{proof}
  This estimates follows easily by the transformation $Y\to Y/(x_1²\n{X})$. In fact
  \begin{equation*}
    \begin{split}
    &\int\limits_{c_0 \leq \norm{Y}\leq c_1}
    \frac{\sqrt{\norm{Y}}}{\sqrt{\norm{Y-x_1²X}}} \; dY
    =x_1²\n{X}\int\limits_{c_0/(x_1²\n{X}) \leq \norm{Y}\leq c_1/(x_1²\n{X})}
    \frac{\sqrt{\norm{Y}}}{\sqrt{\norm{Y-1}}} \; dY.
    \lesssim c_1.
    \end{split}
  \end{equation*}
  For $x_1^2 \n{X}\lesssim 1$ we get for $\n{Y}\leq 2$ the bound $x_1^2 \n{X} \; c\lesssim c_1$ and for
  $\n{Y}\geq 2$ we get $x_1^2 \n{X} \; c_1/(x_1^2\n{X})=c_1$.
  For $x_1^2 \n{X}\gtrsim 1$ we get for $\n{Y}\leq 2$ the bound
  $c_1 \; \int_{\n{Y}\leq 2} (\sqrt{\n{Y}}\sqrt{\n{Y-1}})^{-1} \; dY\lesssim c_1$.
  and for $\n{Y}\geq 2$ we get $x_1^2\n{X} \; c_1/(x_1^2\n{X})=c_1$. This gives us
  estimate (a). For (b) observe that
  \begin{equation*}
    \begin{split}
    \int\limits_{c_0 \leq \norm{Y}\leq c_1}
    &\frac{1}{\sqrt{\norm{Y}}}\frac{1}{\sqrt{\norm{Y-x_1²X}}} \; dY
    =\int\limits_{\frac{c_0}{x_1^2\n{X}} \leq \norm{Y}\leq \frac{c_1}{x_1²\n{X}}}
    \frac{1}{\sqrt{\n{Y}}}\frac{1}{\sqrt{\norm{Y-1}}} \; dY\\
    \lesssim &\int\limits_{0 \leq \norm{Y} \leq 1/2} \frac{1}{\sqrt{\n{Y}}} \; dY+
    \int\limits_{\frac{c_0}{x_1^2\n{X}} \leq \norm{Y}\leq \frac{c_1}{x_1²\n{X}},\ \n{Y}\geq 2} \frac{1}{\n{Y}} \;
    dY
    +\int\limits_{Y \sim 1} \frac{1}{\sqrt{\norm{Y-1}}} \; dY\\
    \lesssim &1+\log\Big(\frac{c_1}{c_0}\Big)
    \end{split}
  \end{equation*}
  holds.
\end{proof}

\begin{lemma}
    \label{lemmayint2}
  \begin{itemize}
    \item[(a)] For all $\epsilon \geq 0$
        \[\int\limits_{\n{Y-a_2}\leq c} \ed{\sqrt{\n{Y-a_1}}} \; dY
        \lesssim c^{1-\epsilon} \n{a_1-a_2}^{-1/2+\epsilon}\]
        holds with a constant not depending on $a_1$, $a_2$ and $c$.
    \item[(b)] For all $\epsilon > 0$
        \[\int\limits_{\n{Y-a_2}\geq c} \frac{c}{\n{Y-a_2}}
        \; \ed{\sqrt{\n{Y-a_1}}} \; dY
        \lesssim c^{1-\epsilon} \n{a_1-a_2}^{-1/2+\epsilon}\]
        holds with a constant not depending on $a_1$, $a_2$ and $c$.
  \end{itemize}
\end{lemma}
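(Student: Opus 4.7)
The plan for both parts is identical in spirit: after translating the variable of integration and scaling out the parameter $|a_1-a_2|$, the estimate reduces to a model one-dimensional integral, and we then split into two regimes depending on whether $c$ is larger or smaller than $|a_1-a_2|$.

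First I would handle (a). Setting $Z := Y-a_2$ and $b := a_1-a_2$, the integral becomes $\int_{|Z|\leq c}|Z-b|^{-1/2}\,dZ$. In the regime $|b|\leq 2c$, I would note that $|Z|\leq c$ implies $|Z-b|\leq 3c$, and the elementary bound $\int_{|Z-b|\leq 3c}|Z-b|^{-1/2}\,dZ\lesssim \sqrt{c}$ takes over; comparison with $c^{1-\epsilon}|b|^{-1/2+\epsilon}$ then reduces to the inequality $|b|^{1/2-\epsilon}\lesssim c^{1/2-\epsilon}$ (or its reverse when $\epsilon>1/2$), which is automatic from $|b|\lesssim c$. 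In the complementary regime $|b|\geq 2c$, I would use $|Z-b|\geq |b|/2$ throughout the integration domain, giving $\int_{|Z|\leq c}|Z-b|^{-1/2}\,dZ\lesssim c|b|^{-1/2}$; the desired bound then follows from $(|b|/c)^\epsilon\geq 1$.

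Next I would turn to (b), using the same substitution $Z=Y-a_2$, $b=a_1-a_2$, followed by the further rescaling $Z=|b|t$, which produces
\[
\int_{|Y-a_2|\geq c}\frac{c\,dY}{|Y-a_2|\sqrt{|Y-a_1|}}
=\frac{c}{\sqrt{|b|}}\int_{|t|\geq c/|b|}\frac{dt}{|t|\sqrt{|t\pm 1|}}.
\]
With $\delta:=c/|b|$, two regimes present themselves. When $\delta\geq 2$ the integrand behaves like $|t|^{-3/2}$ on the domain and integrates to $\lesssim \delta^{-1/2}$, which unwinds to $\sqrt{c}$ and is dominated by $c^{1-\epsilon}|b|^{-1/2+\epsilon}$ using $|b|\leq c/2$. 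When $\delta\leq 2$ the integrand is integrable at the singularities $t=\pm 1$ and at infinity, but has a logarithmic divergence coming from the region $\delta\leq |t|\ll 1$: namely $\int_\delta^1 t^{-1}\,dt=\log(1/\delta)$.

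The main (and really only) obstacle is exactly this log term in part (b); this is also the reason (b) is stated only for $\epsilon>0$, whereas (a) allows $\epsilon=0$. I would absorb the log using the trivial inequality $1+\log(1/\delta)\lesssim_\epsilon\delta^{-\epsilon}$, which turns $\frac{c}{\sqrt{|b|}}\cdot\delta^{-\epsilon}$ into $c^{1-\epsilon}|b|^{-1/2+\epsilon}$ precisely. Collecting the two regimes for (b) and the two for (a) completes the proof.
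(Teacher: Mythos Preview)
Your approach is essentially the paper's: rescale by $|a_1-a_2|$ and then do a case split. The paper organizes the split by the size of the rescaled integration variable ($|Y|\ll 1$, $|Y|\sim 1$, $|Y|\gg 1$), while you split on $\delta=c/|a_1-a_2|$; for (b) the paper spends the $\epsilon$ at the outset via $c/|Y-a_2|\le (c/|Y-a_2|)^{1-\epsilon}$ on the domain of integration, whereas you absorb the logarithm at the end with $1+\log(1/\delta)\lesssim_\epsilon \delta^{-\epsilon}$. These are equivalent maneuvers and both work.

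One small correction: your parenthetical ``(or its reverse when $\epsilon>1/2$), which is automatic from $|b|\lesssim c$'' is not right. In the regime $|b|\le 2c$ with $\epsilon>1/2$ the required inequality is $c\lesssim |b|$, which is \emph{not} available. In fact part (a) is simply false for $\epsilon>1/2$ (take $c=1$ and let $|a_1-a_2|\to 0$: the left side stays bounded away from $0$ while the right side tends to $0$). The statement should be read with $0\le\epsilon\le 1/2$; the paper's own proof tacitly assumes this when it writes $\sqrt{|Y|}\ge |Y|^\epsilon$ for $|Y|\gg 1$, and only small $\epsilon$ is used in the applications. With that caveat your argument is correct.
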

%
%

\begin{proof}
  \begin{equation*}
    \begin{split}
    \int\limits_{\n{Y-a_2}\leq c} \ed{\sqrt{\n{Y-a_1}}} \; dY
    &=\int\limits_{\n{Y}\leq c \n{a_1-a_2}^{-1}} \sqrt{\n{a_1-a_2}} \; \ed{\sqrt{Y-1}} \; dY
    \end{split}
  \end{equation*}
  For $Y\ll 1$ the integral is bounded by $c^{1-\epsilon} \n{a_1-a_2}^{-1/2+\epsilon}$.

  \noindent For $Y\sim 1$ we have $1\lesssim c^{1-\epsilon} \n{a_1-a_2}^{-1+\epsilon}$.
  Since the integral converges we get $c^{1-\epsilon} \n{a_1-a_2}^{-1/2+\epsilon}$.

  \noindent For $Y\gg 1$ we have $\sqrt{\n{Y-1}}\geq \sqrt{\n{Y}}\geq \n{Y}^\epsilon$. The integration gives
  us the bound
  $c^{1-\epsilon} \; \n{a_1-a_2}^{-1+\epsilon} \; \sqrt{a_1-a_2}$. Thus (a) holds.
  \begin{equation*}
    \begin{split}
    \int\limits_{\n{Y-a_2}\geq c} \frac{c}{\n{Y-a_2}}
    \; \ed{\sqrt{\n{Y-a_1}}} \; dY
    &\lesssim \int \Big(\frac{c}{\n{Y}}\Big)^{1-\epsilon} \ed{\n{a_1-a_2}^{1/2-\epsilon}} \;
    \ed{\sqrt{Y-1}} \; dY
    \end{split}
  \end{equation*}
  For $Y \ll 1$  and $Y\sim 1$ we get by integration $c^{1-\epsilon}\n{a_1-a_2}^{-1/2+\epsilon}$.

  \noindent For $Y\gg 1$ we have
  $\sqrt{Y-1}\gtrsim \sqrt{Y}$ and the integral converges. We get $c^{1-\epsilon}\n{a_1-a_2}^{-1/2+\epsilon}$.
  Thus (b) holds.
  \end{proof}

\begin{lemma}
    \label{hkjlemma}
    Let $\epsilon>0$ and $C_1$ an arbitrary constant. We define
    \begin{equation*}
        \psi_{k,j}(X,s):=\frac{2^j}{(1+2^j\n{1-e^{i(\arctan(X)+2^{k-j}/s)}})^{1+\epsilon}}.
    \end{equation*}
    Then there exists a constant $C$ such that for all $j$ and $k$, with $2^{j-k}\leq
    C_1$,
    and all $X\in \R$ the estimate
    \[\bignorm{\int \psi_{k,j}(X,s) \; \X(s) \; ds}\leq C\]
    holds.
\end{lemma}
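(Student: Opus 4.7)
The strategy is to treat the integral as essentially a one-dimensional oscillatory integral with a monotone phase, change variables to the phase, and exploit the $2\pi$-periodicity of the resulting integrand.

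First I would record the elementary bound $|1-e^{i\theta}|=2|\sin(\theta/2)|$, which is comparable to the distance from $\theta$ to $2\pi\Z$, and in particular the integrand
\[
\frac{2^{j}}{(1+2^{j}|1-e^{i\tau}|)^{1+\epsilon}}
\]
is $2\pi$-periodic in $\tau$ and satisfies $\int_{0}^{2\pi}\tfrac{2^{j}}{(1+2^{j}|1-e^{i\tau}|)^{1+\epsilon}}\,d\tau\lesssim 1/\epsilon$, where the bound follows from reducing to $\int_{-\pi}^{\pi}\tfrac{2^{j}}{(1+2^{j}|\tau|)^{1+\epsilon}}\,d\tau\leq\int_{\R}\tfrac{2^{j}}{(1+2^{j}|\tau|)^{1+\epsilon}}\,d\tau$.

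Next, since $\chi$ is supported in a compact interval bounded away from zero (so $s\sim 1$ on $\supp\chi$), I would change variables via $\tau:=\arctan(X)+2^{k-j}/s$. The derivative $\tau'(s)=-2^{k-j}/s^{2}$ has constant sign and satisfies $|\tau'(s)|\sim 2^{k-j}$ uniformly on $\supp\chi$, so $s\mapsto\tau$ is a diffeomorphism onto an interval $J$ of length $|J|\lesssim 2^{k-j}$, with Jacobian $|ds/d\tau|=s^{2}/2^{k-j}\lesssim 2^{j-k}$. Writing everything in the new variable yields
\[
\Bignorm{\int\psi_{k,j}(X,s)\,\chi(s)\,ds}
\,\lesssim\,2^{j-k}\int_{J}\frac{2^{j}}{(1+2^{j}|1-e^{i\tau}|)^{1+\epsilon}}\,d\tau.
\]

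To bound the right-hand side I would cover $J$ by at most $\lceil|J|/(2\pi)\rceil+1\lesssim 1+2^{k-j}$ intervals of length $2\pi$ and apply the periodic bound from the first step, obtaining
\[
\int_{J}\frac{2^{j}}{(1+2^{j}|1-e^{i\tau}|)^{1+\epsilon}}\,d\tau\,\lesssim\,1+2^{k-j}.
\]
Combining, the full integral is bounded by $C\,2^{j-k}(1+2^{k-j})=C(2^{j-k}+1)\leq C(C_{1}+1)$, using the hypothesis $2^{j-k}\leq C_{1}$. Thus there is no real obstacle: the only slightly delicate point is checking that both regimes $2^{k-j}\leq 1$ (where the image interval $J$ is short and one period of the periodic mass suffices) and $2^{k-j}\geq 1$ (where $|J|$ is large but $2^{j-k}$ is controlled by $C_{1}$) are absorbed by the hypothesis; both cases yield a constant depending only on $\epsilon$ and $C_{1}$.
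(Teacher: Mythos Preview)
Your proof is correct and follows essentially the same approach as the paper: change variables to the phase $\tau=\arctan(X)+2^{k-j}/s$, bound the Jacobian by $\lesssim 2^{j-k}$ using $s\sim 1$ on $\supp\chi$, then exploit the $2\pi$-periodicity of the integrand to cover the image interval (of length $\sim 2^{k-j}$) by $O(1+2^{k-j})$ periods, concluding with $2^{j-k}(1+2^{k-j})\lesssim 1$. The paper's version is more terse and cites Lemma~3.1 of M\"uller--Stein, but the argument is the same.
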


\begin{proof}
    Put $z:=\arctan(X) \in \intaa{0,\pi}$. Since $\supp \X \teil \intaa{1/8,32}$, we have
    \begin{equation*}
        \begin{split}
        \int \n{\psi_{k,j}(X,s) \; \X(s)} \; ds&\leq 2^{j-k} \int_{2^{k-j-4}}^{2^{k-j+3}}
        \frac{2^j}{(1+2^j\n{1-e^{i(z+s)}})^{1+\epsilon}}\; ds\\
        &\leq 2^{j-k} \; (1+2^{k-j}) \int \frac{2^j}{(1+2^j\n{s})^{1+\epsilon}}\;
        ds\lesssim 1
        \end{split}
    \end{equation*}
    (compare Lemma 3.1 in \cite{mueller}).
\end{proof}

\begin{lemma}
    \label{trivialestimate}
    Let $k,\ j \in \N_0$ with $2^{k-j} \leq \const_2$. Then
    \begin{equation*}
    \begin{split}
        I:=\int\limits_{O(x_1)} \psi_{k,j}(X,s) \;
        \frac{\norm{Y}^{1/2}}{\sqrt{\norm{XY-x_1²X²}}} \; \X(s) \; dX \; dY \; ds\lesssim 1
    \end{split}
    \end{equation*}
    holds.
\end{lemma}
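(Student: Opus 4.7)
The plan is to exploit the smallness of $2^{k-j}$ together with the specific choice of $\const_2$ to confine all geometric quantities to a bounded range, after which the estimate reduces to routine integrations in the new coordinates.

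First, I would extract a priori bounds. From $2^{k-j}\leq\const_2 = 1/(16(\const_0+\const_1))^2$, the definition of $O(x_1)$, and the hypothesis $\n{x_1}\leq 2\const_1 2^{(k-j)/2}$, one gets $\n{x_1}, \n{x-x_1}\leq 1/8$ (hence $\n{x}\leq 1/4$), and $\n{u}\leq 2\const_0(2+2\const_1)\cdot\const_2$, which is bounded by a small absolute constant (say $\n{u}\leq 1/64$). Since $\chi(s)$ is supported in $[1/8,32]$, combining these yields $\n{u-s}\sim 1$. Moreover $R=x^2+x_1^2\lesssim 2^{k-j}$, $w=\sqrt{(x^2-x_1^2)^2+(u-s)^2}\sim 1$, and $\n{2Rxx_1+(u-s)w}\sim \n{u-s}w\sim 1$, because $\n{2Rxx_1}\leq R^2\ll 1$ is negligible compared with the second term.

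Next, I would read off the consequences for the new coordinates. From the identities $\j{X}=(R^2+(u-s)^2)/\n{2Rxx_1+(u-s)w}$ and $\n{Y}=w\j{X}$ established in the previous section, one obtains $\j{X}\sim 1$ and $\n{Y}\sim 1$ uniformly. Similarly, from the numerator $Rw-2xx_1(u-s)$ in the definition of $X$ (both summands of size $\lesssim R$), one deduces $\n{X}\lesssim R\lesssim 2^{k-j}$. Consequently $\n{x_1^2 X}\leq R^2\ll 1$, so that $\n{Y-x_1^2 X}\sim 1$ as well. Thus the image in $(X,Y)$-coordinates of the set $\{(x,u)\in O(x_1)\}$ (for each fixed $s\in\supp\chi$) is contained in the bounded rectangle $\{\n{X}\lesssim R\}\times\{\n{Y}\sim 1\}$.

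Finally I would apply Fubini and finish by elementary integration. Using $\n{XY-x_1^2 X^2}=\n{X}\,\n{Y-x_1^2 X}$ and the bound $\n{Y-x_1^2 X}\sim 1$, the integrand is bounded by $\psi_{k,j}(X,s)\,\n{X}^{-1/2}$ times a constant. Integrating in $s$ first by Lemma \ref{hkjlemma} (whose hypothesis $2^{j-k}\leq 1/\const_2$ is precisely what we have here, up to renaming the constant), the $s$-integral is uniformly $O(1)$ in $X$. The remaining $(X,Y)$-integral is then
\begin{equation*}
\int_{\n{Y}\sim 1}\int_{\n{X}\lesssim R}\frac{dX\,dY}{\sqrt{\n{X}}}\lesssim \sqrt{R}\lesssim 1.
\end{equation*}
The main (and only mild) subtlety is that the map $\psi$ is not a global diffeomorphism, but as noted just before this lemma, $\R^2$ decomposes into finitely many open pieces on which $\psi$ is a diffeomorphism, contributing only an overall constant.
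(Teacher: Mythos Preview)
Your geometric reductions are correct: in this regime $\n{X}\lesssim R\lesssim 2^{k-j}$, $\j{X}\sim 1$, $\n{Y}\sim 1$, and $\n{Y-x_1^2X}\sim 1$. The gap is in the $s$-integration. Lemma~\ref{hkjlemma} requires $2^{j-k}\leq C_1$, whereas the present hypothesis $2^{k-j}\leq\const_2$ says exactly the opposite, namely $2^{j-k}\geq 1/\const_2$, and $j-k$ can be arbitrarily large. The lemma simply does not apply. In fact, for $\n{X}\lesssim 2^{k-j}$ one has $\arctan(X)+2^{k-j}/s$ of size $\sim 2^{k-j}$, so the substitution $t=2^{k-j}/s$ (with $ds\sim 2^{j-k}\,dt$ and $t$ ranging over an interval of length $\sim 2^{k-j}$ containing the peak of $\rho_j$) gives
\[
\int \psi_{k,j}(X,s)\,\X(s)\,ds \;\sim\; 2^{j-k}\int_{t\sim 2^{k-j}}\rho_j(\arctan(X)+t)\,dt \;\sim\; 2^{j-k},
\]
not $O(1)$. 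Plugging this back into your final display yields only $2^{j-k}\cdot\sqrt{R}\sim 2^{(j-k)/2}$, which is unbounded.

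The missing ingredient is a sharper bound on the $Y$-range. You only use $\n{Y}\sim 1$, but the image of $O(x_1)$ in the $Y$-coordinate, for fixed $s$, has measure $\lesssim 2^{k-j}$: from $Y/\j{X}^2=w(2Rxx_1+(u-s)w)/a^2$ and the observation that $\j{X}$, $w$, $a$, $u-s$ are all $-s$ (or $1$) up to $O(2^{k-j})$ corrections, one sees $Y=-s+O(2^{k-j})$. This extra factor $2^{k-j}$ in the $Y$-integration is exactly what compensates the $2^{j-k}$ from the $s$-integral, and the paper's proof hinges on it. Your argument can be repaired by inserting this observation before the $(X,Y)$-integration.
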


\begin{proof}
    In this case we have $Y\sim 1,\ \j{X}\sim 1$ and $\n{X}\lesssim 2^{k-j}$.
    We consider the case $\n{Y}\leq 2x_1^2\n{X}$ first. Since $x_1^2\lesssim 2^{k-j}$, we get
    \begin{equation*}
        \begin{split}
        2^{(k-j)/2} &\int\limits_{Y \sim 1,\ \n{Y}\leq 2x_1^2\n{X}} \psi_{k,j}(X,s) \;
        \frac{1}{\sqrt{\n{X}}\sqrt{\norm{Y-x_1²X}}} \; \X(s) \; dX \; dY \; ds\\
        &\lesssim 2^{(k-j)/2} \int\limits_{\n{X} \leq 1} \psi_{k,j}(X,s) \;
        \frac{1}{\sqrt{\n{X}}} \; \X(s) \; dX \; ds.
        \end{split}
    \end{equation*}
    For $\n{X}\leq 2^{k-j}$, we do the $s$-integration first and obtain
    \begin{equation*}
        2^{(k-j)/2} \; 2^{j-k} \int_{\n{X}\leq 2^{k-j}} \frac{1}{\sqrt{\n{X}}} \;
        dX\lesssim 1.
    \end{equation*}
    For $\n{X}\geq 2^{k-j}$, we do the $X$-integration first and obtain
    \begin{equation*}
        \begin{split}
        2^{(k-j)/2} &\int\limits_{s\sim 1,\ \n{X} \geq 1} \psi_{k,j}(X,s) \;
        \frac{1}{\sqrt{\n{X}}} \; dX \; ds\\
        &\lesssim 2^{(k-j)/2} \; 2^{(j-k)/2} \; \int\limits_{s\sim 1,\ \n{X} \geq 1} \psi_{k,j}(X,s) \;
        dX \; ds
        \lesssim \int\limits_{s\sim 1} \; ds\lesssim 1.
        \end{split}
    \end{equation*}
    Now, the case $\n{Y}\geq 2x_1^2\n{X}$ is left. It is not difficult to see that for
    every $x'\leq 2\const_1$ and $s$ in the support of $\X$ there exists a set
    $\Omega(x',s)$ such that
    \begin{equation}
        Y(x_1,x,u,s)\in \Omega(x',s),
    \end{equation}
    for $x_1=2^{(k-j)/2}x'$ and all $x,u \in O(x_1)$. In addition, there exists a constant
    $C$ such that for all $x'$ and $s$ the estimate
    \[\n{\Omega(x',s)}\lesssim 2^{k-j}\]
    holds.
    To prove this we write
    \begin{equation}
        \frac{Y}{\j{X}^2}=\frac{(R^2+(u-s)^2)w}{a^4}(2Rxx_1+(u-s)w).
    \end{equation}
    Now, we have that $\j{X},\ (u-s)$ and $w$ are $1+O(2^{k-j})$. Furthermore, $u,R,xx_1$ are
    bounded by $2^{k-j}$ and $s$ is similar to $1$.
    Since $2^{k-j}\leq \const_2$ is small enough we get the result. We do not want to go
    into the details here. For $x'=0$ this is evident, since in this case we have
    \begin{equation}
        \frac{Y}{\j{X}^2}=u-s
    \end{equation}
    and $\n{u}\lesssim 2^{k-j}$.
    With this result we get
    \begin{equation}
        \begin{split}
        \int\limits_{O(x_1),\ \n{Y}\geq 2x_1^2\n{X}} &\psi_{k,j}(X,s) \;
        \frac{\norm{Y}^{1/2}}{\sqrt{\norm{XY-x_1²X²}}} \; \X(s) \; dX \; dY \; ds\\
        &\lesssim \int\limits_{O(x_1),\ \n{Y}\geq 2x_1^2\n{X}} \psi_{k,j}(X,s) \;
        \frac{1}{\sqrt{\norm{X}}} \; \X(s) \; dX \; dY \; ds\\
        &\lesssim 2^{k-j} \int\limits_{\n{X}\lesssim 1} \psi_{k,j}(X,s) \;
        \frac{1}{\sqrt{\norm{X}}} \; \X(s) \; dX \; ds\lesssim 2^{(k-j)/2}\lesssim 1.
        \end{split}
    \end{equation}
\end{proof}

\begin{lemma}
    \label{gkj2a}
    Let $\epsilon>0$ and $C$ an arbitrary constant. We define
    \begin{equation*}
        \psi_{k,j}(X,s):=\frac{2^j}{(1+2^j\n{1-e^{i(\arctan(X)+2^{k-j}/s)}})^{1+\epsilon}}
    \end{equation*}
    and $\Sigma:=\frac{2^{-j/2+j-k}\j{X}}{\n{Y-2^{j-k}s²X}}$.
    \begin{itemize}
    \item[(a)] For all $j$ and $k$ with $2^{j-k}\leq C$, the estimate
        \begin{equation*}
        \begin{split}
            I:=&2^{k/2+(k-j)/2}\\
            &\times \int\limits_{\Sigma \gtrsim 1} \psi_{k,j}(X,s) \X(s)
            \frac{\sqrt{Y}}{\sqrt{X}\sqrt{\n{Y-x_1²X}}} \frac{1}{\j{X}^{5/2}} dX dY
            ds \lesssim 2^{k\epsilon}.
        \end{split}
        \end{equation*}
        holds.
    \item[(b)] And similarly, for all $j$ and $k$ with $2^{j-k}\leq C$, the estimate
        \begin{equation*}
        \begin{split}
            I:=&2^{k/2+(k-j)/2}\\
            &\times \int\limits_{\ueber{\Sigma\lesssim 1,}{\n{Y} \lesssim 2^{j-k}\j{X}}} \psi_{k,j}(X,s) \X(s)
            \frac{(\Sigma+(2^{-j}\Sigma)^{1/2}) \sqrt{Y}}{\sqrt{X}\sqrt{\n{Y-x_1²X}}} \frac{1}{\j{X}^{5/2}} dX dY ds
            \lesssim 2^{k\epsilon}
        \end{split}
        \end{equation*}
        holds.
  \end{itemize}
\end{lemma}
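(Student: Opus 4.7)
The plan is to integrate in the order $Y$, then $s$, then $X$, reducing each threefold integral to one-dimensional estimates handled by Lemma \ref{lemmayint2} and Lemma \ref{hkjlemma}. I treat (a) first and then indicate how (b) is a variant with an extra decay factor built in.

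For (a), the constraint $\Sigma\gtrsim 1$ translates into $\n{Y-2^{j-k}s^2X}\lesssim 2^{j/2-k}\j{X}$, so $Y$ lies in an interval of length $\lesssim 2^{j/2-k}\j{X}$ centered at $2^{j-k}s^2X$. Since $2^{j-k}\lesssim 1$ by assumption this forces $\n{Y}\lesssim \j{X}$, whence the algebraic factor $\sqrt{\n{Y}}/\j{X}^{5/2}$ is bounded by $\j{X}^{-2}$. I apply Lemma \ref{lemmayint2}(a) with $a_2=2^{j-k}s^2X$, $a_1=x_1^2X$ and $c=2^{j/2-k}\j{X}$ to bound the inner $Y$-integral by $(2^{j/2-k}\j{X})^{1-\epsilon'}\n{(2^{j-k}s^2-x_1^2)X}^{-1/2+\epsilon'}$ for any small $\epsilon'>0$. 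Pulling out the explicit powers of $X$, the remaining double integral has the form
\[
  2^{k/2+(k-j)/2}(2^{j/2-k})^{1-\epsilon'}\int\X(s)\,\psi_{k,j}(X,s)\,\frac{\n{2^{j-k}s^2-x_1^2}^{-1/2+\epsilon'}}{\sqrt{\n{X}}\,\j{X}^{3/2+\epsilon'}}\,dX\,ds.
\]
Lemma \ref{hkjlemma} then controls the $s$-integration uniformly in $X$, and since $s\sim 1$ while $\n{x_1}\lesssim 2^{(k-j)/2}$, the factor $\n{2^{j-k}s^2-x_1^2}$ is of size $2^{j-k}$ away from a set of small $s$-measure. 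Combining powers of $2$ leaves $2^{\epsilon' k}$, and the leftover $X$-integral is finite by standard estimates against $\j{X}^{-3/2-\epsilon'}\n{X}^{-1/2}$.

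For (b), the region is the complement $\Sigma\lesssim 1$ together with $\n{Y}\lesssim 2^{j-k}\j{X}$, so the $Y$-range is wider, but the extra factor $\Sigma+(2^{-j}\Sigma)^{1/2}$ supplies the needed compensation. I decompose dyadically: set $\n{Y-2^{j-k}s^2X}\sim 2^\ell$ with $2^\ell\gtrsim 2^{j/2-k}\j{X}$, so that $\Sigma\sim 2^{j/2-k}\j{X}\cdot 2^{-\ell}$. On rings where $\Sigma\gtrsim 2^{-j}$ the $\Sigma$ term dominates and Lemma \ref{lemmayint2}(b), applied with the tail kernel $c/\n{Y-a_2}$ coming from the $\Sigma$ factor, handles the $Y$-integration. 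On rings where $\Sigma\lesssim 2^{-j}$ the $(2^{-j}\Sigma)^{1/2}$ piece carries an explicit $2^{-j/2}$ factor that compensates the enlarged $Y$-range, and the $Y$-integration proceeds via Lemma \ref{lemmayint2}(a) on each dyadic shell. Summing over dyadic rings produces at most a logarithmic loss, absorbed into the $2^{\epsilon k}$ allowance; the $s$- and $X$-integrations then mirror those of (a).

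The main obstacle is the careful bookkeeping near the degeneracy $2^{j-k}s^2\approx x_1^2$, where the Lemma \ref{lemmayint2} estimates for the $Y$-integral lose sharpness. I expect to treat this region by a further sub-split according to the size of $\n{X}$ relative to $1$ and to $2^{k-j}$, using that $s\sim 1$ confines the degenerate $s$-set to small measure and that $\psi_{k,j}$ decays away from the stationary set. Once that is done, each sub-region yields a uniform bound in $\n{x_1}\leq 2\const_1\cdot 2^{(k-j)/2}$, and the constants combine to give precisely $2^{\epsilon k}$ without a loss that would break summability later in Proposition \ref{dyadicprop2}.
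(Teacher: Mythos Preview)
Your overall plan—integrate in $Y$ via Lemma \ref{lemmayint2}, then handle the remaining $(X,s)$-integral—matches the paper's route, and in the easy regimes (where $|Y-x_1^2X|\gtrsim|Y|$, or where $Y\sim x_1^2X$ but $Y\not\sim 2^{j-k}X$) your sketch would go through. The gap is exactly where you locate it in your last paragraph, and it is not a bookkeeping matter but a missing idea.

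After the $Y$-integration you carry the weight $|2^{j-k}s^2-x_1^2|^{-1/2+\epsilon'}$. Lemma \ref{hkjlemma} only controls $\int\psi_{k,j}(X,s)\chi(s)\,ds$; it gives no bound for $\int\psi_{k,j}(X,s)\chi(s)\,|s-s_0|^{-1/2+\epsilon'}\,ds$ with $s_0=x_1\,2^{(k-j)/2}$. In the hard case $x_1^2\sim 2^{j-k}$ the point $s_0$ lies in $\supp\chi$, and for fixed $X$ the function $\psi_{k,j}(X,\cdot)$ has about $2^{k-j}$ peaks of height $2^j$ and $s$-width $\sim 2^{-k}$. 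When one of these peaks sits on $s_0$ the weighted $s$-integral is of size $2^{j}\cdot(2^{-k})^{1/2+\epsilon'}$, not $O(1)$; combined with the prefactors this produces a contribution of order $2^{j/2}$, which is fatal. Saying the degenerate $s$-set has small measure does not help, because $\psi_{k,j}$ itself concentrates on sets of small measure, and whether those sets meet $\{s\approx s_0\}$ depends on $X$. Treating $s$ before $X$ destroys precisely the correlation you need.

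What the paper does instead in this hard case is to integrate in $X$ and $s$ \emph{jointly}. One passes to $\mu=\arctan X$ (for $|X|\ge 1$; for $|X|\le 1$ one uses $\arctan X\sim X$), changes variable $s\mapsto 2^{k-j}/s$ so that $\psi_{k,j}$ becomes $\rho_j(\mu+s)$ with $s$ ranging over an interval of length $\sim 2^{k-j}$, and then cuts that interval into $\sim 2^{k-j}$ unit pieces indexed by $n$. On each piece the translation $s\mapsto s-\mu$ moves the singularity into the $\mu$-variable, $|s+n-\mu-2^{(k-j)/2}x_1^{-1}|^{-1/2}$, and the $\mu$-integration over $[0,\pi]$ smooths it. The remaining sum $\sum_{n\sim 2^{k-j}}|n-2^{(k-j)/2}x_1^{-1}|^{-1/2}\lesssim 2^{(k-j)/2}$ is exactly cancelled by the prefactor $2^{(j-k)/2}$. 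This periodicity-and-shift argument is the mechanism you are missing; it cannot be replaced by Lemma \ref{hkjlemma}.

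Two minor points. First, your displayed $(X,s)$-integrand should carry $|X|^{-1+\epsilon'}$, not $|X|^{-1/2}$, once you include the $|X|^{-1/2+\epsilon'}$ coming from $|a_1-a_2|^{-1/2+\epsilon'}$ in Lemma \ref{lemmayint2}(a). Second, for part (b) the paper proceeds analogously: the factor $\Sigma$ plays the role of $c/|Y-a_2|$ in Lemma \ref{lemmayint2}(b), and the same joint $(\mu,s)$ analysis handles the degenerate case; your dyadic-in-$\ell$ decomposition is reasonable but would again founder at the degeneracy without the joint integration.
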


\begin{proof}
  Define $\rho_j(x):=2^j \; (1+2^j\n{1-e^{ix}})^{-1-\epsilon}$.
  Then
  \begin{equation*}
    \begin{split}
    &\int \psi_{k,j}(X,s) \; \X(s) \; ds=
    \int \rho_j(\arctan(X)+2^{k-j}/s) \; \X(s) \; ds\\
    &\lesssim 2^{j-k} \; \int_{s \sim 2^{k-j}} \rho_j(\arctan(X)+s) \; ds
    \lesssim \int_{s \sim 1} \frac{2^j}{(1+2^j\n{\arctan(X)+s})^{1+\epsilon}}\; ds \lesssim 1
    \end{split}
  \end{equation*}
  since $\rho_j$ is a periodic function.

%
%
  \noindent \emph{(a)}

  First we study the case $Y \geq 2 x_1^2 X$ or $Y\leq x_1^2 X/2$.
  Here we have the estimate
  \begin{equation*}
    I \lesssim 2^{k/2+(k-j)/2} \; 2^{-j/2+j-k}\int \psi_{k,j}(X,s) \; \X(s) \;
    \frac{1}{\sqrt{X}} \; \frac{1}{\j{X}^{3/2}} \; dX \; ds\lesssim 1
  \end{equation*}
  Now $x_1^2 \n{X}/2 \leq \n{Y}\leq 2 x_1^2 \n{X}$ and we suppose that
  $\n{Y} \geq 4 \; 2^{j-k}\n{X}$ or \mbox{$\n{Y}\leq 2^{j-k}\n{X}/4$}.
  We get
  \begin{equation*}
    \begin{split}
    I&\lesssim 2^{k/2+(k-j)/2}\\
    &\int\limits_{\n{Y}\lesssim 2^{-j/2+j-k}\j{X}} \psi_{k,j}(X,s) \; \X(s) \;
    \frac{\sqrt{Y}}{\sqrt{X}\sqrt{\n{Y-x_1²X}}} \; \frac{1}{\j{X}^{5/2}} \; dX \; dY \; ds
    \end{split}
  \end{equation*}
  Since we only integrate where $Y \sim x_1^2 X$ and since
  \[\n{x_1}\n{X}^{1/2}\lesssim \n{Y}^{1/2}\lesssim 2^{-j/4+(j-k)/2}\j{X}\]
  we get form the $Y$-integration
  \begin{equation*}
    \begin{split}
    I&\lesssim 2^{k/2+(k-j)/2} \; \int
    \psi_{k,j}(X,s) \; \X(s) \; 2^{-j/4+(j-k)/2} \; \j{X}^{1/2}
    \frac{\n{x_1}\sqrt{X}}{\sqrt{X}} \; \frac{1}{\j{X}^{5/2}} \; dX \; ds\\
    &\lesssim 2^{k/2-j/4} \; \int
    \psi_{k,j}(X,s) \; \X(s) \;
    \frac{2^{-j/4+(j-k)/2} \; \j{X}^{1/2}}{\sqrt{X}} \; \frac{1}{\j{X}^{2}} \; dX \; ds\lesssim 1.\\
    \end{split}
  \end{equation*}
  We are left with the case $Y \sim 2^{j-k}X$ and $Y \sim x_1²X$. First we estimate
  $\n{\sqrt{Y}}$ by $2^{(j-k)/2} \sqrt{\n{X}}$. We get
  \begin{equation*}
    I\lesssim 2^{k/2+(k-j)/2} \; 2^{(j-k)/2} \int_{\Sigma\gtrsim 1} \psi_{k,j}(X,s)\X(s) \;
    \frac{1}{\n{Y-x_1^2X}} \; \frac{1}{\j{X}^{5/2}} \; dX \; dY \; ds.
  \end{equation*}
  By Lemma \ref{lemmayint2} (a) we
  obtain
  \begin{equation*}
    I \lesssim 2^{(j-k)/2} \int \psi_{k,j}(X,s) \; \X(s) \;
    \frac{2^{(-j/2+j-k)(-\t{\epsilon})}}{\n{(2^{j-k}s²-x_1²)X}^{1/2-\t{\epsilon}}}
    \; \frac{1}{\j{X}^{3/2+\t{\epsilon}}} \; dX \; ds
  \end{equation*}
  for every $\t{\epsilon}\geq 0$.
  For $X \geq 1$ we get by the transformation $\mu:=\arctan X\in \intaa{0,\pi}$ and with
  $\t{\epsilon}=0$
  \begin{equation*}
    \begin{split}
      I&\lesssim 2^{(j-k)/2} \int\limits_0^{\pi} \int \rho_j(\mu+2^{k-j}/s) \; \X(s) \;
      \frac{1}{\n{2^{j-k}s²-x_1²}^{1/2}}
      \; d\mu \; ds\\
      &\lesssim 2^{(j-k)+(j-k)/2} \int\limits_0^{\pi} \int\limits_{s\sim 2^{k-j}}
      \rho_j(\mu+s) \frac{1}{\n{2^{k-j}s^{-2}-x_1^{2}}^{1/2}}
      \; d\mu \; ds\\
      &\lesssim 2^{(j-k)+(j-k)/2} \int\limits_0^{\pi} \int\limits_{s\sim 2^{k-j}}
      \rho_j(\mu+s) \frac{2^{k-j}}{\n{2^{k-j}-x_1^{2}s^2}^{1/2}}
      \; d\mu \; ds\\
    \end{split}
  \end{equation*}
  Since $x_1^2$ is comparable to $2^{j-k}$ we get
  \begin{equation*}
    \begin{split}
        &2^{(j-k)/2} \int\limits_0^{\pi} \int\limits_{s\sim 2^{k-j}}
        \rho_j(\mu+s) \frac{2^{(k-j)/2}}{\n{s^2-2^{k-j}x_1^{-2}}^{1/2}}
        \; d\mu \; ds\\
        &=2^{(j-k)/2} \int\limits_0^{\pi} \int\limits_{s\sim 2^{k-j}}
        \rho_j(\mu+s) \frac{2^{(k-j)/2}}{\n{s-2^{(k-j)/2}x_1^{-1}}^{1/2}\n{s+2^{(k-j)/2}x_1^{-1}}^{1/2}}
        \; d\mu \; ds\\
    \end{split}
  \end{equation*}
  We know that $s+\sgn(x_1)2^{(k-j)/2}x_1^{-1}\geq s\gtrsim 2^{k-j}$. We can assume that
  $\sgn(x_1)=1$. The case $\sgn(x_1)=-1$ is similar. We obtain
  \begin{equation*}
    \begin{split}
    I&\lesssim 2^{(j-k)/2} \int\limits_0^{\pi} \int\limits_{s\sim 2^{k-j}}
        \rho_j(\mu+s) \frac{1}{\n{s-2^{(k-j)/2}x_1^{-1}}^{1/2}}
        \; d\mu \; ds\\
        &\lesssim 2^{(j-k)/2} \;
      \sum\limits_{\ueber{n\sim 2^{k-j}}{n \sim 2^{k-j},\; n\in \N}} \int\limits_{s \in \intaa{-\pi,\pi}} \int \rho_j(\mu+n+s) \;
      \frac{1}{\n{s+n-2^{(k-j)/2}x_1^{-1}}^{1/2}} \; d\mu \; ds\\
      &=2^{(j-k)/2} \;
      \sum\limits_{\ueber{n\sim 2^{k-j}}{n \sim 2^{k-j},\;n\in \N}} \int\limits_{s \in \intaa{-\pi,2\pi}} \int \rho_j(n+s) \;
      \frac{1}{\n{s+n-\mu-2^{(k-j)/2}x_1^{-1}}^{1/2}} \; d\mu \; ds\\
      \end{split}
  \end{equation*}
  For $\n{n-2^{(k-j)/2}x_1^{-1}}\leq 4\pi$ we get by integration with respect to $\mu$
  \[
    2^{(j-k)/2} \;
    \sum_{\ueber{\n{n-2^{(k-j)/2}x_1^{-1}}\leq 4\pi}{n\in \N}} \; \int\limits_{s \in \intaa{-\pi,2\pi}} \rho_j(n+s) \;
    \n{s+n-2^{(k-j)/2}x_1^{-1}}^{1/2} \; ds.
  \]
  $\n{s+n-2^{(k-j)/2}x_1^{-1}}^{1/2}$ is bounded by a
  constant. The last integration gives us a one more constant. Hence we get
  $I\lesssim 2^{(j-k)/2}$.

  For $\n{n-2^{(k-j)/2}x_1^{-1}}\geq 4\pi$ we get
  $\n{s+n-\mu-2^{(k-j)/2}x_1^{-1}}\geq \n{n-2^{(k-j)/2}x_1^{-1}}/2$ and hence
  \begin{equation*}
    \begin{split}
    I&\lesssim 2^{(j-k)/2} \;
    \sum_{\ueber{\n{n-2^{(k-j)/2}x_1^{-1}}\geq 4\pi}{n \sim 2^{k-j},\;n \in \N}} \; \int\limits_{s \in \intaa{-\pi,2\pi}} \rho_j(n+s) \;
    \frac{1}{\n{n-2^{(k-j)/2}x_1^{-1}}^{1/2}} \; ds\\
    &\lesssim 2^{(j-k)/2} \;
    \sum_{\ueber{\n{n-2^{(k-j)/2}x_1^{-1}}\geq 4\pi}{n \sim 2^{k-j},\;n \in \N}} \;
    \frac{1}{\n{n-2^{(k-j)/2}x_1^{-1}}^{1/2}}.
    \end{split}
  \end{equation*}
  It is easy to observe that this last sum is bounded by $2^{(k-j)/2}$ and hence $I\lesssim 1$.

  For $X \leq 1$ and for $\sgn(x_1)=1$, which we assume, we get with similar arguments and $\arctan X \sim X$
  \begin{equation*}
    \begin{split}
      &I\lesssim 2^{j\epsilon/2+(j-k)/2}
      \int\limits_{\ueber{s \sim 2^{k-j}}{\n{X}\leq 1}}
      \rho_j(X+s) \frac{1}{\n{s-2^{(k-j)/2}x_1^{-1}}\n{X}^{1/2-\epsilon}}
      dX \; ds\\
      &\lesssim 2^{j\epsilon/2+(j-k)/2} \;
      \sum\limits_{\ueber{n\sim 2^{k-j}}{n\in\N}} \int\limits_{\ueber{s \in \intaa{-\pi,\pi}}{\n{X}\leq1}} \rho_j(X+n+s) \;
      \frac{\n{X}^{-1/2+\epsilon}}{\n{s+n-2^{(k-j)/2}x_1^{-1}}^{1/2-\epsilon}} dX \; ds\\
      &=2^{j\epsilon/2+(j-k)/2} \;
      \sum\limits_{\ueber{n\sim 2^{k-j}}{n \in \N}} \int\limits_{\ueber{s \in \intaa{-\pi,2\pi}}{\n{X}\leq1}} \rho_j(n+s) \;
      \frac{\n{X}^{-1/2+\epsilon}}{\n{s+n-X-2^{(k-j)/2}x_1^{-1}}^{1/2-\epsilon}} dX \; ds\\
    \end{split}
  \end{equation*}
  As before we get, for $\n{n-2^{(k-j)/2}x_1^{-1}}\leq 4\pi$,
  \begin{equation*}
    2^{j\epsilon/2} \; 2^{(j-k)/2} \;
    \sum_{\n{n-2^{(k-j)/2}x_1^{-1}}\leq 4\pi} \; \int\limits_{s \in \intaa{-\pi,2\pi}} \rho_j(n+s) \;
    ds\lesssim 2^{(j-k)/2} \; 2^{k\epsilon/2}
  \end{equation*}
  and, for $\n{n-2^{(k-j)/2}x_1^{-1}}\geq 4\pi$, the estimate
  \begin{equation*}
    \begin{split}
    2^{j\epsilon/2} \; 2^{(j-k)/2} \;
    &\sum\limits_{\ueber{\n{n-2^{(k-j)/2}x_1^{-1}}\geq 4\pi}{n\sim 2^{k-j},\;n\in\N}} \; \int\limits_{s \in \intaa{-\pi,2\pi}} \rho_j(n+s) \;
    \n{n-2^{(k-j)/2}x_1^{-1}}^{-1/2+\epsilon} \; ds\\
    &\lesssim 2^{k\epsilon}\\
    \end{split}
  \end{equation*}
  holds.\absatz

  \noindent\emph{(b)}

  The proof of \emph{(b)} is very similar to the proof of \emph{(a)}.

\noindent For $Y \geq 2 x_1^2 X$ or $Y\leq 1/2 x_1^2 X$ we use \[\Sigma^{1/2}\leq
\Sigma^{1/2-\epsilon/2}\leq
2^{j\epsilon/4+(k-j)\epsilon/2}2^{-j/2+j-k}\n{Y-2^{j-k}s^2X}^{-1+\epsilon/2}\] and get by
integrating with respect to $Y$.
  \begin{equation*}
    I \lesssim 2^{k\epsilon} \; \int \psi_{k,j}(X,s) \; \X(s) \;
    \frac{1}{\sqrt{X}} \; \frac{1}{\j{X}^{3/2-\epsilon/2}} \; dX \; ds\lesssim
    2^{k\epsilon}.
  \end{equation*}
  \noindent The case $Y \geq 4 \; 2^{j-k}X$ or $Y\leq 1/4 \; 2^{j-k}X$.
  For this part we have the estimate
  \begin{equation*}
    \begin{split}
    I&\lesssim
    \int\limits \psi_{k,j}(X,s) \; \X(s) \;
    \frac{1}{\sqrt{X}\sqrt{\n{Y-x_1²X}}} \; \frac{1}{\j{X}^{3/2}} \; dX \; dY \; ds\\
    \end{split}
  \end{equation*}
  Since we only integrate where $Y \sim x_1^2 X$ and since $\n{Y}\lesssim 2^{j-k}\j{X}$ we get form the $Y$-integration
  \begin{equation*}
    \begin{split}
    I&\lesssim 2^{(j-k)/2} \; \int
    \psi_{k,j}(X,s) \; \X(s) \; \frac{1}{\sqrt{X}} \; \frac{1}{\j{X}} \; dX \; ds\lesssim 1\\
    \end{split}
  \end{equation*}

  \noindent The case $Y \sim 2^{j-k}X$ and $Y \sim x_1²X$ is left. By Lemma \ref{lemmayint2} (b) we get

  \begin{equation*}
    I \lesssim 2^{(j-k)/2} \int \psi_{k,j}(X,s) \; \X(s) \;
    \frac{2^{(-j/2+j-k)(-\epsilon)}}{\n{(2^{j-k}s²-x_1²)X}^{1/2-\epsilon}}
    \; \frac{1}{\j{X}^{3/2+\epsilon}} \; dX \; ds
  \end{equation*}
  By similar estimates as in \emph{(a)} we get $I\lesssim 2^{k\epsilon}$.
\end{proof}

We get a similar result for $2^{k-j}\lesssim 1$.
\begin{lemma}
  \label{gkj2}
  Let $\epsilon>0$ and $C\leq 1$ a constant. We define
    \begin{equation*}
        \psi_{k,j}(X,s):=\frac{2^j}{(1+2^j\n{1-e^{i(\arctan(X)+2^{k-j}/s)}})^{1+\epsilon}}
    \end{equation*}
    and $\Sigma:=\frac{2^{-k/2}\j{X}}{\n{Y-2^{j-k}s²X}}$.
  \begin{itemize}
    \item[(a)] For all $j$ and $k$ with $2^{k-j}\leq C$, the estimate
        \begin{equation*}
            \begin{split}
        I:=&2^{k/2+(k-j)/2}\\
        &\times \int\limits_{\ueber{\Sigma\gtrsim 1,\ X\lesssim 1,}{Y\sim 1}} \psi_{k,j}(X,s) \X(s)
        \frac{\sqrt{Y}}{\sqrt{X}\sqrt{\n{Y-x_1²X}}} \frac{1}{\j{X}^{5/2}} dX  dY ds
        \lesssim 2^{j\epsilon}
        \end{split}
        \end{equation*}
        holds.
    \item[(b)] And similarly, for all $j$ and $k$ with $2^{k-j}\leq C$, the estimate
        \begin{equation*}
            \begin{split}
        I:=&2^{k/2+(k-j)/2}\\
        &\times \int\limits_{\ueber{\Sigma \; \lesssim 1,\ X\lesssim 1,}{Y\sim 1}}
        \psi_{k,j}(X,s)  \X(s)
        \frac{\Sigma \sqrt{Y}}{\sqrt{X}\sqrt{\n{Y-x_1²X}}} \frac{1}{\j{X}^{5/2}} dX  dY ds
        \lesssim 2^{j\epsilon}
        \end{split}
        \end{equation*}
        holds.
  \end{itemize}
\end{lemma}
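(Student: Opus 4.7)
The proof follows the template of Lemma~\ref{gkj2a}, but the more restrictive integration domain $X\lesssim 1,\ Y\sim 1$ together with the regime $2^{k-j}\leq C\leq 1$ leads to substantial simplifications. The key structural observation is that the ``resonance case'' which dominated the proof of Lemma~\ref{gkj2a} does not occur here. Indeed, since $|x_1|\leq 2\const_1\cdot 2^{(k-j)/2}$, one has $x_1^2 X\lesssim 2^{k-j}\ll 1\sim Y$ on the integration region, so $|Y-x_1^2X|\sim 1$ uniformly and the singular factor $|Y-x_1^2X|^{-1/2}$ is harmless.

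For part~(a), the plan is to perform the $Y$-integration first. The restriction $\Sigma\gtrsim 1$ localizes $Y$ in an interval of length $\sim 2^{-k/2}\langle X\rangle$ around $2^{j-k}s^2 X$; combined with $Y\sim 1$ and $s\sim 1$, this in turn forces $X\sim 2^{k-j}$. The $Y$-integral therefore contributes a factor $\lesssim 2^{-k/2}$. For the remaining $(X,s)$-integration, the kernel $\psi_{k,j}(X,s)=\rho_j(\arctan X+2^{k-j}/s)$ has its peak at $X_0\approx -2^{k-j}/s$, yielding the standard estimate $\int\psi_{k,j}(X,s)\,|X|^{-1/2}\,dX\lesssim |X_0|^{-1/2}\lesssim 2^{(j-k)/2}$ (the peak width $\sim 2^{-j}$ is much smaller than $|X_0|\sim 2^{k-j}$, so the peak neighborhood is well separated from the origin). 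Assembling these pieces gives
\[
    I\;\lesssim\;2^{k/2+(k-j)/2}\cdot 2^{-k/2}\cdot 2^{(j-k)/2}\cdot\int\chi(s)\,ds\;\lesssim\;1,
\]
which is stronger than the claimed bound $2^{j\epsilon}$.

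For part~(b), the additional $\Sigma$ factor in the integrand and the complementary restriction $\Sigma\lesssim 1$ call for Lemma~\ref{lemmayint2}(b), applied to the $Y$-integration with $c=2^{-k/2}\langle X\rangle$, $a_2=2^{j-k}s^2X$ and $a_1=x_1^2X$. Since $x_1^2\lesssim 2^{k-j}\ll 2^{j-k}s^2$, the denominator $|a_1-a_2|=|x_1^2-2^{j-k}s^2|\cdot|X|\sim 2^{j-k}|X|$ is harmlessly large, and the $Y$-integration produces a bound $\lesssim (2^{-k/2})^{1-\epsilon}(2^{j-k}|X|)^{-1/2+\epsilon}$. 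Combining with the same peak analysis for $\psi_{k,j}$ and collecting the powers of $2$ yields the bound $\lesssim 2^{j\epsilon}$; the $\epsilon$-loss from Lemma~\ref{lemmayint2} is precisely what is absorbed into the $2^{j\epsilon}$ slack. The only bookkeeping subtlety is verifying that the resulting $|X|^{-1+\epsilon}$ singularity is integrable against $\psi_{k,j}$, which is automatic because the peak $X_0\sim 2^{k-j}$ of $\psi_{k,j}$ is well separated from $X=0$ at the scale $2^{-j}\ll 2^{k-j}$. Overall, the absence of any $x_1^2 X$--$2^{j-k}X$ resonance makes the present lemma considerably easier than Lemma~\ref{gkj2a}; the main (rather mild) obstacle is simply the careful accounting of powers of $2$ across subregions.
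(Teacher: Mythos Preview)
Your simplifying observation—that on the domain $|X|\lesssim 1$, $Y\sim 1$ one has $x_1^2|X|\lesssim 2^{k-j}$, hence $|Y-x_1^2X|\sim 1$ and the ``resonance case'' does not occur—is the right idea, and once it is in force your sketches for both (a) and (b) are sound and noticeably shorter than the paper's argument. The paper, by contrast, keeps the full three--case structure of Lemma~\ref{gkj2a}: it still separates the regions $Y\not\sim x_1^2X$, $Y\sim x_1^2X$ but $Y\not\sim 2^{j-k}X$, and $Y\sim x_1^2X\sim 2^{j-k}X$, and in the last case invokes Lemma~\ref{lemmayint2}(a) together with the $s$--splitting exactly as before.

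There is, however, a gap in your reduction. The assertion ``$x_1^2X\lesssim 2^{k-j}\ll 1$'' does not follow from the hypothesis $2^{k-j}\leq C\leq 1$ alone: nothing in the statement forces $C$ to be small, so $x_1^2X$ may be comparable to $Y$ and $|Y-x_1^2X|$ is then not bounded below. The same shortcut reappears in your treatment of (b) when you write $|x_1^2-2^{j-k}s^2|\sim 2^{j-k}$; for $j\approx k$ one can have $x_1^2$ of the same order as $2^{j-k}$, and the difference need not be of size $2^{j-k}$. This is precisely the case the paper's third subcase covers.

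The repair is cheap and you should make it explicit. From $x_1^2\leq 4\const_1^2\cdot 2^{k-j}$ and $Y\sim x_1^2X\sim 1$ with $|X|\lesssim 1$ one sees that the resonance case forces $2^{j-k}\lesssim\const_1^2$, i.e.\ $j-k$ bounded; in that range all scales are of unit size and the bound is trivial. Alternatively, restrict the statement to $C$ small (only $C=\const_2$ is ever used). With either remark added, your argument is a legitimate and cleaner alternative to the paper's case analysis for the bulk regime $j\gg k$.
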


\begin{proof}
  Define $\rho_j(x):=2^j \; (1+2^j\n{1-e^{ix}})^{-1-\epsilon}$.\absatz

  \noindent \emph{(a)}

  The case $Y \geq 2 x_1^2 X$ or $Y\leq 1/2 x_1^2 X$.
  For this part we have the estimate
  \begin{equation*}
    I \lesssim 2^{k/2+(k-j)/2} \; 2^{-k/2} \; \int \psi_{k,j}(X,s) \; \X(s) \;
    \frac{1}{\sqrt{X}} \; \frac{1}{\j{X}^{3/2}} \; dX \; ds
  \end{equation*}
  Now, for $\n{X}\geq 2^{k-j}$, we obtain
  $I \lesssim 2^{(k-j)/2} \; 2^{(j-k)/2} =1$,
  by integration over $X$.
  For $\n{X}\leq 2^{k-j}$, we first do the $s$-integration and obtain
  \[2^{(k-j)/2} \; 2^{j-k} \; \int_{\n{X}\leq 2^{k-j}} \frac{1}{\sqrt{X}} \; dX,\]
  but this is bounded by a constant.

  The case $Y \geq 4 \; 2^{j-k}X$ or $Y\leq 1/4 \; 2^{j-k}X$.
  Since we only integrate where $Y \sim x_1^2 X$ we get form the $Y$-integration
  \begin{equation*}
    \begin{split}
    I&\lesssim 2^{k/2+(k-j)/2} \; \int
    \psi_{k,j}(X,s) \; \X(s) \; 2^{-k/4} \; \j{X}^{1/2}
    \frac{\n{x_1}\sqrt{X}}{\sqrt{X}} \; \frac{1}{\j{X}^{5/2}} \; dX \; ds\\
    &\lesssim 2^{k/2+(k-j)/2} \; \int
    \psi_{k,j}(X,s) \; \X(s) \; 2^{-k/4} \; \j{X}^{1/2}
    \frac{2^{-k/4} \; \j{X}^{1/2}}{\sqrt{X}} \; \frac{1}{\j{X}^{5/2}} \; dX \; ds\lesssim 1\\
    \end{split}
  \end{equation*}

  \noindent The case $Y \sim 2^{j-k}X$ and $Y \sim x_1²X$

  Since $X \lesssim 1$ and $Y\sim 1$ we get by Lemma \ref{lemmayint2} (a)
  \begin{equation*}
    I \lesssim 2^{j\epsilon/2+(k-j)\epsilon} \int \psi_{k,j}(X,s) \; \X(s) \;
    \frac{1}{\n{(2^{j-k}s²-x_1²)X}^{1/2-\epsilon}} \; X^{-1+\epsilon} \; dX \; ds
  \end{equation*}
  Now, with $\arctan X \sim X$
  and by the transformation $s:=2^{k-j}/s$, we obtain
  \begin{equation*}
    \begin{split}
      &2^{k\epsilon/2+(k-j)/2} \; 2^{j-k} \; \int\limits_{X\lesssim 1} \int\limits_{s \sim 2^{k-j}}
      \rho_j(X+s) \frac{1}{{\n{x_1^2-2^{k-j}/s^2}}^{1/2-\epsilon}} \; X^{-1+\epsilon} \; dX \;
      ds\\
      &\lesssim 2^{k\epsilon/2+(k-j)/2} \; 2^{(j-k)2\epsilon} \; \int\limits_{X\lesssim 1} \int\limits_{s \sim 2^{k-j}}
      \rho_j(X+s) \frac{x_1^{-1+2\epsilon}}{{\n{s^2-2^{k-j}/x_1^2}}^{1/2-\epsilon}} \; X^{-1+\epsilon} \; dX \; ds
     \end{split}
  \end{equation*}
  Since $x_1^2X \sim Y \sim 1$, the factor $x_1^{-1+2\epsilon}$ is bounded by $\n{X}^{1/2-\epsilon}$. We assume that $\sgn(x_1)=1$ and hence
  $\n{s+2^{(k-j)/2}x_1^{-1}}\gtrsim 2^{k-j}$. We obtain
  \begin{equation*}
    \begin{split}
    I\lesssim& 2^{k\epsilon/2} \; 2^{(j-k)\epsilon} \; \int\limits_{X\lesssim 1} \int\limits_{s \sim 2^{k-j}}
    \rho_j(X+s) \frac{1}{{\n{s-2^{(k-j)/2}/x_1}}^{1/2-\epsilon}} \; X^{-1/2} \; dX \;ds\\
    =& 2^{k\epsilon/2} \; 2^{(j-k)\epsilon}\\ &\times \sum_{n\sim 2^{k-j}} \;
    \int\limits_{X\lesssim 1} \int\limits_{s \sim 1}
    \rho_j(n+s) \frac{1}{{\n{s+n-X-2^{(k-j)/2}/x_1}}^{1/2-\epsilon}} \; X^{-1/2} \; dX \; ds
    \end{split}
  \end{equation*}

  Since $2^{k-j}\leq 1$ there is only one summand.
  For $\n{n-2^{(k-j)/2}x_1^{-1}}\leq 64$, we compute the $x$-integration and get
  \begin{equation*}
    2^{k\epsilon/2} \; 2^{(j-k)\epsilon}
    \sum_{\n{n-2^{(k-j)/2}x_1^{-1}}\leq 64} \; \int\limits_{s \sim 1} \rho_j(n+s) \;
    ds\lesssim 2^{j\epsilon}
  \end{equation*}
  \noindent For $\n{n-2^{(k-j)/2}x_1^{-1}}\geq 64$, we get
  \begin{equation*}
    \begin{split}
    2^{k\epsilon/2} \; &2^{(j-k)\epsilon} \;
    \sum_{\ueber{n \sim 2^{k-j},}{\n{n-2^{(k-j)/2}x_1^{-1}}\geq 64}}
    \int\limits_{s \sim 1} \rho_j(n+s) \;
    \n{n-2^{(k-j)/2}x_1^{-1}}^{-1/2+\epsilon} \; ds\lesssim 2^{j\epsilon}
    \end{split}
  \end{equation*}
  with a new $\epsilon$.

  Since the proof for \emph{(b)} is very similar, we omit it.
\end{proof}
%
%
%
%
%

%
%
\subsection{Estimation for $2^{j-k}\leq 1/\const_2$}
\subsubsection*{Estimation of $H_{k,j}$}

Recall the definition of $H_{k,j}$ given by \eqref{hkj}. According to this
\begin{equation*}
    \begin{split}
    H_{k,j}&(x_1,x,u)=2^{k/2} \; 2^{(m-2)(j-k)-j}\\
    &\times \int \frac{\zeta_{k,j,m-1}(x_1,x,u,s)}{(R^2+(u-s)^2)^{m/2}}
    \; (u-s) \; \frac{(2xx_1-iw)^{m}}{a^{m}} \;  \rho'(2^{2(k-j)}w(s)^2) \;\X(s)\; ds.
    \end{split}
\end{equation*}
Let
\begin{equation*}
    \kappa(s):=(u-s) \; \frac{(2xx_1-iw)^m}{a^{2m}} \; \rho'(2^{2(k-j)}w(s)^2).
\end{equation*}
Then $a^m w^{-m}\n{\kappa}\lesssim (2xx_1-iw)^m \; a^{-m} w^{-m+1}=w^{1-m}$.
\begin{bemerkung}
  On the Heisenberg group $\H_m$, $\kappa^{\H}$ is defined as
  \[\kappa^{\H}:=(u-s)(R²+(t-s)²)^{-m/2}\rho'(2^{2(k-j)}a²)\]
  and we have the estimate $\kappa^\H\lesssim a^{1-m}$.
\end{bemerkung}
\noindent We know, by Proposition \ref{propqn2}, that
\begin{equation*}
  \begin{split}
  \zeta_{k,j,m-1}&\lesssim 2^{-mj} \; 2^{j/2+j\epsilon} \; \frac{a^{m}}{w^{m}} \; \psi_{k,j}(X,s)
  = 2^{j\epsilon} \; \frac{a^{m}}{w^{m}} \; \psi_{k,j}(X,s)\\
  \end{split}
\end{equation*}
with
\[\psi_{k,j}(X,s):=\frac{2^j}{(1+2^j\n{1-e^{i\arctan(X)+2^{k-j}/s}})^{1+\epsilon}}.\]
Thus
\[\norm{H_{k,j}}
    \leq 2^{k/2} \; 2^{(m-2)(j-k)-j+\epsilon j}\; \int \psi_{k,j}
    \; w^{1-m} \; \rho'(2^{2(k-j)}w(s)^2) \;\X(s)\; ds.
    \]
Observe that for $\rho'(s)\not=0$ we have
\begin{equation*}
  \frac{\n{Y}}{\j{X}}=w(s)\sim 2^{j-k}.
\end{equation*}
Lemma \ref{hkjlemma} implies that the $s$-integration just gives us a constant. In
combination with Lemma \ref{lemmayint1} (a) we find that
\[
  \begin{split}
  \dnorm{H_{k,j}&}_{L_1(O(x_1))}\lesssim 2^{k/2} \; 2^{(m-2)(j-k)-j+j\epsilon} \; 2^{j-k}\\
  &\times \int\limits_{\norm{Y} \sim 2^{j-k} \j{X}}
  \psi_{k,j}(X,s) \; \frac{\j{X}^{1/2}}{\norm{Y}^{1/2}}\;
  \frac{\norm{Y}}{2\j{X}^3} \frac{1}{\sqrt{\norm{XY-x_1²X²}}}\;ds\;dX\; dY\\
  \lesssim  & 2^{k/2} \; 2^{(m-2)(j-k)-j+\epsilon j} \; 2^{j-k} \;
  \int \frac{2^{j-k}\j{X}}{\j{X}^{2+1/2}\norm{X}^{1/2}}\; dX \lesssim 2^{-j/2+\epsilon j}
  \end{split}
\]
holds.
\subsubsection*{Definition of $\Sigma(s)$}
This was the easy part of the proof. For $F_{k,j}$ and $G_{k,j}$ we use partial summation
in the $s$-variable to get the desired estimates. For this, we use a "control quantity",
denoted by $\Sigma(s)$. If $\Sigma(s) \lesssim 1$ we "win" by partial integration a
factor of size $2^{-j/2}$. For $\Sigma(s) \gtrsim 1$ we use support estimates and also
"win" a factor of size $2^{-j/2}$. We define \index{Sigma}
\begin{equation}
  \label{Sigmadef}
  \Sigma(s):=2^{-j} \; \Big(\frac{2^{(k-j)/2}+w^{-1}}{\d{s}\kphi}\Big)^2
\end{equation}
Then
\begin{equation*}
    \begin{split}
    \Sigma(s)&=2^{-j} \; \Big(\frac{2^{(k-j)/2}+w^{-1}}
  {\frac{X}{Y}-\frac{2^{k-j}}{s²}}\Big)^2\\
  &\sim 2^{-j} \; \Big(\frac{(2^{(j-k)/2}+2^{j-k}w^{-1})Y}
  {Y-2^{j-k}s²X}\Big)^2
  =2^{-j} \; \Big(\frac{2^{(j-k)/2}\norm{Y}+2^{j-k}<X>}
  {Y-2^{j-k}s²X}\Big)^2.
  \end{split}
\end{equation*}
\index{Sigma1}
%
%
%
%
%
%
\subsubsection*{Estimation of $F_{k,j}$}

Recall the definition of $F_{k,j}$. $F_{k,j}$ is given by
\begin{equation}
    \begin{split}
    F_{k,j}(x_1,x,u)=&2^{k/2} \; 2^{m(j-k)}
    \; \int \frac{\zeta_{k,j,m}(x,x_1,u,s)}{(R^2+(u-s)^2)^{(m+3)/2}} \;
    e^{-i(m+1)\sigma}\\
    &\times (R2xx_1+(u-s)w - i(Rw-2xx_1(u-s)))\\
    &\times (1-\rho)(2^{2(k-j)}w(s)^2) \; \X(s) \; ds.
    \end{split}
\end{equation}


\noindent Let
\begin{equation*}
    \begin{split}
    \kappa(s):=&\frac{(2xx_1-iw)^{m+1}}{a^{m+1}} \; \frac{R2xx_1+(u-s)w -
    i(Rw-2xx_1(u-s))}{a^{m+3}}\\&\times(1-\rho)(2^{2(k-j)}w(s)²).
    \end{split}
\end{equation*}
Then we have
\begin{equation}
    \label{Fkappasigma}
    a^{m+1} \; w^{-m-1} \;
    \norm{\kappa}\lesssim\norm{2xx_1-iw}^{m+1}a^{-m-1}w^{-m-1}=w^{-m-1}.
\end{equation}
\begin{bemerkung}
  On the Heisenberg group $\H_m$, $\kappa^{\H}$ is defined by
  \[\kappa^{\H}:=(u-s-iR)(R²+(u-s)²)^{-(m+2)/2}(1-\rho)(2^{2(k-j)}a²)\]
  and we have the estimate $\kappa^\H\lesssim a^{-m-1}$. Once more we see that the role of
  $a$ on the Heisenberg group coincides with the role of $w$ in our proof.
\end{bemerkung}
\noindent Since
\[\norm{R2xx_1+(u-s)w - i(Rw-2xx_1(u-s))}^2=R^2a^2+(u-s)^2a^2=a^4\]
we get
\begin{equation}
    \n{\d{s} \kappa}\lesssim \n{\kappa} \; a^{-1}.
\end{equation}
 \noindent For $s$ in the support of $(1-\rho)$ and by \eqref{odef} 
 we have
\begin{equation*}
    c_0 \; 2^{j-k} \leq w(s) \leq c_1 \; 2^{k-j},
\end{equation*}
for suitable constants $c_0$ and $c_1$ and $c_0\geq 64$. Thus
\begin{equation}
   c_0 \; 2^{j-k} \; <X> \leq \norm{Y} \leq c_1 \; 2^{k-j}<X>.
\end{equation}
Hence $\norm{Y-s² 2^{j-k}X}\geq \norm{Y}$ for all $s \in \supp \X$ and
\begin{equation*}
  \begin{split}
  \Sigma(s)&=2^{-j} \Big(\frac{2^{(j-k)/2}\norm{Y}+2^{j-k}<X>}
  {Y-2^{j-k}s²X}\Big)^2
  \lesssim 2^{-j} \Big(\frac{2^{(j-k)/2}\norm{Y}+2^{j-k}<X>}{\norm{Y}}\Big)^2\\
  &\lesssim 2^{-j} (2^{(j-k)/2}+1)^2\sim 2^{-j}.
  \end{split}
\end{equation*}
Observe that
\begin{equation*}
  2^{-j}\bignorm{\frac{\kappa'}{\kphi'\kappa}}\lesssim 2^{-j}\frac{w^{-1}}{\norm{\kphi'}}
  \lesssim (2^{-j}\Sigma)^{1/2}\leq 2^{-j}(2^{(j-k)/2}+1)^2\sim2^{-j}(2^{j-k}+1)
\end{equation*}
and
\begin{equation*}
  \begin{split}
  2^{-j}\frac{\norm{\X'}}{\norm{\kphi'\X}}&
  \lesssim \frac{1}{\norm{\kphi}}\lesssim 2^{-j}\frac{\norm{(a^{-1}+2^{k-j})}}{\norm{\d{s} \kphi}^2}
  \lesssim \Sigma(s).
  \end{split}
\end{equation*}
hold true. By Lemma \ref{ll} we have
\[2^{-j}\Bignorm{\frac{\d{s}^2\kphi}{\kphi'(s)^2}}\lesssim \Sigma(s)\]
and hence
\[2^{-j}\d{s}\Big(\frac{\kappa\X}{\d{s}\kphi}\Big)\lesssim (\Sigma(s)+(2^{-j}\Sigma(s))^{1/2})\n{\kappa\X}(s).
\]
Furthermore,
\[\Bignorm{\frac{2^{-j}w^{-1}}{\d{s}\kphi}}\lesssim (2^{-j}\Sigma(s))^{1/2}.\]
By the definition of $\kappa$ we have
\[F_{k,j}=2^{k/2} \; 2^{m(j-k)} \; \int
  \zeta_{k,j,m}(x_1,x,u,s) \;
  \kappa(s) \; \X(s) \; ds \]
and
\begin{equation*}
  \begin{split}
  \zeta_{k,j,m}(x_1,x,u,s)=&\sum_{n=0}^\infty \X_j(m+n) \; q_{n,m}(\sigma) \; 2^{-mj} \\
  &\times \frac{1}{i(n+m+1)\d{s}\kphi} \; [\d{s} e^{i(n+m+1)\kphi(s)}]
  \end{split}
\end{equation*}
with $\kphi(s):=\arctan(X)+2^{k-j}/s$. Thus
\begin{equation*}
  \begin{split}
  F_{k,j}&=2^{k/2} \; 2^{m(j-k)} \; \int \t{\zeta}_{j,m}(x_1,x,u-s,2^{k-j}/s) \;
  2^{-j} \d{s} \Big(\frac{\kappa \X}{\d{s}\kphi}\Big) \; ds\\
  &+2^{k/2} \; 2^{m(j-k)} \; \int \t{\t{\zeta}}_{j,m}(x_1,x,u-s,2^{k-j}/s) \;
  \frac{2^{-j} \; w^{-1}}{\d{s}\kphi} \; \kappa \; \X \; ds
  \end{split}
\end{equation*}
with
\begin{equation*}
  \begin{split}
  \t{\zeta}_{k,j,m}&:=\sum_{n=0}^\infty \X_j(m+n) \; q_{n,m}(\sigma)\;
  \frac{2^{(1-m)j}}{i(n+m+1)} \; e^{i(n+m+1)\kphi}\\
  \t{\t{\zeta}}_{k,j,m}&:=\sum_{n=0}^\infty \X_j(m+n) \;
  [\d{s}q_{n,m}(\sigma)] \; \frac{2^{(1-m)j} \; w}{i(n+m+1)} \; e^{i(n+m+1)\kphi}.
  \end{split}
\end{equation*}
We know, by Proposition \ref{propqn1}, that the following two inequalities
\begin{equation}
  \begin{split}
  \t{\zeta}_{k,j,m}&\lesssim 2^{-mj} \; 2^{j/2+j\epsilon} \; \frac{a^{m+1}}{w^{m+1}} \; \frac{2^j}{(1+2^j\n{\kphi})^{1+\epsilon}}
  \sim 2^{j\epsilon} \; \frac{a^{m+1}}{w^{m+1}} \; \psi_{k,j}(X,s),\\
  \t{\t{\zeta}}_{k,j,m}&\lesssim 2^{-mj} \; 2^{j/2+j\epsilon} \; \frac{a^{m+2}}{w^{m+2}}\;
  a^{-1} \; w \; \frac{2^j}{(1+2^j\n{\kphi})^{1+\epsilon}}
  \sim 2^{j\epsilon} \; \frac{a^{m+1}}{w^{m+1}} \; \psi_{k,j}(X,s)
  \end{split}
\end{equation}
hold, with
\[\psi_{k,j}(X,s):=\frac{2^j}{(1+2^j\n{1-e^{i\arctan(X)+2^{k-j}/s}})^{1+\epsilon}}.\]
Let
\[J:=\intaa{c_0 \; 2^{j-k} \j{X},c_1 \; 2^{k-j}\j{X}}.\]
By Lemma \ref{hkjlemma} the $s$-integration $\int \psi_{k,j}(X,s) \; \X(s) \; ds$
yields a constant. In combination 
with Lemma \ref{lemmayint1} (b) we thus find that
\begin{equation*}
      \begin{split}
      \Bigdnorm{&2^{k/2} \; 2^{m(j-k)} \; \int \t{\zeta}_{k,j,m}(x_1,x,u,s) \;
      2^{-j} \d{s} \Big(\frac{\kappa \X}{\d{s}\kphi}\Big) \; ds}_{L_1(O(x_1))}\\
      &\leq 2^{k/2} \; 2^{m(j-k)} \; \Bigdnorm{\int \n{\t{\zeta}_{k,j,m}} \;
      (\Sigma(s)+(2^{-j}\Sigma(s))^{1/2}) \; \n{\kappa \X} \; ds}_{L_1(O(x_1))}\\
      &\lesssim 2^{-j/2+\epsilon j} \int\limits_{\norm{Y}\in J} \psi_{k,j}(X,s) \;
      \frac{\j{X}^{3/2}}{\norm{Y}^{3/2}}
      \frac{\norm{Y}}{2\j{X}^3} \frac{1}{\sqrt{\n{XY-x_1²X²}}} \;
      \; dX \; dY \; ds\\
      &\lesssim 2^{-j/2+\epsilon j} \; \int\limits_{\norm{Y}\in J}
      \frac{1}{\norm{Y}^{1/2}\j{X}^{3/2}} \;
      \frac{1}{\sqrt{\norm{XY-x_1²X²}}} \; dX \; dY\\
      &\lesssim 2^{-j/2+\epsilon j} \; \int \big(1+\log\big(\frac{c_1}{c_0} \; 2^{2(k-j)}\big)\big) \;
      \frac{1}{\n{X}^{1/2} \j{X}^{3/2}} \; dX \lesssim 2^{-j/2+\epsilon j} \; k.
  \end{split}
\end{equation*}
holds. For the term involving $\t{\t{\zeta}}_{k,j,m}$ we get the same result. This
implies
\[
    \sum_{j-k\lesssim 1} \dnorm{F_{k,j}}_{L_1(0(x_1))}\lesssim k^2.\]

%
%
%
%
%
%
\subsubsection*{Estimation of $G_{k,j}$}

The proof for $G_{k,j}$ is similar to the proof for $F_{k,j}$, except for that we have
here a set of points for which the partial integration does not yields $2^{-j}$. We use
estimates for the measure of this set, to get also an additional $2^{-j/2}$.
$G_{k,j}$ is defined by
\begin{equation*}
    \begin{split}
    G_{k,j}&(x_1,x,u)=2^{k/2} \; 2^{(m-1)(j-k)}\\
    &\times \int \frac{\zeta_{j,m-1}(x_1,x,u,s)}{(R^2+(u-s)^2)^{m/2}}
    \; \frac{(2xx_1-iw)^{m}}{a^{m}} \; \rho(2^{2(k-j)}w(s)^2) \; \X(s) \; ds.
    \end{split}
\end{equation*}
with
\[\zeta_{k,j,m-1}(x_1,x,u,s)=\sum_n \X_j(m+n) \; q_{n,m-1} \; 2^{mj} \; e^{i(n+m)\kphi}.\]
Let
\[\kappa(s):=\frac{(2xx_1-iw)^{m}}{a^{2m}} \; \rho(2^{2(k-j)}w(s)²).\]
Then $a^m \; w^{-m} \; \norm{\kappa}\lesssim \norm{2xx_1-iw}^{m} \; a^{-m}
w^{-m}=w^{-m}$.
\begin{bemerkung}
  On the Heisenberg group $\H_m$, $\kappa^{\H}$ is defined as
  \[\kappa^{\H}:=(R²+(t-s)²)^{-m/2}\rho(2^{2(k-j)}a²)\]
  and we have the estimate $\kappa^\H\lesssim a^{-m}$.
\end{bemerkung}
\noindent For $s$ in the support of $\rho$ we have
\begin{equation*}
  w(s) \lesssim 2^{j-k}
\end{equation*}
and thus
\begin{equation}
  \norm{Y}\lesssim 2^{j-k} <X>.
\end{equation}
Let
\begin{equation*}
  \begin{split}
  \Sigma(s)&:=2^{-j} \; \Big(\frac{2^{(k-j)/2}+w^{-1}}{\d{s}\kphi}\Big)^2
  =2^{-j} \; \Big(\frac{2^{(k-j)/2}+w^{-1}}
  {\frac{X}{Y}-\frac{2^{k-j}}{s²}}\Big)^2\\
  &\sim 2^{-j} \; \Big(\frac{(2^{(j-k)/2}+2^{j-k}w^{-1})Y}
  {Y-2^{j-k}s²X}\Big)^2
  =2^{-j} \; \Big(\frac{2^{(j-k)/2}\norm{Y}+2^{j-k}\j{X}}
  {Y-2^{j-k}s²X}\Big)^2.
  \end{split}
\end{equation*}
Since $\norm{Y}\lesssim 2^{j-k} \; <X>$ we see that
\begin{equation}
  \Sigma(s)\sim 2^{-j} \; \Big(\frac{2^{j-k}\j{X}}
  {Y-2^{j-k}s²X}\Big)^2.
\end{equation}
holds. This implies that
\begin{equation}
  \norm{Y-2^{j-k}s²X}\lesssim 2^{-j/2+j-k}\j{X},
\end{equation}
if $\Sigma(s)\geq 1$. Now, fix a cut-off function $\rho_1$ supported in $\norm{x}\leq 2$
with $\rho_1(x)=1$ for $\norm{x}\leq 1$. We write
\begin{equation*}
    \begin{split}
    \norm{G_{k,j}}&(x,x_1,u)=2^{k/2} \; 2^{(m-1)(j-k)}\\
    &\times \Bignorm{\int \zeta_{k,j,m-1}(x,x_1,u,s)\; \kappa(s)
    \big(\rho_1(\Sigma(s))+(1-\rho_1)(\Sigma(s))\big) \; \X(s) \; ds}\\
    \lesssim &G_{k,j}^1+G_{k,j}^2+G_{k,j}^3+G_{k,j}^4
    \end{split}
\end{equation*}
with
\begin{equation*}
    \begin{split}
    G_{k,j}^1&:=2^{k/2+(m-1)(j-k)} \; \int\limits_{1\leq \Sigma\leq 2}
    \Bignorm{\t{\zeta}_{k,j,m-1}(x_1,x,u,s)\; \Big(\frac{\kappa\X}{\kphi'}\Big)(s) \; 2^{-j} \; \Sigma'(s)} \; ds\\
    G_{k,j}^2&:=2^{k/2+(m-1)(j-k)} \; \int\limits_{\Sigma\leq 2}
    \Bignorm{\t{\zeta}_{k,j,m-1}(x_1,x,u,s)\;
    \; 2^{-j}\d{s}\Big(\frac{\kappa\X}{\kphi'}\Big)(s)} \; ds\\
    G_{k,j}^3&:=2^{k/2+(m-1)(j-k)} \; \int\limits_{\Sigma\geq 1}
    \Bignorm{\zeta_{k,j,m-1}(x_1,x,u,s) \; (\kappa\X)(s)} \; ds\\
    G_{k,j}^4&:=2^{k/2+(m-1)(j-k)} \; \int\limits_{\Sigma\leq 2}
    \Bignorm{\t{\t{\zeta}}_{k,j,m-1}(x_1,x,u,s) \; 2^{-j} \; \Big(\frac{\kappa\X}{w\kphi'}\Big)(s)} \; ds
  \end{split}
\end{equation*}
and
\begin{equation*}
    \begin{split}
    \t{\zeta}_{k,j,m-1}(x,x_1,u,s)&=\sum_n \X_j(m+n) \; q_{n,m-1} \; \frac{2^{(1+m)j}}{i(n+m)} \;
    e^{i(n+m)\kphi}\\
    \t{\t{\zeta}}_{k,j,m-1}(x,x_1,u,s)&=\sum_n \X_j(m+n) \; (\d{s}q_{n,m-1}) \; \frac{2^{(1+m)j}}{i(n+m)} \;
    e^{i(n+m)\kphi}
    \end{split}
\end{equation*}
Notice that
\begin{equation}
  \begin{split}
  \norm{\Sigma'}(s)&\lesssim 2^{-j/2}\Sigma^{1/2}\Big(\Bignorm{\frac{w^{-2}}{\kphi'}}
  +2^{j/2}\Sigma^{1/2}\Bignorm{\frac{\kphi''}{\kphi'}}\Big)\\
  &\lesssim (2^{j/2}\Sigma^{3/2}+2^{j}\Sigma^{2})\norm{\kphi'}\lesssim 2^j\norm{\kphi'},
  \end{split}
\end{equation}
if $\Sigma(s) \lesssim 1$, and hence $\n{G_{k,j}^1}\lesssim \n{G_{k,j}^3}$.
\subsubsection*{Estimates for $G_{k,j}^1$ and $G_{k,j}^3$.}
We get by Proposition \ref{propqn1}
\begin{equation}
  \begin{split}
  \t{\zeta}_{k,j,m-1}&\lesssim 2^{-mj} \; 2^{j/2+j\epsilon} \; \frac{a^{m}}{w^{m}} \; \frac{2^j}{(1+2^j\n{\kphi})^{1+\epsilon}}
  \sim 2^{j\epsilon} \; \frac{a^{m}}{w^{m}} \; \psi_{k,j}(X,s),\\
  \end{split}
\end{equation}
with
\[\psi_{k,j}:=\frac{2^j}{(1+2^j\n{1-e^{i\kphi}})^{1+\epsilon}}.\]
By Lemma \ref{gkj2a}(a) we deduce
\begin{equation*}
  \begin{split}
  \Bigdnorm{2^{k/2} \; &2^{(m-1)(j-k)} \; \int\limits_{\Sigma(s)\geq 1}
  \t{\zeta}_{j,m-1}(x_1,x,u,s)\; \kappa(s) \; \X(s) \; ds}\\
  \lesssim &2^{k/2+\epsilon j} \; 2^{(m-1)(j-k)}\\ &\times\int\limits_{\Sigma(s)\geq 1}
  \psi_{k,j}(X,s) \; \frac{<X>^{1/2}}{\norm{Y}^{1/2}}
  \frac{\norm{Y}}{2<X>^3} \frac{1}{\sqrt{\norm{XY-x_1²X²}}}  \;
  dX \; dY \; ds\\
  =&2^{k/2-1+\epsilon j} \; 2^{(m-1)(j-k)}\\ &\times\int\limits_{\Sigma(s)\geq 1}
  \psi_{k,j}(X,s) \; \frac{\norm{Y}^{1/2}}{<X>^{5/2}} \frac{1}{\sqrt{\norm{XY-x_1²X²}}}  \;
  dX \; dY \; ds\\
  \lesssim &2^{2\epsilon k}.
  \end{split}
\end{equation*}


\subsubsection*{Estimates for $G_{k,j}^2$ and $G_{k,j}^4$.}

We know by Proposition \ref{propqn1}
\begin{equation}
  \begin{split}
  \t{\zeta}_{k,j,m-1}&\lesssim 2^{mj} \; 2^{-j/2+j\epsilon} \; \frac{a^{m}}{w^{m}} \; \frac{2^j}{(1+2^j\n{\kphi})^{1+\epsilon}}
  \sim 2^{j\epsilon}\frac{a^{m}}{w^{m}} \; \psi_{k,j}(X,s)\\
  \t{\t{\zeta}}_{k,j,m-1}&\lesssim 2^{mj} \; 2^{-j/2+j\epsilon} \; \frac{a^{m+1}}{w^{m+1}}\;
    a^{-1} \; w \; \frac{2^j}{(1+2^j\n{\kphi})^{1+\epsilon}}
    \sim 2^{j\epsilon} \; \frac{a^{m}}{w^{m}} \; \psi_{k,j}(X,s),
    \end{split}
\end{equation}
with
\[\psi_{k,j}:=\frac{2^j}{(1+2^j(1-e^{i\omega}))^{1+\epsilon}}.\]
Easy calculations show
\begin{equation}
    2^{-j} \; \Big(\frac{\kappa\X}{w\kphi'}\Big)(s)\lesssim (2^{-j}\Sigma(s))^{1/2} \;
    (\kappa\X)(s)
\end{equation}
and
\[2^{-j}\d{s}\Big(\frac{\kappa\X}{\kphi'}\Big)(s)\leq
(\Sigma+(2^{-j}\Sigma)^{1/2})(s)(\kappa\X)(s).\] In this case we gain by the integration
by parts. We have the following estimate
\begin{equation*}
  2^{-j/2+j-k}<X> \lesssim \norm{Y-2^{j-k}s²X}\lesssim 2^{j-k}<X>
\end{equation*}
and hence
\begin{equation*}
  2^{k-j}<X>^{-1}\lesssim \Sigma(s)\leq 1.
\end{equation*}
This implies, by Lemma \ref{gkj2a}(b),
\begin{equation*}
    \begin{split}
    \max&\{\dnorm{G_{k,j}^2}_{L_1(O(x_1))},\dnorm{G_{k,j}^4}_{L_1(O(x_1))}\}
    \lesssim 2^{k/2+\epsilon j} \; 2^{(m-1)(j-k)}\\
    \int\limits_{\Sigma\leq 2}&
    \psi_{k,j}(X,s) \; (\Sigma+(2^{-j}\Sigma)^{1/2}) \; \X(s)\;
    \frac{\n{Y}^{1/2}}{\j{X}^{5/2}\n{X}^{1/2}} \;
    \frac{1}{\sqrt{\norm{Y-x_1²X}}} \; dX \; dY \; ds\\
    &\leq 2^{2\epsilon k}.
    \end{split}
\end{equation*}
%
%
%
%
%
%
%
%
%
%
%
\subsection{Estimation for $2^{k-j}\leq \const_2$}

\subsubsection*{Definition of $\Sigma(s)$}

We define the control quantity $\Sigma$ in this case by
\begin{equation*}
  \begin{split}
  \Sigma(s)&:=2^{-j} \; \Big(\frac{2^{(k-j)/2}}{\d{s}\kphi}\Big)^2
  =2^{-j} \; \Big(\frac{2^{(k-j)/2}}
  {\frac{X}{Y}-\frac{2^{k-j}}{s²}}\Big)^2
  \sim 2^{-j} \; \Big(\frac{2^{(j-k)/2}Y}
  {Y-2^{j-k}s²X}\Big)^2.
  \end{split}
\end{equation*}
\index{Sigma2}

\subsubsection*{Estimation of $G_{k,j}$}

Recall that in this case $G_{k,j}$ is given by
\begin{equation}
  \begin{split}
   G_{k,j}&:=2^{k/2} \; 2^{(m-1)(j-k)} \; \int
   \frac{\zeta_{k,j,m-1}(x_1,x,u,s)}{(R^2+(u-s)^2)^{m/2}} \;
   \frac{(2xx_1-iw)^{m}}{a^{m}} \; \X(s) \; ds.
  \end{split}
\end{equation}
Let
\[\kappa(s):=\frac{(2xx_1-iw)^{m}}{a^{2m}}.\]
Then $a^{m} \; w^{-m} \; \norm{\kappa}=\norm{2xx_1-iw}^{m} \; a^{-m}\; w^{-m}=w^{-m}$.
\begin{bemerkung}
  On the Heisenberg group $\H_m$, $\kappa^{\H}$ is defined as
  \[\kappa^{\H}:=(R²+(u-s)²)^{-m/2}\]
  and we have the estimate $\kappa^\H\lesssim a^{-m}$.
\end{bemerkung}
\noindent
The support of $\X$ is contained in $\intaa{1/8,32}$. By \eqref{konstante2} and
\eqref{odef}
we get for $s$ in the support of $\X$ the following estimates
\begin{equation*}
  \label{Fsupprho}
  \begin{split}
  \norm{x²-x_1²}&=\n{(x+x_1)(x-x_1)}\leq (2\const_0 2^{(k-j)/2} +4\const_1
  2^{(k-j)/2})2^{(k-j)/2+1}\const_0\\
  &\leq 2^{k-j+3}(\const_0+\const_1)^2\leq 8 \const_2 (\const_0+\const_1)^2\leq 1/32,\\
  R&=x^2+x_1^2\leq 16 \const_2(\const_0+\const_1)^2 \leq 1/16,\\
  u&\leq 2^{k-j+3} (\const_0+\const_1)^2\leq 8\const_2(\const_0+\const_1)^2 \leq 1/32,\\
  (u-s)&\sim 1, \quad w(s)=\sqrt{(x^2-x_1^2)^2+(u-s)^2} \sim 1.
  \end{split}
\end{equation*}
We get
\begin{equation*}
    \begin{split}
    \norm{X}&=\Bignorm{\frac{Rw+2xx_1(u-s)}{(u-s)w-2Rxx_1}}\lesssim \frac{Rw}{w}\lesssim 2^{k-j}<1\\
    \norm{Y}&=\Bignorm{\frac{(R²+(u-s)²)w}{(u-s)w-2Rxx_1}}\sim 1,\\
    \end{split}
\end{equation*}
which implies
\begin{equation}
  \norm{Y}\sim \j{X} \sim 1.
\end{equation}
Furthermore, $\n{\d{s}²\kphi}\lesssim 2^{k-j}$ and $\n{\d{s} \kphi} \lesssim 2^{k-j}$.
Hence easy calculations show
\begin{equation}
    2^{-j} \; \Big(\frac{\kappa\X}{w\kphi'}\Big)(s)\lesssim \frac{2^{k-2j}}{\n{\kphi'}^2}(\kappa\X)(s)=\Sigma(s) \;
    (\kappa\X)(s)
\end{equation}
and
\[2^{-j}\d{s}\Big(\frac{\kappa\X}{\kphi'}\Big)(s)\lesssim \frac{2^{k-2j}}{\n{\kphi'}^2}(\kappa\X)(s)
=\Sigma(s)\;(\kappa\X)(s).\]
Since $\norm{Y} \sim 1$ we see that
\begin{equation}
  \Sigma(s)\sim \frac{2^{-k}}
  {\n{Y-2^{j-k}s²X}^2}
\end{equation}
holds. This implies
\begin{equation}
  \norm{Y-2^{j-k}s²X}\lesssim 2^{-k/2},
\end{equation}
if $\Sigma(s)\geq 1$. Fix a cut-off function $\rho_1$ supported in $\norm{x}\leq 2$ with
$\rho_1(x)=1$ for $\norm{x}\leq 1$. We can write $G_{k,j}$ as
\begin{equation*}
  \begin{split}
  \norm{G_{k,j}}=&2^{k/2} \; 2^{(m-1)(j-k)}\\
  &\Bignorm{\int
  \zeta_{k,j,m-1}(x_1,x,u,s)\; \kappa(s) \;
  \rho_1(\Sigma(s)+(1-\rho_1(\Sigma(s)) \; \X(s) \; ds}\\
  \lesssim &G_{k,j}^1+G_{k,j}^2+G_{k,j}^3+G_{k,j}^4
  \end{split}
\end{equation*}
with
\begin{equation*}
  \begin{split}
    G_{k,j}^1&:=2^{k/2+(m-1)(j-k)-j} \; \int\limits_{1\leq \Sigma\leq 2}
    \Bignorm{\t{\zeta}_{k,j,m-1}(x_1,x,u,s)\; \Big(\frac{\kappa\X}{\kphi'}\Big)(s) \; (\d{s}\Sigma)(s)}ds\\
    G_{k,j}^2&:=2^{k/2+(m-1)(j-k)} \; \int\limits_{\Sigma\leq 2}
    \Bignorm{\t{\zeta}_{k,j,m-1}(x_1,x,u,s) \; (\Sigma+(2^{-j}\Sigma)^{1/2})(s) \;
    (\kappa\X)(s)} \; ds\\
    G_{k,j}^3&:=2^{k/2+(m-1)(j-k)} \; \int\limits_{\Sigma\geq 1}
    \norm{\zeta_{k,j,m-1}(x_1,x,u,s) \; (\kappa\X)(s)} \; ds\\
    G_{k,j}^4&:=2^{k/2+(m-1)(j-k)} \; \int\limits_{\Sigma\leq 2}
    \Bignorm{\t{\t{\zeta}}_{k,j,m-1}(x_1,x,u,s) \; (2^{-j}\Sigma)^{1/2}(s) \; (\kappa\X)(s)} \;
    ds,
  \end{split}
\end{equation*}
and $\t{\zeta}_{k,j,m-1}$ and $\t{\t{\zeta}}_{k,j,m-1}$ defined as before. We have
$\n{G_{k,j}^1}\lesssim \n{G_{k,j}^3}$. Notice that
\begin{equation}
  \begin{split}
  \norm{\d{s}\Sigma}(s)&\lesssim \n{2^{-j}\frac{2^{k-j}}{(\d{s}\kphi)^3} \d{s}^2\kphi} \lesssim 2^j\norm{\kphi'},
  \end{split}
\end{equation}
if $\Sigma(s) \lesssim 1$.
And hence $\n{G_{k,j}^1}\lesssim \n{G_{k,j}^3}$.

\subsubsection*{Estimates for $G_{k,j}^1$ and $G_{k,j}^3$.}

In this case we have by Lemma \ref{gkj2}
\begin{equation*}
  \begin{split}
  \dnorm{G_{k,j}^3}_{L_1(O(x_1))}=
  &\Bigdnorm{2^{k/2} \; 2^{(m-1)(j-k)} \; \int\limits_{\ueber{O(x_1)}{\Sigma(s)\geq 1}}
  \zeta_{k,j,m}(x_1,x,u,s)\; \kappa(s) \; \X(s) \; ds}_{L_1(O(x_1))}\\
  \lesssim& 2^{k/2+\epsilon j} \; 2^{(m-1)(j-k)}\\
  &\times \int\limits_{\ueber{O(x_1)}{\Sigma(s)\geq 1}}
  \psi_{k,j}(X,s) \;
  \frac{\norm{Y}^{1/2}}{2<X>^{5/2}} \frac{1}{\sqrt{\norm{XY-x_1²X²}}}  \;
  dX \; dY \; ds\\
  \lesssim &2^{2\epsilon j}.
  \end{split}
\end{equation*}
In addition with the trivial estimate in Lemma \ref{trivialestimate},
\begin{equation*}
  \begin{split}
  \dnorm{G_{k,j}^3}_{L_1(O(x_1))}\lesssim &2^{k/2+\epsilon j} \; 2^{(m-1)(j-k)}\\
  &\times \int\limits_{O(x_1)} \psi_{k,j}(X,s) \;
  \frac{\norm{Y}^{1/2}}{2<X>^{5/2}} \frac{1}{\sqrt{\norm{XY-x_1²X²}}} \; dX \; dY \; ds\\
  \lesssim &2^{k-j/2+\epsilon j},
  \end{split}
\end{equation*}
we get
\begin{equation*}
  \dnorm{G_{k,j}^1}_{L_1(O(x_1))}+\dnorm{G_{k,j}^3}_{L_1(O(x_1))}\lesssim \min\{2^{2j\epsilon},
  2^{k-j/2+\epsilon j}\}.
\end{equation*}
Hence, with $2^M \sim 1/\const_2$,
\begin{equation*}
  \begin{split}
  \sum_{j>k+M} \dnorm{G_{k,j}^1}_{L_1(O(x_1))}&+\dnorm{G_{k,j}^3}_{L_1(O(x_1))}\\
  &\lesssim \sum_{j>k+M,\ j \leq 2k}
  2^{2\epsilon j}+
  \sum_{j>k+M,\ j>2k} 2^{k-j/2+\epsilon j}\\&\lesssim k \; 2^{2\epsilon k} + \sum_{j=0}^{\infty} 2^{-j/2+\epsilon j}
  \lesssim k \; 2^{k\epsilon}.
  \end{split}
\end{equation*}

\subsubsection*{Estimates for $G_{k,j}^2$ and $G_{k,j}^4$.}

In this case we gain by the integration by parts.
We deduce by Lemma \ref{gkj2}
\begin{equation*}
  \begin{split}
  \dnorm{G_{k,j}^2}_{L_1(O(x_1))}+&\dnorm{G_{k,j}^4}_{L_1(O(x_1))}\\
  \lesssim &2^{k/2+\epsilon j+(m-1)(j-k)}\\
  &\times \int\limits_{\ueber{O(x_1)}{\Sigma\leq 2}}
  \psi_{k,j}(X,s) \; \Sigma(s) \;
  \frac{\norm{Y}^{1/2}}{\sqrt{\norm{XY-x_1²X^2}}} \; dX \; dY \; ds\\
  \lesssim &2^{2\epsilon j}.
  \end{split}
\end{equation*}
Together with the trivial estimate, Lemma \ref{trivialestimate},
\begin{equation*}
  2^{k/2+\epsilon j+ (m-1)(j-k)} \; \int\limits_{\ueber{O(x_1)}{\Sigma\leq 1,\ Y \sim 1}}
  \norm{\Sigma}^{1/2} \;  \frac{\norm{Y}^{1/2}}{\sqrt{\norm{XY-x_1²X^2}}} \; dY\lesssim 2^{k-j/2+\epsilon j}
\end{equation*}
we get, with $2^M \sim 1/\const_2$,
\begin{equation*}
    \begin{split}
  \sum_{j>k+M} \dnorm{G_{k,j}^2}_{L_1(O(x_1))}+\dnorm{G_{k,j}^4}_{L_1(O(x_1))}&=\sum_{j>k+M,\ j<2k}
  2^{2\epsilon j}+\sum_{j>k+M,j>2k}
  2^{k-j/2+\epsilon j}\\
  &\lesssim k \; 2^{2\epsilon k}.
    \end{split}
\end{equation*}
This completes the proof of Proposition \ref{dyadicprop2} and hence completes the proof
of Theorem \ref{theorem2}.

\newpage
\thispagestyle{myheadings} \markboth{Index of notation}{Index of notation} 

\addcontentsline{toc}{section}{Index of notation}

\noindent{\Large \bf Index of
notation}\\
\absatz


\noindent\begin{tabular}{ll}
    \hspace{1.3cm} &\\
    $\C$ & set of complex numbers\\
    $C_{\infty}$ & set of continuous, in $\infty$ vanishing functions\\
    $C^{\infty}$ & set of smooth functions\\
    $C^{\infty}_0$ & set of compactly supported smooth function\\
    $D'$ & set of distributions\\
    $\H_1$ & Heisenberg group of dimension $3$, page 11\\
    $\H_m$ & Heisenberg group of dimension $2m+1$, page 11\\
    $L_1$ & set of integrable functions\\
    $L_p$ & set of measurable functions with $\n{f}^p \in L_1$\\
    $L_p^\alpha$ & set of measurable function $f$ with $\dnorm{f}_{L_p^\alpha}<\infty$\\
    $\N$ & set of natural numbers\\
    $\N_0$ & $\N \cup \{0\}$\\
    $\R$ & set of real numbers\\
    $\R^+$ & set of real numbers $x$ with $x\geq 0$\\
    $\S$ & set of Schwartz functions\\
    $W^s_p$ & $L_p$-Sobolev space of order $s$\\
    $\Z$ & set of integers\\
    \hspace{1.3cm} &\\
    \hspace{1.3cm} &\\
    $\dnorm{\fa}_{L_p^\alpha}$ & for a differential operator $A$, $\dnorm{f}_{L_p^\alpha}=\dnorm{(1+A)^{\alpha/2}f}_p$, page 1\\
    $\dnorm{\fa}_{Schur}$ & Schur norm, page 10\\
    $a$ & $((x^2+x'^2)^2+u^2)^{1/2}$ resp. $((x^2+x'^2)^2+(u-s)^2)^{1/2}$, pages 34, 74\\
    $\alpha$ & $>1/2$, page 50\\
    $\alpha(d,p)$ & $(d-1)\n{1/p-1/2}$, page 2\\
    $B_{A}$ & ball with respect to the metric $d_{A}$, page 18\\
    $\const_0,\ \const_1,\ \const_2$ & general constants, pages 20, 26, 71\\
    $d_{A}$ & optimal control metric of the differential operator $A$, page 17\\
    $\delta_r$ & $(x,u)\mapsto(rx,r^2u)$, automorphic dilation, page 11\\
    $G$ & Gru\v{s}in operator, page 11\\
    $h^\alpha$ & page 50\\
    $L$ & sub-Laplacian on $\H_m$, page 12\\
    $m$ & $1/2$, page 26\\
    $m^\alpha(G)$  & $\exp(i\sqrt{G}) (1+G)^{-\alpha/2}$, page 26\\
    $O(x_1)$ & page 79\\
    $\P_{n,\epsilon}$ & spectral projection operator belonging to the ray $\ray_{n,\epsilon}$,
    page 31\\
    $R$ & $x^2+x'^2$, pages 34, 74\\
    $\ray_{n,\epsilon}$ & ray in the joint spectrum of $G$ and $iU$, page 30
\end{tabular}
\noindent\begin{tabular}{ll}
    $\sigma$ & page 34\\
    $\Sigma$ & pages 91, 97\\
    $S_C,\ S_{\const_1}$ & pages 5, 25\\
    $U$ & $\d{u}$, page 30\\
    $w$ & $((x^2-x'^2)^2+u^2)^{1/2}$ resp. $((x^2-x'^2)^2+(u-s)^2)^{1/2}$, pages 34, 74\\
    $X,\ Y$ & new coordinates, page 75\\
    $\X_A$ & indicator function of the set $A$\\
    $\t{\X}_B$ & "smooth indicator function" associated to the ball $B$, page 59\\
\end{tabular}



\newpage

\end{document}